\newcommand{\adj}[4]{#1\negmedspace: #2\rightleftarrows #3:\negmedspace #4}
\newtheorem{thm}{Theorem}[section]
\newtheorem{cor}[thm]{Corollary}
\newtheorem{lem}[thm]{Lemma}
\newtheorem{ass}[thm]{Assumptions}
\newtheorem{prop}[thm]{Proposition}
\theoremstyle{defn}
\newtheorem{defn}[thm]{Definition}
\newtheorem{convention}[thm]{Convention}
\newtheorem{example}[thm]{Example}
\newtheorem{rem}[thm]{Remark}
\numberwithin{equation}{section}
\DeclareFontFamily{U}{rsf}{} \DeclareFontShape{U}{rsf}{m}{n}{
  <5> <6> rsfs5 <7> <8> <9> rsfs7 <10->  rsfs10}{}
\DeclareMathAlphabet{\mathscr}{U}{rsf}{m}{n}
\newcommand{\B}{B}
\newcommand*{\defeq}{\mathrel{\vcenter{\baselineskip0.5ex \lineskiplimit0pt
                     \hbox{\scriptsize.}\hbox{\scriptsize.}}}%
                     =}
\DeclareMathAlphabet{\mathpzc}{OT1}{pzc}{m}{it}
\newcommand{\Q}{{\mathbb Q}}
\renewcommand{\imath}{\sqrt{-1}}
\def\Z{{\mathbb Z}}
\newcommand\reallywidehat[1]{%
\savestack{\tmpbox}{\stretchto{%
  \scaleto{%
    \scalerel*[\widthof{\ensuremath{#1}}]{\kern-.6pt\bigwedge\kern-.6pt}%
    {\rule[-\textheight/2]{1ex}{\textheight}}
  }{\textheight}%
}{0.5ex}}%
\stackon[1pt]{#1}{\tmpbox}%
}
\DeclareMathOperator{\colim}{colim}
\numberwithin{equation}{section}
\begin{document}
\title[]{Koszul Duality in Exact Categories}
\author{Jack Kelly}
\address{Jack Kelly, The Hamilton Mathematics Institute, School of Mathematics, Trinity College Dublin, Dublin 2, Ireland}
\hfill \break
\bigskip
\email{kellyj29@tcd.ie}
\thanks{The majority of this research was conducted during the author's graduate studies, which were supported by the EPSRC studentship BK/13/007, while employed at the University of Oxford on the EPSRC grant `Symmetries and Correspondences', and while supported by the Simons Foundation at Trinity College Dublin under the
program `Targeted Grants to Institutes'. }

\begin{abstract}
In this paper we establish Koszul duality type results in the setting of chain complexes in exact categories. In particular we prove generalisations of Vallette's  Koszul duality theorem \cite{vallette2014homotopy}, and Koszul duality along the lines of Lurie \cite{lurie2011derived}. We also prove a connective version. We conclude with some applications, including our main example, the category of complete bornological spaces over a Banach field. 
\end{abstract}
\maketitle
\newpage
\newpage
\tableofcontents
\newpage
\section*{Introduction}

\subsection*{Background and Motivation}
Derived algebraic geometry has provided significant new insights into the theory of algebraic moduli spaces. Classical moduli spaces are often pathological and singular but can be realised as truncations of smooth derived spaces. One tool which has proven fundamental for analysing moduli spaces in derived geometry is Koszul duality. Koszul duality, at least for Lie algebras and augmented  commutative algebras, says that there is an adjunction of $(\infty,1)$-categories 
$$\adj{\textbf{D}_{\kappa}}{(\textbf{cdga}^{aug})^{op}}{\textbf{dgla}}{\hat{C}_{\kappa}}$$
between the category of augmented commutative differentially graded algebras and differentially graded Lie algebras over a field of characteristic zero. Moreover after forgetting the Lie algebra structure, the underlying complex of $\textbf{D}_{\kappa}(A)$ can be identified with a shift of the tangent complex of $A$ at the canonical point of $Spec(A)$ corresponding to the augmentation of $A$. One can also give conditions under which the unit $\mathfrak{g}\rightarrow \textbf{D}_{\kappa}\circ \hat{\textbf{C}}_{\kappa}(\mathfrak{g})$ is an equivalence. The idea behind its application to moduli theory is as follows. If $m$ is a point of a moduli problem $M$ one considers the shifted tangent complex at this point $\mathbb{T}_{M,m}[1]$. This object is equipped with a canonical Lie algebra structure (it is essentially the tangent Lie algebra to the based loop group) which typically satisfies the assumptions such that the unit is an equivalence. The spectrum of the Koszul dual commutative algebra to this Lie algebra can be regarded as a formal neighbourhood of the point $m$. This idea is most conveniently formulated by Lurie using the language of $(\infty,1)$-categories and formal moduli problems in \cite{lurie2011derived} but as pointed out in loc. cit. these ideas appeared previously in work of e.g. Hinich (\cite{MR1843805}, \cite{MR2101417}), Kontsevich-Soibelman (\cite{kontsevich2002deformation}), Manetti (\cite{MR1891232}), and Pridham (\cite{MR2628795}).

The adjunction above actually factors through an \textit{equivalence} of $(\infty,1)$-categories
$$\adj{\Omega_{\kappa}}{\textbf{coComm}}{\textbf{dgla}}{\textbf{B}_{\kappa}}$$
between the categories of differentially graded coaugmented conilpotent cocommutative coalgebras and differentially graded Lie algebras. The functor $\hat{C}_{\kappa}$ is obtained by composing $B_{\kappa}$ with the dualising functor $(-)^{\vee}:\textbf{coComm}\rightarrow\textbf{cdga}^{op}$. Hinich gives a geometric interpretation of this equivalence by interpreting coalgebras as representing \textit{formal stacks}.

Many moduli spaces, such as the moduli space of instantons or of Galois representations, are naturally analytic or smooth rather than algebraic in nature. In its nascence the goal of this paper was to develop an analogue of Koszul duality which could be applied to the study of analytic and smooth moduli problems. The idea is as follows. Derived algebraic geometry over a ring $R$ may be viewed as derived geometry, in the sense of To\"{e}n and Vezzosi \cite{toen2004homotopical}, relative to the monoidal model category $Ch({}_{R}\mathpzc{Mod})$ (or $Ch_{\ge0}({}_{R}\mathpzc{Mod})$). The category of affine spaces is opposite to the category of cdgas, and a suitable Grothendieck topology allows one to define and study derived stacks. The novel perspective of the author, Bambozzi, Ben-Bassat, and Kremnizer \cite{koren}, \cite{orenbambozzi}, \cite{bambozzi} on non-derived analytic geometry suggests that derived analytic geometry over a Banach field $R$ of characteristic $0$ be viewed as derived geometry relative to the monoidal model category $Ch(Ind(Ban_{R}))$ or $Ch_{\ge0}(Ind(Ban_{R}))$. (This will be developed fully in the forthcoming work \cite{dang}). Here $Ind(Ban_{R})$ is the formal completion of the category of Banach $R$-modules by inductive limits. One can also consider instead the full subcategory $CBorn_{R}$ of complete bornological $R$-modules. Our analytic analogue of Koszul duality relates the categories of commutative algebras, Lie algebras, and cocommutative coalgebras internal to the category $Ch(Ind(Ban_{R}))$. The present work is a crucial ingredient which will allow us to utilise the general setup of formal moduli problems from \cite{MR4509772} in our context. Applications to deformation theory in derived analytic and smooth geometry will be the subject of future work joint with Rhiannon Savage.

Before precisely outlining our results let us first review some of the extensive history of Koszul duality. Vallette's work on duality \cite{vallette2014homotopy} generalises previous work of Hinich \cite{hinich2001dg} which interprets duality between Lie algebras and cocommutative coalgebras in terms of formal stacks. This in turn generalises results from the seminal work of Quillen \cite{quillen1969rational} on rational homotopy theory. Getzler and Jones \cite{getzler1994operads} have also done crucial work on Koszul duality. Motivated by studying differential forms on iterated loops spaces they in particular study duality for $E_{n}$-algebras. In the process of establishing chiral Koszul duality, Francis and Gaitsgory \cite{francis2012chiral} prove a general Koszul duality result in the context of pro-nilpotent $(\infty,1)$-categories. A version of Koszul duality for operads has been established by Ginzburg and Kapranov \cite{ginzburg1994koszul}. There is also a version for curved operads due to Hirsh and Milles \cite{hirsh2012curved}. Ching and Harper have recently proved a spectral version of Koszul duality \cite{ching2015derived}. The relationship between Koszul duality and deformation theory has also been extensively studied by Kontsevich and Soibelman in \cite{kontsevich2000deformations} and \cite{kontsevich2003deformation}, as well as by Lurie \cite{lurie2011derived} and Hennion \cite{hennion2015tangent}. In \cite{MR4398644}, following previous work of the same author in \cite{MR2723021}, Positselski considers non-homogeneous Koszul duality over a general non-commutative base ring. Recent work of Roca I Lucio \cite{https://doi.org/10.48550/arxiv.2209.09833} discusses Koszul duality for \textit{absolute algebras} (in some sense adically-completed, or `pro-nilpotent' algebras) in the case of vector spaces over a field of characeristic zero. We expect the results therein also hold in more general exact categories.


\subsection*{Outline of Results}

Although motivated by Koszul duality for bornological (or ind-Banach) modules, we will in fact prove Koszul duality for a large class of monoidal model categories, which also includes the category of vector spaces over a field. The model structure on $Ch(CBorn_{R})$ arises from a Quillen exact (in fact quasi-abelian) structure on $CBorn_{R}$.  We introduce the notion of a Koszul category $\mathpzc{M}$, which is a monoidal model category of the form ${}_{R}\mathpzc{Mod}(Ch(\mathpzc{E}))$ for $\mathpzc{E}$ an exact category, in which the techniques of Koszul duality work.

In fact we shall prove something much more general, which also incorporates, for example, generalisations of $E_{n}$ self-duality \cite{getzler1994operads}. Namely we generalise the notion of a Koszul twisting morphism $\alpha:\mathfrak{C}\rightarrow\mathfrak{P}$ from a  cooperad to an operad. We then identify an $(\infty,1)$-category of co-algebras over $\mathfrak{C}$, $\textbf{coAlg}_{\mathfrak{C}}^{|K^{f}|,\alpha-adm}$, which is equivalent to the $(\infty,1)$-category of algebras over $\mathfrak{P}$. Our main theorem is the following (Theorem \ref{coopKoszuldual}).


\begin{thm}
Let $\mathpzc{M}$ be a Koszul category, and $\alpha:\mathfrak{C}\rightarrow\mathfrak{P}$ a Koszul morphism in $\mathpzc{M}$. The bar-cobar adjunction induces an adjoint equivalence of $(\infty,1)$-categories
$$\adj{\Omega_{\alpha}}{\textbf{coAlg}_{\mathfrak{C}}^{|K^{f}|,\alpha-adm}}{\textbf{Alg}_{\mathfrak{P}}}{\textbf{B}_{\alpha}}$$
\end{thm}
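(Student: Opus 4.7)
The plan is to follow the now-standard bar--cobar strategy, adapted to the exact-categorical setting of a Koszul category $\mathpzc{M}$. First, I would construct the adjunction $\Omega_\alpha \dashv B_\alpha$ at the underlying (semi-)model categorical level: $B_\alpha A$ is the cofree $\mathfrak{C}$-coalgebra on $A$ equipped with a differential twisted by $\alpha$, while $\Omega_\alpha C$ is the free $\mathfrak{P}$-algebra on $C$ with a dually twisted differential. The Maurer--Cartan condition satisfied by a Koszul (twisting) morphism $\alpha$ in the convolution dg Lie algebra $\Hom(\mathfrak{C},\mathfrak{P})$ is precisely what ensures that both twisted differentials square to zero, so the constructions are well-posed internally to $Ch(\mathpzc{E})$. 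The axioms built into a Koszul category should then guarantee the existence of (cofibrantly generated, transferred) semi-model structures on $\textbf{Alg}_{\mathfrak{P}}$ and on the admissible subcategory $\textbf{coAlg}_{\mathfrak{C}}^{|K^{f}|,\alpha-adm}$, and the Quillen nature of the adjunction.

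The heart of the proof then consists of verifying that the counit $\Omega_\alpha B_\alpha A \to A$ is a weak equivalence for every $\mathfrak{P}$-algebra $A$, and that the unit $C \to B_\alpha \Omega_\alpha C$ is a weak equivalence for every admissible $C$. For the counit, I would equip $\Omega_\alpha B_\alpha A$ with an increasing filtration by the number of operations whose associated graded recovers (up to the underlying object of $A$) the Koszul complex of the pair $(\mathfrak{C},\mathfrak{P})$ associated to $\alpha$. Acyclicity of this Koszul complex, built into the notion of a Koszul morphism, then yields the equivalence via the resulting spectral sequence --- convergence of which must be checked in the exact rather than abelian setting, and is where the cofibrancy/flatness hypotheses encoded in the definition of a Koszul category become crucial.

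For the unit, the decoration $|K^{f}|,\alpha$-adm is engineered to carve out exactly the subcategory on which the argument goes through. I would first verify the unit is an equivalence on cofree conilpotent $\mathfrak{C}$-coalgebras by a direct calculation, reducing it to a contracting homotopy on the same Koszul complex. For a general admissible $C$, one then writes $C$ as a (homotopy) colimit of cofree objects --- the admissibility condition being precisely designed to guarantee the existence of such a resolution and the fact that $B_\alpha$ and $\Omega_\alpha$ both preserve it --- and concludes by commuting the equivalence past the colimit. Finally, one passes from the resulting Quillen equivalence to an equivalence of $(\infty,1)$-categories in the standard way, by localising at the weak equivalences on each side.

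The main obstacle I expect is the interaction between the notoriously delicate homotopy theory of coalgebras and the fact that the ambient category is only exact rather than abelian: existence of transferred model structures on coalgebras, control of spectral sequence convergence, and availability of enough cofree/Koszul-flat resolutions are all non-trivial issues. The admissibility decoration $|K^{f}|,\alpha$-adm is the device introduced to handle these issues simultaneously, so showing that this class is stable under $B_\alpha$ and $\Omega_\alpha$, and that it contains enough objects for the equivalence to be non-vacuous, will almost certainly be the most technical step.
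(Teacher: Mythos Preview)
Your overall shape is right---verify that unit and counit are equivalences, then localise---and your counit argument via an operadic filtration reducing to the Koszul complex is essentially the paper's. But two points diverge from the paper, and one of them is a genuine gap.

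First, the paper does \emph{not} put a model structure on the coalgebra side to prove this theorem. It works instead with relative categories: the weak equivalences in $\mathpzc{coAlg}_{\mathfrak{C}}^{|K^{f}|,\alpha\text{-adm}}$ are \emph{defined} to be the $\alpha$-weak equivalences, i.e.\ maps $f$ such that $\Omega_\alpha f$ is a weak equivalence of $\mathfrak{P}$-algebras. One checks that $B_\alpha$ and $\Omega_\alpha$ are relative functors and that unit and counit are componentwise weak equivalences; Dwyer--Kan localisation then yields the equivalence of $(\infty,1)$-categories directly. A model structure on coalgebras is discussed only afterwards, under an additional assumption, and is not needed for the theorem. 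So your worry about transferring model structures to coalgebras is misplaced here.

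Second---and this is the real gap---your approach to the unit (treat cofree coalgebras first, then resolve a general $C$ by cofree ones) is not the paper's and would be hard to carry out: there is no evident way to write an arbitrary admissible $C$ as a homotopy colimit of cofree coalgebras in this generality, and the decoration $\alpha$-adm has nothing to do with such resolutions. It is a \emph{filtration} condition, ensuring that the tensor-product filtrations on objects like $(\mathfrak{P}\circ\mathfrak{C}\circ\mathfrak{P})(n)\otimes C^{\otimes n}$ are exhaustive, admissible, and have admissible coinvariants, so that passing to associated graded commutes with $\circ$. The paper's argument for the unit instead filters $\Omega_\alpha B_\alpha \Omega_\alpha C$ so that on associated graded one sees the \emph{two-sided} Koszul complex $\mathfrak{P}\circ_\alpha\mathfrak{C}\circ_\alpha\mathfrak{P}$ acting on $\mathrm{gr}(C)$; the Koszul hypothesis makes $\mathfrak{P}\circ_\alpha\mathfrak{C}\circ_\alpha\mathfrak{P}\to\mathfrak{P}$ a filtered equivalence, whence $\Omega_\alpha(\nu_\alpha(C))$ is an equivalence. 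Since the coalgebra weak equivalences are $\alpha$-weak equivalences this already suffices, and one also notes that $\nu_\alpha(C)$ is a filtered retract of $\Omega_\alpha(\nu_\alpha(C))$. No spectral sequence convergence is invoked: everything is done via exhaustive filtrations by pure monomorphisms, with $K$-flatness of the graded pieces standing in for the usual abelian arguments.
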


The $\mathfrak{P}$-algebra side is the $(\infty,1)$-category presented by the transferred model structure along the free-forgetful function
$$\adj{\textrm{Free}_{\mathfrak{P}}(-)}{\mathpzc{M}}{\mathpzc{Alg}_{\mathfrak{P}}(\mathpzc{M})}{|-|_{\mathfrak{P}}}$$
However the coalgebra side is the Hammock localisation of the relative categories in which the weak equivalences are those maps $f:C\rightarrow D$ such that $\Omega_{\alpha}(f)$ is a weak equivalence in the transferred model structure on $\mathpzc{Alg}_{\mathfrak{P}}$.

The statement of the theorem, and much of its proof, is a generalisation of the proof of \cite{vallette2014homotopy} Theorem 2.1. (1) and (2). Given a Koszul twisting morphism $\alpha:\mathfrak{C}\rightarrow\mathfrak{P}$ satisfying some additional assumptions, we consider the $(\infty,1)$-category of algebras over the twisted dual operad of $\mathfrak{C}$, $(\mathfrak{S}^{c}\otimes_{H}\mathfrak{C})^{\vee}$. In the case of the twisting morphism $\mathfrak{S}^{c}\otimes_{H}\mathfrak{coComm}\rightarrow\mathfrak{Lie}$ this is of course just the $(\infty,1)$-category of commutative algebras. We consider the functor
$$\hat{\textbf{C}}_{\alpha}\defeq(-)^{\vee}[1]\circ\textbf{B}_{\alpha}:\textbf{Alg}_{\mathfrak{P}}\rightarrow\textbf{Alg}_{(\mathfrak{S}^{c}\otimes_{H}\mathfrak{C})^{\vee}}^{op}$$
Our first theorem on operadic Koszul duality is the following, (Theorem \ref{alphadjoint}).
\begin{thm}
The functor $\hat{\textbf{C}_{\alpha}}:\textbf{Alg}_{\mathfrak{P}}\rightarrow\textbf{Alg}_{(\mathfrak{S}^{c}\otimes_{H}\mathfrak{C})^{\vee}}^{op}$ admits a right adjoint $\textbf{D}_{\alpha}$.
\end{thm}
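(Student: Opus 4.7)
The plan is to exploit the fact, established in Theorem \ref{coopKoszuldual}, that $\textbf{B}_\alpha$ is already an equivalence of $(\infty,1)$-categories, with pseudo-inverse $\Omega_\alpha$. Any equivalence is simultaneously a left and a right adjoint, so a right adjoint to the composite $\hat{\textbf{C}}_\alpha = (-)^\vee[1] \circ \textbf{B}_\alpha$ exists if and only if a right adjoint to the shifted linear-duality functor
$$(-)^\vee[1] : \textbf{coAlg}_{\mathfrak{C}}^{|K^f|, \alpha-\text{adm}} \longrightarrow \textbf{Alg}_{(\mathfrak{S}^c \otimes_H \mathfrak{C})^\vee}^{op}$$
exists. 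If such a right adjoint $R$ is produced, the desired functor is $\textbf{D}_\alpha := \Omega_\alpha \circ R$, and the adjunction is immediate.

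For the construction of $R$ I would first proceed explicitly. Given a $(\mathfrak{S}^c \otimes_H \mathfrak{C})^\vee$-algebra $B$, dualise the structure maps $(\mathfrak{S}^c \otimes_H \mathfrak{C})^\vee(n) \otimes B^{\otimes n} \to B$ using the evaluation pairings $(\mathfrak{S}^c \otimes_H \mathfrak{C})(n) \otimes (\mathfrak{S}^c \otimes_H \mathfrak{C})^\vee(n) \to R$ together with the canonical comparison map $(B^\vee)^{\otimes n} \to (B^{\otimes n})^\vee$; this yields $\mathfrak{S}^c \otimes_H \mathfrak{C}$-coalgebra structure maps on $B^\vee$, and untwisting the operadic suspension $\mathfrak{S}^c$ promotes them to a $\mathfrak{C}$-coalgebra structure on $B^\vee[-1]$. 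The natural iso $\Hom(B, C^\vee) \cong \Hom(C, B^\vee)$, lifted through this dualised structure, should then give
$$\Hom_{\textbf{Alg}_{(\mathfrak{S}^c \otimes_H \mathfrak{C})^\vee}}(B, C^\vee[1]) \cong \Hom_{\textbf{coAlg}_{\mathfrak{C}}}(C, B^\vee[-1]),$$
whence $\textbf{D}_\alpha(B) = \Omega_\alpha(B^\vee[-1])$ by applying the equivalence $\textbf{B}_\alpha \simeq \Omega_\alpha^{-1}$ to set $C = \textbf{B}_\alpha(A)$.

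The principal obstacle is to guarantee that $B^\vee[-1]$, equipped with this coalgebra structure, actually lands in the subcategory $\textbf{coAlg}_{\mathfrak{C}}^{|K^f|, \alpha-\text{adm}}$ on which $\Omega_\alpha$ is defined: the compact-generation and $\alpha$-admissibility conditions are restrictive and need not be preserved by naïve linear duality in a bornological setting. One remedy is to first replace $B^\vee$ by a Koszul-admissible cofibrant resolution and verify the homotopy type is independent of that choice. A cleaner alternative, more natural in the $(\infty,1)$-categorical setting, is to bypass the explicit formula and invoke an adjoint-functor-theorem argument: both categories are presentable (as categories of algebras over operads in the Koszul category $\mathpzc{M}$), the equivalence $\textbf{B}_\alpha$ preserves colimits trivially, and linear duality converts colimits of coalgebras to limits of algebras, i.e.\ to colimits in $\textbf{Alg}^{op}$. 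Hence $\hat{\textbf{C}}_\alpha$ preserves colimits and a right adjoint $\textbf{D}_\alpha$ exists formally; by uniqueness of adjoints it agrees with the explicit construction whenever the admissibility verification goes through.
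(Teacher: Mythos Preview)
Your overall strategy in the second approach---invoke the adjoint functor theorem by showing $\hat{\textbf{C}}_\alpha$ preserves colimits---is exactly what the paper does. The gap is in your one-line justification that ``linear duality converts colimits of coalgebras to limits of algebras.'' This would be immediate if colimits in $\textbf{coAlg}_{\mathfrak{C}}^{|K^f|,\alpha\text{-}adm}$ were computed on underlying objects of $\textbf{M}$, but that is precisely the nontrivial point. Recall that this $(\infty,1)$-category is the localization at $\alpha$-weak equivalences, \emph{not} underlying quasi-isomorphisms, and by construction its colimits are those transported via the equivalence $\textbf{B}_\alpha$ from $\textbf{Alg}_{\mathfrak{P}}$. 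So what you actually need is that the composite $|-|\circ\textbf{B}_\alpha:\textbf{Alg}_{\mathfrak{P}}\rightarrow\textbf{M}$ preserves colimits, and this does not follow formally: the forgetful functor from $\textbf{Alg}_{\mathfrak{P}}$ preserves only sifted colimits, while $\textbf{B}_\alpha$ changes the underlying object nontrivially.

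The paper handles this in two separate steps. For sifted colimits, one factors $|-|\circ\textbf{B}_\alpha$ through $\overline{\textbf{Filt}}(\textbf{M})$ using the canonical filtration $F_n B_\alpha A=\mathfrak{C}_n\circ A$, and then invokes Proposition~\ref{inductgraded}: the associated-graded functor becomes (up to equivalence) $|-|\circ\mathfrak{C}(-)\circ|-|$, each factor of which preserves sifted colimits, and the $0$th filtered piece is just $|-|$. For finite coproducts no formal argument is available; instead one uses that $\textbf{Alg}_{\mathfrak{P}}$ is generated under sifted colimits by free algebras on cofibrant objects, so it suffices to check coproducts of the form $\mathfrak{P}(V)\coprod\mathfrak{P}(W)\cong\mathfrak{P}(V\oplus W)$. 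Here the Koszul condition $\mathfrak{C}\circ_\alpha\mathfrak{P}\simeq I$ gives $\hat{\textbf{C}}_\alpha(\mathfrak{P}(V))\simeq V^\vee$, whence the coproduct is sent to $V^\vee\oplus W^\vee$, which is indeed the product in $\textbf{Alg}_{(\mathfrak{S}^c\otimes_H\mathfrak{C})^\vee}$. Your explicit first approach runs into the same issue in disguise: even granting the admissibility of $B^\vee[-1]$, verifying the adjunction at the level of $(\infty,1)$-mapping spaces amounts to the same colimit-preservation check.
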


Finally we will specialise to Koszul twisting morphism $\kappa:\mathfrak{S}^{c}\otimes_{H}\mathfrak{coComm}\rightarrow\mathfrak{Lie}$. As in \cite{lurie2011derived}, for an augmented commutative algebra $A$ we relate the underlying object of the Lie algebra $\textbf{D}_{\alpha}(A)$ with the shifted tangent complex $\mathbb{T}_{0}(A)[1]$ of $A$ at its canonical point $A\rightarrow R$ (Proposition \ref{shiftedtanget}).

\begin{thm}
Let $|-|:\textbf{Alg}_{\mathfrak{Lie}}(\mathrm{L^{H}}(\mathpzc{M}))\rightarrow\mathrm{L^{H}}(\mathpzc{M})$ be the forgetful functor. There is a natural equivalence of functors $|-|\circ\textbf{D}_{\alpha}\cong\mathbb{T}_{0}[1]$. 
\end{thm}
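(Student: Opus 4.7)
The plan is to compute $|\textbf{D}_\alpha(A)|$ by applying Yoneda to the mapping spaces $\text{Map}_{\textbf{M}}(V, |\textbf{D}_\alpha(A)|)$ for arbitrary $V \in \textbf{M}$, after running them through the composite of the free-forgetful adjunction $\textbf{F}_{\mathfrak{Lie}} \dashv |-|$ and the Koszul adjunction $\hat{\textbf{C}}_\alpha \dashv \textbf{D}_\alpha$ of Theorem \ref{alphadjoint}. This yields
\begin{equation*}
\text{Map}_{\textbf{M}}(V, |\textbf{D}_\alpha(A)|) \simeq \text{Map}_{(\textbf{Alg}_{\mathfrak{Comm}}^{aug})^{op}}(\hat{\textbf{C}}_\alpha(\textbf{F}_{\mathfrak{Lie}}(V)), A),
\end{equation*}
and reduces the problem to identifying $\hat{\textbf{C}}_\alpha(\textbf{F}_{\mathfrak{Lie}}(V))$.

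The key intermediate claim is that $\hat{\textbf{C}}_\alpha(\textbf{F}_{\mathfrak{Lie}}(V))$ is equivalent to a trivial square-zero extension $R \oplus V^\vee[k]$ of $R$, where $k$ is the degree shift fixed by the operadic conventions in the definition of $\hat{\textbf{C}}_\alpha$. The argument invokes the Koszul equivalence $\Omega_\alpha \dashv \textbf{B}_\alpha$ of Theorem \ref{coopKoszuldual} applied to the trivial conilpotent cocommutative coalgebra $R \oplus V[1]$, whose reduced coproduct vanishes by construction. Tautologically $\Omega_\alpha(R \oplus V[1]) = \textbf{F}_{\mathfrak{Lie}}(V)$ with zero differential, so inverting the equivalence gives $\textbf{B}_\alpha(\textbf{F}_{\mathfrak{Lie}}(V)) \simeq R \oplus V[1]$, and applying $(-)^{\vee}[1]$ produces the claimed square-zero extension.

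The final step is the standard description of derivations via the derived cotangent complex: mapping out of $A$ into a square-zero extension is corepresented by $\mathbb{L}_A \otimes^{\mathbb{L}}_A R$, so
\begin{equation*}
\text{Map}_{\textbf{Alg}_{\mathfrak{Comm}}^{aug}}(A, R \oplus V^\vee[k]) \simeq \text{Map}_{\textbf{M}}(\mathbb{L}_A \otimes^{\mathbb{L}}_A R, V^\vee[k]) \simeq \text{Map}_{\textbf{M}}(V, \mathbb{T}_0(A)[1]),
\end{equation*}
the last step using $\mathbb{T}_0(A) = (\mathbb{L}_A \otimes^{\mathbb{L}}_A R)^\vee$ and absorbing $k$ via the choice of the shift. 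Yoneda then delivers the natural equivalence $|\textbf{D}_\alpha(A)| \simeq \mathbb{T}_0(A)[1]$.

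The main obstacle is the middle step: in the exact-category setting of a Koszul category one must verify that the trivial coalgebra $R \oplus V[1]$ lies in the admissible subcategory $\textbf{coAlg}_{\mathfrak{C}}^{|K^f|,\alpha-adm}$ of Theorem \ref{coopKoszuldual}, and must handle the dualising functor $(-)^\vee$ with care, since on non-dualisable $V$ in the bornological or ind-Banach setting it may be necessary to restrict to a subcategory of strongly dualisable objects, or to reinterpret $(-)^\vee$ as the internal Hom into $R$. A secondary issue is to justify the cotangent-complex characterisation of maps into square-zero extensions within the present quasi-abelian framework, rather than appealing to classical derived algebraic geometry.
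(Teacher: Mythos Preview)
Your proposal is correct and follows essentially the same strategy as the paper: both arguments identify $|-|\circ\textbf{D}_{\kappa}$ and $\mathbb{T}_{0}[1]$ as right adjoints to the same functor $\hat{\textbf{C}}_{\kappa}\circ Free_{\mathfrak{Lie}}(-)\simeq R\oplus(-)^{\vee}[-1]$, with the cotangent-complex description of maps into square-zero extensions supplying the second adjunction. The only minor difference is in how the key identification $\hat{\textbf{C}}_{\kappa}(\mathfrak{Lie}(V))\simeq R\oplus V^{\vee}[-1]$ is obtained: the paper reads it off directly from the Koszul acyclicity $\mathfrak{C}\circ_{\alpha}\mathfrak{P}\to I$ (already used in the proof of Theorem~\ref{alphadjoint}), whereas you argue that $\Omega_{\alpha}$ of the trivial coalgebra is tautologically free and then invert the equivalence of Theorem~\ref{coopKoszuldual}; both routes are valid and your admissibility concern is harmless since the trivial coalgebra on a $K$-flat $V$ lies in $\mathpzc{coAlg}_{\mathfrak{C}}^{|K^{f}|,\alpha\text{-}adm}$ by construction.
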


Finally prove a general theorem identifying when the unit of the adjunction is an equivalence. As a consequence we obtain the following result (Theorem \ref{analyticopkoszul}).

\begin{thm}
Let $k$ be a spherically complete field, $\mathfrak{g}$ be a bornological Lie algebra over $k$ concentrated in negative degrees such that each $|\mathfrak{g}_{n}|$ is a dual nuclear bornological space. If the differentials in $|\mathfrak{g}|$ are Fredholm operators, then $\eta_{\mathfrak{g}}:\mathfrak{g}\rightarrow\textbf{D}_{\kappa}\circ\hat{\textbf{C}}_{\kappa}(\mathfrak{g})$ is an equivalence.
\end{thm}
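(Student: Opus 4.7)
The plan is to apply the general unit-equivalence criterion established just above this theorem, and to verify its hypotheses using the specific features of the category of complete bornological $k$-modules under the four assumptions of the statement.

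First I would reduce the equivalence to an assertion on underlying chain complexes. Since equivalences in $\textbf{Alg}_{\mathfrak{Lie}}$ are detected by the forgetful functor, and since by Proposition \ref{shiftedtanget} the composition $|-| \circ \textbf{D}_{\kappa}$ is naturally equivalent to the shifted tangent complex functor $\mathbb{T}_{0}[1]$, it suffices to show that the underlying chain-level map
\[
|\mathfrak{g}| \longrightarrow \mathbb{T}_{0}\bigl( \hat{\textbf{C}}_{\kappa}(\mathfrak{g}) \bigr)[1]
\]
is a quasi-isomorphism in $Ch(\textbf{CBorn}_{k})$. Unraveling $\hat{\textbf{C}}_{\kappa}(\mathfrak{g}) = \textbf{B}_{\kappa}(\mathfrak{g})^{\vee}[1]$ and using the bar--cobar equivalence $\Omega_{\kappa} \dashv \textbf{B}_{\kappa}$ between cocommutative coalgebras and Lie algebras, this reduces the problem to an assertion of double-dualization type on the symmetric (co)algebra built from $\mathfrak{g}$.

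Next I would verify, hypothesis by hypothesis, the abstract conditions of the general criterion. Negative-degree concentration of $\mathfrak{g}$ ensures that the weight filtration on $\textbf{B}_{\kappa}(\mathfrak{g})$ by symmetric powers of $\mathfrak{g}$ is bounded in each fixed total degree, so that the associated spectral sequence converges strongly and no completion issue obstructs the comparison. Dual nuclearity of each $|\mathfrak{g}_{n}|$ together with spherical completeness of $k$ supplies reflexivity $|\mathfrak{g}_{n}|^{\vee\vee} \simeq |\mathfrak{g}_{n}|$ (spherical completeness being precisely what makes non-Archimedean Hahn--Banach, and hence the nuclearity--reflexivity dictionary, available) and ensures that dualization is compatible with symmetric powers in the form $\Sym^{m}(V^{\vee}) \simeq (\Sym^{m} V)^{\vee}$ for $V$ dual nuclear. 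Fredholmness of the differentials forces each $H^{n}(|\mathfrak{g}|)$ to be finite-dimensional over $k$, which in turn implies that $(-)^{\vee}$ commutes with passage to cohomology of $|\mathfrak{g}|$, promoting reflexivity of the underlying graded object to its cohomology.

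The main obstacle will be controlling the interaction of the bornological tensor product, the symmetric-group (co)invariants defining $\Sym^{n}$ and its cofree counterpart, and the duality functor $(-)^{\vee}$. In the algebraic characteristic-zero setting this is packaged by the norm map identifying invariants and coinvariants, but in the bornological setting one must additionally track nuclearity and completeness through each step of the construction: dual nuclearity ensures that dualization commutes with symmetric powers, Fredholmness promotes this through cohomology, and spherical completeness underwrites the entire non-Archimedean duality theory. Once these compatibilities are packaged into a verification of the general unit-equivalence criterion, the conclusion follows directly.
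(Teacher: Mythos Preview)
Your overall strategy matches the paper's: reduce to the shifted tangent complex via Proposition \ref{shiftedtanget}, then verify that $\mathfrak{g}$ satisfies the hypotheses of the general criterion (Theorem \ref{operadickoszul}). The paper organises this through the notion of a \emph{very good} Lie algebra, whose four ingredients are separability, finite $K$-cotorsion of $|\mathfrak{g}|$ with positively graded flat dual, weak $\aleph_{1}$-filteredness of each $\mathfrak{g}_{i}^{\vee}$, and homotopical reflexivity of $|\mathfrak{g}|$. These are established in Propositions \ref{dnf} through \ref{BanachKcotors}: dual nuclearity supplies separability, reflexivity, $K$-cotorsion, and (via Proposition \ref{dnf}(3)) the $\aleph_{1}$-filtered property of the nuclear Fr\'{e}chet duals; spherical completeness is used not so much for the nuclearity--reflexivity dictionary as to make $k$ injective among Banach spaces (Proposition \ref{BanachKcotors}) and to split the Fredholm complex as a direct sum of its finite-dimensional cohomology and an acyclic piece, which is what yields homotopical reflexivity.

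You diverge from the paper on one point. To handle the passage from the polynomial algebra $C_{\kappa}(\mathfrak{g})$ to the power-series algebra $\hat{C}_{\kappa}(\mathfrak{g})$, the paper invokes the $\aleph_{1}$-filtered condition through the decency argument of Proposition \ref{conditionfordeceny} and Corollary \ref{koszulpushout}. You instead observe that strict negativity of $\mathfrak{g}$ forces the weight filtration on $B_{\kappa}(\mathfrak{g})$ to be locally finite in each homological degree, so that once separability is known the product defining $\hat{C}_{\kappa}(\mathfrak{g})$ agrees degreewise with the sum defining $C_{\kappa}(\mathfrak{g})$. This is a correct and more direct argument in the present setting; the paper's route through $\aleph_{1}$-filteredness is more general (it would survive a nontrivial $\mathfrak{g}_{-1}$ producing genuine completion phenomena). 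Note, however, that if you cite Theorem \ref{operadickoszul} as stated, its hypothesis \emph{very passable} literally includes the $\aleph_{1}$-filtered condition, and your local-finiteness remark does not verify that hypothesis; it rather bypasses the step in the proof where that hypothesis is used. Either verify $\aleph_{1}$-filteredness from Proposition \ref{dnf}(3), or make explicit that you are adapting the proof of Theorem \ref{operadickoszul} rather than applying it as a black box.
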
 
%

\subsection*{Structure of the Paper}
The paper is laid out as follows. After establishing the conventions of the paper, in the first two sections Section \ref{appcatalg} and Section \ref{homotopexact} we introduce the two main ingredients of our version of Koszul duality, namely operad theory in additive categories, and homotopy theory in exact categories. Section \ref{appcatalg} mainly consists of recalling foundational results from \cite{loday2012algebraic} reformulated to work for monoidal additive categories containing certain colimits. The primary goal of this section is to arrive at the \textit{bar-cobar construction} for a twisting morphism between a divided powers cooperad and an operad. After recalling some results about monoidal model structures on categories of chain complexes in exact categories from \cite{kelly2016homotopy}, and the homotopy theory of algebras over operads in Section \ref{algcoalghom},  in Section \ref{homotopexact} our work begins in earnest. Namely we introduce the notion of a \textit{Koszul category}. These are essentially categories of chain complexes in exact categories equipped with monoidal model structures such that Koszul duality for Koszul twisting morphisms works. We then give a method of constructing such categories from \textit{basic Koszul categories} which provides a rich class of examples. We also discuss connective Koszul categories as well as categories of filtered objects in Koszul categories and their associated $(\infty,1)$-categories. Section \ref{seccoopkosz} is the main event. After defining the categories of coalgebras we consider, and studying their homotopy theory, we prove various generalisations of the Koszul duality of \cite{vallette2014homotopy}. Then in Section \ref{secopkosz} we prove a generalisation of Lurie's \cite{lurie2011derived} version of Koszul duality.  Finally in Section \ref{secexamples} we give various examples of our version of Koszul duality, including generalisations of previously known results for categories of modules over a ring, as well as totally new results for categories of complete bornological spaces over Banach fields. We conclude by suggesting some further directions in which this work could be continued, such as analytic and smooth versions of chiral Koszul duality. 

\subsection*{Notation and Conventions}\label{notation}

Throughout this work we will use the following notation.
\begin{itemize}
\item
$1$-categories will be denoted using the mathpzc font $\mathpzc{C},\mathpzc{D},\mathpzc{E}$, etc. In particular we denote by $\mathpzc{Ab}$ the category of abelian groups and ${}_{\Q}\mathpzc{Vect}$ the category of $\Q$-vector spaces.
\item
 If $\mathpzc{M}$ is a model category, or a category with weak equivalences, its associated $(\infty,1)$-category will be denoted $\mathrm{L^{H}}(\mathpzc{M})$. For concreteness, we fix as quasi-categories our model for $(\infty,1)$-categories
\item 
Operads will be denoted using capital fractal letters $\mathfrak{C},\mathfrak{P}$, etc. The category of algebras over an operad will be denoted $\mathpzc{Alg}_{\mathfrak{P}}$.
 \item
We denote the operads for unital associative algebras, unital commutative algebras, non-unital commutative algebras, and Lie algebras by $\mathfrak{Ass},\mathfrak{Comm},\mathfrak{Comm}^{nu}$, and $\mathfrak{Lie}$ respectively. Similarly, we denote the divided powers cooperads for counital cocommutative coalgebras and non-counital cocommutative coalgebras by $\mathfrak{coComm},\mathfrak{coComm}^{ncu}$ respectively
\item
 For the operad $\mathfrak{Ass},\mathfrak{Comm},\mathfrak{Lie}$ we will denote the corresponding free algebras by $T(V),S(V)$, and $L(V)$ respectively. We also denote by $\hat{S}(V)$ the commutative algebra of formal power series on an object $V$.
 \item
Unless stated otherwise, the unit in a monoidal category will be denoted by $k$, the tensor functor by $\otimes$, and for a closed monoidal category the internal hom functor will be denoted by $\underline{\textrm{Hom}}$. 
\item
For symmetric monoidal categories the symmetric braiding will be denoted $\Sigma$.
\item
We will assume that tensor functors commute with colimits in each variable.
\item
Filtered colimits will be denoted by $\textrm{lim}_{\rightarrow}$. Projective limits will be denoted $\textrm{lim}_{\leftarrow}$.
\end{itemize}
\subsubsection*{Chain Complexes}
Let us now introduce some conventions for chain complexes. Let  $\underline{Gr}_{\mathbb{Z}}(\mathpzc{E})$ denote the category whose objects are $\mathbb{Z}$-indexed collections of objects of $\mathpzc{E}$, which we usually write as $\bigoplus_{n\in\mathbb{Z}}E_{n}$. If $\bigoplus_{n\in\mathbb{Z}}E_{n}$ and $\bigoplus_{n\in\mathbb{Z}}F_{n}$ are two objects in this category then we set 
$$Hom_{\underline{Gr}_{\mathbb{Z}}(\mathpzc{E})}(\bigoplus_{m\in\mathbb{Z}}E_{m},\bigoplus_{n\in\mathbb{Z}}F_{n})\defeq\bigoplus_{n\in\mathbb{Z}}\prod_{i}Hom_{\mathpzc{E}}(E_{i},F_{i+n})$$
If $f\in \prod_{i}Hom_{\mathpzc{E}}(E_{i},F_{i+n})$ then $f$ is said to have degree $n$. We denote the subgroup $\prod_{i}Hom_{\mathpzc{E}}(E_{i},F_{i+n})$ consisting of degree $n$ maps by $Hom_{n}(E,F)$. $\underline{Gr}_{\mathbb{Z}}(\mathpzc{E})$ is a symmetric monoidal category where
$$(X_{\bullet}\otimes Y_{\bullet})_{n}=\bigoplus_{i+j=n}X_{i}\otimes Y_{j}$$

We shall adapt the Koszul sign rule for categories enriched in $\mathpzc{Gr}_{\mathbb{Z}}(\mathpzc{Ab})$. Namely, if $f:V\rightarrow V'$ is a degree $p$ map and $g:W\rightarrow W'$ is a degree $q$ map then one defines 

$$f\otimes g|_{V_{m}\otimes W_{n}}\defeq(-1)^{qm}f_{m}\otimes g_{n}$$
The monoidal unit is the graded object $G_{0}(k)$ where $(G_{0}(k))_{0}=k$ and $(G_{0}(k))_{n}=0$ for $n>0$. 
We denote by $\mathpzc{Gr}_{\mathbb{Z}}(\mathpzc{E})$ the wide subcategory of $\underline{Gr}_{\mathbb{Z}}$ where $Hom_{\mathpzc{Gr}_{\mathbb{Z}}}(E,F)\defeq Hom_{0}(E,F)$, and by $\mathpzc{Gr}_{\mathbb{N}{0}}(\mathpzc{E})$ the full subcategory of $\mathpzc{Gr}_{\mathbb{Z}}(\mathpzc{E})$ on objects $\bigoplus_{n\in\Z}E_{n}$ such that $E_{n}=0$ for $n<0$. 

\begin{defn}
A \textbf{pre-differential graded module} in $\mathpzc{E}$ is a pair $(X_,d_{X})$ where $X\in\underline{Gr}(\mathpzc{E})$ and $d\in Hom_{-1}(X,X)$. If $(X_,d_{X})$ and $(Y_,d_{Y})$ are pre-differential graded modulees, a morphism from $(X_,d_{X})$ to $(Y_,d_{Y})$ is a morphism $f\in Hom_{0}(X,Y)$ which commutes with $d_{X}$ and $d_{Y}$. The category of pre-differential graded modules is denoted $\tilde{Ch}(\mathpzc{E})$.
\end{defn}

We will  frequently use the following special  complexes.

\begin{defn}
If $E$ is an object of an additive category $\mathpzc{E}$ we let $S^{n}(E)\in \tilde{Ch}(\mathpzc{E})$ be the complex whose $n$th entry is $E$,  with all other entries being $0$. We also denote by $D^{n}(E)\in \tilde{Ch}(\mathpzc{E})$ the complex whose $n$th and $(n-1)$st entries are $E$, with all other entries being $0$, and the differential $d_{n}$ being the identity.
\end{defn}

We also define $\tilde{Ch}_{\ge0}(\mathpzc{E})$ to be the full subcategory of $\tilde{Ch}(\mathpzc{E})$ on complexes $A_{\bullet}$ such that $A_{n}=0$ for $n<0$, $\tilde{Ch}_{\le0}(\mathpzc{E})$ to be the full subcategory of $\tilde{Ch}(\mathpzc{E})$ on complexes $A_{\bullet}$ such that $A_{n}=0$ for $n>0$, $Ch_{+}(\mathpzc{E})$, the full subcategory of chain complexes $A_{\bullet}$ such that $A_{n}=0$ for $n<<0$, $\tilde{Ch}_{-}(\mathpzc{E})$, the full subcategory of chain complexes $A_{\bullet}$ such that $A_{n}=0$ for $n>>0$ and $\tilde{Ch}_{b}(\mathpzc{E})$  to be the full subcategory of $\tilde{Ch}(\mathpzc{E})$ on complexes $A_{\bullet}$ such that $A_{n}\neq 0$ for only finitely many $n$.  

The categories $\tilde{Ch}(\mathpzc{E})$ also comes equipped with a shift functor. It is given on objects by $(A_{\bullet}[1])_{i}=A_{i+1}$ with differential $d_{i}^{A[1]}=-d^{A}_{i+1}$. The shift of a morphism $f^{\bullet}$ is given by $(f_{\bullet}[1])_{i}=f_{i+1}$. $[1]$ is an auto-equivalence with inverse $[-1]$. We set $[0]=\textrm{Id}$ and $[n]=[1]^{n}$ for any integer $n$.\newline
\\
 Finally, we define the mapping cone as follows.

\begin{defn}
Let $X_{\bullet}$ and $Y_{\bullet}$ be pre-differential graded modulees in an additive category $\mathpzc{E}$ and $f_{\bullet}:X_{\bullet}\rightarrow Y_{\bullet}$. The \textbf{mapping cone of} $f_{\bullet}$, denoted $\textrm{cone}(f_{\bullet})$ is the complex whose components are
$$\textrm{cone}(f_{\bullet})_{n}=X_{n-1}\oplus Y_{n}$$
and whose differential is
 \begin{displaymath}
d^{\textrm{cone}(f)}_{n}  = \left(
     \begin{array}{lr}
       -d_{n-1}^{X} &0\\
       -f_{n-1} & d^{Y}_{n}

   \end{array}
            \right)
\end{displaymath} 
\end{defn}
There are natural morphisms $\tau:Y_{\bullet}\rightarrow \textrm{cone}(f)$ induced by the injections $Y_{i}\rightarrow X_{i-1}\oplus Y_{i}$, and $\pi:\textrm{cone}(f)\rightarrow X_{\bullet}[-1]$ induced by the projections $X_{i-1}\oplus Y_{i}\rightarrow X_{i-1}$. The sequence
$$Y_{\bullet}\rightarrow\textrm{cone}(f)\rightarrow X_{\bullet}[-1]$$
is split exact in each degree.

If $\mathpzc{E}$ is a symmetric monoidal additive category, we may regard $\widetilde{Ch}(\mathpzc{E})$ as a symmetric monoidal additive category itself by defining
$$(X,d_{X})\otimes (Y,d_{Y})\defeq(X\otimes Y,d_{X}\otimes Id_{Y}+Id_{X}\otimes d_{Y})$$
where we are using the Koszul sign rule, so that we have
$$(d_{X}\otimes Id_{Y}+Id_{X}\otimes d_{Y})|_{X_{i}\otimes Y_{j}}=d_{X_{i}}\otimes Id_{Y_{j}}+(-1)^{i}Id_{X_{i}}\otimes d_{Y_{j}}$$

\begin{defn}
The \textbf{category of chain complexes in }$\mathpzc{E}$, denoted $Ch(\mathpzc{E})$, is the full subcategory of $\tilde{Ch}(\mathpzc{E})$ consisting of pre-differential graded modulees $(X,d_{X})$ such that $d_{X}^{2}=0$. 
\end{defn}

All the constructions above restrict to $Ch(\mathpzc{E})$. In particular if $\mathpzc{E}$ is a symmetric monoidal category then so is $Ch(\mathpzc{E})$. If $\mathpzc{E}$ is closed symmetric monoidal then $Ch(\mathpzc{E})$ is also closed symmetric monoidal with internal hom
$$\underline{\textrm{Hom}}(X_{\bullet},Y_{\bullet})_{n}=\prod_{i\in\Z}\underline{\textrm{Hom}}_{\mathpzc{E}}(X_{i},Y_{i+n})$$
and differential $d_{n}$ defined on $\textrm{Hom}_{\mathpzc{E}}(X_{i},Y_{i+n})$ by

$$d=\underline{\textrm{Hom}}(d_{i}^{X_{\bullet}},id)+(-1)^{i}\underline{\textrm{Hom}}(id,d_{i+n}^{Y_{\bullet}})$$

\subsection*{Acknowledgements}
 The author would like to thank Kobi Kremnizer and the anonymous referees for many useful comments and conversations throughout the course of this work.

 \section{Operadic Algebra in Additive Categories}\label{appcatalg}

Before we get stuck into the homotopy theory of algebras in exact categories let us first discuss some of the basics of operadic and divided powers cooperadic algebra in additive categories. Much of this section is not new, and our reference is \cite{loday2012algebraic} where, unless stated otherwise, anything left unproved in this section can be found. While the book works in the context of vector spaces, most of the proofs work mutatis mutandis for monoidal additive categories with some mild assumptions. At the outset let us fix exactly what we mean by a monoidal additive category for the purposes of this paper. A \textbf{monoidal additive category} is an additive category $\mathpzc{E}$ together with a unital associative bifunctor $\otimes:\mathpzc{E}\times\mathpzc{E}\rightarrow\mathpzc{E}$ which commutes with colimits in each variable. We shall assume also that $\mathpzc{E}$ has countable colimits and kernels. The unit will be denoted $k$.

\subsection{$\Sigma$-Modules}
In the next two sections we mostly follow \cite{loday2012algebraic} Chapter 5.
\subsubsection{Discrete Groups in Monoidal Categories}

Let $(\mathpzc{E},\otimes, k)$ be a monoidal category with all small coproducts, such that $\otimes$ commute with coproducts in each variable. We denote by $k[-]:\mathpzc{Set}\rightarrow\mathpzc{E}$ the functor which sends a set $S$ to the object $k[S]=\coprod_{s\in S}k$. If $f:S\rightarrow T$ is a map of sets, then $k[f]:k[S]\rightarrow k[T]$ is the morphism which sends the copy of $k$ indexed by $s\in S$ to the copy indexed by $f(s)\in T$. Objects and morphisms in the essential image of the functor $k[-]:\mathpzc{Set}\rightarrow\mathpzc{E}$ will be called \textbf{discrete}.

\begin{prop}
Let $(\mathpzc{E},\otimes,k)$ be a monoidal  category. Endow $\mathpzc{Set}$ with its Cartesian monoidal structure. Then the functor $k[-]:\mathpzc{Set}\rightarrow\mathpzc{E}$ is strong monoidal.
\end{prop}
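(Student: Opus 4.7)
The plan is to verify the three pieces of data defining a strong monoidal functor (unit preservation, coherent tensor comparison, and compatibility with associators/unitors), using only the universal property of coproducts and the hypothesis that $\otimes$ commutes with coproducts in each variable.

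First I would handle the unit. By definition $k[\{*\}] = \coprod_{s\in\{*\}} k$, and a coproduct indexed by a singleton is canonically isomorphic to its single summand, giving a natural iso $k[\{*\}] \cong k$. This will serve as the unit constraint $\varepsilon\colon k \to k[\{*\}]$.

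Next I would construct the tensor comparison $\mu_{S,T}\colon k[S] \otimes k[T] \xrightarrow{\sim} k[S\times T]$. The calculation is the chain of canonical isomorphisms
\[
k[S]\otimes k[T] \;=\; \Bigl(\coprod_{s\in S} k\Bigr)\otimes \Bigl(\coprod_{t\in T} k\Bigr) \;\cong\; \coprod_{s\in S}\coprod_{t\in T}(k\otimes k) \;\cong\; \coprod_{(s,t)\in S\times T} k \;=\; k[S\times T],
\]
where the first isomorphism uses that $\otimes$ preserves coproducts in each variable (applied twice), and the second uses the unit isomorphism $k\otimes k\cong k$ together with the standard identification of an iterated coproduct with a coproduct over the product indexing set. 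Naturality in $S$ and $T$ is then immediate from the naturality of all constituents in the indexing variables.

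The main (though purely formal) step is verifying the coherence axioms for a strong monoidal functor: the pentagon comparing the two ways of going from $k[S]\otimes k[T]\otimes k[U]$ to $k[S\times T\times U]$, and the two triangles involving the unit. I would check each by reducing both sides to the canonical isomorphism identifying an iterated tensor product of coproducts of copies of $k$ with the coproduct indexed by the Cartesian product, and then appealing to the universal property of coproducts: two morphisms out of $\coprod_{(s,t,u)} k$ agree iff they agree on each structure map $k \to \coprod k$. Since both composites in each coherence diagram restrict on the $(s,t,u)$-summand to the same map (built from associators/unitors of $\otimes$ on $k$), the diagrams commute; the associativity and unit axioms of $(\mathpzc{E},\otimes,k)$ itself are what make this work. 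I expect no real obstacle beyond bookkeeping, since Cartesian product on $\mathpzc{Set}$ is strictly associative and unital, so on the $\mathpzc{Set}$ side the coherence diagrams reduce to identities, and on the $\mathpzc{E}$ side they reduce to the monoidal coherence of $\mathpzc{E}$ restricted to copies of $k$.
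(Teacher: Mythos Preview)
Your proof is correct and follows the same approach as the paper: the paper gives exactly the chain of isomorphisms $\bigl(\coprod_S k\bigr)\otimes\bigl(\coprod_T k\bigr)\cong\coprod_S\coprod_T k\otimes k\cong\coprod_S\coprod_T k\cong\coprod_{S\times T}k$ and stops there, leaving the coherence verification implicit. You have simply been more thorough in spelling out the unit constraint and the coherence axioms, which is fine and arguably more complete than what the paper records.
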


\begin{proof}
Let $S$ and $T$ be sets. Then
$$
\Bigr(\coprod_{S}k\Bigr)\otimes\Bigr(\coprod_{T}k\Bigr)
\cong\coprod_{S}\coprod_{T} k\otimes k
\cong\coprod_{S}\coprod_{T} k
\cong\coprod_{S\times T}k
$$
\end{proof}

In particular $\mathpzc{Set}\rightarrow\mathpzc{E}$ sends groups to Hopf monoids. If $G$ is a group we call $k[G]$ the group monoid of $G$ in $\mathpzc{E}$. 

If $G$ and $H$ are groups and $H\rightarrow G$ is a morphism, then we get a morphism of group monoids $k[H]\rightarrow k[G]$. We denote by 
$Ind_{H}^{G}:{}_{k[H]}\mathpzc{Mod}\rightarrow {}_{k[G]}\mathpzc{Mod}$ the functor $k[G]\otimes_{k[H]}(-)$.

\subsubsection{Graded Objects and $\Sigma$-Modules}

We denote by $k[\Sigma]$ the monoid in $\mathpzc{Gr}_{\mathbb{N}_{0}}(\mathpzc{E})$ defined as follows. In degree $n$ it is given by the monoid
$k[\Sigma_{n}]$, the free monoid on the symmetric group in $n$ letters.

\begin{defn}
The \textbf{category of } $\Sigma$-\textbf{modules} in $\mathpzc{E}$ is the category of right $k[\Sigma]$-modules in $\mathpzc{Gr}_{\mathbb{N}_{0}}(\mathpzc{E})$. It is denoted $\mathpzc{Mod}_{\Sigma}(\mathpzc{E})$
\end{defn}

\begin{defn}
Let $M$ be a $\Sigma$-module. Its associated \textbf{Schur functor} is the endofunctor $\tilde{M}:\mathpzc{E}\rightarrow\mathpzc{E}$ defined by
$$\tilde{M}(V)=\bigoplus_{n\ge0}M(n)\otimes_{\Sigma_{n}} V^{\otimes n}$$
The assignment $M\mapsto\tilde{M}$ is functorial in a natural way. We denote the functor $\mathpzc{Mod}_{\Sigma}\rightarrow\mathpzc{End}(\mathpzc{E},\mathpzc{E})$ by $\textrm{Sch}$.
\end{defn}

\subsubsection{Divided Powers $\Sigma$-Modules}

Since we are not always working in characteristic $0$, invariants and coinvariants of finite group actions need not coincide. We will therefore often have to restrict what kinds of symmetric sequences we consider.

\begin{defn}
\begin{enumerate}
\item
Let $G$ be a finite group and $M\in\mathpzc{E}$ a $G$-module. $M$ is said to be \textbf{divided powers} if the canonical map
$$M_{G}\rightarrow M^{G}$$
from coinvariants to invariants is an isomorphism.
\item
An object $M\in\mathpzc{Mod}_{\Sigma}(\mathpzc{E})$ is said to be \textbf{divided powers} if each $M(n)$ is divided powers as a $\Sigma_{n}$-module. The full subcategory of $\mathpzc{Mod}_{\Sigma}(\mathpzc{E})$ consisting of divided powers modules is denoted $\mathpzc{Mod}^{div}_{\Sigma}(\mathpzc{E})$
\item
Let $\mathpzc{E}$ be a symmetric monoidal additive category. A $G$-module $M$ is said to be \textbf{stably divided powers} if for any $G$-module $N$, $M\otimes N$ is divided powers, where $G$ acts diagonally. 
\item
Let $\mathpzc{E}$ be a symmetric monoidal additive category. $M\in\mathpzc{Mod}_{\Sigma}(\mathpzc{E})$ is said to be \textbf{stably divided powers} if each $M(n)$ is a stably divided powers $\Sigma_{n}$-module. The full subcategory of $\mathpzc{Mod}_{\Sigma}(\mathpzc{E})$ consisting of stably divided powers modules is denoted $\mathpzc{Mod}^{stdiv}_{\Sigma}(\mathpzc{E})$
\end{enumerate}
\end{defn}

\begin{example}
\begin{enumerate}
\item
Let $\mathpzc{E}$ be a symmetric monoidal additive category with unit $k$ and $G$ a finite group. Then for any $M\in\mathpzc{E}$, $k[G]\otimes M$ is stably divided powers. 
\item
Any retract of a stably divided powers $G$-module is stably divided powers.
\item
If filtered colimits commute with finite limits in $\mathpzc{E}$, then any filtered colimit of stably divided powers $G$-modules is stably divided powers. 
\end{enumerate}
\end{example}

\subsubsection{The Tensor Product of $\Sigma$-Modules}

We are going to define a monoidal structure on the category of $\Sigma$-Modules.

\begin{defn}
Let $M$ and $N$ be two $\Sigma$-modules. The \textbf{tensor product} of $M$ and $N$, denoted $M\otimes N$, is the $\Sigma$-module define by
$$(M\otimes N)(n)=\bigoplus_{i+j=n}\textit{Ind}_{\Sigma_{i}\times\Sigma_{j}}^{\Sigma_{n}}(M(i)\otimes N(j))$$
\end{defn}

\begin{defn}
The $\Sigma$-module $I$ is defined by $I(i)=0$ for $i\neq 1$ and $I(1)=k$.
\end{defn}

The following can be proven as in \cite{loday2012algebraic} Section 5.1.

\begin{prop}
$(\mathpzc{Mod}_{\Sigma},\otimes, I)$ is a symmetric monoidal category. Moreover if we endow $[\mathpzc{E},\mathpzc{E}]$ with its object wise monoidal structure, $\textrm{Sch}$ is a strong monoidal functor.
\end{prop}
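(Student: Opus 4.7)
The plan is to verify the symmetric monoidal axioms for $(\mathpzc{Mod}_{\Sigma},\otimes,I)$ by hand, treating the formula for $\otimes$ as a Day-convolution-style coend along the groupoid of finite sets and bijections, and then to establish the strong monoidal property of $\textrm{Sch}$ by a direct calculation. Throughout I would use the standing hypothesis that $\otimes$ in $\mathpzc{E}$ preserves colimits in each variable, together with the strong monoidality of $k[-]\colon\mathpzc{Set}\to\mathpzc{E}$ from the preceding proposition.

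First I would construct the structure isomorphisms. Since $\Sigma_{0}$ is trivial and $k$ is the unit of $\otimes$,
$$(I\otimes M)(n)=\textrm{Ind}_{\Sigma_{0}\times\Sigma_{n}}^{\Sigma_{n}}(k\otimes M(n))\cong M(n),$$
and similarly on the right. For associativity, one unfolds both $((M\otimes N)\otimes P)(n)$ and $(M\otimes(N\otimes P))(n)$, using transitivity of induction along $\Sigma_{i}\times\Sigma_{j}\times\Sigma_{k}\hookrightarrow\Sigma_{i+j}\times\Sigma_{k}\hookrightarrow\Sigma_{i+j+k}$ and associativity of $\otimes$ in $\mathpzc{E}$, to identify both with
$$\bigoplus_{i+j+k=n}\textrm{Ind}_{\Sigma_{i}\times\Sigma_{j}\times\Sigma_{k}}^{\Sigma_{n}}\bigl(M(i)\otimes N(j)\otimes P(k)\bigr).$$
The symmetry $\tau_{M,N}\colon M\otimes N\to N\otimes M$ is built from the block-swap permutation in $\Sigma_{i+j}$ conjugating $\Sigma_{i}\times\Sigma_{j}$ to $\Sigma_{j}\times\Sigma_{i}$, combined with the symmetry of $\otimes$ in $\mathpzc{E}$. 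The pentagon, triangle and hexagon axioms then reduce to the corresponding axioms in $\mathpzc{E}$ together with standard combinatorics of block permutations.

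For the second claim, I would compute directly:
\begin{align*}
\widetilde{M\otimes N}(V)
&=\bigoplus_{n\ge0}(M\otimes N)(n)\otimes_{\Sigma_{n}}V^{\otimes n}\\
&\cong\bigoplus_{n\ge0}\bigoplus_{i+j=n}\bigl(k[\Sigma_{n}]\otimes_{k[\Sigma_{i}\times\Sigma_{j}]}(M(i)\otimes N(j))\bigr)\otimes_{\Sigma_{n}}V^{\otimes n}\\
&\cong\bigoplus_{i,j\ge0}(M(i)\otimes N(j))\otimes_{\Sigma_{i}\times\Sigma_{j}}(V^{\otimes i}\otimes V^{\otimes j})\\
&\cong\Bigl(\bigoplus_{i}M(i)\otimes_{\Sigma_{i}}V^{\otimes i}\Bigr)\otimes\Bigl(\bigoplus_{j}N(j)\otimes_{\Sigma_{j}}V^{\otimes j}\Bigr)=\tilde{M}(V)\otimes\tilde{N}(V).
\end{align*}
The crucial step is the identity $\textrm{Ind}_{H}^{G}(A)\otimes_{G}W\cong A\otimes_{H}W$ applied with $H=\Sigma_{i}\times\Sigma_{j}$, $G=\Sigma_{i+j}$, $A=M(i)\otimes N(j)$, $W=V^{\otimes(i+j)}$, together with the $\Sigma_{i}\times\Sigma_{j}$-equivariant decomposition $V^{\otimes(i+j)}\cong V^{\otimes i}\otimes V^{\otimes j}$, both of which follow from associativity of $\otimes$ and its commutation with colimits. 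For the unit, $\tilde{I}(V)=k\otimes_{\Sigma_{1}}V\cong V$ naturally in $V$.

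Finally I would check that the isomorphisms $\widetilde{M\otimes N}\cong\tilde{M}\otimes\tilde{N}$ and $\tilde{I}\cong\textrm{Id}_{\mathpzc{E}}$ are natural in $M,N$ and compatible with the associators, unitors and symmetries constructed in the first step, so that $\textrm{Sch}$ is strong monoidal rather than merely pointwise so. I expect the main obstacle to be pure bookkeeping: tracking the block-permutation combinatorics through the coend formula so that the symmetry of $\otimes$ on $\mathpzc{Mod}_{\Sigma}$ matches the pointwise symmetry of $\otimes$ on $[\mathpzc{E},\mathpzc{E}]$. Nothing conceptual beyond the facts listed above is needed; the arguments of \cite{loday2012algebraic} Chapter 5 go through verbatim once one has the coproduct-preservation of $\otimes$ in $\mathpzc{E}$.
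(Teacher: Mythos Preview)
Your proposal is correct and is essentially the argument the paper has in mind: the paper itself gives no proof here but simply refers to \cite{loday2012algebraic} Section 5.1, and what you have written is precisely a sketch of that argument adapted to the additive setting (unit via triviality of $\Sigma_0$, associativity via transitivity of induction, symmetry via block-swap, and the Schur calculation via $\textrm{Ind}_H^G(A)\otimes_G W\cong A\otimes_H W$). Nothing further is needed.
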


\subsubsection{The Composite Products of $\Sigma$-Modules}
Operads are defined as monoids with respect to a different monoidal structure on $\mathpzc{Mod}_{\Sigma}$ which we define here. There are in fact numerous important monoidal structures on $\mathpzc{Mod}_{\Sigma}$.

\begin{defn}
Let $M$ and $N$ be two $\Sigma$-modules. 
\begin{enumerate}
\item
The \textbf{composite product} of $M$ and $N$, denoted $M\circ N$ is defined by 
$$M\circ N(n)=\bigoplus_{k\ge0}M(k)\otimes_{\Sigma_{k}}N^{\otimes k}(n)$$
\item
The \textbf{infinite composite product} of $M$ and $N$, denoted $M\hat{\circ} N$ is defined by 
$$M\hat{\circ} N(n)=\prod_{k\ge0}M(k)\otimes_{\Sigma_{k}}N^{\otimes k}(n)$$
\item
The \textbf{invariant composite product} of $M$ and $N$, denoted $M\overline{\circ} N$ is defined by 
$$M\overline{\circ} N(n)=\bigoplus_{k\ge0}(M(k)\otimes N^{\otimes k}(n))^{\Sigma_{k}}$$
\item
The \textbf{infinite invariant composite product} of $M$ and $N$, denoted $M\hat{\overline{\circ}} N$ is defined by 
$$M\hat{\overline{\circ}} N(n)=\prod_{k\ge0}(M(k)\otimes N^{\otimes k}(n))^{\Sigma_{k}}$$
\end{enumerate}
\end{defn}

We will mainly be interested in the composite product, though when discussing cooperads in the next section we will discuss the invariant and infinite invariant composite products.

\begin{rem}
There are natural maps
$$M\circ N\rightarrow M\overline{\circ} N$$
$$M\hat{\circ} N\rightarrow M\hat{\overline{\circ}} N$$
If $M$ is stably divided powers then these are isomorphisms.
\end{rem}

 \begin{prop}[\cite{loday2012algebraic} Proposition 5.1.14]
 $(\mathpzc{Mod}_{\Sigma},\circ,I)$ is a monoidal category.
 \end{prop}
Another useful notion, particularly in the context of differentials on operads, is the infinitesimal composite product of $\Sigma$-modules
\begin{defn}
The functor $-\circ(-;-):\mathpzc{Mod}_{\Sigma}^{3}\rightarrow\mathpzc{Mod}_{\Sigma}$ is the subfunctor of $-\circ(-\oplus-):\mathpzc{Mod}_{\Sigma}^{3}\rightarrow\mathpzc{Mod}_{\Sigma}$ which is linear in the last variable.
\end{defn}

\begin{defn}[\cite{loday2012algebraic} Section 6.1]
The \textbf{infinitesimal composite product} functor $-\circ_{(1)}-:\mathpzc{Mod}_{\Sigma}^{2}\rightarrow\mathpzc{Mod}_{\Sigma}$ is the functor 
$$-\circ_{(1)}-\defeq-\circ(I;-)$$
\end{defn}

The following is a useful generalisation of \cite{loday2012algebraic} Section 6.1.5.

\begin{defn}
Let $f:M_{1}\rightarrow M_{2}$ an $g:N_{1}\rightarrow N_{2}$, $\epsilon:N_{1}\rightarrow N$ be morphisms of $\Sigma$-modules i. The \textbf{infinitesimal composite} of $f$ and $g$ relative to $\epsilon$, denoted $f\circ'(\epsilon;g)$ is the map
$$f\circ' g:M_{1}\circ N_{1}\rightarrow M_{2}\circ (N; N_{2})$$
given by the formula
$$f\circ' (\epsilon;g)=\sum_{i}f\otimes (\epsilon\otimes\ldots\otimes \epsilon\otimes g\otimes \epsilon\otimes\ldots\otimes \epsilon)$$
\end{defn}
%

\subsubsection{Operads and Divided Powers Cooperads}

We can now define operads and cooperads as in \cite{loday2012algebraic} Chapter 5.

\begin{defn}
\begin{enumerate}
\item
The \textbf{category of operads} denoted $\mathpzc{Op}(\mathpzc{E})$ is the category of associative monoids $(\mathfrak{P},\gamma,\eta)$ in the monoidal category $(\mathpzc{Mod}_{\Sigma},I,
\circ)$. 
\item
The \textbf{category of cooperads} denoted $\mathpzc{coOp}(\mathpzc{E})$ is the category of coassociative comonoids $(\mathfrak{C},\Delta,\epsilon)$ in the monoidal category $(\mathpzc{Mod}_{\Sigma},I,
\overline{\circ})$. 
\item
An operad $(\mathfrak{P},\gamma,\eta)$ is said to be \textbf{augmented} if there is a map $u:\mathfrak{P}\rightarrow I$ such that $u\circ I$ is the identity. One similarly defines coaugmented  cooperads.
\end{enumerate}
\end{defn}

\begin{defn}
The category of \textbf{divided powers cooperads}, denoted $\mathpzc{coOp}^{stdiv}(\mathpzc{E})$, is the full subcategory of $\mathpzc{coOp}(\mathpzc{E})$ consisting of cooperads whose underlying $\Sigma$-module is stably divided powers.
\end{defn}

We will almost always be concerned with divided powers cooperads.

\begin{rem}
\begin{enumerate}
\item
There is an important subtlety to keep in mind here. Even in the $\mathbb{Q}$-enriched setting, the definition of an operad and a cooperad are not dual \textit{relative to }$\mathpzc{E}$. Namely, there is a fully faithful functor
$$\mathpzc{coOp}^{stdiv}(\mathpzc{E})\rightarrow\mathpzc{Op}(\mathpzc{E}^{op})^{op}$$
which is not an equivalence in general. This is because the construction of the composite product is not self-dual (in the opposite category one would get an infinite product rather than an infinite sum in the definition of the composite product).
\item
We may regard a cooperad $(\mathfrak{C},\Delta,\epsilon)$ as a coassociative comonoid in $(\mathpzc{Mod}_{\Sigma},I,
\hat{\overline{\circ}})$ such that the map
$$\mathfrak{C}\rightarrow\mathfrak{C}\hat{\overline{\circ}}\mathfrak{C}$$
factors through
$$\mathfrak{C}\rightarrow\mathfrak{C}\overline{\circ}\mathfrak{C}$$
\item
We may regard a divided powers cooperad $(\mathfrak{C},\Delta,\epsilon)$ as a coassociative comonoid in $(\mathpzc{Mod}_{\Sigma},I,
\hat{\circ})$ such that the map
$$\mathfrak{C}\rightarrow\mathfrak{C}\hat{\circ}\mathfrak{C}$$
factors through
$$\mathfrak{C}\rightarrow\mathfrak{C}\circ\mathfrak{C}$$
\end{enumerate}
\end{rem}

\begin{prop}[\cite{loday2012algebraic} Section 5.5]
The forgetful functor $|-|:\mathpzc{Op}(\mathpzc{E})\rightarrow\mathpzc{E}$ has a left adjoint $\mathfrak{T}(-)$, called the \textbf{free operad functor}.
\end{prop}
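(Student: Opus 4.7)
The plan is to build the free operad via the classical inductive ``rooted-tree'' construction, adapted to the monoidal additive setting. Since operads are defined as monoids in $(\mathpzc{Mod}_\Sigma(\mathpzc{E}),\circ,I)$, the forgetful functor to $\mathpzc{E}$ factors through $\mathpzc{Mod}_\Sigma(\mathpzc{E})$, so it is enough to produce a left adjoint $\mathfrak{T}$ to $\mathpzc{Op}(\mathpzc{E})\to\mathpzc{Mod}_\Sigma(\mathpzc{E})$; composing with the evident left adjoints of the further forgetful functors $\mathpzc{Mod}_\Sigma(\mathpzc{E})\to\mathpzc{E}^{\mathbb{N}_0}\to\mathpzc{E}$ (each built out of coproducts of copies of the $k[\Sigma_n]$) gives the statement as written. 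The running hypothesis that $\otimes$ commutes with colimits in each variable, together with the existence of countable colimits in $\mathpzc{E}$, ensures that $-\circ-$ commutes with the sequential colimits on which the construction depends.

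For a $\Sigma$-module $M$, I would set $\mathfrak{T}_0(M)\defeq I$ and inductively
$$\mathfrak{T}_{n+1}(M)\defeq I \oplus \bigl(M \circ \mathfrak{T}_n(M)\bigr),$$
with structural inclusions $i_n:\mathfrak{T}_n(M)\hookrightarrow\mathfrak{T}_{n+1}(M)$ given by $\textrm{id}_I$ on the first summand and by $M\circ i_{n-1}$ on the second (with $i_{-1}:0\hookrightarrow I$). Define $\mathfrak{T}(M)\defeq\colim_n\mathfrak{T}_n(M)$. The unit $\eta:I\to\mathfrak{T}(M)$ is the canonical inclusion $I=\mathfrak{T}_0(M)\to\mathfrak{T}(M)$. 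The composition $\gamma:\mathfrak{T}(M)\circ\mathfrak{T}(M)\to\mathfrak{T}(M)$ is obtained by a parallel induction: on $\mathfrak{T}_0(M)\circ\mathfrak{T}(M)\cong\mathfrak{T}(M)$ by unitality, and on $\mathfrak{T}_{n+1}(M)\circ\mathfrak{T}(M)$ by splitting the first summand through unitality and, on $(M\circ\mathfrak{T}_n(M))\circ\mathfrak{T}(M)\cong M\circ(\mathfrak{T}_n(M)\circ\mathfrak{T}(M))$, applying the previously-constructed composition at stage $n$ and then embedding $M\circ\mathfrak{T}(M)\hookrightarrow\mathfrak{T}(M)$ via the definition of $\mathfrak{T}_{n+1}(M)$.

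For the universal property, given an operad $(\mathfrak{P},\eta_{\mathfrak{P}},\gamma_{\mathfrak{P}})$ and a $\Sigma$-module morphism $f:M\to|\mathfrak{P}|$, I would define $\tilde f_n:\mathfrak{T}_n(M)\to|\mathfrak{P}|$ by $\tilde f_0\defeq\eta_{\mathfrak{P}}$ and
$$\tilde f_{n+1}\defeq\bigl(\eta_{\mathfrak{P}},\;\gamma_{\mathfrak{P}}\circ(f\circ\tilde f_n)\bigr):I\oplus (M\circ\mathfrak{T}_n(M))\to|\mathfrak{P}|,$$
and set $\tilde f\defeq\colim_n\tilde f_n$. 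Uniqueness is forced at each inductive step, since on the summand $I$ any operad morphism must agree with $\eta_{\mathfrak{P}}$ and on $M\circ\mathfrak{T}_n(M)$ compatibility with $\gamma_{\mathfrak{P}}$ pins down $\tilde f_{n+1}$ in terms of $f$ and $\tilde f_n$.

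The main obstacle will be verifying associativity of $\gamma$ on $\mathfrak{T}(M)$, together with the operad unit axioms. These reduce to a double induction over the filtration stages $\mathfrak{T}_n(M)$ and, after commuting $\circ$ past the sequential colimits using the preservation hypothesis, to the associativity of $\circ$ in $\mathpzc{Mod}_\Sigma(\mathpzc{E})$ applied to expressions of the form $M\circ(M\circ\mathfrak{T}_m(M))\circ\mathfrak{T}(M)$. The combinatorial content coincides with the grafting-of-trees associativity of \cite{loday2012algebraic} Chapter 5, and the key thing to check in the present generality is that no finer colimit assumptions on $\mathpzc{E}$ than those already imposed are needed anywhere in the induction.
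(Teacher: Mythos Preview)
Your approach is correct and is precisely the standard tree construction from \cite{loday2012algebraic}, Chapter 5. The paper does not give its own proof of this proposition: it is one of the results covered by the blanket statement at the start of Section~\ref{appcatalg} that anything left unproved can be found in \cite{loday2012algebraic}, so there is nothing to compare against beyond the reference you are already invoking.
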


For cooperads the story is again more subtle. The category of cooperads has a full subcategory $\mathpzc{coOp}^{con}(\mathpzc{E})$ called the category of \textbf{conilpotent cooperads}. We will not go into details here but a cooperad $\mathfrak{C}$ is conilpotent if it is coaugmetned and a certain filtration on $\mathfrak{C}$, called the \textbf{coradical filtration} is exhaustive (see \cite{loday2012algebraic} Section 5.8 for details). 

\begin{prop}
The forgetful functor $|-|:\mathpzc{coOp}^{con}(\mathpzc{E})\rightarrow\mathpzc{Mod}_{\Sigma}(\mathpzc{E})$ (resp. $|-|:\mathpzc{coOp}^{stdiv,con}(\mathpzc{E})\rightarrow\mathpzc{Mod}^{div}_{\Sigma}(\mathpzc{E})$) has a right adjoint $\mathfrak{T}^{c}(-)$, called the \textbf{cofree cooperad functor} (resp. the \textbf{cofree divided powers cooperad functor}).
\end{prop}

Generally speaking the cooperads we consider are conilpotent. We finish this subsection with some more important terminology.

\begin{defn}[\cite{loday2012algebraic} Section 6.1.4]
Let $(\mathfrak{P},\gamma,\eta)$ be an operad. The \textbf{infinitesimal composition map}  $\gamma_{(1)}:\mathfrak{P}\circ_{(1)}\mathfrak{P}\rightarrow\mathfrak{P}$ is given by the composition.
\begin{displaymath}
\xymatrix{
\mathfrak{P}_{\circ_{(1)}}\mathfrak{P}\ar@{=}[r] & \mathfrak{P}\circ(I;\mathfrak{P})\ar[r] & \mathfrak{P}\circ (I\oplus\mathfrak{P})\ar[rr]^{Id_{\mathfrak{P}}\circ(\eta+Id_{\mathfrak{P}})} & & \mathfrak{P}\circ\mathfrak{P}\ar[r]^{\gamma} & \mathfrak{P}
}
\end{displaymath}
\end{defn}

\begin{defn}[\cite{loday2012algebraic} Section 6.1.7]
Let $(\mathfrak{C},\Delta,\epsilon)$ be a divided powers cooperad. The \textbf{infinitesimal decomposition map}  $\Delta_{(1)}\mathfrak{C}\rightarrow\mathfrak{C}\circ_{(1)}\mathfrak{C}$ is given by the composition.
\begin{displaymath}
\xymatrix{
\mathfrak{C}\ar[r]^{\Delta} & \mathfrak{C}\circ\mathfrak{C}\ar[rr]^{Id_{\mathfrak{C}}\circ'Id_{\mathfrak{C}}} & & \mathfrak{C}\circ(\mathfrak{C};\mathfrak{C})\ar[rr]^{Id_{\mathfrak{C}}\circ(\epsilon;Id_{\mathfrak{C}})} & & \mathfrak{C}\circ(I;\mathfrak{C})\ar@{=}[r] & \mathfrak{C}_{\circ_{(1)}}\mathfrak{C}
}
\end{displaymath}
\end{defn}

There is another useful tensor product on the category of $\Sigma$-modules called the Hadamard tensor product.

\begin{defn}[\cite{loday2012algebraic} Section 5.3.3]
If $\mathfrak{O}$ and $\mathfrak{P}$ are $\Sigma$-modules then we define their \textbf{Hadamard tensor product} by $(\mathfrak{O}\otimes_{H}\mathfrak{P})(n)\defeq\mathfrak{O}(n)\otimes\mathfrak{P}(n)$ equipped with the diagonal action of $\Sigma_{n}$. 
\end{defn}

If $\mathfrak{O}$ and $\mathfrak{P}$ are operads then their Hadamard tensor product has a natural operad structure. The Hadamard tensor product of two divided powers cooperads is also  divided powers cooperad.

\subsection{Modules and Algebras Over Operads}

Let us now discuss categories of algebras over operads, following \cite{loday2012algebraic} Section 5.8.

%
%
%

\begin{defn}
\begin{enumerate}
\item
Let $(\mathfrak{P},\gamma,\eta)$ be an operad in $\mathpzc{E}$. A left/ right $\mathfrak{P}$-module is a left/ right $\mathfrak{P}$-module internal to $(\mathpzc{Mod}_{\Sigma},\circ,I)$. The category of left/ right $\mathfrak{P}$-modules is denoted ${}_{\mathfrak{P}}\mathpzc{Mod}(\mathpzc{E})/\mathpzc{Mod}_{\mathfrak{P}}(\mathpzc{E})$
\item
A $\mathfrak{P}$-\textbf{algebra} is a left $\mathfrak{P}$-module which is concentrated in arity $1$. The full subcategory of $\mathfrak{P}$-modules consisting of $\mathfrak{P}$-algebras is denoted $\mathpzc{Alg}_{\mathfrak{P}}(\mathpzc{M})$. 
\item
Let $\mathfrak{C}$ be a cooperad. A left/ right $\mathfrak{C}$-\textbf{comodule} is a left/ right $\mathfrak{C}$-comodule internal to the category $(\mathpzc{Mod}_{\Sigma},\hat{\overline{\circ}}, I)$. A $\mathfrak{C}$-\textbf{coalgebra} is a  $\mathfrak{C}$-comodule concentrated in arity $1$. The category of left/ right $\mathfrak{C}$-comodules is denoted ${}_{\mathfrak{C}}\mathpzc{coMod}(\mathpzc{E})/\mathpzc{co:Mod}_{\mathfrak{C}}(\mathpzc{E})$
\item
Let $\mathfrak{C}$ be a cooperad. A \textbf{conilpotent} $\mathfrak{C}$-\textbf{comodule} is a $\mathfrak{C}$-comodule in $(\mathpzc{Mod}_{\Sigma},\overline{\circ}, I)$. The category of conilpotent $\mathfrak{C}$-comodules is denoted $\mathpzc{coMod}_{\mathfrak{C}}^{conil}$. A \textbf{conilpotent} $\mathfrak{C}$-\textbf{coalgebra} is a conilpotent $\mathfrak{C}$-comodule concentrated in arity $1$. 
The category of conilpotent coalgebras is denoted $\mathpzc{coAlg}^{conil}_{\mathfrak{C}}(\mathpzc{M})$.
\end{enumerate}
\end{defn}

\begin{rem}
If $\mathfrak{C}$ is a divided powers cooperad, then a $\mathfrak{C}$-comodule (resp. conilpotent $\mathfrak{C}$-comodule) as defined above is the same as a $\mathfrak{C}$-comodule in $(\mathpzc{Mod}_{\Sigma},\hat{\circ}, I)$ (resp. $(\mathpzc{Mod}_{\Sigma},\circ, I)$).
\end{rem}

A $\mathfrak{P}$-\textbf{module} can be described as a pair $(N,\lambda_{N})$ where $N$ is a $\Sigma$-module and $\lambda_{N}:\mathfrak{P}\circ N\rightarrow N$ is a morphism of $\Sigma$-modules such that the following diagrams commute
\begin{displaymath}
\xymatrix{
(\mathfrak{P}\circ\mathfrak{P})\circ N\ar@{=}[r]\ar[d]^{\gamma\circ N} & \mathfrak{P}\circ(\mathfrak{P}\circ N)\ar[rr]^{\mathfrak{P}(\lambda_{N})} & & \mathfrak{P}\circ N\ar[d]^{\lambda_{N}}\\
\mathfrak{P}\circ N\ar[rrr]^{\lambda_{N}} & & & N
}
\end{displaymath}
\begin{displaymath}
\xymatrix{
I\circ N\ar@{=}[dr]\ar[r]^{\eta\circ N} & \mathfrak{P}\circ N\ar[d]^{\gamma_{N}}\\
& N
}
\end{displaymath}

There is an obvious forgetful functor $|-|:{}_{\mathfrak{P}}\mathpzc{Mod}(\mathpzc{E})\rightarrow{}\mathpzc{Mod}_{\Sigma}$. For a $\Sigma$-module $N$ there is a $\mathfrak{P}$-module structure on $\mathfrak{P}\circ N$ given by
\begin{displaymath}
\xymatrix{
\mathfrak{P}\circ(\mathfrak{P}\circ N)\ar@{=}[r] & (\mathfrak{P}\circ\mathfrak{P})\circ N\ar[r]^{\gamma\circ N} & \mathfrak{P}\circ N
}
\end{displaymath}
This construction is functorial, and gives the left adjoint to the forgetful functor. 

There is a similar story for $\mathfrak{C}$-comodules. Moreover in this case the forgetful functor is a left adjoint.



%
%
\subsection{Derivations    }\label{secder}
Typically the operads and algebras we consider will be equipped with differentials making them into chain complexes. However we will want to `twist' these differentials, a process which will often break the equation $d^{2}=0$, In this section we present a rather general discussion of derivations on operads and algebras which do not necessarily square to zero, i.e. we consider categories of pre-differential graded modulees. Let $\mathpzc{E}$ be a closed symmetric monoidal additive category. Consider the category $\widetilde{Ch}(\mathpzc{E})$ with its symmetric monoidal structure.

\begin{defn}
A \textbf{unital commutative ring with derivation} is an object
$$(R,d_{R})\in\mathpzc{Alg}_{\mathfrak{Comm}}(\widetilde{Ch}(\mathpzc{E}))$$
A $(R,d_{R})$-\textbf{module} is an object of 
$${}_{(R,d_{R})}\mathpzc{Mod}(\widetilde{Ch}(\mathpzc{E}))$$
\end{defn}

Unravelling the definitions, an $(R,d_{R}$)-module, this is just an $R$-module $V$ equipped with a map $d_{V}:V\rightarrow V$ of degree $-1$ such that the following diagram commutes
\begin{displaymath}
\xymatrix{
R\otimes V\ar[rr]^{d_{R}\otimes Id_{V}+Id_{R}\otimes d_{V}}\ar[d] & & R\otimes V\ar[d]^{\mu_{V}}\\
V\ar[rr]^{d_{V}} & & V 
}
\end{displaymath}
 The category of $(R,d_{R})$-modules inherits a closed symmetric monoidal structure.

\begin{defn}
An \textbf{operad with derivation over }$(R,d_{R})$ is an object of $\mathpzc{Op}({}_{(R,d_{R})}\mathpzc{Mod}(\widetilde{Ch}(\mathpzc{E})))$
\end{defn}

Cooperads with coderivations are defined dually. 
%

\subsubsection{Relative Derivations}

Let us now introduce a relative notion of derivation.

\begin{defn}\label{defn:relder}
Let $(R,d_{R})$ be a ring with derivation, and $(\mathfrak{P},d_{\mathfrak{P}})$ an operad with derivation over $(R,d_{R})$.
\begin{enumerate}
\item
 Let $A$ and $B$ be left $\mathfrak{P}$-modules, and let $\epsilon:A\rightarrow B$ be a morphism of $\mathfrak{P}$-modules. An $\epsilon$-\textbf{derivation from }$A$\textbf{ to }$B$ is a morphism $D:A\rightarrow B$ such that the following diagram commutes
\begin{displaymath}
\xymatrix{
\mathfrak{P}\circ(A;A)\ar[rrr]^{d_{\mathfrak{P}}\circ(\epsilon;\epsilon)+Id_{\mathfrak{P}}\circ(\epsilon;D)}\ar[d]^{\gamma_{A}} &&& \mathfrak{P}(B;B)\ar[d]^{\gamma_{B}}\\
A\ar[rrr]^{D} & && B\\
}
\end{displaymath}
\item
 Let $A$ and $B$ be right $\mathfrak{P}$-modules, and let $\epsilon:A\rightarrow B$ be a morphism of $\mathfrak{P}$-modules. An $\epsilon$-\textbf{derivation from }$A$\textbf{ to }$B$ is a morphism $D:A\rightarrow B$ such that the following diagram commutes
\begin{displaymath}
\xymatrix{
\mathfrak{P}\circ(A;A)\ar[rrr]^{d_{\mathfrak{P}}\circ(\epsilon;\epsilon)+Id_{\mathfrak{P}}\circ(\epsilon;D)}\ar[d]^{\gamma_{A}} &&& \mathfrak{P}(B;B)\ar[d]^{\gamma_{B}}\\
A\ar[rrr]^{D} & && B\\
}
\end{displaymath}
\end{enumerate}
\end{defn}

\begin{defn}
Let $V,W\in{}_{R}\mathpzc{Mod}(Gr_{\mathbb{Z}}(\mathpzc{E}))$ and $\epsilon:V\rightarrow A$ a map of $R$-modules. By an $\epsilon$-\textbf{derivation from }$V$ \textbf{ to }$A$ over $(R,d_{R})$ we mean an $\epsilon$-derivation in the sense of Definition \ref{defn:relder}, where we regard $(R,d_{R})$ as an operad with derivation concentrated in arity $1$.
\end{defn}

Again relative coderivations of left/ right $\mathfrak{C}$-comodules are defined dually.

Now let $(R,d_{R})\in\mathpzc{Alg}_{\mathfrak{Comm}}(\widetilde{Ch}(\mathpzc{E}))$, $(\mathfrak{P},d_{\mathfrak{P}})$  an operad in ${}_{(R,d_{R})}\mathpzc{Mod}(\widetilde{Ch}(\mathpzc{E}))$, $A$ a graded left $\mathfrak{P}$-module in ${}_{R}\mathpzc{Mod}(\mathpzc{Gr}_{\mathbb{Z}}(\mathpzc{E}))$, and $V\in{}_{R}\mathpzc{Mod}(\mathpzc{Gr}_{\mathbb{Z}}(\mathpzc{E}))$. Let $\epsilon:V\rightarrow A$ be a map of $R$-modules, $D:V\rightarrow A$ an $\epsilon$-derivation from $V$ to $A$ over $(R,d_{R})$, and $\phi:V\rightarrow A$ a degree $-1$ map of $R$-modules. Let $\epsilon,\phi:V\rightarrow A$ be maps of $R$-modules with $\phi$ of degree $-1$, and denote by
$$\tilde{d}_{(\epsilon;\phi)}:\mathfrak{P}\circ V\rightarrow A$$
the degree $-1$ map given by
$$Id_{\mathfrak{P}}\circ'(\epsilon;\phi)$$
Define
$$d_{D;\epsilon;\phi}:\mathfrak{P}\circ V\rightarrow A$$
by
$$d_{D;\epsilon;\phi}=\gamma_{A}(d_{\mathfrak{P}}\circ\epsilon+Id_{\mathfrak{P}}\circ' D)+\tilde{d}_{(\epsilon;\phi)}$$
Note that $d_{D;\epsilon;\phi}$ is an $\overline{\epsilon}$-derivation over $(R,d_{R})$ where $\overline{\epsilon}$ is the unique map of $\mathfrak{P}$-algebras determined by $\epsilon:V\rightarrow A$. As in the absolute case proved in \cite{loday2012algebraic} Proposition 6.3.12, we have the following.

\begin{prop}
$d_{D;\epsilon;\phi}$ is the unique $\epsilon$-derivation of left $(\mathfrak{P},d_{\mathfrak{P}})$-modules whose restriction to $V$ is $D+\phi$.
\end{prop}

\begin{rem}
There are of course identical statements for right $\mathfrak{P}$-modules and for $\mathfrak{C}$-comodules.
\end{rem}
There is an important special case of this result.

\begin{cor}\label{coproder}
Let $A$, $B$, and $C$ be left $\mathfrak{P}$-modules, and let $\epsilon_{A}:A\rightarrow C$ and $\epsilon_{B}:B\rightarrow C$ be morphisms of $\mathfrak{P}$-modules. Let $D_{A},\phi_{A}:A\rightarrow C$ and $D_{B},\phi_{B}:B\rightarrow C$ be degree $-1$ maps of from $A$ to $C$ and from $B$ to $C$ respectively, with $\phi_{A},\phi_{B}$ being maps of $\mathfrak{P}$-modules, and $D_{A}/D_{B}$ being $\epsilon_{A}$-/$\epsilon_{B}$-derivations of $\mathfrak{P}$-modules. Then there is a unique $\epsilon_{A}\coprod\epsilon_{B}$-derivation $\epsilon$-derivation of left $(\mathfrak{P},d_{\mathfrak{P}})$-modules $D:A\coprod B\rightarrow C$ whose restriction to $A$ is $D_{A}+\phi_{A}$ and whose restriction to $B$ is $D_{B}+\phi_{B}$.
\end{cor}

\begin{proof}
$A\coprod B$ can be constructed as a quotient of $\mathfrak{P}(A\oplus B)$. There is a unique $\overline{(\epsilon_{A},\epsilon_{B})}$-derivation of left $(\mathfrak{P},d_{\mathfrak{P}})$-modules $\overline{D}:\mathfrak{P}(A\oplus B)\rightarrow C$ whose restriction to $A$ is $D_{A}+\phi_{A}$ and whose restriction to $B$ is $D_{B}+\phi_{B}$. One checks that this descends to a map $A\coprod B\rightarrow  C$.
\end{proof}

\subsubsection{The Twisted Composite Product}

Let $(\mathfrak{C},d_{\mathfrak{C}})$ be an co-operad in ${}_{(R,d_{R})}\mathpzc{Mod}(\widetilde{Ch}(\mathpzc{E}))$, $(\mathfrak{P},d_{\mathfrak{P}})$ an operad in ${}_{(R,d_{R})}\mathpzc{Mod}(\widetilde{Ch}(\mathpzc{E}))$, and 
$$\alpha:\mathfrak{C}\rightarrow\mathfrak{P}$$
a degree $-1$ map of $R$-modules.

%


Write $\tilde{d}^{r}_{\alpha}\defeq\tilde{d}_{(Id;\alpha_{r})}$ where $\alpha^{r}$ is the composition
\begin{displaymath}
\xymatrix{
\alpha^{r}\defeq \mathfrak{C}\ar[r]^{\Delta_{(1)}} & \mathfrak{C}\circ_{(1)}\mathfrak{C}\ar[r]^{Id_{\mathfrak{C}}\circ_{(1)}\alpha} & \mathfrak{C}\circ_{(1)}\mathfrak{P}\ar[r] & \mathfrak{C}\circ\mathfrak{P}
}
\end{displaymath}
 
 and $\tilde{d}^{l}_{\alpha}\defeq\tilde{d}_{(Id;\alpha_{l})}$ where $\alpha_{l}$ is the composition

\begin{displaymath}
\xymatrix{
\alpha^{l}\defeq \mathfrak{C}\ar[r]^{\Delta} & \mathfrak{C}\circ\mathfrak{C}\ar[r]^{\alpha\circ Id_{\mathfrak{C}}} & \mathfrak{P}\circ\mathfrak{C}
}
\end{displaymath}
Define
$$d^{r}_{\alpha}\defeq d_{\mathfrak{P}\circ\mathfrak{C}}+\tilde{d}^{r}_{\alpha}$$

$$d^{l}_{\alpha}\defeq d_{\mathfrak{C}\circ\mathfrak{P}}+\tilde{d}^{l}_{\alpha}$$
Note that these are the unique derivations of free right/ left $\mathfrak{P}$-modules on $\mathfrak{C}$ associated to the maps $\alpha_{r}$ and $\alpha_{l}$ respectively.


Finally we denote by $\mathfrak{P}\circ_{\alpha}\mathfrak{C}\circ_{\alpha}\mathfrak{P}$ the complex whose underlying graded $\Sigma$-module is given by $\mathfrak{P}\circ\mathfrak{C}\circ\mathfrak{P}$, with differential given by $d_{\mathfrak{P}\circ\mathfrak{C}\circ\mathfrak{P}}+Id_{\mathfrak{P}}\circ' \tilde{d}_{\alpha^r}-\tilde{d}_{\alpha^l}\circ Id_{\mathfrak{P}}$. This is called the \textbf{two-sided twisted composite product}. 


%

\subsection{(Co)Operads in Chain Complexes}

In this section we let $(\mathpzc{E},\otimes,k)$ be a symmetric monoidal additive category with kernels, cokernels, and countable coproducts. We fix an object $R\in\mathpzc{Alg}_{\mathfrak{Comm}}(Ch(\mathpzc{E}))$ and consider the monoidal additive category $\mathpzc{M}\defeq {}_{R}\mathpzc{Mod}(Ch(\mathpzc{E}))$.

\subsubsection{Twisting Morphisms}
Following \cite{loday2012algebraic} Section 6.4, let is now introduce twisting morphisms. Let $(\mathfrak{B},d_{\mathfrak{P}})\in\mathpzc{Op}(\mathpzc{M})$ and $(\mathfrak{C},d_{\mathfrak{C}})\in\mathpzc{coOp}^{stdiv}(\mathpzc{M})$.

\begin{defn}
A degree $-1$ morphism of $R$-modules $\alpha:\mathfrak{C}\rightarrow\mathfrak{P}$  is said to be a \textbf{twisting morphism} if 
$$\mathfrak{P}\circ_{\alpha}\mathfrak{C}$$
$$\mathfrak{C}\circ_{\alpha}\mathfrak{P}$$
$$\mathfrak{P}\circ_{\alpha}\mathfrak{C}\circ_{\alpha}\mathfrak{P}$$
are all complexes.
\end{defn}

Typically the twisting morphisms we consider on additive categories are induced from ones in $Ch(\mathpzc{Ab})$ (or $Ch({}_{\Q}\mathpzc{Mod})$). To this end we note the following observation, which we record as a proposition.

\begin{prop}\label{preserveMC}
\begin{enumerate}
\item
Let $\mathpzc{D}$ and $\mathpzc{E}$ be monoidal additive categories and $F:\mathpzc{D}\rightarrow\mathpzc{E}$ a strong monoidal functor. Let $\alpha:\mathfrak{C}\rightarrow\mathfrak{P}$ be a twisting morphism in $Ch(\mathpzc{D})$. Then $F(\alpha):F(\mathfrak{C})\rightarrow F(\mathfrak{P})$ is a twisting morphism in $Ch(\mathpzc{E})$.
\item
Let $\mathpzc{D}$ be a monoidal additive category, $\alpha:\mathfrak{C}\rightarrow\mathfrak{P}$ a twisting morphism in $Ch(\mathpzc{D})$, and $(R,d_{R})\in\mathpzc{Alg}_{\mathfrak{Comm}}(Ch(\mathpzc{E}))$. Then 
$$R\otimes\alpha:R\otimes\mathfrak{C}\rightarrow R\otimes\mathfrak{P}$$
is a twisting morphism.
\end{enumerate}
\end{prop}

Twisting morphisms allows us to construct twisted differentials on the composite products $\mathfrak{C}\circ\mathfrak{P}$ and $\mathfrak{P}\circ\mathfrak{C}$, which we will need to construct the bar-cobar adjunction.

\begin{example}\label{cofreetwist}
Let $V$ be an object of $\mathpzc{M}$. The composite morphism
$$\mathfrak{T}^{c}(V[1])\rightarrow V[1]\rightarrow V\rightarrow\mathfrak{T}(V)$$
is a twisting morphism. Indeed the proof of Lemma 7.4.2 in \cite{loday2012algebraic} for more general quadratic operads goes through here. 
\end{example}

It can be shown as in \cite{loday2012algebraic} Section 6.4.1 that all three of the pre-differential graded modulees $\mathfrak{P}\circ_{\alpha}\mathfrak{C}$
, $\mathfrak{C}\circ_{\alpha}\mathfrak{P}$,
$\mathfrak{P}\circ_{\alpha}\mathfrak{C}\circ_{\alpha}\mathfrak{P}$ are complexes precisely if any one of them is. Moreover, twisting morphisms have descriptions in terms of Maurer-Cartan element in the convolution Lie algebra. Let $(\mathfrak{C},\Delta,\epsilon)$ be a divided powers cooperad and $(\mathfrak{P},\gamma,\eta)$, an operad in $\mathpzc{M}$, and consider the object in $\mathpzc{Gr}_{\mathbb{N}_{0}}Ch(\mathpzc{Ab})$ given by
$$\textbf{Hom}(\mathfrak{C},\mathfrak{P})(n)\defeq\underline{\textrm{Hom}}_{\mathpzc{M}}(\mathfrak{C}(n),\mathfrak{P}(n))$$

It is a (right) $\Sigma$ module in $Ch(\mathpzc{Ab})$. For $f\in\textrm{Hom}_{\mathpzc{E}}(\mathfrak{C}(n),\mathfrak{P}(n))$ $\Sigma\in\Sigma_{n}$ acts on the right by
$$f\mapsto \Sigma\circ f\circ\Sigma^{-1}$$

Using the methods of \cite{loday2012algebraic} Section 6.4.1 this $\Sigma$-module in $Ch(\mathpzc{Ab})$ can be made into an operad, called the \textbf{convolution operad} of $\mathfrak{C}$ and $\mathfrak{P}$. Recall that to any $dg$-operad $\mathfrak{P}$ there is an associated dg pre-Lie algebra  whose underlying differentially graded abelian group is $\prod_{n}\mathfrak{P}(n)$. Denote by

$$\textbf{Hom}_{\Sigma}(\mathfrak{C},\mathfrak{P})\defeq\prod_{n\ge0}\textrm{Hom}_{\Sigma_{n}}(\mathfrak{C}(n),\mathfrak{P}(n))$$
the subobject of $\prod_{n\ge0}\textbf{Hom}(\mathfrak{C}(n),\mathfrak{P}(n))$ consisting of $\Sigma$-equivariant maps.

\begin{prop}
$\textrm{Hom}_{\Sigma}(\mathfrak{C},\mathfrak{P})$ is a sub dg pre-Lie algebra of the dg pre-Lie algebra associated to the convolution operad.
\end{prop}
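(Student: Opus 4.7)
The plan is to show that $\textbf{Hom}_{\Sigma}(\mathfrak{C},\mathfrak{P})$ is closed under the two pieces of structure on the dg pre-Lie algebra associated to the convolution operad, namely the differential $\delta$ and the pre-Lie product $\star$ defined by $f \star g = \sum_{i=1}^{m} f \circ_i g$ for $f$ of arity $m$. The pre-Lie relation itself is then automatic, since it already holds in the ambient dg pre-Lie algebra $\prod_{n \ge 0} \textbf{Hom}(\mathfrak{C},\mathfrak{P})(n)$.

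First I would verify that $\delta$ restricts to $\textbf{Hom}_{\Sigma}(\mathfrak{C},\mathfrak{P})$. At arity $n$ the differential on the convolution operad has the shape $\delta(f) = d_{\mathfrak{P}} \circ f - (-1)^{|f|} f \circ d_{\mathfrak{C}}$. Since $\mathfrak{C}$ and $\mathfrak{P}$ are (co)operads in $\mathpzc{M}$, both $d_{\mathfrak{C}}$ and $d_{\mathfrak{P}}$ are morphisms of $\Sigma$-modules, hence $\Sigma_n$-equivariant in each arity; pre- or post-composing with such maps preserves $\Sigma_n$-equivariance, so $\delta(f) \in \textrm{Hom}_{\Sigma_n}(\mathfrak{C}(n),\mathfrak{P}(n))$ whenever $f$ is.

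The heart of the proposition is the second verification: that $f \star g$ is $\Sigma_{m+n-1}$-equivariant whenever $f \in \textrm{Hom}_{\Sigma_m}(\mathfrak{C}(m),\mathfrak{P}(m))$ and $g \in \textrm{Hom}_{\Sigma_n}(\mathfrak{C}(n),\mathfrak{P}(n))$. Unpacking $f \circ_i g$ as the composite that first applies the infinitesimal decomposition $\Delta_{(1)}$ of $\mathfrak{C}$, then $f$ and $g$ on the appropriate slots, and then the infinitesimal composition $\gamma_{(1)}$ of $\mathfrak{P}$, one uses the standard $\Sigma$-equivariance of partial (de)composition in (co)operads to obtain, for each $\sigma \in \Sigma_{m+n-1}$, an identity of the form
$$\sigma \cdot (f \circ_i g) \cdot \sigma^{-1} = (\tau_1 \cdot f \cdot \tau_1^{-1}) \circ_{\tau_1^{-1}(i)} (\tau_2 \cdot g \cdot \tau_2^{-1}),$$
where $\tau_1 \in \Sigma_m$ and $\tau_2 \in \Sigma_n$ depend on $\sigma$ and $i$, and $i \mapsto \tau_1^{-1}(i)$ is a bijection of $\{1,\dots,m\}$. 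Using the $\Sigma$-equivariance hypotheses on $f$ and $g$, the inner conjugations drop out, and reindexing the sum gives $\sigma \cdot (f \star g) \cdot \sigma^{-1} = f \star g$, as required.

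The main obstacle is the bookkeeping in the second step: carefully tracking how $\Sigma_{m+n-1}$ decomposes on the $i$-th partial composition into an action of $\Sigma_m \times \Sigma_n$ on the factors together with a reindexing of the summation. This is entirely formal and mirrors the classical argument for the convolution dg pre-Lie algebra of (co)operads in vector spaces, and it goes through verbatim in our additive setting because $\Delta_{(1)}$ and $\gamma_{(1)}$ are morphisms of $\Sigma$-modules.
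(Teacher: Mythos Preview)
The paper does not actually prove this proposition; it is stated without proof, with the surrounding discussion deferring to \cite{loday2012algebraic} Section~6.4. Your proposal is the standard argument (essentially the one in Loday--Vallette, Proposition~6.4.3), and it is correct: closure under the differential is immediate from the $\Sigma$-equivariance of $d_{\mathfrak{C}}$ and $d_{\mathfrak{P}}$, and closure under $\star$ follows from the equivariance of $\Delta_{(1)}$ and $\gamma_{(1)}$ together with the reindexing you describe.
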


\begin{defn}
$\textrm{Hom}_{\Sigma}(\mathfrak{C},\mathfrak{P})$ is called the \textbf{convolution dg pre-Lie algebra}. The binary operation is denoted $\star$
\end{defn}

As shown in \cite{loday2012algebraic} Chapter 11, a twisting morphism is precisely a Maurer-Cartan element in the convolution dg pre-Lie algebra $\textrm{Hom}_{\Sigma}(\mathfrak{C},\mathfrak{P})$.

\subsubsection{Bar and Cobar Constructions for Algebras}
Here we recall from \cite{loday2012algebraic} Chapter 11 how twisting morphisms induce adjunctions of algebras and coalgebras. Let $\alpha:\mathfrak{C}\rightarrow\mathfrak{P}$ be a twisting morphism. We construct an adjoint pair of functors

$$\adj{\Omega_{\alpha}}{\mathpzc{coAlg}^{conil}_{\mathfrak{C}}(\mathpzc{M})}{\mathpzc{Alg}_{\mathfrak{P}}(\mathpzc{M})}{B_{\alpha}}$$

Let $C$ be a $\mathfrak{C}$-coalgebra. The underlying graded algebra of $\Omega_{\alpha}C$ is the free algebra $\mathfrak{P}(C)$ on $C$. The differential however is augmented using the twisting morphism. Namely, it is the sum of the derivations $d_{1}=d_{\mathfrak{P}}\circ C-\mathfrak{P}\circ; d_{C}$ and $d_{2}$, where $d_{2}$ is the unique derivation extending the map
\begin{displaymath}
\xymatrix{
C\ar[r]^{\Delta} & C\circ C\ar[r]^{\alpha\circ Id_{C}} & \mathfrak{P}\circ C
}
\end{displaymath}

\begin{prop}[\cite{loday2012algebraic} Lemma 11.2.3]
$d_{1}+d_{2}$ is a square-zero derivation on $\mathfrak{P}(C)$. 
\end{prop}

\begin{defn}
$(\mathfrak{P}(C),d_{1}+d_{2})$ is called the \textbf{cobar construction of }$A$\textbf{ with respect to }$\alpha$.
\end{defn}

Now let $A$ be a $\mathfrak{P}$-algebra in $\mathpzc{E}$. The underlying graded coalgebra of $B_{\alpha}A$ is the cofree coalgebra $\mathfrak{C}(A)$. Denote by $d_{1}$ the square zero coderivation $d_{\mathfrak{C}}\circ Id_{A}+\mathfrak{C}\circ' d_{A}$. There is a unique coderivation $d_{2}$ extending the degree $-1$ map

\begin{displaymath}
\xymatrix{
\mathfrak{C}\circ A\ar[r]^{\alpha\circ Id_{A}} & \mathfrak{P}\circ A\ar[r]^{\gamma_{A}} & A
}
\end{displaymath}

\begin{prop}[\cite{loday2012algebraic} Lemma 11.2.9]
$d_{1}+d_{2}$ is a square-zero derivation on $\mathfrak{C}(A)$.
\end{prop}

\begin{defn}
$B_{\alpha}A\defeq(\mathfrak{C}(A),d_{1}+d_{2})$ is called the \textbf{bar construction of } $A$ \textbf{with respect to }$\alpha$.
\end{defn}

\begin{prop}[\cite{loday2012algebraic} Proposition 11.3.2]
There is an adjunction
$$\adj{\Omega_{\alpha}}{\mathpzc{coAlg}^{conil}_{\mathfrak{C}}(\mathpzc{M})}{\mathpzc{Alg}_{\mathfrak{P}}(\mathpzc{M})}{B_{\alpha}}$$
\end{prop}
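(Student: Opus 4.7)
The plan is to follow the standard template for the operadic bar--cobar adjunction (as in \cite{loday2012algebraic} Chapter 11), adapted to the present additive setting. The strategy is to introduce an intermediate set of ``twisting morphisms relative to $\alpha$'' between a coalgebra $C$ and an algebra $A$, and to realise both $\mathrm{Hom}_{\mathpzc{Alg}_\mathfrak{P}}(\Omega_\alpha C, A)$ and $\mathrm{Hom}_{\mathpzc{coAlg}^{conil}_\mathfrak{C}}(C, B_\alpha A)$ as being naturally in bijection with it. Specifically, I would define $\mathrm{Tw}_\alpha(C,A)$ to consist of degree $0$ maps $f: C \to A$ in $\mathpzc{M}$ satisfying a Maurer--Cartan-type equation
$$\partial(f) + \gamma_A \circ (\alpha \circ f) \circ \Delta_C = 0,$$
where $\partial$ is the internal differential induced by $d_C$ and $d_A$, and the middle term uses the infinitesimal decomposition of $C$, the twisting morphism $\alpha$, and the $\mathfrak{P}$-algebra structure map $\gamma_A$. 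One checks that this set is functorial in both variables in the appropriate way.

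Next I would establish a natural bijection $\mathrm{Hom}_{\mathpzc{Alg}_\mathfrak{P}}(\Omega_\alpha C, A) \cong \mathrm{Tw}_\alpha(C,A)$. Since the underlying graded $\mathfrak{P}$-algebra of $\Omega_\alpha C$ is the free $\mathfrak{P}$-algebra $\mathfrak{P}(C)$, the adjunction from the free/forgetful adjunction of Section \ref{appcatalg} identifies morphisms of graded $\mathfrak{P}$-algebras $\mathfrak{P}(C) \to A$ with degree $0$ maps $f: C \to A$ in $\mathpzc{M}$. One then uses the universal property from Corollary \ref{coproder} (together with the proposition on extending derivations) to reduce the condition that this extension commutes with the differentials to a condition on the restriction $f|_C$. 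The cobar differential on $\mathfrak{P}(C)$ was constructed precisely as $d_1 + d_2$, where $d_1$ encodes $d_\mathfrak{P}$ and $d_C$ and $d_2$ encodes $\alpha$ via the infinitesimal coproduct, so a direct computation shows that compatibility with this sum on generators is exactly the Maurer--Cartan equation above.

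Third, I would dually establish $\mathrm{Hom}_{\mathpzc{coAlg}^{conil}_\mathfrak{C}}(C, B_\alpha A) \cong \mathrm{Tw}_\alpha(C,A)$. Conilpotence of $C$ is exactly what allows the cofreeness property of $\mathfrak{C}(A)$ to be used: morphisms of graded conilpotent $\mathfrak{C}$-coalgebras $C \to \mathfrak{C}(A)$ are in bijection with degree $0$ maps $C \to A$ obtained by projection onto arity $1$. Applying the dual of the derivation-extension principle, the condition that such a morphism commutes with the bar differential $d_1 + d_2$ on $\mathfrak{C}(A)$ reduces, after composing with the projection to $A$, to precisely the same Maurer--Cartan equation. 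Composing the two bijections yields the desired natural isomorphism of $\mathrm{Hom}$-sets, which is plainly functorial in $C$ and $A$, giving the adjunction.

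The main obstacle is purely bookkeeping: one must verify carefully that the two differential-compatibility conditions (one on the cobar side, one on the bar side) produce the \emph{same} equation when transported to $\mathrm{Hom}_\mathpzc{M}(C, A)$. This requires tracking signs arising from the Koszul sign convention in our chain complex conventions, together with the interaction between the infinitesimal (de)composition maps $\gamma_{(1)}, \Delta_{(1)}$ and the full (co)composition maps $\gamma, \Delta$. Everything else is a formal consequence of the free/cofree adjunctions already established in Section \ref{appcatalg} and the derivation-extension lemmas of Section \ref{secder}.
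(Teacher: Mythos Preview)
Your proposal is correct and follows exactly the standard argument from \cite{loday2012algebraic} Chapter 11, which is the reference the paper defers to for this result. The paper itself does not supply a proof of this proposition; it falls under the blanket remark at the start of Section \ref{appcatalg} that unproved statements there can be found in \cite{loday2012algebraic}, so your write-up via the intermediate set $\mathrm{Tw}_\alpha(C,A)$ is precisely the intended argument.
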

In the context of the bar-cobar adjunction, particularly for Koszul duals of quadratic operads, it is convenient to introduce the \textit{shifting (co)operad}. If $V$ is an object of $\mathpzc{M}$ then one defines the $\Sigma$-module $\underline{End}_{V}$ by
$$\underline{End}_{V}(n)\defeq\underline{Hom}_{\Sigma_{N}}(V^{\otimes n},V)$$
where $\Sigma_{n}$ acts on $V$ trivially. This can be endowed with the structure of an operad, which we denote by $End_{V}$, and a divided powers cooperad, which we denote by $End^{c}_{V}$. 

\begin{defn}
The \textbf{shifting operad}, denoted $\mathfrak{S}$, is the operad $End_{R[1]}$. The \textbf{shifting divided powers cooperad} is $End^{c}_{R[1]}$. 
\end{defn} 

By \cite{loday2012algebraic} Exercise 5.11.4 we have the following.

\begin{prop}
For any object $V\in {}_{R}\mathpzc{Mod}$ and any $\Sigma$-module $\mathfrak{P}$ there is an isomorphism, natural in $V$,
$$\mathfrak{P}(V)[-1]\cong (\mathfrak{S}\otimes_{H}\mathfrak{P})(V[-1])$$
In particular in $\mathfrak{P}$ is an operad the shift functor induces an equivalence of categories
$$[-1]:\mathpzc{Alg}_{\mathfrak{P}}(\mathpzc{M})\rightarrow\mathpzc{Alg}_{\mathfrak{S}\otimes_{H}\mathfrak{P}}(\mathpzc{M})$$
Similarly If $\mathfrak{C}$ is a divided powers cooperad the shift functor induces an equivalence of categories.
$$[-1]:\mathpzc{coAlg}^{conil}_{\mathfrak{C}}(\mathpzc{M})\rightarrow\mathpzc{coAlg}^{conil}_{\mathfrak{S}^{c}\otimes_{H}\mathfrak{C}}(\mathpzc{M})$$
\end{prop}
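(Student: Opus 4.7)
The plan is to deduce the natural isomorphism of Schur functors by a direct Koszul-sign computation and then promote it to the stated equivalences via the universal property of endomorphism (co)operads.

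First, I would establish the identity $(V[-1])^{\otimes n} \cong V^{\otimes n}[-n] \otimes \mathrm{sgn}_n$ of $\Sigma_n$-equivariant graded objects, where $\mathrm{sgn}_n$ denotes the sign representation; this is iterated application of the symmetric braiding on degree $-1$ suspensions, i.e.\ the Koszul sign rule. Using this, the right hand side of the claimed isomorphism unpacks as
\[
(\mathfrak{S}\otimes_H\mathfrak{P})(V[-1]) \;\cong\; \bigoplus_{n\ge 0} \bigl(\mathfrak{S}(n)\otimes\mathfrak{P}(n)\otimes\mathrm{sgn}_n\bigr)[-n] \otimes_{\Sigma_n} V^{\otimes n},
\]
while the left hand side is $\bigoplus_n \mathfrak{P}(n) \otimes_{\Sigma_n} V^{\otimes n}[-1]$. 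Matching summands reduces to identifying $\mathfrak{S}(n)\otimes\mathrm{sgn}_n$, as a $\Sigma_n$-module, with $R[n-1]$ carrying trivial action, which follows directly by evaluating $\underline{\textrm{Hom}}((R[1])^{\otimes n},R[1])$ and tracking how $\Sigma_n$ acts through the Koszul signs on the source. Naturality in $V$ is visible from the construction.

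Second, I would promote this to the equivalence of algebra categories via the universal characterisation of algebras by endomorphism operads. A $\mathfrak{P}$-algebra structure on $A$ is the same as an operad map $\mathfrak{P} \to \text{End}_A$, and the same Schur-functor computation as above yields a canonical isomorphism of operads $\text{End}_{A[-1]} \cong \mathfrak{S} \otimes_H \text{End}_A$. Hadamard-tensoring a structure map $\mathfrak{P} \to \text{End}_A$ with $\mathrm{id}_{\mathfrak{S}}$ therefore produces $\mathfrak{S}\otimes_H\mathfrak{P} \to \mathfrak{S}\otimes_H\text{End}_A \cong \text{End}_{A[-1]}$, which is the required $(\mathfrak{S}\otimes_H\mathfrak{P})$-algebra structure on $A[-1]$. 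An inverse functor is built symmetrically from the opposite shift together with the complementary shift operad, with the composites canonically trivialised; this yields the stated equivalence.

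The cooperadic case is formally dual: conilpotent $\mathfrak{C}$-coalgebra structures correspond to maps out of the co-endomorphism cooperad, and one uses $\mathfrak{S}^c = \text{End}^c_{R[1]}$ in the same way. Conilpotence is preserved because the coradical filtration is intrinsic to the $\Sigma$-module and the shift $[-1]$ is an equivalence of the underlying symmetric monoidal category. The main obstacle is purely the bookkeeping of Koszul signs and sign representations through the tensor powers, composite products, and Hadamard tensor products, ensuring the identifications are compatible with the (co)operadic structure maps; once the $\Sigma$-module isomorphism is pinned down with the correct conventions, the extension to (co)algebras is essentially formal.
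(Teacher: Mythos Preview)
The paper does not provide its own proof; it merely cites \cite{loday2012algebraic}, page 234. Your approach---computing the Koszul-sign identification $(V[-1])^{\otimes n} \cong V^{\otimes n}[-n] \otimes \mathrm{sgn}_n$, evaluating $\mathfrak{S}(n)$ explicitly, and then transporting algebra structures via the identification $\mathrm{End}_{A[-1]} \cong \mathfrak{S} \otimes_H \mathrm{End}_A$---is exactly the standard argument that reference contains, so you are aligned with the paper's intended proof.

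Two minor points are worth flagging. First, your reduction of the algebra equivalence to operad maps $\mathfrak{P} \to \mathrm{End}_A$ presupposes that $\mathrm{End}_A$ exists, i.e.\ that the ambient monoidal category is closed; at this point in the paper closedness has not yet been imposed (it is only assumed from the duality subsection onward). This is easily circumvented: the natural Schur-functor isomorphism already lets you transport the structure map $\mathfrak{P}(A) \to A$ directly to a map $(\mathfrak{S} \otimes_H \mathfrak{P})(A[-1]) \to A[-1]$ and check the associativity and unit axioms against those of $\mathfrak{P}$ and the operad structure on $\mathfrak{S}$, which works in the stated generality. The conilpotent coalgebra case is handled the same way. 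Second, be careful with the paper's shift convention $(A[1])_i = A_{i+1}$: under it $R[1]$ sits in homological degree $-1$, so $\mathfrak{S}(n) = \underline{\mathrm{Hom}}((R[1])^{\otimes n}, R[1])$ is concentrated in degree $n-1$, i.e.\ $\mathfrak{S}(n) \cong R[1-n]$ rather than $R[n-1]$ in the paper's notation. Make sure your degree bookkeeping is consistent with that convention throughout.
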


\subsection{Duality and Twisting Morphisms}
In this section $\mathpzc{M}={}_{R}\mathpzc{Mod}(Ch(\mathpzc{E}))$ where $\mathpzc{E}$ is a closed monoidal category and $R$ is a unital commutative monoid in $Ch(\mathpzc{E})$. In particular $\mathpzc{M}$ is closed monoidal as well. 
Let $\mathfrak{C}$ be a divided powers cooperad in $\mathpzc{M}$. The dualising functor $(-)^{\vee}:\mathpzc{M}\rightarrow\mathpzc{M}^{op}$ is lax monoidal, so it induces a functor
$$(-)^{\vee}:\mathpzc{coAlg}_{\mathfrak{C}}\rightarrow(\mathpzc{Alg}_{\mathfrak{C}^{\vee}})^{op}$$
Let $\alpha:\mathfrak{C}\rightarrow\mathfrak{P}$ be a twisting morphism. Denote by
$$\hat{C}_{\alpha}:\mathpzc{Alg}_{\mathfrak{P}}\rightarrow\mathpzc{Alg}_{(\mathfrak{S}^{c}\otimes_{H}\mathfrak{C})^{\vee}}$$
the composition $(-)^{\vee}\circ[-1]\circ B_{\alpha}$.

The hat notation is supposed to invoke comparisons with formal power series, which will be evident for the case of duality between cocommutative coalgebras and Lie algebras. Indeed the underlying $\Z$-graded algebra of 
$\hat{C}_{\alpha}(\mathfrak{g})$ is 
$$\prod_{n}((\mathfrak{S}^{c}(n)\otimes\mathfrak{C}(n))\otimes_{\Sigma_{n}}\mathfrak{g}^{\otimes n}[-1])^{\vee}$$
 Consider the $\Z$-graded `polynomial' algebra
 $$\bigoplus_{n}((\mathfrak{S}^{c}(n)\otimes\mathfrak{C}(n))\otimes_{\Sigma_{n}}\mathfrak{g}^{\otimes n}[-1])^{\vee}$$
  There is a natural map of algebras
   $$\bigoplus_{n}((\mathfrak{S}^{c}(n)\otimes\mathfrak{C}(n))\otimes_{\Sigma_{n}}\mathfrak{g}^{\otimes n}[-1])^{\vee}\rightarrow\prod_{n}((\mathfrak{S}^{c}(n)\otimes\mathfrak{C}(n))\otimes_{\Sigma_{n}}\mathfrak{g}^{\otimes n}[-1])^{\vee}$$ 
  Under nice circumstances, which we establish below, the differential on $\hat{C}_{\alpha}$ restricts to a differential on this algebra. Thus we get a differentially-graded algebra 
  $$C_{\alpha}(\mathfrak{g})\rightarrow \hat{C}_{\alpha}(\mathfrak{g})$$

\begin{defn}\label{defn:separable}
Let $\alpha:\mathfrak{C}\rightarrow\mathfrak{P}$ be a twisting morphism. A $\mathfrak{P}$-algebra $\mathfrak{g}$ is said to be $\alpha$-\textbf{separable} if 
\begin{enumerate}
\item
the map $(\mathfrak{S}^{c}(n)\otimes\mathfrak{C}(n))^{\vee}\otimes_{\Sigma_{n}}\mathfrak{g}^{\vee}\otimes\ldots\otimes\mathfrak{g}^{\vee}[1]\rightarrow((\mathfrak{S}^{c}(n)\otimes\mathfrak{C}(n))\otimes_{\Sigma_{n}}\mathfrak{g}^{\otimes n}[-1])^{\vee}$ is an isomorphism.
\item
the map $\mathfrak{C}^{\vee}\circ\mathfrak{g}^{\vee}\rightarrow(\mathfrak{C}\circ\mathfrak{g})^{\vee}$ is a monomorphism.
\item
 the map $\mathfrak{g}^{\vee}\rightarrow(\mathfrak{P}\circ\mathfrak{g})^{\vee}$ factors through $\mathfrak{P}^{\vee}\circ\mathfrak{g}^{\vee}$. 
\end{enumerate}
\end{defn}

\begin{prop}\label{polykoszul}
If $\mathfrak{g}$ is a separable $\mathfrak{P}$-algebra then the differential on $\hat{C}_{\alpha}(\mathfrak{g})$ restricts to a differential on $C_{\alpha}(\mathfrak{g})$. 
\end{prop}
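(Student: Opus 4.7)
The plan is to use the fact that $C_\alpha(\mathfrak{g})$ can be identified, via separability condition (1), with the free $(\mathfrak{S}^c\otimes_H\mathfrak{C})^\vee$-algebra on the generators $\mathfrak{g}^\vee[1]$. Since derivations on a free algebra are uniquely determined by their restriction to the generators, it suffices to verify that $d_{\hat{C}_\alpha}$ maps $\mathfrak{g}^\vee[1]\subseteq C_\alpha(\mathfrak{g})\subseteq\hat{C}_\alpha(\mathfrak{g})$ into the sub-object $C_\alpha(\mathfrak{g})$.

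I would then decompose $d_{\hat{C}_\alpha}$ as the dual of the bar differential $d_1+d_2$ (after shift). The $d_1$-part is assembled from the internal differentials $d_\mathfrak{C}$ and $d_\mathfrak{g}$ and is arity-preserving on $B_\alpha(\mathfrak{g})$, so $d_1^\vee$ is arity-preserving on $\hat{C}_\alpha(\mathfrak{g})$ and in particular sends each arity summand, including the arity-$1$ generators, into itself. This handles the internal part immediately.

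The real content lies in treating $d_2^\vee$. By construction $d_2$ is the coderivation on the cofree coalgebra $\mathfrak{C}(\mathfrak{g})=\mathfrak{C}\circ\mathfrak{g}$ whose projection onto the cogenerators $\mathfrak{g}$ equals the composite $\phi\defeq\gamma_\mathfrak{g}\circ(\alpha\circ Id_\mathfrak{g})$. Dually, the restriction of $d_2^\vee$ to the generators is precisely $\phi^\vee:\mathfrak{g}^\vee\to(\mathfrak{C}\circ\mathfrak{g})^\vee$. Separability condition (3) lets me factor $\gamma_\mathfrak{g}^\vee:\mathfrak{g}^\vee\to(\mathfrak{P}\circ\mathfrak{g})^\vee$ through $\mathfrak{P}^\vee\circ\mathfrak{g}^\vee$; post-composing with the evident map $\alpha^\vee\circ Id_{\mathfrak{g}^\vee}:\mathfrak{P}^\vee\circ\mathfrak{g}^\vee\to\mathfrak{C}^\vee\circ\mathfrak{g}^\vee$ and invoking the naturality square
\[
\begin{array}{ccc}
\mathfrak{P}^\vee\circ\mathfrak{g}^\vee & \xrightarrow{\alpha^\vee\circ Id} & \mathfrak{C}^\vee\circ\mathfrak{g}^\vee\\
\downarrow & & \downarrow\\
(\mathfrak{P}\circ\mathfrak{g})^\vee & \xrightarrow{(\alpha\circ Id)^\vee} & (\mathfrak{C}\circ\mathfrak{g})^\vee
\end{array}
\]
I would conclude that $\phi^\vee$ factors through $\mathfrak{C}^\vee\circ\mathfrak{g}^\vee\hookrightarrow(\mathfrak{C}\circ\mathfrak{g})^\vee$, where the monomorphism is supplied by condition (2). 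Combined with the identification of condition (1), this exhibits $d_2^\vee(\mathfrak{g}^\vee[1])\subseteq C_\alpha(\mathfrak{g})$, as required.

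The principal obstacle I anticipate is bookkeeping rather than conceptual: keeping the shifting-operad contribution (which is essentially unary at each arity) and the sign conventions from the shift $[-1]$ consistent with the identification provided by condition (1), and verifying that the monomorphism in condition (2) is indeed the sub-object through which $\phi^\vee$ factors (which requires the commutativity of the naturality square above, itself a consequence of the functoriality of dualization with respect to the composite product). Once this diagram chase is carried out at the level of generators, the universal property of the free $(\mathfrak{S}^c\otimes_H\mathfrak{C})^\vee$-algebra promotes it to the desired restriction of the full differential on $\hat{C}_\alpha(\mathfrak{g})$.
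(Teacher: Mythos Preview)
Your proposal is correct and follows essentially the same approach as the paper's proof. Both reduce to checking the restriction of the differential to the generators $\mathfrak{g}^{\vee}[1]$ (using freeness from condition (1) and the monomorphism from condition (2)), and both verify the factorization through $\mathfrak{C}^{\vee}\circ\mathfrak{g}^{\vee}$ via the same commutative naturality square relating $\alpha^{\vee}\circ Id$ and $(\alpha\circ Id)^{\vee}$, with condition (3) supplying the bottom-row factorization; the paper's version is simply terser, folding your $d_1$/$d_2$ decomposition into a single statement about the underlying graded map.
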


\begin{proof}
By our assumptions the map $C_{\alpha}(\mathfrak{g})\rightarrow \hat{C}_{\alpha}(\mathfrak{g})$ is a monomorphism. Moreover the algebra $C_{\alpha}(\mathfrak{g})$ is free on $\mathfrak{g}^{\vee}[1]$, so it suffices to check that the restriction of the differential to $\mathfrak{g}^{\vee}[1]$ factors through $C_{\alpha}(\mathfrak{g})$. Then it will automatically square to zero. In fact we shall check that the underlying graded map of $\mathfrak{g}^{\vee}\rightarrow C_{\alpha}(\mathfrak{g})[-1]\cong(\mathfrak{C}\circ_{\alpha}\mathfrak{g})^{\vee}$ factors through $\mathfrak{C}^{\vee}\circ\mathfrak{g}^{\vee}$.  This follows from the commutative diagram below.
\begin{displaymath}
\xymatrix{
(\mathfrak{g})^{\vee}\ar[r]^{}&(\mathfrak{P}\circ\mathfrak{g})^{\vee}\ar[r]^{\;\;\;(\alpha\circ Id_{\mathfrak{G}})^{\vee}} & (\mathfrak{C}\circ\mathfrak{g})^{\vee}\\
(\mathfrak{g})^{\vee}\ar[r]\ar@{=}[u] & \mathfrak{P}^{\vee}\circ\mathfrak{g}^{\vee}\ar[r]^{\alpha^{\vee}\circ Id}\ar[u] & \mathfrak{C}^{\vee}\circ\mathfrak{g}^{\vee}\ar[u]
}
\end{displaymath}
\end{proof}
%

\subsection{Non-Symmetric Operads}

There is a non-symmetric version of the theory detailed above. Details can be found in \cite{loday2012algebraic} Section 5.8. Let $(\mathpzc{E},\otimes)$ be a symmetric monoidal additive category. Consider the category $Gr_{\mathbb{N}_{0}}(\mathpzc{E})$ of graded objects in $\mathpzc{E}$. 

\begin{defn}
Let $M$ and $N$ be graded objects. The \textbf{non-symmetric composite product} of $M$ and $N$, denoted $M\circ_{ns} N$ is the graded object given by
$$(M\circ_{ns} N)(n)=\bigoplus_{k}M(k)\otimes N^{\otimes k}(n)$$
\end{defn}

\begin{defn}
The graded object $I$ is defined by $M(i)=0$ for $i\neq 1$ and $I(1)=k$.
\end{defn}

With these the constructions and results analogous to those for symmetric operads above go through mutatis mutandis.\newline
\\
Denote the category of non-symmetric operads by $\mathpzc{Op}_{ns}$. As with any module category there is an adjunction
$$\adj{k[\Sigma]\otimes(-)}{\mathpzc{Grad}(\mathpzc{E})}{\Sigma-\mathpzc{Mod}}{|-|}$$
which induces an adjunction
$$\adj{(-)^{\Sigma}}{\mathpzc{Op}_{ns}}{\mathpzc{Op}}{|-|}$$

\section{Homotopy Theory of Algebras}\label{algcoalghom}

In this section we recall some general results regarding the existence of model structures on categories of algebras over operads. Standard references for the homotopy theory of such algebras are \cite{hinich1997homological} and \cite{MR2016697}, but we will also need some more general results including from \cite{pavlov2018admissibility}, \cite{harper2010homotopy}, \cite{white2018bousfield}, and \cite{spitzweck2001operads}.

\subsection{The Monoid Axiom}

Let $\mathpzc{M}$ be a combinatorial model category equipped with a symmetric monoidal structure.

\begin{defn}
$\mathpzc{M}$ is said to \textbf{satisfy the monoid axiom} if any transfinite composition of pushouts of maps of the form $X\otimes f$, where $f$ is an acyclic cofibration, is a weak equivalence.
\end{defn}

The important consequence of the monoid axiom is the following, which is \cite{schwede} Theorem 4.1/ Remark 4.2.

\begin{thm}
Let $\mathpzc{M}$ be a combinatorial model category equipped with a symmetric monoidal structure which satisfies the monoid axiom. Then for any unital associative monoid $A$ in $\mathpzc{M}$,  the transferred model structure exists on the categories ${}_{A}\mathpzc{Mod}(\mathpzc{M})$ $\mathpzc{Mod}(\mathpzc{M})_{A}$ of left and right $A$-modules respectively.
\end{thm}

\begin{convention}
From now on, unless stated otherwise, the monoid axiom will be assumed.
\end{convention}

\subsubsection{Pushout-products}
Suppose that $f:X\rightarrow Y$ and $g:X'\rightarrow Y'$ are maps in $\mathpzc{M}$. The \textbf{pushout-product} map to be the unique map $X'\otimes Y\coprod_{X\otimes Y}X\otimes Y'\rightarrow X'\otimes Y'$ determined by the maps $X'\otimes Y\rightarrow X'\otimes Y'$ and $X\otimes Y'\rightarrow X'\otimes Y'$.

\begin{defn}
Let $\mathpzc{M}$ be a model category which is equipped with a symmetric monoidal structure. Let $\mathcal{S}$ be a class of maps in $\mathpzc{M}$.
\begin{enumerate}
\item
$\mathcal{S}$ is said to \textbf{satisfy the pushout-product axiom} if it is closed under arbitrary pushout-products.
\item
$\mathcal{S}$ is said to \textbf{satisfy the weak pushout-product axiom} if whenever $s_{1}\Box\ldots\Box s_{n}$ is an iterated pushout-product of maps in $\mathcal{S}$, and one of the $s_{i}$ is a weak equivalence, then $s_{1}\Box\ldots\Box s_{n}$ is a weak equivalence. 
\item
$\mathpzc{M}$ is said to be a \textbf{weak monoidal model category} if cofibrations satisfy the weak pushout-product axiom.
\item
$\mathpzc{M}$ is said to be a \textbf{monoidal model category} if cofibrations satisfy the pushout-product axiom and the weak pushout-product axiom.
\end{enumerate}
\end{defn}

\begin{rem}
If $\mathpzc{M}$ is a weak monoidal model category then the tensor product functor
$$\otimes:\mathpzc{M}\times\mathpzc{M}\rightarrow\mathpzc{M}$$
is left derivable in the sense of homotopical categories (see. e.g \cite{Riehl}). Thus we can make sense of $\otimes^{\mathbb{L}}$.
\end{rem}

Following \cite{pavlov2018admissibility} there is a symmetric version of this as well. Let $\mathpzc{M}$ be a model category equipped with a symmetric monoidal structure, and let $s$ be a map in $\mathpzc{M}$. Let $n=(n_{1},\ldots,n_{e})$ be a mult-index, and $s=(s_{1},\ldots,s_{e})$ a family of maps in $\mathpzc{M}$. Write $\Sigma_{n}=\prod_{i}\Sigma_{n_{i}}$ and 
$$s^{\Box n}=\Box_{i}s_{i}^{\Box n_{i}}$$
This is equipped with a (left) action of $\Sigma_{n}$. Now let $y$ be a map of (right) $\Sigma_{n}$-modules. We consider the $\Sigma_{n}$-equivariant pushout product
$$y\Box_{\Sigma_{n}}s^{\Box n}$$

\begin{defn}[\cite{pavlov2018admissibility} Definition 2.1 (vii)]
A weak equivalence of $y$ of right $\Sigma_{n}$-modules is said to be \textbf{symmetric flat} if $y\Box_{\Sigma_{n}}s^{\Box n}$ is a weak equivalence for any family of cofibrations $s$.
\end{defn}

\subsection{$K$-Transversality}

Before establishing the existence of model structures on certain categories of algebras, we need to recall some terminology concerning compatibility between model category structures and monoidal category structures. The definitions and proofs below will appear in \cite{dang}, but we record them here for completeness. 
 
\begin{defn}
Let $\mathpzc{M}$ be a model category equipped with a monoidal structure, and let $\mathcal{C}\subset\mathpzc{M}$ An object $X$ of $\mathpzc{M}$ is said to be $K$-\textbf{transverse to} $\mathcal{C}$ if for any weak equivalence $f:A\rightarrow B$ in $\mathcal{C}$, the map $X\otimes A\rightarrow X\otimes B$ is a weak equivalence. $X$ is said to be $K$-\textbf{flat} if it is $K$-transverse to $\mathpzc{M}$.
\end{defn}

In model categories (or more generally, homotopical categories) the notion of derived functors makes sense. If $X$ is $K$-transverse to $\mathcal{C}$ and for any $A\in\mathpzc{M}$ the tensor product functor $A\otimes(-)$ is left-derivable, then for any object $A$ of $\mathcal{C}$ the map $X\otimes^{\mathbb{L}}A\rightarrow X\otimes A$ is an equivalence.

Let us give some operations under which $K$-transversality is stable. First we need a definition

\begin{defn}\label{defn:lprop}
\begin{enumerate}
\item
Let $\mathpzc{M}$ be a model category. A map $f:X\rightarrow Y$ in $\mathpzc{M}$ is said to be \textbf{left proper } if any pushout diagram
\begin{displaymath}
\xymatrix{
X\ar[d]^{f}\ar[r] & A\ar[d]\\
Y\ar[r] & P
}
\end{displaymath}
 is a homotopy pushout. One defines relative right properness dually.
\item
Let $\mathpzc{M}$ be a model category equipped with a symmetric monoidal structure. For $\mathcal{C}$ a class of objects in $\mathpzc{M}$, a map $f:X\rightarrow Y$ in $\mathpzc{M}$ is said to be $\mathcal{C}$-\textbf{monoidally left proper } if $C\otimes f$ is left proper for any $C\in\mathcal{C}$. 
\end{enumerate}
\end{defn}

\begin{defn}
A weak monoidal model category $\mathpzc{M}$ is said to be an \textbf{almost monoidal model category} if any pushout-product of cofibrations is left proper. 
\end{defn}

In particular if $\mathpzc{M}$ is an almost monoidal model category then cofibrations are $\mathcal{C}$-monoidally left proper, where $\mathcal{C}$ is the class of cofibrant objects.

\begin{defn}[\cite{kelly2016homotopy} Definition A.3.17]\label{defn:weaklyelmodel}
Let $\mathcal{S}$ be a collection of maps in a co-complete model category $\mathpzc{C}$. $\mathpzc{C}$ is said to be \textbf{weakly }$\mathcal{S}$-\textbf{elementary} if for any ordinal $\lambda$, and any functor $X\in\mathpzc{Fun}_{\mathcal{S}}(\lambda,\mathpzc{C})^{cocont}$, the colimit $\textrm{colim}_{\alpha<\lambda}X_{\alpha}$ is a homotopy colimit. 
\end{defn}
\begin{example}[\cite{kelly2016homotopy} Example A.3.18]
\begin{enumerate}
\item
If $\mathcal{S}$ is the class of cofibrations then $\mathpzc{C}$ is weakly $\mathcal{S}$-elementary.
\item
If $\mathcal{S}$ is a class of weak equivalences such that any transfinite composition of maps in $\mathcal{S}$ is a weak equivalence, then it follows from the $2$-out-of-$3$ property that $\mathpzc{C}$ is weakly $\mathcal{S}$-elementary.
\end{enumerate}
\end{example}

\begin{lem}\label{Lem:pushouttransK-flat}
Let $\mathpzc{M}$ be an almost monoidal model category.
\begin{enumerate}
\item
Let
\begin{displaymath}
\xymatrix{
A\ar[d]^{f}\ar[r] & B\ar[d]\\
C\ar[r] & D
}
\end{displaymath}
be a pushout diagram where $f$ is $\mathcal{C}$-monoidally left proper. If $A,B$, and $C$ are $K$-transverse to $\mathcal{C}$. Then $D$ is $K$-transverse to $\mathcal{C}$.\\
\item
Let $\mathcal{S}$ be a class of maps in $\mathpzc{M}$ such that for any ordinal $\lambda$ ,any $\lambda$-indexed transfinite composition of maps in $\mathcal{S}$ presents the homotopy colimit. Suppose further that if $f\in\mathcal{S}$ and $Q$ is any object of $\mathcal{C}$, then $Q\otimes f$ is in $\mathcal{S}$. Let 
\begin{displaymath}
\xymatrix{
X_{0}\ar[r] & X_{1}\ar[r] & \ldots\ar[r] & X_{\beta}\ar[r] & \ldots
}
\end{displaymath}
be a $\lambda$-indexed transfinite sequence where each $X_{\beta}$ is $K$-tranverse to $\mathcal{C}$ and each map in the diagram is in $\mathcal{S}$. Then $\colim_{\beta<\lambda} X_{\beta}$ is $K$-transverse to $\mathcal{C}$. 
\end{enumerate}
\end{lem}

\begin{proof}
\begin{enumerate}
\item
By \cite{white2017model} the diagram is in fact a homotopy pushout. Let $Q$ be an object of $\mathpzc{C}$. Consider the following commutative diagram
\begin{displaymath}
\xymatrix{
& A\otimes Q\ar[dd]\ar[rr] & &B\otimes Q\ar[dd]\\
A\otimes^{\mathbb{L}}Q\ar[dd]\ar[ur]\ar[rr] & & B\otimes^{\mathbb{L}}Q\ar[dd]\ar[ur]\\
& C\otimes Q\ar[rr] && D\otimes Q\\
C\otimes^{\mathbb{L}}Q\ar[ur]\ar[rr] &  & D\otimes^{\mathbb{L}}Q \ar[ur]
}
\end{displaymath}
Now since $f$ is a $\mathcal{C}$ monoidally left proper the near and far faces of the cube are both homotopy pushouts. Now the maps connecting the top left, bottom left, and top right vertices of the near and far faces are weak equivalences. Therefore the map $D\otimes^{\mathbb{L}}Q\rightarrow D\otimes Q$ is an equivalence. Since $Q$ was arbitrary this implies that $D$ is $K$-transverse to $\mathcal{C}$. 
\item
Consider a diagram
\begin{displaymath}
\xymatrix{
X_{0}\ar[r] & X_{1}\ar[r] & \ldots\ar[r] & X_{\beta}\ar[r] & \ldots
}
\end{displaymath}
where each $X_{\beta}$ is $K$-transverse to $\mathcal{C}$ and each map in the diagram is in $\mathcal{S}$. 
We have
\begin{align*}
Q\otimes^{\mathbb{L}}\colim X_{\beta}&\cong Q\otimes^{\mathbb{L}}hocolim X_{\beta}\\
&\cong hocolim Q\otimes^{\mathbb{L}}X_{\beta}\\
&\cong hocolim Q\otimes X_{\beta}\\
&\cong colim Q\otimes X_{\beta}\\
&\cong Q\otimes colim X_{\beta}
\end{align*}
\end{enumerate}
\end{proof}

\subsection{Model Structures on Categories of Algebras}
We are now in a position to formulate general results from \cite{white2017model} and \cite{white2018bousfield}, regarding the existence of model structures on categories of algebras over operads.

Let section $\mathpzc{M}$ will be a combinatorial model category, which is also a symmetric monoidal category and which satisfies the monoid axiom. Let $\mathfrak{P}$ be an operad in $\mathpzc{M}$ (either symmetric or non-symmetric), and consider the free-forgetful adjunction

$$\adj{\textrm{Free}_{\mathfrak{P}}(-)}{\mathpzc{M}}{\mathpzc{Alg}_{\mathfrak{P}}(\mathpzc{M})}{|-|_{\mathfrak{P}}}$$

 Recall that the \textbf{transferred model structure} on $\mathpzc{Alg}_{\mathfrak{P}}(\mathpzc{M})$, if it exists, is the one for which weak equivalences (resp. fibrations) are maps $f:A\rightarrow B$ of algebras such that $|f|_{\mathfrak{P}}$ is a weak equivalence (resp. fibration).

\begin{defn}
An operad $\mathfrak{P}$ is said to be \textbf{admissible} if the transferred model structure exists on $\mathpzc{Alg}_{\mathfrak{P}}$. 
\end{defn}
By \cite{harper2010homotopy} Proposition 7.6 for $X$ a $\mathfrak{P}$-algebra, there is a $\Sigma$-module $\mathfrak{P}_{X}$ in $\mathpzc{M}$ such that for any $\Sigma$-module $Y$ in $\mathpzc{M}$, there is an isomorphism, natural in $X$ and $Y$,
 $$X\coprod(\mathfrak{P}\circ Y)\cong\mathfrak{P}_{X}\circ Y$$

  As in \cite{harper2010homotopy} Definition 7.31, for $s:A\rightarrow B$ a map in $\mathpzc{Gr}_{\mathbb{N}_{0}}(\mathpzc{C})$, we define $Q^{t}_{q}(s)$ for $t\ge 1$ and $0\le q\le t$ as follows. $Q_{0}^{t}(s)\defeq A^{\otimes t}$, $Q^{t}_{t}(s)\defeq B^{\otimes t}$ and for $0<q<t$, $Q^{t}_{q}(s)$ is defined by the pushout. 
 \begin{displaymath}
 \xymatrix{
 (X^{\otimes(t-q)}\otimes Q^{q}_{q-1}(s))^{\oplus\binom{t}{q}}\ar[d]\ar[r] & Q_{q-1}^{t}(s)\ar[d]\\
  (X^{\otimes(t-q)}\otimes B^{\otimes q})^{\oplus\binom{t}{q}}\ar[r] & Q^{t}_{q}(s)
 }
 \end{displaymath}
 where the top map is the obvious projection, and the left-hand map is induced by natural map $Q^{q}_{q-1}(s)\rightarrow B^{\otimes q}$. 
 \begin{prop}[\cite{harper2010homotopy}  Proposition 7.32]\label{prop:pushoutalg}
 Let $s:A\rightarrow B$ be a map in $\mathpzc{M}$, and let $X$ be a $\mathfrak{P}$-algebra. Consider a pushout diagram
 \begin{displaymath}
 \xymatrix{
 \mathfrak{P}(A)\ar[d]^{\mathfrak{P}(s)}\ar[r] & X\ar[d]\\
 \mathfrak{P}(B)\ar[r] & P
 }
 \end{displaymath}
 Then $P$ is naturally isomorphic to a filtered colimit
 $$P\cong\textrm{lim}_{\rightarrow_{n}}X_{n}$$
 where $X_{0}=X$, and for $n\ge 1$ the map $X_{n-1}\rightarrow X_{n}$ is given by the pushout diagram in $\mathpzc{M}$
 \begin{displaymath}
 \xymatrix{
 \mathfrak{P}_{X}(n)\otimes_{\Sigma_{n}} Q^{n}_{n-1}(s)\ar[d]\ar[r] & X_{n-1}\ar[d]\\
 \mathfrak{P}_{X}(n)\otimes_{\Sigma_{n}} B^{\otimes n}\ar[r] & X_{n}
 }
 \end{displaymath}
 \end{prop}
 The filtration $P\cong\textrm{lim}_{\rightarrow_{n}}X_{n}$ will be called the \textbf{standard filtration}. We have the following useful observation.
 
 \begin{rem}
 If $X\cong\mathfrak{P}(0)$ is the free $\mathfrak{P}$-algebra on the initial object of $\mathpzc{M}$, then 
 $$\mathfrak{P}_{X}\cong\mathfrak{P}$$
 \end{rem}

\begin{defn}\label{defn:weakPalgeb}
Let $\mathfrak{P}$ be a symmetric operad, and $\mathcal{S}$ a class of maps in $\mathpzc{M}$. A collection of maps $\mathcal{S}$ in $\mathpzc{M}$ is said to \textbf{satisfy the weak }$\mathfrak{P}$-\textbf{algebra axiom relative to }$X$ if for any $s\in\mathcal{S}$
\begin{enumerate}
\item
the square below is a homotopy pushout for any $n\ge1$.
 \begin{displaymath}
 \vcenter{\xymatrix{
 \mathfrak{P}_{X}(n)\otimes_{\Sigma_{n}} Q_{n-1}^{n}(s)\ar[d]\ar[r] & X_{n-1}\ar[d]\\
 \mathfrak{P}_{X}(n)\otimes_{\Sigma_{n}} B^{\otimes n}\ar[r] & X_{n}}
 }
 \end{displaymath}
 \item
$\mathpzc{M}$ is weakly $\mathcal{S}$-elementary. 
\end{enumerate}
$\mathfrak{P}$ is said to \textbf{satisfy the weak }$\mathfrak{P}$-\textbf{algebra axiom} if it satisfies the weak $\mathfrak{P}$-algebra axiom relative to $X$ for all $X\in\mathpzc{Alg}_{\mathfrak{P}}(\mathpzc{M})$.
\end{defn}

\begin{rem}
There is also a version for non-symmetric operads, in which one removes any reference to $\Sigma_{n}$ above. See for example \cite{harper2010homotopy} Proposition 7.28 for the construction of $\mathfrak{P}_{X}$ in this case.
\end{rem}



The following is \cite{white2018bousfield} Theorem 6.1.1

\begin{thm}
Suppose that $\mathpzc{M}$ is a combinatorial model category, which is also a symmetric monoidal category. Let $\mathfrak{P}$ be either a symmetric or non-symmetric operad in $\mathpzc{M}$ such that the class of acyclic cofibrations in $\mathpzc{M}$ satisfies the weak $\mathfrak{P}$-algebra axiom. Then $\mathfrak{P}$ is admissible.
\end{thm}

We have the following useful trick for categories enriched over $\mathbb{Q}$. It follows immediately from the fact that for any right $\Sigma_{n}$ module $X$ and any map $f$ of left $\Sigma_{n}$-modules, $X\otimes_{\Sigma_{n}}f$ is a retract of $X\otimes f$.
\begin{prop}
Let $\mathpzc{M}$ be a combinatorial model category which is enriched over $\mathbb{Q}$, and is also a symmetric monoidal category. Let $\mathfrak{P}$ be a symmetric operad in $\mathpzc{C}$. Suppose when regarded as a non-symmetric operad the class of acyclic cofibrations in $\mathpzc{M}$ satisfies the weak $\mathfrak{P}$-algebra axiom. Then the class of acyclic cofibrations satisfies the weak $\mathfrak{P}$-algebra axiom when it is regarded as a symmetric operad.
\end{prop}%


%

%
We finish with a result describing properties of underlying objects of cofibrant $\mathfrak{P}$-algebras. 
\begin{prop}\label{underlyingKflat}
Let $\mathpzc{M}$ be a combinatorial almost monoidal model category.  Let $\mathfrak{P}$ be an operad in $\mathpzc{M}$ such that acyclic cofibrations satisfy the weak $\mathfrak{P}$-algebra axiom. Suppose that the domains and codomains of generating cofibrations are cofibrant, and that each $\mathfrak{P}(n)$ is a cofibrant $\Sigma_{n}$-module.
\begin{enumerate}
\item
If cofibrant objects in $\mathpzc{M}$ are $K$-transverse to some subcategory $\mathcal{C}$ of $\mathpzc{M}$, and $f:X\rightarrow Y$ is a cofibration of $\mathfrak{P}$-algebras where the underlying object of $X$ is $K$-transverse to $\mathcal{C}$, then the underlying object of $Y$ is $K$-transverse to $\mathcal{C}$.
\item
If $\mathpzc{M}$ satisfies the pushout product axiom and $f:X\rightarrow Y$ is a cofibration of $\mathfrak{P}$-algebras where the underlying object of $X$ is cofibrant in $\mathpzc{M}$, then the underlying object of $Y$ is cofibrant in $\mathfrak{P}$.
 \end{enumerate}
\end{prop}

The proof of the first part follows from Lemma \ref{Lem:pushouttransK-flat}. The proof of the second part is essentially identical, and is Theorem 4.3 in \cite{spitzweck2001operads}.

\subsection{Quasi-Categories and Localization of Relative Categories}
As we shall see, Koszul duality is most naturally formulated using $(\infty,1)$-categories rather than model categories. For concreteness we fill fix quasi-categories as our model for $(\infty,1)$-categories.

\begin{defn}
A \textbf{relative category} or a \textbf{category with weak equivalences} is a pair $(\mathpzc{C},\mathcal{W})$ where $\mathpzc{C}$ is a category and $\mathcal{W}$ is a wide subcategory of $\mathpzc{C}$ containing all isomorphisms, and satisfying the two-out-of-three property, namely if $f:A\rightarrow B$ and $g:B\rightarrow C$ are morphisms in $\mathcal{W}$ and two of $f,g,g\circ f$ are in $\mathcal{W}$, then so is the third. 
\end{defn}

\begin{rem}
If $\mathpzc{M}$ is a homotopical category, and $\mathcal{W}$ is the wide subcategory of weak equivalences in $\mathpzc{M}$, then $(\mathpzc{M},\mathcal{W})$ is a relative category.
\end{rem}

To a relative category $(\mathpzc{C},\mathcal{W})$ we can associate a quasi-category $\textbf{C}$ by Dwyer-Kan localisation \cite{dwyer1980simplicial}. If $\mathpzc{C}$ is a combinatorial simplicial model category and $\mathcal{W}$ is its subcategory of weak equivalences, then $\textbf{C}$ is a locally presentable $(\infty,1)$-category by Proposition A.3.7.6 in \cite{lurie2006higher}. 

\begin{defn}
If $(\mathpzc{C},\mathcal{W})$ and $(\mathpzc{C}',\mathcal{W}')$ are relative categories, then a functor $F:\mathpzc{C}\rightarrow\mathpzc{C}'$ is said to be a \textbf{relative functor} if $F(f)\in\mathcal{W}'$ whenever $f\in\mathcal{W}$. A \textbf{relative adjunction} between relative categories is an adjunction
$$\adj{F}{\mathpzc{C}}{\mathpzc{C}'}{G}$$
such that $F$ and $G$ are both relative functors. A relative adjunction is said to be a \textbf{relative equivalence} if the unit and counit of the adjunction are both component-wise weak equivalences.
\end{defn}

Note that relative adjunction do not always induce adjunctions of $(\infty,1)$-categories between the Dwyer-Kan localisations. However Corollary 3.6 in \cite{dwyer1980calculating} 
 says the following. 

\begin{thm}\label{relequivuc}
Let
$$\adj{F}{\mathpzc{C}}{\mathpzc{C}'}{G}$$ 
be a relative equivalence. Then there is an induced adjoint equivalence of $(\infty,1)$-categories
$$\adj{\textbf{F}}{\textbf{C}}{\textbf{C}'}{\textbf{G}}$$ 
\end{thm}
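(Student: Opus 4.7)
The plan is to invoke the 2-functoriality of Dwyer--Kan localisation. First I would note that Dwyer--Kan localisation extends to a functor from the category of relative categories and relative functors to the category of quasi-categories: a relative functor $F\colon(\mathpzc{C},\mathcal{W})\to(\mathpzc{C}',\mathcal{W}')$ sends $\mathcal{W}$ into $\mathcal{W}'$ and therefore induces a functor $\textbf{F}\colon\textbf{C}\to\textbf{C}'$ on the underlying $(\infty,1)$-categories, well-defined up to contractible choice. Applied to our adjunction we obtain induced functors $\textbf{F}\colon\textbf{C}\to\textbf{C}'$ and $\textbf{G}\colon\textbf{C}'\to\textbf{C}$.

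Next I would produce the unit and counit of a prospective adjunction at the level of $(\infty,1)$-categories. The unit $\eta\colon\mathrm{Id}_{\mathpzc{C}}\Rightarrow GF$ and counit $\epsilon\colon FG\Rightarrow\mathrm{Id}_{\mathpzc{C}'}$ are natural transformations of relative functors (the identity is trivially relative, and $GF,FG$ are compositions of relative functors). Since Dwyer--Kan localisation is 2-functorial with respect to natural transformations between relative functors (one may use either the hammock localisation, or the simplicial localisation combined with the model of mapping spaces via zig-zags), $\eta$ and $\epsilon$ descend to natural transformations $\boldsymbol{\eta}\colon\mathrm{Id}_{\textbf{C}}\Rightarrow\textbf{G}\textbf{F}$ and $\boldsymbol{\epsilon}\colon\textbf{F}\textbf{G}\Rightarrow\mathrm{Id}_{\textbf{C}'}$. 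The triangle identities for $(\eta,\epsilon)$ hold in $\mathpzc{C}$ and $\mathpzc{C}'$ strictly, hence up to homotopy after localising, so $(\textbf{F},\textbf{G},\boldsymbol{\eta},\boldsymbol{\epsilon})$ assemble into an adjunction of $(\infty,1)$-categories.

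Finally I would deduce the equivalence. By hypothesis every component $\eta_X\colon X\to GF(X)$ lies in $\mathcal{W}$, so its image in $\textbf{C}$ is an equivalence; similarly for $\epsilon$. Hence $\boldsymbol{\eta}$ and $\boldsymbol{\epsilon}$ are component-wise equivalences, which is the definition of an adjoint equivalence of $(\infty,1)$-categories. Therefore $\textbf{F}\dashv\textbf{G}$ is an adjoint equivalence.

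The main obstacle is the promotion from a 1-categorical adjunction to an adjunction of $(\infty,1)$-categories, i.e.\ obtaining $\boldsymbol{\eta}$ and $\boldsymbol{\epsilon}$ together with the coherent triangle identities in $\textbf{C},\textbf{C}'$. This is exactly the content of Dwyer--Kan's Corollary~3.6 in \cite{dwyer1980calculating}, and rather than re-prove it I would simply cite that result; the above outline is meant to indicate why the citation gives precisely the statement of the theorem.
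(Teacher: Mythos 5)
Your proposal ends by citing Dwyer--Kan's Corollary 3.6 of \cite{dwyer1980calculating}, which is exactly what the paper does — the theorem is stated there as a direct consequence of that result, with no further argument. Your preliminary sketch (2-functoriality of the localisation, descent of unit and counit, component-wise weak equivalences becoming equivalences) is a reasonable gloss on why the citation applies, so the proposal is correct and takes essentially the same route as the paper.
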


\subsubsection{The Quasi-Category of Algebras}

Let $\mathpzc{M}$ be a combinatorial \textit{monoidal} model category satisfying the monoid axiom, and $\mathfrak{P}$ an admissible operad in $\mathpzc{M}$. The category $\mathpzc{Alg}_{\mathfrak{P}}(\mathpzc{M})$ is a (combinatorial) model category when equipped with the transferred model structure. We denote the associated $(\infty,1)$ category by $\textbf{Alg}_{\mathfrak{P}}(\mathpzc{M})$. We may also regard $\mathfrak{P}$ as an $(\infty,1)$-operad in the $(\infty,1)$-category $\mathrm{L^{H}}(\mathpzc{M})$. To this one can associate the $(\infty,1)$-category of $(\infty,1)$-algebras in $\mathrm{L^{H}}(\mathpzc{M})$ over the $(\infty,1)$-operad $\mathfrak{P}$. This category is denoted $\textbf{Alg}_{\mathfrak{P}}(\mathrm{L^{H}}(\mathpzc{M}))$. For general model categories it is not the case that the category $\mathpzc{Alg}_{\mathfrak{P}}(\mathpzc{M})$ always presents $\textbf{Alg}_{\mathfrak{P}}(\mathrm{L^{H}}(\mathpzc{M}))$, i.e. that the $(\infty,1)$-categories $\textbf{Alg}_{\mathfrak{P}}(\mathpzc{M})$ and $\textbf{Alg}_{\mathfrak{P}}(\mathrm{L^{H}}(\mathpzc{M}))$ are equivalent. However  \cite{pavlov2018admissibility} gives very general conditions under which it is true.

The category ${}\mathpzc{Mod}_{\Sigma}(\mathpzc{M})$ may itself be regarded as a combinatorial model category in the obvious way, with the cofibrations/ weak equivalences/ fibrations defined level-wise.

\begin{defn}
An operad $\mathfrak{P}$ is said to be \textbf{rectifiably admissible} if it is admissible, and for a projective cofibrant replacement $q:Q\mathfrak{P}\rightarrow\mathfrak{P}$ of the underlying $\Sigma$-module of $\mathfrak{P}$ the map $q$ is symmetric flat (i.e. each $q(n)$ is symmetric flat).
\end{defn}

The following is (an immediate consequence of) Theorem 7.10 in \cite{pavlov2018admissibility}.

\begin{thm}
If $\mathfrak{P}$ is rectifiably admissible, then the $(\infty,1)$-category $\textbf{Alg}_{\mathfrak{P}}(\mathpzc{M})$ is naturally equivalent to the $(\infty,1)$-category $\textbf{Alg}_{\mathfrak{P}}(\mathrm{L^{H}}(\mathpzc{M}))$ of $(\infty,1)$-algebras in $\mathrm{L^{H}}(\mathpzc{M})$ over $\mathfrak{P}$ considered as an $(\infty,1)$-operad.
\end{thm}

\begin{prop}
If $\mathfrak{P}$ is admissible, and the underlying $\Sigma$-module of $\mathfrak{P}$ is a retract of a free $\Sigma$-module on graded object which is $K$-transverse to cofibrant objects, then $\mathfrak{P}$ is rectifiably admissible. In particular if $\mathpzc{M}$ is $\Q$-enriched then any admissible operad is rectifiably admissible.
\end{prop}

\begin{proof}
Let $q:Q\mathfrak{P}\rightarrow\mathfrak{P}$ be a projective cofibrant resolution. We may assume that $q$ is of the form $\Sigma\otimes \tilde{q}$, where $\tilde{q}:\tilde{Q}\rightarrow\tilde{\mathfrak{P}}$ is an acyclic fibration of graded objects, $\tilde{Q}$ is projectively cofibrant in $\mathpzc{Gr}_{\mathbb{N}_{0}}(\mathpzc{M})$, and $\mathfrak{P}$ is $K$-flat in $\mathpzc{Gr}_{\mathbb{N}_{0}}(\mathpzc{M})$. Let $s:X\rightarrow Y$ be a cofibration. Then $q(n)\Box_{\Sigma_{n}}s^{\Box n}\cong\tilde{q}(n)\Box s^{\Box n}$. Write $\tilde{s}(n)=s^{\Box n}:\tilde{X}\rightarrow\tilde{Y}$, which is a cofibration since $\mathpzc{M}$ is a monoidal model category. Consider the pushout
\begin{displaymath}
\xymatrix{
\tilde{Q}(n)\otimes \tilde{X}\ar[d]^{\tilde{q}\otimes id}\ar[r]^{id\otimes\tilde{s}} &\tilde{Q}(n)\otimes\tilde{Y}\ar[d]\\
\tilde{\mathfrak{P}}(n)\otimes\tilde{X} \ar[r] & \tilde{Q}(n)\otimes\tilde{Y}\coprod_{\tilde{Q}(n)\otimes \tilde{X}}\tilde{\mathfrak{P}}(n)\otimes\tilde{X} 
}
\end{displaymath}
Since $\tilde{Q}(n)$ and $\tilde{\mathfrak{P}}(n)$ are $K$-flat the left-hand vertical map is a weak equivalence. The top horizontal map is an admissible monomorphism. By Proposition 4.2.45 in \cite{kelly2016homotopy} the right-hand vertical map is an equivalence. Now the map $\tilde{q}(n)\otimes Id:\tilde{Q}(n)\otimes\tilde{Y}\rightarrow\tilde{\mathfrak{P}}(n)\otimes\tilde{Y}$ is an equivalence, again since $\tilde{Q}(n)$ and $\tilde{\mathfrak{P}}(n)$ are $K$-flat. By the two-out-of-three property the map $\tilde{q}(n)\Box s^{\Box n}$ is an equivalence, as required.
\end{proof}

\subsubsection{Generation under Sifted Colimits}\label{siftedgeneration}
The highly technical subtleties of the above discussion actually have a crucial consequence, in that it provides is with a very convenient set of generators for the $(\infty,1)$-category $\textbf{Alg}_{\mathfrak{P}}(\mathpzc{M})$, which we will now disucss. 

Let $\textbf{C}$ be a cocomplete $(\infty,1)$-category and $\textbf{C}_{0}$ a full subcategory. We denote by $\mathcal{P}_{\Sigma}(\textbf{C}_{0})$ the free cocompletion of $\textbf{C}_{0}$ by sifted colimits, i.e. by filtered colimits and geometric realisations. There is a natural functor
$$\mathcal{P}_{\Sigma}(\textbf{C}_{0})\rightarrow\textbf{C}$$
Let $\textbf{T}$ be a monad on $\textbf{C}$ which preserves sifted colimits and let $\textbf{C}^{\textbf{T}}$ its category of Eilenberg-Moore algebras. Consider the corresponding adjunction.
$$\adj{Free_{\textbf{T}}}{\textbf{C}}{\textbf{C}^{\textbf{T}}}{|-|_{\textbf{T}}}$$
 Let $Free_{\textbf{T}}(\textbf{C}_{0})$ denote the full subcategory of $\textbf{C}^{\textbf{T}}$ spanned by free $\textbf{T}$-algebras on $\textbf{C}_{0}$. 
 \begin{displaymath}
 \xymatrix{
 \mathcal{P}_{\Sigma}(\textbf{C}_{0})\ar[d]\ar[r] & \mathcal{P}_{\Sigma}(Free_{\textbf{T}}(\textbf{C}_{0}))\ar[d]\\
 \textbf{C}\ar[r] & \textbf{C}^{T}
 }
 \end{displaymath}
 Suppose that the left-hand vertical map is essentially surjective. By Proposition 4.7.3.14 in \cite{lurie2017higher} every object in $\textbf{C}^{T}$ can be obtained as a colimit of a simplicial diagram of objects in the image of $Free_{\textbf{T}}$. Therefore by the discussion above every object in $\textbf{C}^{T}$ can be obtained as a sifted colimit of objects in $Free_{\textbf{T}}(\textbf{C}_{0})$. In particular the functor  $\mathcal{P}_{\Sigma}(Free_{\textbf{T}}(\textbf{C}_{0}))\rightarrow\textbf{C}^{\textbf{T}}$
is essentially surjective.

We are particularly interested in the case that $\textbf{C}$ is presented by a Kan complex enriched monoidal model category $\mathpzc{C}$ and $\mathfrak{P}$ is an admissible operad in $\mathpzc{C}$. This gives rise to a monadic Quillen adjunction
$$\adj{\mathfrak{P}(-)}{\mathpzc{C}}{\mathpzc{Alg}_{\mathfrak{P}}(\mathpzc{C})}{|-|}$$
which by localization induces an adjunction of $(\infty,1)$-categories
$$\adj{\mathfrak{P}(-)}{\textbf{C}}{\textbf{Alg}_{\mathfrak{P}}(\mathpzc{C})}{|-|}$$
If $\mathfrak{P}$ is rectifiably admissible then this is also a monadic adjunction. Since $\mathpzc{C}$ is cofibrantly generated $\textbf{C}$ is generated under sifted colimits by some small subcategory $\textbf{C}_{0}$ of cofibrant objects in $\mathpzc{C}$. Therefore $\textbf{Alg}_{\mathfrak{P}}(\mathpzc{C})$ is generated under sifted colimits by the full category of free $\mathfrak{P}$-algebras on objects in $\textbf{C}_{0}$. 

\begin{rem}
The argument given above is a significant component of the one in \cite{hennion2015tangent} Proposition 1.2.2, which shows that the category of chain complexes of vector spaces over a field, and the category of Lie algebras over it, are in fact the formal completion of subcategories of certain compact objects by sifted colimits.
\end{rem}
\subsubsection{Presentation by Subcategories} 

Let $\mathfrak{P}$ be an admissible operad. For technical reasons in Section \ref{seccoopkosz} it will be useful to consider certain relative subcategories of $\mathpzc{Alg}_{\mathfrak{P}}(\mathpzc{M})$ which present the same $(\infty,1)$-category.

\begin{prop}\label{infinitysame}
Let $\mathpzc{M}$ be a combinatorial model category with a symmetric monoidal model structure. Let $\mathfrak{P}$ be an admissible operad in $\mathpzc{M}$, and $\mathpzc{J}\subset\mathpzc{Alg}_{\mathfrak{P}}(\mathpzc{M})$ be a full subcategory containing all cofibrant objects, which we regard as a relative subcategory. Then the inclusion
$$\mathpzc{J}\rightarrow\mathpzc{Alg}_{\mathfrak{P}}(\mathpzc{M})$$
induces an equivalence of $(\infty,1)$-categories.
\end{prop}

\begin{proof}
This follows from an easy dual argument to Corollary 4.5 in \cite{dwyer1980calculating}.
\end{proof}

 Let $\mathpzc{Alg}_{\mathfrak{P}}^{|c|}$ denote the full subcategory of $\mathpzc{Alg}_{\mathfrak{P}}$ consisting of algebras whose underlying object of $\mathpzc{M}$ is cofibrant, let $\mathpzc{Alg}_{\mathfrak{P}}^{c}$ denote the full subcategory of $\mathpzc{Alg}_{\mathfrak{P}}$ consisting of cofibrant algebras. Finally we let $\mathpzc{Alg}_{\mathfrak{P}}^{|K|}$ denote the full subcategory of $\mathpzc{Alg}_{\mathfrak{P}}$ whose underlying complex is $K$-\textit{flat relative to }$\mathfrak{P}$ in the following sense: for any $n$ the map
$$\mathfrak{P}(n)\otimes_{\Sigma_{n}}^{\mathbb{L}}A^{\otimes n}\rightarrow \mathfrak{P}(n)\otimes_{\Sigma_{n}}A^{\otimes n}$$
is an equivalence. We regard these categories as relative categories in the obvious way. 

\begin{cor}
\begin{enumerate}
\item
The inclusion
$$\mathpzc{Alg}_{\mathfrak{P}}^{c}\rightarrow\mathpzc{Alg}_{\mathfrak{P}}$$
induces an equivalence of $(\infty,1)$-categories. 
\item
If $\mathpzc{M}$ satisfies the pushout-product axiom, and the domains and codomains of generating cofibrations in $\mathpzc{M}$ are cofibrant, then the map
$$\mathpzc{Alg}_{\mathfrak{P}}^{|c|}\rightarrow\mathpzc{Alg}_{\mathfrak{P}}$$
induces an equivalence of $(\infty,1)$-categories. If moreover cofibrant objects are $K$-transverse to cofibrant objects, and $\mathfrak{P}$ is cofibrant as a $\Sigma_{n}$-module, then the inclusion
$$\mathpzc{Alg}_{\mathfrak{P}}^{|K|}\rightarrow\mathpzc{Alg}_{\mathfrak{P}}$$
also induces an equivalence of $(\infty,1)$-categories. 
\end{enumerate}
\end{cor}

\section{Koszul Categories}\label{homotopexact}
We now introduce the sorts of model categories in which we will be primarily interested, namely we define a class of additive model categories in which we have a rich theory of homotopical algebra.

\subsection{Exact Categories}
The model categories we consider will come from additive categories equipped with exact structures. Following \cite{hovey}, \cite{christensen2002quillen}, and \cite{gillespie}, we developed homotopy theory in exact categories in detail in \cite{kelly2016homotopy}. Here we briefly recall some essential definitions from the theory of exact categories. Details about exact categories can be found in \cite{Buehler}

 A $\textbf{kernel-cokernel pair}$ in $\mathpzc{E}$ is a pair of composable maps $(i,p)$, $i:A\rightarrow B,p:B\rightarrow C$ such that $i=\textrm{Ker}(p)$ and $p=\textrm{Coker}(i)$. If $\mathcal{Q}$ is a class of kernel-cokernel pairs and $(i,p)\in\mathcal{Q}$, then we say that $i$ is an admissible monic and $p$ is an admissible epic with respect to $\mathcal{Q}$.

\begin{defn}
A \textbf{Quillen exact structure} on an additive category $\mathpzc{E}$ is a collection $\mathcal{Q}$ of kernel-cokernel pairs such that
\begin{enumerate}
\item
Isomorphisms are both admissible monics and admissible epics.
\item
Both the collection of admissible monics and the collection of admissible epics  are closed under composition.
\item
If
\begin{displaymath}
\xymatrix{
A\ar[d]\ar[r]^{f} & B\ar[d]\\
X\ar[r]^{f'} & Y
}
\end{displaymath}
is a push out diagram, and $f$ is an admissible monic, then $f'$ is as well.
\item
If
\begin{displaymath}
\xymatrix{
A\ar[d]\ar[r]^{f'} & B\ar[d]\\
X\ar[r]^{f} & Y
}
\end{displaymath}
is a pullback diagram, and $f$ is an admissible epic, then $f'$ is as well.
\end{enumerate}
\end{defn}

Let $(\mathpzc{E},\mathcal{Q})$ be an exact category. We call a null sequence
\begin{displaymath}
\xymatrix{
0\ar[r] & A\ar[r]^{i} & B\ar[r]^{p} & C\ar[r] & 0
}
\end{displaymath}
\textbf{short exact} if $(i,p)$ is a kernel-cokernel pair in $\mathcal{Q}$. We will use interchangeably the notion of kernel-cokernel pair and short exact sequence. When it is not likely to cause confusion, we will suppress the notation $(\mathpzc{E},\mathcal{Q})$ to $\mathpzc{E}$. When studying exact categories it is natural to consider so-called exact functors:

\begin{defn}
Let $(\mathpzc{E},\mathcal{P})$, $(\mathpzc{F},\mathcal{Q})$ be exact categories. A functor $F:\mathpzc{E}\rightarrow\mathpzc{F}$ is said to be \textbf{exact} (with respect to $\mathcal{P}$ and $\mathcal{Q}$) if for any short exact sequence
$$0\rightarrow X\rightarrow Y\rightarrow Z\rightarrow 0$$
in $\mathcal{P}$,
$$0\rightarrow F(X)\rightarrow F(Y)\rightarrow F(Z)\rightarrow 0$$
is a short exact sequence in $\mathcal{Q}$.
\end{defn}

\begin{defn}
Let $\mathpzc{E}$ be an exact category. An object $P\in\mathpzc{E}$ is said to be \textbf{projective} if the functor
$$Hom(P,-):\mathpzc{E}\rightarrow\mathpzc{Ab}$$
is exact.
\end{defn}

\begin{rem}
Almost all of the results of this paper can be formulated using the more general notion of a left exact category (see e.g. \cite{bazzoni2013one}). 
\end{rem}


On any additive category one can define the split exact structure for which the kernel-cokernel pairs are the split exact sequences. Any exact category contains this is an exact subcategory. At the other extreme we have quasi-abelian exact structures.

\begin{defn}\label{quas}
An additive category $\mathpzc{E}$ with all kernels and cokernels is said to be \textbf{quasi-abelian} if the class $\mathpzc{qac}$ of all kernel-cokernel pairs forms an exact structure on $\mathpzc{E}$.
\end{defn}

\begin{rem}
If an additive category has all kernels and cokernels, then it always has a maximal exact structure. This is due to Sieg and Wegner \cite{sieg2011maximal}. However this maximal exact structure may not consist of \textit{all} kernel-cokernel pairs.
\end{rem}

Let $X\in Ch(\mathpzc{E})$ be a chain complex.

\begin{defn}
$X$ is said to be \textbf{acyclic} if for each $n$ the map $d_{n}:X_{n}\rightarrow Z_{n-1}X$ is an admissible epimorphism, where $Z_{n-1}X\defeq Ker(d_{n-1}:X_{n-1}\rightarrow X_{n-2})$. 
\end{defn}

\begin{defn}
Let $\mathpzc{E}$ be an exact category. A map $f:X\rightarrow Y$ in $Ch(\mathpzc{E})$ is said to be a \textbf{quasi-isomorphism} if $cone(f)$ is acyclic.
\end{defn}



\subsection{(Monoidal) Model Structures on Exact Categories}
The class $\mathcal{W}$ of quasi-imorphisms satisfies the 2-out-of-6 property, namely if $f:W\rightarrow X$, $g:X\rightarrow Y$, $h:Y\rightarrow Z$ and $h\circ g$, $g\circ f$ are quasi-isomorphisms, if and only if $f,g,h$ and $h\circ g\circ f$ are. Thus the class $\mathcal{W}$ makes $Ch(\mathpzc{E})$ into a \textit{homotopical category}. 

\begin{defn}
A \textbf{homotopical category} is a pair $(\mathpzc{M},\mathcal{W})$ where $\mathpzc{M}$ is a category, and $\mathcal{W}$ is a class of morphisms in $\mathpzc{M}$ containing all identity morphisms and satisfying the 2-out-of-6 property.
\end{defn}
If $\mathpzc{C}$ is a model category and $\mathcal{W}$ is its class of weak equivalences, then $(\mathpzc{C},\mathcal{W})$ is a homotopical category. It will be important for us that a model structure exists on $Ch(\mathpzc{E})$ for which the class of weak equivalences coincides with the class of quasi-isomorphisms. 

In \cite{kelly2016homotopy} Section 2.4 we introduced the notion of a monoidal exact category. Essentially this is an exact category $\mathpzc{E}$ equipped with an additive associative bifunctor $\otimes:\mathpzc{E}\times\mathpzc{E}\rightarrow\mathpzc{E}$ which commutes with colimits in each variable. Closed monoidal exact categories, in which the assumption that $\otimes$ commutes with colimits automatically holds, are discussed in \cite{vst2012exact}.

\begin{defn}
A \textbf{monoidal exact category} is an exact category $\mathpzc{E}$, equipped with a bifunctor $\otimes:\mathpzc{E}\times\mathpzc{E}\rightarrow\mathpzc{E}$ such that $(\mathpzc{E},\otimes)$ is a monoidal additive category. 
\end{defn}
In a monoidal exact category we have the notions of flatness and purity. 

\begin{defn}
Let $(\mathpzc{E},\otimes)$ be a monoidal exact category. An object $X$ of $\mathpzc{E}$ is said to be \textbf{flat} if for any exact sequence 
$$0\rightarrow E\rightarrow F\rightarrow G\rightarrow 0$$
the sequences 
$$0\rightarrow X\otimes E\rightarrow X\otimes F\rightarrow X\otimes G\rightarrow 0$$
and
$$0\rightarrow E\otimes X\rightarrow F\otimes X\rightarrow G\otimes X\rightarrow 0$$
are exact. 
\end{defn}

\begin{defn}
Let $(\mathpzc{E},\otimes)$ be a monoidal exact category. A sequence
$$0\rightarrow E\rightarrow F\rightarrow G\rightarrow 0$$
is said to be \textbf{pure} if for any object $X$ of $\mathpzc{E}$ the sequences
$$0\rightarrow X\otimes E\rightarrow X\otimes F\rightarrow X\otimes G\rightarrow 0$$
and
$$0\rightarrow E\otimes X\rightarrow F\otimes X\rightarrow G\otimes X\rightarrow 0$$
are exact. A monomorphism $f:E\rightarrow F$ is said to be \textbf{pure} if it is an admissible monomorphism occuring as the kernel of a pure exact sequence.
\end{defn}
The class of all pure monomorphisms is denote \textbf{PureMon}. In \cite{kelly2016homotopy} Lemma 2.4.76. we showed the following:

\begin{prop}
If a monoidal exact category $\mathpzc{E}$ has enough flat objects and
$$0\rightarrow E\rightarrow F\rightarrow G\rightarrow 0$$
is a short exact sequence with $G$ flat, then the sequence is pure.
\end{prop}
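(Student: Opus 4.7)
The strategy is to establish the short exact sequence $0 \to X \otimes E \to X \otimes F \to X \otimes G \to 0$ via the long exact sequence in homology associated to a flat resolution of $X$; the symmetric statement for tensoring on the right follows by the same argument with the roles swapped. Using the hypothesis that $\mathpzc{E}$ has enough flat objects, I first construct a flat resolution $\cdots \to P_1 \to P_0 \to X \to 0$ in $\mathpzc{E}$, built inductively: choose an admissible epic $P_0 \to X$ with $P_0$ flat, take the kernel $K_0$, then choose an admissible epic $P_1 \to K_0$ with $P_1$ flat, and iterate. This yields an exact complex $P_\bullet \to X$ in the acyclic sense of exact categories, with each $P_i$ flat.

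Next, I tensor the given short exact sequence $0 \to E \to F \to G \to 0$ on the left with each $P_i$. Since each $P_i$ is flat, each sequence $0 \to P_i \otimes E \to P_i \otimes F \to P_i \otimes G \to 0$ is exact in $\mathpzc{E}$, and we obtain a short exact sequence of chain complexes
$$0 \to P_\bullet \otimes E \to P_\bullet \otimes F \to P_\bullet \otimes G \to 0.$$
Applying the standard long exact sequence in homology for short exact sequences of complexes in an exact category (see \cite{Buehler}) then yields
$$\cdots \to H_1(P_\bullet \otimes G) \to H_0(P_\bullet \otimes E) \to H_0(P_\bullet \otimes F) \to H_0(P_\bullet \otimes G) \to 0.$$

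It remains to identify the relevant homology. Because $- \otimes Y$ preserves cokernels for every $Y$ (tensor commutes with colimits in each variable) and $X = \coker(P_1 \to P_0)$, we have $H_0(P_\bullet \otimes Y) = X \otimes Y$. Because $G$ is flat, tensoring the exact complex $P_\bullet \to X$ with $G$ again yields an exact complex: apply flatness to each short exact sequence $0 \to K_n \to P_n \to K_{n-1} \to 0$ that assembles the resolution. Hence $H_n(P_\bullet \otimes G) = 0$ for all $n \geq 1$ while $H_0(P_\bullet \otimes G) = X \otimes G$. Substituting into the long exact sequence collapses it exactly to $0 \to X \otimes E \to X \otimes F \to X \otimes G \to 0$, and the symmetric argument, applied to a flat resolution tensored on the right, finishes the proof.

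The main obstacle I anticipate is justifying the existence of the long exact sequence in homology for a short exact sequence of chain complexes in a \emph{general} exact category: unlike the abelian case, the definition of homology requires that the relevant maps factor through admissible monics and epics, and the construction of the connecting homomorphism uses the exact-category axioms in a nontrivial way. This is, however, standard background for \cite{kelly2016projective} and is worked out in detail in \cite{Buehler}, so I would invoke it rather than reprove it. Everything else in the argument reduces to the defining property of flat objects and the universal property of cokernels.
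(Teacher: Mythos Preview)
The paper does not prove this proposition here; it simply records it as Lemma 4.6 of the author's earlier work \cite{kelly2016projective}, so there is no in-paper argument to compare against.

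Your approach is the standard $\mathrm{Tor}_1$-vanishing argument and is correct in outline. One caution: the obstacle you flag is more serious than your phrasing suggests. B\"uhler does \emph{not} provide a long exact sequence in homology for an arbitrary degreewise-exact sequence of complexes in an exact category, precisely because $H_n(P_\bullet\otimes E)$ and $H_n(P_\bullet\otimes F)$ for $n\ge 1$ need not exist---the differentials of $P_\bullet\otimes E$ have no reason to factor as an admissible epic followed by an admissible monic. What B\"uhler \emph{does} supply, and what makes your argument go through, is the Gabriel--Quillen embedding of $\mathpzc{E}$ into an abelian category $\mathcal{A}$ as an exact, extension-closed, exactness-reflecting subcategory. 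You compute all the tensor products $P_i\otimes(-)$ in $\mathpzc{E}$, then pass to $\mathcal{A}$: there the long exact sequence is routine, $H_0(P_\bullet\otimes Y)\cong X\otimes Y$ (the embedding preserves cokernels), and $P_\bullet\otimes G$ is acyclic in $\mathcal{A}$ because it is acyclic in $\mathpzc{E}$. You conclude that $0\to X\otimes E\to X\otimes F\to X\otimes G\to 0$ is short exact in $\mathcal{A}$; since the embedding reflects exactness, it is short exact in $\mathpzc{E}$. This is almost certainly the argument in the cited reference. It is worth making the passage to the abelian envelope explicit rather than invoking a long exact sequence that is not literally available inside $\mathpzc{E}$.
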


It is straightforward to prove the following (see  \cite{kelly2016homotopy} Section 2.4.1).

\begin{prop}\label{3pure}
Let
\begin{displaymath}
\xymatrix{
& 0\ar[d] & 0\ar[d] & 0\ar[d] &\\
0\ar[r] & A\ar[r]\ar[d] &  B\ar[r]\ar[d] & C\ar[r]\ar[d]\ar[r] & 0\\
0\ar[r] & X\ar[r]\ar[d] & Y\ar[r]\ar[d] & Z\ar[r]\ar[d] & 0\\
0\ar[r] & K\ar[r]\ar[d] & L\ar[r]\ar[d] & M\ar[r]\ar[d] & 0\\
& 0 & 0 & 0 &
}
\end{displaymath}
be a diagram in which all columns and two of the rows, or all rows and two of the columns are pure exact. Then the remaining row or column is pure exact. 
\end{prop}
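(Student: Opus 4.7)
The plan is to combine two standard tools: the Gabriel-Quillen embedding of an exact category into an abelian category, and the ordinary $3\times 3$ lemma, together with the observation that tensoring with a fixed object commutes with taking rows and columns of the diagram.

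First, I would fix the setup. Suppose all three columns and the top two rows of the diagram are pure exact; we want to show the bottom row is pure exact (the dual case is treated identically). As a preliminary, I would note that purity implies admissible exactness, so by the $3\times 3$ lemma for admissible short exact sequences in an exact category, which one can deduce by embedding $\mathpzc{E}$ fully faithfully and exactly into its abelianization $\mathcal{A}(\mathpzc{E})$ and invoking the classical abelian $3\times 3$ lemma, the bottom row
\[
0\to K\to L\to M\to 0
\]
is already an admissible short exact sequence in $\mathpzc{E}$. This takes care of the exactness half of the purity statement.

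Next, I would upgrade admissible exactness to purity. Fix an arbitrary object $X\in\mathpzc{E}$ and apply the functor $X\otimes(-)$ to the entire $3\times 3$ diagram. Because $\otimes$ is additive, the result is again a commutative $3\times 3$ diagram, whose rows and columns correspond to tensoring the original rows and columns with $X$. By hypothesis, the three columns and the two top rows of the original diagram are pure exact, so after tensoring with $X$ those five sequences are still exact, now viewed in the abelianization $\mathcal{A}(\mathpzc{E})$. Applying the abelian $3\times 3$ lemma in $\mathcal{A}(\mathpzc{E})$, the bottom row $0\to X\otimes K\to X\otimes L\to X\otimes M\to 0$ is exact. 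Since $X$ was arbitrary and the argument is symmetric between $X\otimes(-)$ and $(-)\otimes X$, the sequence $0\to K\to L\to M\to 0$ remains exact after tensoring with any object on either side, which is exactly the purity condition.

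The only genuine work is ensuring that the tensor product, the embedding into the abelianization, and the formation of the $3\times 3$ diagram interact as expected. The main subtlety, and where I would be most careful, is the first step: one needs the abelianization to be chosen so that a sequence in $\mathpzc{E}$ is admissible exact iff it is exact in $\mathcal{A}(\mathpzc{E})$, so that the classical abelian $3\times 3$ lemma transports back. This is exactly the content of the Gabriel-Quillen embedding. Once that is in hand, no further computation is required — purity is just exactness after tensoring, and the two applications of the abelian $3\times 3$ lemma suffice.
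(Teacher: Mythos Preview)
Your proposal is correct and is exactly the approach the paper takes. The paper's proof consists of the single sentence ``By passing to an abelianization (see \cite{kelly2016projective}) it is straightforward to prove the following,'' and your write-up simply unpacks what ``straightforward'' means here: use the Gabriel--Quillen embedding to reduce the admissibility claim to the abelian $3\times 3$ lemma, then handle purity by tensoring with an arbitrary object and applying the same lemma again in the abelianization.
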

We shall require a technical assumption on $\mathpzc{E}$. Let $\mathcal{I}$ be a filtered category, and let $\mathcal{S}$ be a class of morphisms in $\mathpzc{E}$. Denote by $Fun_{\mathcal{S}}^{cocont}(\mathcal{I},\mathpzc{E})$ the category of cocontinuous functors from $F:\mathcal{I}\rightarrow\mathpzc{E}$ such that for any morphism $\alpha$ in $\mathcal{I}$ the map $F(\alpha)$ is in $\mathcal{S}$.

\begin{defn}
$\mathpzc{E}$ is said to be \textbf{weakly} $\mathcal{S}$-\textbf{elementary} if for any ordinal $\lambda$ the functor $lim_{\rightarrow}:Fun_{\mathcal{S}}^{cocont}(\lambda,\mathpzc{E})\rightarrow\mathpzc{E}$ exists and is exact.
\end{defn}

\begin{rem}
If there exists a model structure on $Ch(\mathpzc{E})$ in which the weak equivalences are the quasi-isomorphisms then $\mathpzc{E}$ is weakly $\mathcal{S}$-elementary as an exact category precisely if $Ch(\mathpzc{E})$ is weakly $Ch(\mathcal{S})$-elementary as a model category.
\end{rem}

Typically we will assume that our exact categories are weakly \textbf{PureMon}-elementary. Often our exact categories will satisfy a much stronger property than being weakly elementary for some class of (admissible) monomorphisms.

\begin{defn}[\cite{kelly2016homotopy} Definition 2.6.97]
An exact category $\mathpzc{E}$ is said to be \textbf{elementary} if it has a projective generating set $\mathcal{P}$ consisting of tiny projectives. That is
\begin{enumerate}
\item
each $P\in\mathcal{P}$ is projective
\item
each $P\in\mathcal{P}$ is \textit{tiny} in that the functor
$$Hom(P,-):\mathpzc{E}\rightarrow\mathpzc{Ab}$$
commutes with filtered colimits.
\item
for any object $E$ of $\mathpzc{E}$ there is an admissible epimorphism $\bigoplus_{i\in\mathcal{I}}P_{i}\rightarrow E$, where each $P_{i}\in\mathpzc{E}$. 
\end{enumerate}
\end{defn}

In an elementary exact category $\mathpzc{E}$ all filtered colimits are exact by \cite{kelly2016homotopy} Proposition 2.6.101. In particular $\mathpzc{E}$ is weakly $\mathcal{S}$-elementary for $\mathcal{S}$ the class of all morphisms. 

\subsection{$K$-Flatness and $K$-Cotorsion in Monoidal Exact Categories}\label{Kcotorproj}

Let us now focus on homotopical categories of the form $(Ch(\mathpzc{E}),\mathcal{W})$, where $\mathpzc{E}$ is an exact category and $\mathcal{W}$ is the class of weak equivalences.


\begin{prop}\label{boundedKflat}
Let $\mathpzc{E}$ be a weakly $\textbf{PureMon}$-elementary exact category. Let $X_{\bullet}$ be an $(\aleph_{0};\textbf{PureMon})$-extension of bounded below complexes of flat objects, that is $X_{\bullet}=lim_{\rightarrow_{n\in\mathbb{N}}} X_{\bullet}^{n}$ where each $X_{\bullet}^{n}$ consists of flat objects and $X_{\bullet}^{n}\rightarrow X_{\bullet}^{n+1}$ is a pure monomorphism. Then $X_{\bullet}$ is $K$-flat. 
\end{prop}

\begin{proof}
First note that $X_{\bullet}$ is a $(\aleph_{0};\mathcal{S})$-extension of bounded below complexes satisfying the hypotheses of the proposition. Since $\mathpzc{E}$ is weakly $(\aleph_{0};\mathcal{S})$-elementary, we may in fact assume that $X_{\bullet}$ is bounded below. Now $X_{\bullet}$ is obtained from the complex $S^{0}(X_{0})$ by a transfinite composition of pushouts of the form
\begin{displaymath}
\xymatrix{
S^{k}(F)\ar[r]\ar[d] & A\ar[d]\\
D^{k+1}(F)\ar[r] & B
}
\end{displaymath}
where $F$ is a flat object. By induction each $A$ and $B$ consists of flat objects and the map $A\rightarrow B$ is a pure monomorphism. It follows that we may assume that $X$ is concentrated in degree $0$, in which case the result is clear.
\end{proof}
Dually one defines cotorsion objects.

\begin{defn}\label{defn:Kcotors}
Suppose that $(\mathpzc{M},\mathcal{W})$ is a homotopical category with a closed monoidal structure such that the internal hom functor $\underline{Hom}(-,O)$ is right derivable for any object $O$ of $\mathpzc{M}$. If $\mathfrak{O}$ is a class of objects in $\mathpzc{M}$, an object $X$ is said to be $\mathfrak{O}$-K-cotorsion if the map $\underline{Hom}(X,O)\rightarrow\mathbb{R}\underline{Hom}(X,O)$ is an equivalence for any $O\in\mathfrak{O}$. When $\mathfrak{O}$ is the class of finite products of copies of the monoidal unit $k$, we say that $X_{\bullet}$ is \textbf{finitely }$K$-\textbf{cotorsion}.
\end{defn}

Now suppose that $\mathpzc{E}$ is a closed monoidal exact category, and consider the homotopical category $(Ch(\mathpzc{E}),\mathcal{W})$. Let us assume that $\underline{Hom}$ is right-derivable.
Notice that to check that a complex $X_{\bullet}$ is finitely $K$-cotorsion, it is necessary and sufficient to check that $\underline{Hom}(X_{\bullet},k)\rightarrow\mathbb{R}\underline{Hom}(X_{\bullet},k)$ is an equivalence. In particular the functor $\underline{Hom}(-,k)$ preserves weak equivalences between finitely $K$-cotorsion objects. A similar proof to Proposition \ref{boundedKflat} gives the following.

\begin{prop}\label{boundedbelowKcotors}
Any bounded below complex of $\mathfrak{O}$-cotorsion objects is $\mathfrak{O}$-K-cotorsion. 
\end{prop}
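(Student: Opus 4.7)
The plan is to mirror the proof of Proposition \ref{boundedKflat}, with pushouts along pure monomorphisms of stalk/disk cells replaced by a dual filtration argument, exploiting that $\mathfrak{O}$-K-cotorsion is preserved under extensions along split short exact sequences.

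First I would unwind the statement: an object $X$ of $\mathpzc{E}$ is called $\mathfrak{O}$-K-cotorsion when the stalk complex $S^0(X)$ is $\mathfrak{O}$-K-cotorsion in the sense of the definition. Since the shift functor $[1]$ is an auto-equivalence of $Ch(\mathpzc{E})$ commuting with both $\underline{\textrm{Hom}}(-,O)$ and $\mathbb{R}\underline{\textrm{Hom}}(-,O)$ up to canonical equivalence, every stalk complex $S^n(X_n)$ is then $\mathfrak{O}$-K-cotorsion for all $n \in \mathbb{Z}$.

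Next I would introduce the stupid truncation filtration $F_n X_\bullet$ defined by $(F_n X)_k = X_k$ for $k \leq n$ and $(F_n X)_k = 0$ otherwise, with differentials inherited from $X_\bullet$. Since $X_\bullet$ is bounded below, say with $X_k = 0$ for $k < N$, one has $F_N X_\bullet = S^N(X_N)$. For each $n > N$, there is a short exact sequence
\begin{equation*}
0 \longrightarrow F_{n-1} X_\bullet \longrightarrow F_n X_\bullet \longrightarrow S^n(X_n) \longrightarrow 0
\end{equation*}
in $Ch(\mathpzc{E})$ which is split in every degree (and so in particular pure). Applying $\underline{\textrm{Hom}}(-,O)$ yields a degree-wise split short exact sequence, while applying $\mathbb{R}\underline{\textrm{Hom}}(-,O)$ yields a fibre sequence; both sit in a map of such sequences, so by the 2-out-of-3 property of equivalences, an induction starting from $F_N X_\bullet$ shows that every bounded truncation $F_n X_\bullet$ is $\mathfrak{O}$-K-cotorsion.

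The main obstacle is passing from the bounded-above truncations to $X_\bullet$ itself. I would write $X_\bullet = \mathrm{colim}_n F_n X_\bullet$, noting that this colimit is eventually constant in each fixed degree, so $\underline{\textrm{Hom}}(X_\bullet,O) \cong \lim_n \underline{\textrm{Hom}}(F_n X_\bullet,O)$ strictly, and likewise $\mathbb{R}\underline{\textrm{Hom}}(X_\bullet,O) \simeq \mathrm{holim}_n \mathbb{R}\underline{\textrm{Hom}}(F_n X_\bullet,O)$. Because each transition map $\underline{\textrm{Hom}}(F_n X_\bullet,O) \to \underline{\textrm{Hom}}(F_{n-1} X_\bullet,O)$ is a degree-wise split epimorphism (arising from the split sequence above applied contravariantly), the tower is Mittag-Leffler, the $\lim^1$-obstruction vanishes, and the natural comparison map $\mathrm{holim}_n \mathbb{R}\underline{\textrm{Hom}}(F_n X_\bullet,O) \to \lim_n \underline{\textrm{Hom}}(F_n X_\bullet,O)$ is an equivalence. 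Combined with the bounded step, this identifies $\underline{\textrm{Hom}}(X_\bullet,O) \to \mathbb{R}\underline{\textrm{Hom}}(X_\bullet,O)$ as an equivalence, completing the proof.
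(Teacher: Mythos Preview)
Your argument is correct and is essentially the approach the paper has in mind: the paper gives no explicit proof beyond the remark that it is ``similar to Proposition~\ref{boundedKflat}'', and your dualization --- filtering by stupid truncations (which is the same filtration as the one obtained from the paper's pushouts along $S^{k}\to D^{k+1}$), running a 2-out-of-3 induction on the degree-wise split short exact sequences, and then passing to the limit via a Mittag--Leffler argument --- is exactly how that similarity unfolds. The only cosmetic issue is the direction of your final comparison map (one compares $\lim$ to $\textrm{holim}$, not the reverse), but the logic is clear.
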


%

\begin{prop}\label{prop:elexsuff}
Let $\mathpzc{E}$ be an elementary exact category and $X_{\bullet}$  a complex in $Ch(\mathpzc{E})$ such that for sufficiently negative $n$ $Z_{n}X\rightarrow X_{n}$ is an admissible morphism, and both $B_{n}=Im(d_{n})$ and $X_{n}$ are $\mathfrak{O}$-cotorsion for all $n$, then $X_{\bullet}$ is $\mathfrak{O}$-K-cotorsion.
\end{prop}

\begin{proof}
We know the claim holds for bounded complexes by Proposition \ref{boundedKflat}. Since $\mathpzc{E}$ has enough projectives, projective limits in which connecting morphisms are admissible epimorphisms (Mittag-Leffler systems) are exact. Indeed we may test exactness by passing to abelian groups using $Hom(P,-)$ for $P$ projective, and these send our projective systems to Mittag-Leffler systems. Now we write $X_{\bullet}=lim_{\rightarrow}\tau_{\ge n}X_{\bullet}$. Then $\underline{Hom}(X_{\bullet},O)\cong lim_{\leftarrow}\underline{Hom}(\tau_{\ge n}X_{\bullet},O)$. Each $\underline{Hom}(\tau_{\ge n}X_{\bullet},O)$ is acyclic by the first part. Now by the long exact sequence (\cite{Buehler} Section 12) each $Z_{n}X$ is $\mathfrak{O}$-cotorsion, and the sequence 
$$0\rightarrow\underline{Hom}(B_{n},O)\rightarrow\underline{Hom}(X_{n},O)\rightarrow\underline{Hom}(Z_{n},O)\rightarrow0$$
is short exact. Therefore the system $lim_{\leftarrow}\underline{Hom}(\tau_{\ge n}X_{\bullet},O)$ is a Mittag-Leffler system. Moreover in an elementary exact category we have
$$\mathbb{R}\underline{Hom}(lim_{\rightarrow}\tau_{\ge n}X_{\bullet},k)\cong\mathbb{R}lim_{\leftarrow}\mathbb{R}\underline{Hom}(\tau_{\ge n}X_{\bullet},k)\cong lim_{\leftarrow}\underline{Hom}(\tau_{\ge n}X_{\bullet},k)$$
where the last line follows from the fact that the projective system is Mittag-Leffler, and each $\tau_{\ge n}X_{\bullet}$ is finitely $\mathfrak{O}$-K-cotorsion.  
\end{proof}

\subsubsection{Duality}

For a closed monoidal model category $\mathpzc{M}$ with monoidal unit $R$, we denote by $(-)^{\vee}:\mathpzc{M}\rightarrow\mathpzc{M}^{op}$ the functor $\underline{Hom}(-,R)$. Since $\mathpzc{M}$ is closed monoidal this is derivable to a functor
$$\mathbb{R}(-)^{\vee}\defeq\mathbb{R}\underline{Hom}(-,R):Ho(\mathpzc{M})\rightarrow Ho(\mathpzc{M})^{op}$$
By abuse of notation we denote by $\mathbb{R}(-)^{\vee\vee}:Ho(\mathpzc{M})\rightarrow Ho(\mathpzc{M})$ the composition $\mathbb{R}(-)^{\vee}\circ \mathbb{R}(-)^{\vee}$. Note that in $Ho(\mathpzc{M})$ there is a map $V\rightarrow\mathbb{R}(V)^{\vee\vee}$. 

\begin{defn}\label{defn:htpyref}
An object $V$ is said to be \textbf{homotopically reflexive} if the map $V\rightarrow \mathbb{R}V^{\vee \vee}$ is an isomorphism in $Ho(\mathpzc{M})$. 
\end{defn}

If $V$ and its dual are both finitely $K$-cotorsion then $\mathbb{R}(V)^{\vee\vee}$ is equivalent to $V^{\vee\vee}$, and such an object is homotopically refleixve if and only if the map $V\rightarrow V^{\vee\vee}$ is a weak equivalence.

\subsection{Koszul Categories}
We are now ready to define what we mean by a Koszul category.

\begin{defn}
A \textbf{Koszul category} is a weakly monoidal model category $\mathpzc{M}$ of the form ${}_{R}\mathpzc{Mod}(Ch(\mathpzc{E}))$ where:
\begin{enumerate}
\item
$\mathpzc{E}$ is a complete and cocomplete symmetric monoidal exact category, and the monoidal structure on $Ch(\mathpzc{E})$ is the one induced from $\mathpzc{E}$.
\item
$R$ is a commutative monoid in $Ch(\mathpzc{E})$.
\item
$Ch(\mathpzc{E})$ is equipped with a combinatorial model structure satisfying the monoid axiom.
\item
weak equivalences in the model structure on $Ch(\mathpzc{E})$ are thick: if
\begin{displaymath}
\xymatrix{
0\ar[r] & X\ar[d]\ar[r] & Y\ar[d]\ar[r] & Z\ar[d]\ar[r] & 0\\
0\ar[r] & U\ar[r] & V\ar[r] & W\ar[r] & 0
}
\end{displaymath}
is a diagram in $Ch(\mathpzc{E})$ in which the top and bottom rows are exact, and any two of the vertical morphisms are weak equivalences, then the third map is a weak equivalence.
\item
$\mathpzc{M}$ is equipped with the transferred model structure (which we assume to exist), and this model structure satisfies the monoid axiom.
\item
$\mathpzc{M}$ satisfies the weak pushout-product axiom.
\end{enumerate}
A Koszul category is said to be \textbf{closed} if it is a closed monoidal  category.
\end{defn}

We \textit{do not assume} the full pushout-product axiom. In particular we do not assume that the tensor product of cofibrant objects is cofibrant.

\begin{defn}
A Koszul category is said to be
\begin{enumerate}
\item
 \textbf{C-monoidal} if it satisfies the pushout-product axiom, i.e. it is a monoidal model category.
\item
\textbf{K-monoidal} if coifbrant objects are $K$-flat.
 \item
 \textbf{KC-monoidal} if it is both K-monoidal and C-monoidal.
 \item
 \textbf{pure} if cofibrations are pure monomorphisms.
 \end{enumerate}
\end{defn}

%

\subsection{Construction of Koszul Categories} 
In this subsection we will give a method of constructing a large class of Koszul categories.

\subsubsection{Basic Koszul Category}
\begin{defn}\label{basic Koszul}
A \textbf{basic Koszul category} is a complete and cocomplete monoidal exact category $(\mathpzc{E},\otimes)$, together with a model structure on $Ch(\mathpzc{E})$ such that
\begin{enumerate}
\item
$\mathpzc{E}$ is weakly $\textbf{PureMon}$-elementary, and transfinite compositions of pure monomorphisms are pure monomorphisms.  
\item
the weak equivalences are the quasi-isomorphisms.
\item
the model structure is combinatorial.
\item
there is a set of objects $\{G_{i}\}_{i\in\mathcal{I}}$ and a set of pure monomorphisms $\{f_{l}\}_{l\in\mathcal{L}}$ such that 
$$\{S^{n}(G_{i})\rightarrow D^{n+1}(G_{i})\}_{n\in\Z,i\in\mathcal{I}}\cup\{0\rightarrow D^{n}(G_{i})\}_{n\in\Z,i\in\mathcal{I}}\cup\{S^{n}(f_{l})\}_{n\in\Z,l\in\mathcal{L}}$$
is a set of generating cofibrations. 
\item
there is a set of pure monomorphisms $\{g_{k}\}_{k\in\mathcal{K}}$ in $\mathpzc{E}$ such that $\{D^{n}(g_{k})\}_{n\in\Z,k\in\mathcal{K}}$ is a collection of generating acyclic cofibrations.
\end{enumerate}
A basic Koszul category is said to be \textbf{projective} if $\{f_{l}\}_{l\in\mathcal{L}}=\emptyset$, and every $g_{k}$ is of the form $0\rightarrow F$ for $F$ projective. 
\end{defn}

The meaning behind the term \textit{projective}, is that such a model structure behaves very much like a projective model structure, in that the cofibrations are degree-wise split. The collection
$$\mathfrak{G}=\{S^{n}(G_{i})\rightarrow D^{n+1}(G_{i})\}_{n\in\Z,i\in\mathcal{I}}\cup\{0\rightarrow D^{n}(G_{i})\}_{n\in\Z,i\in\mathcal{I}}\cup\{S^{n}(f_{l})\}_{n\in\Z,l\in\mathcal{L}}$$
is called the \textbf{cofibrancy data} of $\mathpzc{E}$.

\begin{prop}\label{prop:cokercofib}
Let $\mathpzc{E}$ be a basic Koszul category. Suppose that $f:X\rightarrow Y$ is a (trivial) cofibration in $Ch(\mathpzc{E})$. Then $f$ is a pure monomorphism and $coker(f)$ is a (trivially) cofibrant object. 
\end{prop}

\begin{proof}
We first note that $f$ is a retract of a transfinite composition of pushouts of maps in 
$$\{S^{n}(G_{i})\rightarrow D^{n+1}(G_{i})\}_{n\in\Z,i\in\mathcal{I}}\cup\{0\rightarrow D^{n}(G_{i})\}_{n\in\Z,i\in\mathcal{I}}\cup\{S^{n}(f_{l})\}_{n\in\Z,l\in\mathcal{L}}$$
and is therefore a pure monomorphism. 
 Let $C$ be the cokernel of $f:X\rightarrow Y$. Suppose that 
\begin{displaymath}
\xymatrix{
0\ar[d]\ar[r] & A\ar[d]\\
C\ar[r] & B
}
\end{displaymath}
is a commutative diagram and that $A\rightarrow B$ is a trivial fibration. Then there is a commutative diagram
\begin{displaymath}
\xymatrix{
X\ar[d]^{f}\ar[r] & A\ar[d]\\
Y\ar[r] & B
}
\end{displaymath}
where the map $X\rightarrow A$ is the zero map. Thus there is a lift $h:Y\rightarrow A$ in this diagram. But $h\circ f=0$, so this gives a lift $\tilde{h}:C\rightarrow A$ in the first diagram.
The same proof works for trivial cofibrations.
\end{proof}

\begin{lem}\label{lem:almostpurebkosz}
Let $\mathpzc{E}$ be a basic Koszul category. Then it is a pure almost monoidal model category.
\end{lem}

\begin{proof}
Any pushout product of pure monomorphism is a pure, and hence admissible, monomorphism. Admissible monomorphisms are left proper by \cite{kelly2016homotopy} Proposition 4.2.45. Moreover since $\mathpzc{E}$ is weakly $\textbf{PureMon}$-elementary, it suffices to establish the weak monoid axiom for generating cofibrations and acyclic cofibrations. In fact let $f:X\rightarrow Y$ be any pure monomorphism, and $D^{n}(A)\rightarrow D^{n}(B)$ a generating acyclic cofibration with cokernel $D^{n}(C)$. As in the proof of \cite{vst2012exact} Theorem 8.11 we have that the pushout-product map is an admissible monomorphism whose cokernel is $D^{n}(C)\otimes coker(f)\cong C\otimes cone(Id_{coker(f)})[n]$. Since $cone(Id_{coker(f)})[n]$ is homotopy equivalent to the zero complex, $C\otimes cone(Id_{coker(f)})[n]$ is also homotopy equivalent to the zero complex, and hence is acyclic. By \cite{kelly2016homotopy} Proposition 4.2.42 the pushout-product map is an equivalence.
\end{proof}
%


\begin{prop}
\begin{enumerate}
\item
$Ch(\mathpzc{E})$ satisfies the monoid axiom.
\item
Let $\mathpzc{C}\subset Ch(\mathpzc{E})$ be a subcategory. If each $G_{i}$ is $K$-transverse to $\mathpzc{C}$, and the cokernel of each $f_{l}$ is $K$-transverse to $\mathpzc{C}$, then for $C$ a cofibrant object, and  $X$ is any acyclic object such that each $Ker(d^{X}_{n}),coker(d^{X}_{n}),X_{n}$ is in $\mathpzc{C}$, we have that $C\otimes X$ is trivial. In particular if each $G_{i}$ and each cokernel of each $f_{l}$ is flat, then $Ch(\mathpzc{E})$ is $K$-monoidal. 
\end{enumerate}
\end{prop}

\begin{proof}
\begin{enumerate}
\item
The proof of Lemma \ref{lem:almostpurebkosz} in fact shows that if $g$ is a pure monomorphism and $f$ an acyclic cofibration, then $g\Box f$ is a weak equivalence and a pure monomorphism. In particular for any object $X$, $X\otimes f$ is a weak equivalence and a pure monomorphism. Since $\mathpzc{E}$ is assumed weakly $\textbf{PureMon}$-elementary, and transfinite composition of such maps is again a weak equivalence.
\item
The proof of Proposition 4.2.55 in \cite{kelly2016homotopy} works in this generality.
\end{enumerate}
\end{proof}

Let $\mathpzc{E}$ be a basic Koszul category and consider the  model category $Ch(\mathpzc{E})$. Since, by assumption, the model structure on $Ch(\mathpzc{E})$ is monoidal and satisfies the monoid axiom, for any unital commutative monoid $R\in\mathpzc{Alg}_{\mathfrak{Comm}}(Ch(\mathpzc{E}))$ the transferred model structure exists on ${}_{R}\mathpzc{Mod}(Ch(\mathpzc{E}))$. Moreover this model structure is monoidal and satisfies the monoid axiom (Theorem 4.1 in \cite{schwede}). All of the arguments above work again for the transferred model structure on ${}_{R}\mathpzc{Mod}(Ch(\mathpzc{E}))$, for $R\in\mathpzc{Alg}_{\mathfrak{Comm}}(Ch(\mathpzc{E}))$. Thus we get the following.

\begin{lem}
Let $\mathpzc{E}$ be a basic Koszul category, and $R\in\mathpzc{Alg}_{\mathfrak{Comm}}(Ch(\mathpzc{E}))$. Then with the induced model structure ${}_{R}\mathpzc{Mod}$ is a an almost monoidal, pure, Koszul category. It is $K$-monoidal if each $G_{i}$ and each cokernel of each $f_{l}$ is flat.
\end{lem}

\begin{defn}
If $Ch(\mathpzc{E})$ is a basic Koszul category and $R\in\mathpzc{Alg}_{\mathfrak{Comm}}(Ch(\mathpzc{E})$, we call a Koszul category of the form ${}_{R}\mathpzc{Mod}(Ch(\mathpzc{E}))$, equipped with the transferred model structure, an \textbf{exact Koszul category}
\end{defn}

\begin{rem}
 By \cite{kelly2016homotopy} Proposition 4.2.42, admissible monomorphisms are left proper in exact Koszul categories.
 \end{rem}

%

\subsubsection{Strong Koszul Categories}

\begin{defn}
A Koszul category $\mathpzc{M}={}_{(R,d_{R})}\mathpzc{Mod}(Ch(\mathpzc{E}))$ is said to be \textbf{strong} if the model structure is transferred from a basic Koszul category structure on $Ch(\mathpzc{E})$, and a map $X\rightarrow Y$ in $\mathpzc{M}$ is a (trivial) fibration if and only if it is an admissible epimorphism whose kernel is a (trivially) fibrant object. 
\end{defn}

\begin{prop}
Let $\mathpzc{M}$ be a Koszul category. Suppose a map $g:G\rightarrow C$ is a (trivial) fibration if and only if it is an admissible epimorphism with (trivially) fibrant kernel. Then a map $f:X\rightarrow Y$ is a (trivial) cofibration if and only if it is an admissible monomorphism whose cokernel is (trivially) cofibrant.
\end{prop}

\begin{proof}
One direction follows from Proposition \ref{prop:cokercofib}. In the other direction let $f:X\rightarrow Y$ be an admissible monomorphism in $\mathpzc{M}$ whose cokernel $C$ is (trivially) cofibrant. First we show that if $F$ is trivially fibrant/ fibrant in $\mathpzc{M}$ and $C$ is cofibrant/ trivially fibrant, then $Ext^{1}(C,F)=0$. Let
$$0\rightarrow F\rightarrow G\rightarrow C\rightarrow 0$$
be an exact sequence. Then $G\rightarrow C$ is a trivial fibration/ fibration. Since the map $0\rightarrow C$ is a cofibration, there is a lift in the diagram
\begin{displaymath}
\xymatrix{
0\ar[d]\ar[r] & G\ar[d]\\
C\ar[r] & C
}
\end{displaymath}
This says precisely that the sequence is split, so $Ext^{1}(C,F)=0$. Now the result follows from Lemma 5.14. in \cite{vst2012exact}. 
\end{proof}

In the terminology of \cite{vst2012exact}, this means that the weak factorisation systems determining the model structure are \textit{exact weak factorisation systems}. Equivalently, the weak factorisation systems arise from cotorsion pairs (see \cite{vst2012exact} Section 6).

\begin{cor}
If $Ch(\mathpzc{E})$ is a strong Koszul category and $R\in\mathpzc{Alg}_{\mathfrak{Comm}}(Ch(\mathpzc{E}))$ then $\mathpzc{M}={}_{R}\mathpzc{Mod}(Ch(\mathpzc{E}))$ is strong Koszul.
\end{cor}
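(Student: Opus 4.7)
The plan is to invoke the previous proposition. The earlier corollary already establishes that $\mathpzc{M} = {}_{R}\mathpzc{Mod}(Ch(\mathpzc{E}))$ is a Koszul category under the transferred monoidal model structure, so it suffices to verify that in $\mathpzc{M}$ a map $g: Y \to Z$ is a (trivial) fibration if and only if it is an admissible epimorphism whose kernel is (trivially) fibrant; once that is established, the previous proposition delivers the conclusion.

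The first step is to upgrade the cofibration characterisation of the strong Koszul hypothesis on $Ch(\mathpzc{E})$ to the dual fibration characterisation in $Ch(\mathpzc{E})$. Since $Ch(\mathpzc{E})$ is strong Koszul, the (trivial) cofibrations are precisely the admissible monomorphisms with (trivially) cofibrant cokernels. Dualising the splitting argument of the previous proposition — lifting against $0 \to K$ with $K$ the kernel of a candidate admissible epimorphism — one obtains $\mathrm{Ext}^{1}(C,F) = 0$ for the relevant pairs, and then the cotorsion pair / exact weak factorisation system formalism of \cite{vst2012exact} (Lemma 5.14, applied on the other side of the pair) forces the (trivial) fibrations in $Ch(\mathpzc{E})$ to be exactly the admissible epimorphisms with (trivially) fibrant kernels.

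The second step is to transfer this characterisation to $\mathpzc{M}$. By definition of the transferred model structure on ${}_{R}\mathpzc{Mod}(Ch(\mathpzc{E}))$, a map $g$ is a (trivial) fibration in $\mathpzc{M}$ iff the underlying map $|g|$ is a (trivial) fibration in $Ch(\mathpzc{E})$. The forgetful functor $|-|\colon \mathpzc{M} \to Ch(\mathpzc{E})$ creates kernels and both preserves and reflects admissible short exact sequences, because $\mathpzc{M}$ carries the exact structure inherited from $Ch(\mathpzc{E})$ via $|-|$. Similarly, an $R$-module is (trivially) fibrant in $\mathpzc{M}$ iff its underlying chain complex is (trivially) fibrant in $Ch(\mathpzc{E})$. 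Combining these observations with the fibration characterisation in $Ch(\mathpzc{E})$ yields exactly the hypothesis needed to apply the previous proposition to $\mathpzc{M}$, finishing the proof.

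The only real obstacle is the first step: extracting the fibration characterisation from the cofibration characterisation encoded in the definition of strong Koszul. This is a purely formal cotorsion-pair / exact-WFS matter — in such a pair the left and right classes mutually determine one another via $\mathrm{Ext}^{1}$-orthogonality, so pinning down one class automatically pins down the other. Everything after that is routine bookkeeping about the transferred model structure and the natural exact structure on module categories over a monoid in a monoidal exact category.
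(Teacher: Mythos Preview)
Your proposal is correct and matches the paper's intended (implicit) argument: verify the fibration-side hypothesis of the preceding proposition by noting that strong Koszul on $Ch(\mathpzc{E})$ makes the weak factorisation systems exact (hence cotorsion-pair based), so (trivial) fibrations there are admissible epimorphisms with (trivially) fibrant kernel, and this characterisation transfers verbatim to ${}_{R}\mathpzc{Mod}(Ch(\mathpzc{E}))$ because fibrations, kernels, and admissible epimorphisms are all created by the forgetful functor. The only remark is that your step 1 can be shortened: once you invoke the exact-WFS/cotorsion-pair equivalence the paper states immediately after the definition of strong Koszul, the fibration description is automatic and the separate $\mathrm{Ext}^{1}$ splitting argument is unnecessary.
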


The fact that the weak factorisation systems are determined by cotorsion pairs also has the following important consequences, which follow from the general theory of exact model structures (see e.g. \cite{vst2012exact}, \cite{kelly2016homotopy}).

\begin{cor}\label{middlecofibrant}
If $\mathpzc{M}$ is strong Koszul and $0\rightarrow X\rightarrow Y\rightarrow Z\rightarrow 0$ is an exact sequence such that $X$ and $Z$ are (trivially) cofibrant then so is $Y$. 
\end{cor}

\begin{cor}
If $\mathpzc{M}$ is strong Koszul then it is $C$-monoidal precisely if the tensor product of two cofibrant objects is cofibrant.
\end{cor}

We will have use of a stricter notion than strongness.

\begin{defn}
A strong Koszul category $\mathpzc{M}$ is said to be \textbf{hereditary} if whenever $f:X\rightarrow Y$ is an admissible epimorphism between (trivially) cofibrant objects, then $Ker(f)$ is (trivially) cofibrant.
\end{defn}
In the language of cotorsion pairs this is equivalent to the condition that the cotorsion pairs determining the weak factorisation systems are hereditary. 

\begin{prop}
Suppose that $Ch(\mathpzc{E})$ is a hereditary Koszul category. If $R\in\mathpzc{Alg}_{\mathfrak{Comm}}(Ch(\mathpzc{E}))$ then $\mathpzc{M}={}_{R}\mathpzc{Mod}(Ch(\mathpzc{E}))$ is hereditary Koszul.
\end{prop}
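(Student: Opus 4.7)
The plan is to combine the preceding corollary with the standard duality for hereditary cotorsion pairs. By that corollary, $\mathpzc{M}={}_{R}\mathpzc{Mod}(Ch(\mathpzc{E}))$ is already strong Koszul, so its two weak factorisation systems are encoded by cotorsion pairs whose right classes consist of the (trivially) fibrant $R$-modules and whose left classes consist of the (trivially) cofibrant $R$-modules. Asking $\mathpzc{M}$ to be hereditary is precisely asking that these two cotorsion pairs be hereditary in the sense of \cite{vst2012exact}, and it is only this property that still needs verification.

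The crucial input is a standard duality for hereditary cotorsion pairs: a cotorsion pair $(\mathcal{A},\mathcal{B})$ is hereditary, i.e. $\mathcal{A}$ is closed under kernels of admissible epimorphisms between its objects, if and only if dually $\mathcal{B}$ is closed under cokernels of admissible monomorphisms between its objects. Granting this, I would shift the whole problem to the right classes, which are much easier to control in the transferred setting: by the very construction of the transferred model structure, an $R$-module $F$ is (trivially) fibrant in $\mathpzc{M}$ if and only if the underlying complex $|F|$ is (trivially) fibrant in $Ch(\mathpzc{E})$.

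Concretely, given a short exact sequence $0\rightarrow F_{1}\rightarrow F_{2}\rightarrow F_{3}\rightarrow 0$ in $\mathpzc{M}$ with $F_{1}$ and $F_{2}$ (trivially) fibrant, apply the forgetful functor $|-|:\mathpzc{M}\rightarrow Ch(\mathpzc{E})$, which is exact since kernels and cokernels of $R$-module maps are computed on the underlying complexes. The resulting short exact sequence in $Ch(\mathpzc{E})$ has $|F_{1}|$ and $|F_{2}|$ (trivially) fibrant, and the assumption that $Ch(\mathpzc{E})$ is hereditary Koszul forces $|F_{3}|$ to be (trivially) fibrant in $Ch(\mathpzc{E})$, hence $F_{3}$ is (trivially) fibrant in $\mathpzc{M}$. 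This gives hereditary for both cotorsion pairs at once and completes the proof. The main delicate point is the dual characterisation of hereditary cotorsion pairs in the exact-category generality; I would cite \cite{vst2012exact}, since the authors already draw on that paper for the translation between cotorsion pairs and weak factorisation systems.
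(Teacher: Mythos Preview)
Your argument is correct and follows essentially the same route as the paper: invoke \cite{vst2012exact} to reformulate hereditariness in terms of the right classes of the cotorsion pairs, then observe that (trivial) fibrancy in ${}_{R}\mathpzc{Mod}(Ch(\mathpzc{E}))$ is detected by the exact forgetful functor, and finally appeal to the hereditary hypothesis on $Ch(\mathpzc{E})$. The paper's proof phrases the dual condition as closure of the fibrant class under kernels of admissible epimorphisms rather than under cokernels of admissible monomorphisms as you do, but the structure of the argument is identical and both reductions go through Lemma 6.17 of \cite{vst2012exact}.
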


\begin{proof}
By Lemma 6.17 in \cite{vst2012exact} it suffices to show that if $X\rightarrow Y$ is an admissible epimorphism where $X$ and $Y$ are (trivially) fibrant, then $Ker(f)$ is (trivially) fibrant. But $Ker(f)$ is (trivially) fibrant precisely if its underlying complex is, and this is true since $Ch(\mathpzc{E})$ is hereditary. 
\end{proof}

The utility of the hereditary property is the following $2$-out-of-$3$ result for cofibrations.

\begin{prop}\label{23cofibhered}
Let $f:X\rightarrow Y$ and $g:Y\rightarrow Z$ be maps in a hereditary Koszul category $\mathpzc{M}$. Suppose that $g\circ f$ and $g$ are (acyclic) cofibrations. Then $f$ is an (acyclic) cofibration.
\end{prop}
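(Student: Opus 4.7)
The plan is to reduce the statement to a purely exact-categorical fact together with the hereditary property of the cotorsion pair, using the characterisation of (trivial) cofibrations in a strong Koszul category as admissible monomorphisms with (trivially) cofibrant cokernel. Throughout the parenthetical ``(acyclic/trivially)'' is treated uniformly.

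First I would show that $f$ is an admissible monomorphism. This is a standard fact about exact categories (see e.g.\ Bühler's survey): given composable morphisms $f\colon X\to Y$ and $g\colon Y\to Z$ in an exact category with $gf$ and $g$ admissible monomorphisms, the map $f$ is itself an admissible monomorphism. The quickest way to see this is to note that $gf$ factors as $X\xrightarrow{f}Y\xrightarrow{g}Z$, and then the kernel of the cokernel $Z\to Z/X$ restricted along $g$ identifies $f$ with a pullback exhibiting it as a kernel.

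Next, invoking the third isomorphism theorem for exact categories, the admissible monomorphisms $X\hookrightarrow Y\hookrightarrow Z$ fit into a short exact sequence
\begin{equation*}
0\longrightarrow Y/X\longrightarrow Z/X\longrightarrow Z/Y\longrightarrow 0.
\end{equation*}
Since $\mathpzc{M}$ is strong Koszul, the hypothesis that $gf$ and $g$ are (trivial) cofibrations means precisely that $Z/X$ and $Z/Y$ are (trivially) cofibrant. The hereditary assumption on $\mathpzc{M}$ now applies to the admissible epimorphism $Z/X\twoheadrightarrow Z/Y$ between (trivially) cofibrant objects to conclude that its kernel $Y/X$ is (trivially) cofibrant.

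Finally, combining the two conclusions: $f\colon X\to Y$ is an admissible monomorphism whose cokernel $Y/X$ is (trivially) cofibrant. Invoking strongness once more, this is exactly the characterisation of a (trivial) cofibration in $\mathpzc{M}$. The main (minor) obstacle is the first step, where one has to be careful to justify admissibility of $f$ purely from exact category axioms; everything else is a direct application of the strong and hereditary properties of the cotorsion pair underlying $\mathpzc{M}$.
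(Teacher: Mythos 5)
Your proof is correct and follows essentially the same route as the paper: establish that $f$ is an admissible monomorphism, pass to the short exact sequence of cokernels $0\to Y/X\to Z/X\to Z/Y\to 0$, apply strongness and the hereditary property, and conclude via the strongness characterisation of (trivial) cofibrations. The only cosmetic difference is in the first step, where the paper simply cites the Obscure Lemma (which, given that cokernels exist, yields admissibility of $f$ directly from $gf$ being an admissible monic), whereas your pullback sketch is a slightly looser justification of the same standard fact.
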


\begin{proof}
The Obscure Lemma implies that $g$ is an admissible monomorphism. Moreover the Snake Lemma implies that there is an exact sequence
$$0\rightarrow coker(f)\rightarrow coker(g\circ f)\rightarrow coker(g)\rightarrow 0$$
$coker(g\circ f)$ and $coker(g)$ are (trivially) cofibrant objects by strongness. Therefore $coker(f)$ is (trivially) cofibrant by the hereditary property. 
\end{proof}

Another useful property is \textit{projectivity}

\begin{defn}
A strong Koszul category is said to be \textbf{projective} if it arises from a projective basic Koszul category.  
\end{defn}

Note that projective Koszul categories are hereditary.

 If $\mathpzc{E}$ is projectively monoidal (see \cite{kelly2016homotopy} Definition 2.4.71) and elementary (see \cite{kelly2016homotopy} Definition 2.6.97) and $Ch(\mathpzc{E})$ is equipped with the projective model structure, then $\mathpzc{M}={}_{R}\mathpzc{Mod}(Ch(\mathpzc{E}))$ is projective Koszul by \cite{kelly2016homotopy} Theorem 4.3.58 and Theorem 4.3.68. Moreover $\mathpzc{M}$ is also hereditary Koszul in this case. Examples of this include $\mathpzc{E}={}_{R}\mathpzc{Mod}(\mathpzc{Ab})$ for any ring $R$, and $\mathpzc{E}=Ind(Ban_{R})$ for $R$ a Banach ring.

\begin{defn}
A Koszul category $\mathpzc{M}={}_{R}\mathpzc{Mod}(Ch(\mathpzc{E}))$ is said to be \textbf{elementary} if $\mathpzc{E}$ is an elementary exact category, and the model structure on $Ch(\mathpzc{E})$ is the projective one. 
\end{defn}


\subsection{Examples}

The definition of Koszul category may appear unnecessarily general. Let us give some examples which we will pursue both in this work and in future work.

\begin{example}
As mentioned above, if $\mathpzc{E}$ is a projectively monoidal elementary exact category and $R$ is a commutative monoid in $Ch(\mathpzc{E})$, then ${}_{R}\mathpzc{Mod}(Ch(\mathpzc{E}))$ isan elementary Koszul category. Moreover in this instance it is in fact KC-monoidal. 
\end{example}

Consider now a category of the form $Pro(Ban_{k})$ where $k$ is a either a spherically complete non-Archimedean Banach field or $k=\mathbb{C}$. It can be equipped with the injective tensor product $\hat{\otimes}_{\epsilon}$. $Ban_{k}$ has enough injectives, and these injectives are flat. However the tensor product of injectives need not be injective. Passing to $Pro(Ban_{k})^{op}\cong Ind(Ban_{k}^{op})$ we have the following.

\begin{example}
$Ch(Ind(Ban_{k}^{op}))$ is an elementary Koszul category. Moreover it is $K$-monoidal. 
\end{example}

In both of the above examples the weak equivalences in the model structures are quasi-isomorphisms in the underlying exact category. There is an example which we will focus on in a future paper in which the homotopical structure will be a localisation of the exact one. Let $\mathcal{I}$ be a small category, and let $\mathpzc{ExCat}^{L}$ denote the $2$-category of exact categories with morphisms being left adjoints whose right adjoints are exact. By a presheaf off exact categories on $\mathcal{I}$ we mean a pseudo-functor
$$F:\mathcal{I}^{op}\rightarrow\mathpzc{ExCat}^{L}$$
Suppose that each $F(i)$ is elementary. We get a \textit{left Quillen presheaf}
$$Ch(F):\mathcal{I}^{op}\rightarrow\mathpzc{Mod}^{L}$$
in the sense of \cite{MR2771591} Definition 2.21
where $\mathpzc{Mod}^{L}$ is the $2$-category whose objects are model categories, and whose morphisms are left Quillen functors. The category $\mathpzc{Sect}^{L}(Ch(F))$ of sections of the Grothendieck construction (\cite{MR2771591}  Definition 2.21) is equipped with a model category structure which presents the homotopy limit of this diagram of model categories by \cite{MR2771591} Theorem 5.25. For various monoidal structures on $\mathpzc{Sect}^{L}(Ch(F))$ we will show that this is a Koszul category. This, for example, will include quasi-coherent sheaves on (pre-)stacks.

\subsection{Strict $t$-Structures on Koszul Categories}

For formulating a version of connective Koszul duality it will be convenient to have a compatible notion of $t$-structure.

\begin{defn}
A \textbf{strict }$t$-\textbf{structure} on a stable combinatorial model category $\mathpzc{M}$ is, for each $n\in\mathbb{Z}$ a collection of subcategories
$$\ldots\mathpzc{M}_{\ge n+1}\subset\mathpzc{M}_{\ge n}\subset\ldots\subset\mathpzc{M}$$
such that
\begin{enumerate}
\item
each $\mathpzc{M}_{\ge n}$ is a combinatorial model category.
\item
each inclusion $\mathpzc{M}_{\ge n}\rightarrow\mathpzc{M}$ is a Quillen coreflection, and preserves equivalences.
\item
there is a complete $t$-structure $(\mathbf{M}_{\ge0},\mathbf{M}_{\le0})$ on $\mathrm{L^{H}}(\mathpzc{M})$ such that the essential image of the inclusion $\mathrm{L^{H}}(\mathpzc{M}_{\ge n})\rightarrow\mathrm{L^{H}}(\mathpzc{M})$ is $\mathpzc{M}_{\ge n}$.
\end{enumerate}
We will write the data of a strict $t$-structure on $\mathpzc{M}$ as a tuple $(\mathpzc{M}_{\ge n})_{n\in\mathbb{Z}}$
\end{defn}

We also need some compatibility with the monoidal structure.

\begin{defn}
Let $\mathpzc{M}$ be a model category and $\otimes$ a symmetric monoidal functor on $\mathpzc{M}$. A strict $t$-structure $(\mathpzc{M}_{\ge n})_{n\in\mathbb{Z}}$ on $\mathpzc{M}$ is said to be \textbf{compatible with the monoidal structure} if 
\begin{enumerate}
\item
the monoidal unit is contained in $\mathpzc{M}_{\ge0}$,
\item
$X\otimes Y\in\mathpzc{M}_{\ge0}$ for any $X,Y\in\mathpzc{M}_{\ge 0}$ (c.f. \cite{raksit2020hochschild} Definition 3.3.1).
\end{enumerate}
\end{defn}

Let us give some relevant examples. Let $\mathpzc{E}$ be a basic Koszul category, and let $Ch_{\ge n}(\mathpzc{E})$ denote the full subcategory of $Ch(\mathpzc{E})$ consisting of objects concentrated in degrees $\ge n$. There is an obvious inclusion functor $Ch_{\ge n}(\mathpzc{E})\rightarrow Ch(\mathpzc{E})$. It has both a left and right adjoint, but we will only be concerned with the right adjoint $\tau_{\ge n}$ defined in the introduction.

\begin{thm}\label{connective}
Consider the adjunction
$$\adj{i_{\ge n}}{Ch(\mathpzc{E})_{\ge n}}{Ch(\mathpzc{E})}{\tau_{\ge n}}$$
The right-transferred model structure exists on $Ch(\mathpzc{E})_{\ge n}$. Moreover for $n=0$ it is monoidal and satisfies the monoid axiom. 
\end{thm}

\begin{proof}
We are going to use \cite{MR3380069} Theorem 2.23. Let $f:X\rightarrow Y$ be map in $Ch_{\ge n}(\mathpzc{E})$ which has the right lifting property against all in $Ch_{\ge n}(\mathpzc{E})$ which are cofibrations in $Ch(\mathpzc{E})$. We claim that this map is in fact an acyclic fibration in $Ch(\mathpzc{E})$. But such a map clearly has the right lifting property with respect to all maps in the collection $I$ of cofibrancy data
$$\mathfrak{G}=\{S^{n}(G_{i})\rightarrow D^{n+1}(G_{i})\}_{n\in\Z,i\in\mathcal{I}}\cup\{0\rightarrow D^{n}(G_{i})\}_{n\in\Z,i\in\mathcal{I}}\cup\{S^{n}(f_{l})\}_{n\in\Z,l\in\mathcal{L}}$$
Indeed the only place where something could go wrong is lifting against $S^{n-1}(G_{i})\rightarrow D^{n}(G_{i})$. But since the map $0\rightarrow S^{n}(G_{i})$ is a cofibration in $Ch_{\ge n}(\mathpzc{E})$ this does not pose any problems.
The fact that it is monoidal and satisfies the monoid axiom is clear.
\end{proof}

By \cite{henrard2021left} Section 3 there is a $t$-structure on $\mathpzc{E}$, called the \textbf{left }$t$-\textbf{structure}. Moreover the functors $\tau_{\ge n}$ are the truncation functors for this $t$-structure. It follows that
$$(Ch_{\ge n}(\mathpzc{E}))_{n\in\mathbb{Z}}$$
is a strict $t$-structure on $Ch(\mathpzc{E})$.


\subsection{Filtrations and Gradings in Koszul Categories}

Many of the algebraic objects we study will be equipped with filtrations.
In \cite{kelly2016homotopy} Chapter 5 we studied in detail categories of filtered objects in exact categories, and in monoidal exact categories. Fix a class of morphisms $\mathcal{S}$. Let us recall the definition of a filtered object. 
\begin{defn}
Let $A$ be an object of $\mathpzc{M}$ and $\mathcal{S}$ a class of morphisms in $\mathpzc{E}$. An $\mathcal{S}$-\textbf{subobject} of $\mathpzc{E}$ is a map $s:A'\rightarrow A$ in $\mathcal{S}$. An $\mathcal{S}$-\textbf{filtration} of $A$ consists of a collection of $\mathcal{S}$-subobjects of $A$, $\{\alpha_{i}:A_{i}\rightarrow A\}_{i\in\mathbb{N}_{0}}$ together with maps $a_{i}:A_{i}\rightarrow A_{i+1}$ in $\mathcal{S}$ such that $\alpha_{i+1}\circ a_{i}=\alpha_{i}$. An $\mathcal{S}$-\textbf{filtered object} of $\mathpzc{M}$ is tuple of data $((A)_{top},\alpha_{i},a_{i})$ where $(A)_{top}$ is an object of $\mathpzc{E}$ and $(\alpha_{i},a_{i})$ is a $\mathcal{S}$-filtration of $(A)_{top}$. A \textbf{morphism of  filtered objects} $g:((A)_{top},\alpha_{i},a_{i})\rightarrow((B)_{top},\beta_{i},b_{i})$ consists of a collection of morphisms $\{g_{i}:A_{i}\rightarrow B_{i}\}_{i\in\mathbb{N}_{0}}$, and $(g)_{top}:(A)_{top}\rightarrow (B)_{top}$ such that $g_{i+1}\circ a_{i}=b_{i}\circ g_{i}$ and $(g)_{top}\circ a_{i}=\beta_{i}\circ g_{i}$ for all $i\in\mathbb{N}_{0}$. $\mathcal{S}$-filtered objects and morphisms of $\mathcal{S}$-filtered objects can then be organised into an additive category $\mathpzc{Filt}_{\mathcal{S}}(\mathpzc{E})$.
\end{defn}

There is a functor
$$gr:\mathpzc{Filt}_{\mathcal{S}}(\mathpzc{E})\rightarrow\mathpzc{Gr}_{\mathbb{N}_{0}}(\mathpzc{E})$$
called the \textbf{associated graded functor} which we will make extensive use of. It sends a filtered object $((A)_{top},\alpha_{i},a_{i})$ to $\bigoplus_{n=0}^{\infty}A_{n}\big\slash A_{n-1}=\bigoplus_{n=0}^{\infty}gr_{n}(A)$.  

We also let $(-)_{top}:\overline{\mathpzc{Filt}}_{\textbf{RegMon}}\rightarrow\mathpzc{M}$ denote the functor sending a filtered object $(A_{i},\alpha_{i},a_{i})$ to $(A)_{top}$. 


\begin{defn}
A filtered object $((A)_{top},\alpha_{i},a_{i})$ is said to be \textbf{exhaustive} if $(A)_{top}$ together with the maps $\alpha_{i}:A_{i}\rightarrow (A)_{top}$ is a direct limit of the diagram
\begin{displaymath}
\xymatrix{
A_{0}\ar[r]^{a_{0}} & A_{1}\ar[r]^{a_{1}} & A_{2}\ar[r] &\ldots
}
\end{displaymath}
The full subcategory of $\mathpzc{Filt}_{\mathcal{S}}(\mathpzc{E})$ on objects equipped with an exhaustive filtration will be denoted by $\overline{\mathpzc{Filt}}_{\mathcal{S}}(\mathpzc{E})$. 
\end{defn}

We let $\textbf{RegMon}$ denote the class of regular monomorphisms in $\mathpzc{E}$ (i.e. morphisms which are kernels of their cokernels), and $\textbf{AdMon}$ denote the class of admissible monomorphisms. 

Constructing limits and colimits in this category is a somewhat subtle enterprise. However for cokernels we have the following result, which is in fact all we'll really need. Using the $3\times 3$ lemma for exact categories (\cite{Buehler} Corollary
3.6) one has the following. 

\begin{prop}\label{filteredcokernels}
Let $g:((A)_{top},\alpha_{i},a_{i})\rightarrow((B)_{top},\beta_{i},b_{i})$ be a morphism in $\mathpzc{Filt}_{\textbf{AdMon}}(\mathpzc{M})$ where each $g_{i}$ for $0\le i\le\infty$ is an admissible monomorphism. Then for each $0\le i<\infty$ the maps $\gamma_{i}:coker(g_{i})\rightarrow coker((g)_{top})$ and $g_{i}:coker(g_{i})\rightarrow coker(g_{i+1})$ are admissible monomorphism, and the object $(coker(g_{top}),\gamma_{i},g_{i})$ is a cokernel of $g$. If $g$ is a map in $\overline{\mathpzc{Filt}}_{\textbf{AdMon}}(\mathpzc{M})$ then this is also the cokernel in $\overline{\mathpzc{Filt}}_{\textbf{AdMon}}(\mathpzc{M})$. Finally, if each $g$ is a map in $\mathpzc{Filt}_{\textbf{PureMon}}(\mathpzc{M})$ (resp.  $\overline{\mathpzc{Filt}}_{\textbf{PureMon}}(\mathpzc{M})$) and each $g_{i}$ is a pure monomorphism, then this is also the cokernel in $\mathpzc{Filt}_{\textbf{PureMon}}(\mathpzc{M})$ (resp.  $\overline{\mathpzc{Filt}}_{\textbf{PureMon}}(\mathpzc{M})$).
\end{prop}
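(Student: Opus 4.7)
The plan is to verify the proposition level-by-level, exploiting the hypothesis that both $g_i$ (for all $i$, including $\infty$) and the structure maps $a_i,b_i,\alpha_i,\beta_i$ are admissible monomorphisms, and then assemble the pieces.

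First I would establish admissibility of the induced cokernel maps. Fix $i<\infty$ and consider the commutative square with all four sides admissible monomorphisms:
\begin{displaymath}
\xymatrix{
A_i \ar@{>->}[r]^{a_i} \ar@{>->}[d]_{g_i} & A_{i+1} \ar@{>->}[d]^{g_{i+1}}\\
B_i \ar@{>->}[r]^{b_i} & B_{i+1}
}
\end{displaymath}
Form the pushout $P = B_i \coprod_{A_i} A_{i+1}$; stability of admissible monos under pushout gives an admissible mono $B_i \hookrightarrow P$, and the universal property furnishes a canonical map $P \to B_{i+1}$ whose cokernel computes $B_{i+1}/A_{i+1}$. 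A direct application of the $3\times 3$ lemma for exact categories (equivalently the noether-isomorphism argument from \cite{Buehler}) then identifies the cokernel of $P\to B_{i+1}$ with $\operatorname{coker}(a_i)\big\slash(\text{image of }\operatorname{coker}(g_i))$ in such a way that the induced map $\bar g_i : \operatorname{coker}(g_i) \to \operatorname{coker}(g_{i+1})$ is an admissible monomorphism. The same argument applied to the square obtained by replacing $i+1$ with $\infty$ (using that $g_{top}$ is also admissible) shows $\gamma_i : \operatorname{coker}(g_i) \to \operatorname{coker}((g)_{top})$ is an admissible monomorphism. Compatibility $\gamma_{i+1}\bar g_i=\gamma_i$ is automatic from the universal property of cokernels.

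Next I would verify the universal property. Suppose $h : ((B)_{top},\beta_i,b_i) \to ((C)_{top},\gamma_i^C,c_i)$ is a filtered morphism with $h\circ g=0$, i.e.\ $h_i\circ g_i = 0$ and $(h)_{top}\circ(g)_{top}=0$. The level-wise cokernel property produces unique factorisations $\bar h_i : \operatorname{coker}(g_i)\to C_i$ and $\bar h_{top}:\operatorname{coker}((g)_{top})\to (C)_{top}$; uniqueness forces compatibility with $\bar g_i$ and $\gamma_i$, yielding the required morphism of filtered objects. This establishes the cokernel assertion in $\mathpzc{Filt}_{\textbf{AdMon}}(\mathpzc{M})$.

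For the exhaustive refinement, I would observe that when the source and target filtrations are exhaustive, $(g)_{top} = \mathop{\mathrm{colim}}\limits g_i$ as morphisms in $\mathpzc{M}$, and since $\operatorname{coker}$ (being a colimit) commutes with the filtered colimit, $\operatorname{coker}((g)_{top}) = \mathop{\mathrm{colim}}\limits \operatorname{coker}(g_i)$ with the transition maps $\bar g_i$; hence the constructed filtration is exhaustive. The universal property restricts from $\mathpzc{Filt}_{\textbf{AdMon}}$ to $\overline{\mathpzc{Filt}}_{\textbf{AdMon}}$ automatically. Finally, for the $\textbf{PureMon}$ case, I would invoke Proposition \ref{3pure}: in each $3\times 3$ diagram the two rows giving the short exact sequences $0\to A_i\to B_i\to \operatorname{coker}(g_i)\to 0$ (and similarly at level $i+1$) together with the column expressing $a_i,b_i$ as pure monomorphisms force the remaining column $\operatorname{coker}(g_i)\to\operatorname{coker}(g_{i+1})$ to sit in a pure short exact sequence, hence $\bar g_i$ is a pure monomorphism. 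An analogous argument handles $\gamma_i$, and exhaustivity is preserved since the colimit of pure-mono filtrations is computed as in $\mathpzc{M}$ by the weakly $\textbf{PureMon}$-elementary hypothesis.

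The main obstacle is the first step: rigorously upgrading the level-wise square to admissibility (and later purity) of the induced cokernel maps. The $3\times 3$ argument is standard in exact categories but must be executed with care, because one does not have honest quotients in $\mathpzc{E}$ and must manipulate kernel-cokernel pairs directly; once this is in hand, the remaining universal-property and exhaustivity verifications are formal.
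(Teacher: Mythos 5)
The paper gives no argument for this proposition at all (it is quoted from \cite{kelly2016projective}), so your proposal has to stand on its own merits, and its first, decisive step has a genuine gap. From a commutative square in which $g_{i}$, $g_{i+1}$, $a_{i}$, $b_{i}$ are all admissible monomorphisms it simply does not follow that the induced map $\coker(g_{i})\rightarrow\coker(g_{i+1})$ is an admissible monomorphism, and the pushout/Noether argument you sketch cannot deliver it: the canonical map $P=B_{i}\coprod_{A_{i}}A_{i+1}\rightarrow B_{i+1}$ need not be a monomorphism at all (its failure to be one measures exactly the failure of the square to be cartesian), so the Noether-isomorphism identification from \cite{Buehler} is not available, and the phrase ``in such a way that $\bar g_{i}$ is an admissible monomorphism'' is precisely the assertion that needs proof. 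Worse, under the hypotheses as literally stated the assertion is false: take $\mathpzc{M}=Ch(\mathpzc{Ab})$ (objects in degree $0$), $A_{0}=0$, $A_{1}=(A)_{top}=\Z$, $B_{0}=B_{1}=(B)_{top}=\Z$, $g_{0}\colon 0\rightarrow\Z$, $g_{1}=(g)_{top}=\mathrm{id}$; every structure map and every $g_{i}$ (including $i=\infty$) is an admissible monomorphism, yet $\coker(g_{0})=\Z\rightarrow\coker(g_{1})=0$ and $\gamma_{0}\colon\Z\rightarrow 0$ are not monomorphisms. So no argument from these hypotheses alone can succeed: the statement tacitly carries a strictness condition on $g$ (e.g.\ that the squares over $(g)_{top}$ are pullbacks, equivalently that $B_{i}\coprod_{A_{i}}A_{i+1}\rightarrow B_{i+1}$ is an admissible monomorphism, as holds in the situations where the paper applies the result), and a correct proof must invoke it explicitly at exactly the point where your argument waves at the $3\times 3$ lemma.

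The same problem resurfaces in your purity step. Proposition \ref{3pure} requires five of the six short exact sequences of a $3\times 3$ diagram to be pure exact (all columns and two rows, or all rows and two columns); from the hypotheses you have only four (the rows coming from $a_{i},b_{i}$ and the columns coming from $g_{i},g_{i+1}$), and the exactness of the remaining row and column is precisely what you are trying to establish, so the lemma cannot bootstrap it — my example above satisfies all four available sequences purely and still fails. Once a strictness hypothesis is added, the levelwise admissibility/purity of $\bar g_{i}$ and $\gamma_{i}$ can be extracted (e.g.\ via the obscure lemma applied to $B_{i}\coprod_{A_{i}}A_{i+1}\rightarrowtail B_{i+1}$), and then the remaining parts of your write-up — the universal property, the exhaustiveness via commuting colimits, and the purity bookkeeping — are indeed routine and correctly organised.
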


\begin{defn}
Let $\mathpzc{M}$ be a Koszul category. An object $((A)_{top},a_{i},\alpha_{i})$ of $\overline{\mathpzc{Filt}}_{\textbf{AdMon}}(\mathpzc{M})$ is said to be \textbf{filtered cofibrant} if each of the maps $a_{i}:A_{i}\rightarrow A_{i+1}$ is a cofibration. A map $f:A\rightarrow B$ between filtered objects is said to be a \textbf{filtered cofibration} if it is an admissible monomorphism with filtered cofibrant cokernel, and at each level of the filtration it is a cofibration.
\end{defn}

If $\mathpzc{M}$ is a Koszul category then $\mathpzc{Gr}_{\mathbb{N}_{0}}(\mathpzc{M})$ is a model category in which weak equivalences, cofibrations, and fibrations are defined degree-wise. 


\begin{prop}
Let $\mathpzc{M}$ be a strong Koszul category. A map $f:A\rightarrow B$ in $\overline{\mathpzc{Filt}}_{\textbf{AdMon}}(\mathpzc{M})$ is a filtered cofibration if and only if $gr(f)$ is a degree-wise admissible monomorphism with graded cofibrant cokernel.
\end{prop}

\begin{proof}
Suppose $f$ is a filtered cofibration. By Proposition \ref{filteredcokernels} $gr(f)$ is a degree-wise admissible monomorphism with graded cofibrant cokernel. The converse follows by an easy inductive argument using Corollary \ref{middlecofibrant}.
\end{proof}


For $1\le j\le k$ let $A^{j}=((A)_{top}^{j},\alpha^{j},a^{j})$ be in $\mathpzc{Filt}_{\textbf{RegMon}}(\mathpzc{M})$. We define an object $\bigotimes_{j=1}^{k}A^{j}$ of $\mathpzc{Filt}_{\textbf{RegMon}}(\mathpzc{M})$ by 
$$(\bigotimes_{j=1}^{k}A^{j})_{\infty}=\bigotimes_{j=1}^{k}A^{j}_{\infty}$$
and
$$(\bigotimes_{j=1}^{k}A^{j})_{n}=Im(\bigoplus_{i_{1}+\ldots+i_{k}\le n}A^{1}_{i_{1}}\otimes\ldots\otimes A^{k}_{i_{k}}\rightarrow \bigotimes_{j=1}^{k}A^{j}_{\infty})$$

We recall the  the following result from \cite{kelly2016homotopy}. 

\begin{prop}[ \cite{kelly2016homotopy} Proposition 5.4.62]\label{gradestronmon}
For $1\le j\le k$ let $A^{j}=((A)_{top}^{j},\alpha^{j},a^{j})$ be a filtered object.
\begin{enumerate}
\item
Suppose that for each $n$ the map
$$\bigoplus_{i_{1}+\ldots+i_{k}=n}A^{1}_{i_{1}}\otimes\ldots\otimes A^{k}_{i_{k}}\rightarrow A^{1}_{\infty}\otimes\ldots\otimes A^{k}_{\infty}$$
is admissible. Then the map $\bigotimes_{j=1}^{k}\textrm{gr}(A^{j})\rightarrow\textrm{gr}\Bigr(\bigotimes_{j=1}^{k}A^{j}\Bigr)$ is an admissible epimorphism.\\
\item
If in addition for each $1\le j\le k$ and each $0\le i<\infty$ the map each map $A^{j}_{i}\rightarrow A^{j}_{i+1}$ is a pure monomorphism, then the map $\bigotimes_{j=1}^{k}\textrm{gr}(A^{j})\rightarrow\textrm{gr}\Bigr(\bigotimes_{j=1}^{k}A^{j}\Bigr)$ is an isomorphism. 
\end{enumerate}

\end{prop}

We let $\overline{\mathpzc{Filt}}_{\textbf{PureMon}}^{K}(\mathpzc{M})$ denote the full subcategory of $\overline{\mathpzc{Filt}}_{\textbf{PureMon}}(\mathpzc{M})$ consisting of filtered objects $A$ such that for each $i\in\mathbb{N}$, $\textrm{gr}_{i}(A)$ is $K$-flat. 

\subsubsection{$\Sigma_{n}$-filtrations}
For associativity reasons, in general this does not endow $\overline{\mathpzc{Filt}}_{\textbf{RegMon}}(\mathpzc{M})$ with a monoidal structure. However there is a natural $\Sigma_{n}$ action on $\bigotimes_{j=1}^{k}A^{j}$ which is functorial. This brings us to a discussion of $\Sigma_{n}$-filtrations. Note that there is an obvious equivalence of categories between the category $\overline{\mathpzc{Filt}}_{\textbf{AdMon}}({}_{\Sigma_{n}}\mathpzc{Mod})$ of filtered $\Sigma_{n}$-modules and the category of $\Sigma_{n}$-modules in $\overline{\mathpzc{Filt}}_{\textbf{AdMon}}(\mathpzc{Mod})$.

\begin{defn}
An object $((A)_{top},\alpha_{i},a_{i})$ of $\overline{\mathpzc{Filt}}_{\textbf{AdMon}}({}_{\Sigma_{n}}\mathpzc{Mod})$ is said to \textbf{have admissibly filtered coinvariants} if the maps $(\alpha_{i})_{\Sigma_{n}}$ and $(a_{i})_{\Sigma_{n}}$ are admissible monomorphisms. The full subcategory of $\overline{\mathpzc{Filt}}_{\textbf{AdMon}}({}_{\Sigma_{n}}\mathpzc{Mod})$ consisting of filtered $\Sigma_{n}$-modules which have admissibly filtered coinvariants is denoted $\overline{\mathpzc{Filt}}_{{}_{\Sigma_{n}}\textbf{AdMon}}({}_{\Sigma_{n}}\mathpzc{Mod})$
\end{defn}

The following is clear.

\begin{prop}
\begin{enumerate}
\item
Let $((A)_{top},\alpha_{i},a_{i})$ be an object of $\overline{\mathpzc{Filt}}_{\textbf{AdMon}}\mathpzc{Mod}$. Then the free object $(\Sigma_{n}\otimes(A)_{top},\Sigma_{n}\otimes\alpha_{i},\Sigma_{n}\otimes a_{i})$ has admissibly filtered coinvariants.
\item
$\overline{\mathpzc{Filt}}_{{}_{\Sigma_{n}}\textbf{AdMon}}({}_{\Sigma_{n}}\mathpzc{Mod})$ is closed under taking summands in $\overline{\mathpzc{Filt}}_{\textbf{AdMon}}({}_{\Sigma_{n}}\mathpzc{Mod})$.
\item
If $((A)_{top},\alpha_{i},a_{i})$ is in $\overline{\mathpzc{Filt}}_{{}_{\Sigma_{n}}\textbf{AdMon}}({}_{\Sigma_{n}}\mathpzc{Mod})$ then the natural map $gr(A)_{\Sigma_{n}}\rightarrow gr(A_{\Sigma_{n}})$ is an isomorphism.
\end{enumerate}
\end{prop}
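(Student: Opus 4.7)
For part (1), my plan is simply to observe that for the free $\Sigma_n$-object on an underlying module $M$, coinvariants recover $M$: we have $(\Sigma_n \otimes M)_{\Sigma_n} \cong M$ naturally in $M$, since taking coinvariants is left adjoint to the trivial action functor and the free-forgetful adjunction composes with this to the identity on plain modules. Applying this level-by-level, the maps $(\Sigma_n \otimes \alpha_i)_{\Sigma_n}$ and $(\Sigma_n \otimes a_i)_{\Sigma_n}$ are canonically identified with $\alpha_i$ and $a_i$ themselves, which are admissible monomorphisms by hypothesis.

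For part (2), the key point is that the coinvariants functor $(-)_{\Sigma_n}$ is a left adjoint (to the trivial-action functor), hence preserves coproducts and in particular direct summands. So if $A$ is a summand of $B$ in $\overline{\mathpzc{Filt}}_{\textbf{AdMon}}({}_{\Sigma_n}\mathpzc{Mod})$ and $B$ has admissibly filtered coinvariants, then each $(a_i^A)_{\Sigma_n}$ is a retract of $(a_i^B)_{\Sigma_n}$, and similarly for $(\alpha_i^A)_{\Sigma_n}$. Admissible monomorphisms in an exact category are closed under retracts, so $A$ has admissibly filtered coinvariants.

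For part (3), the plan is to compute the associated graded of $A_{\Sigma_n}$ using the description of cokernels from Proposition \ref{filteredcokernels}. Since $A$ has admissibly filtered coinvariants, the maps $(a_i)_{\Sigma_n}$ are admissible monomorphisms, so the filtered object $A_{\Sigma_n}$ lies in $\overline{\mathpzc{Filt}}_{\textbf{AdMon}}(\mathpzc{Mod})$ and its $i$-th graded piece is $\coker\bigl((a_{i-1})_{\Sigma_n}\bigr)$. Using again that $(-)_{\Sigma_n}$ is a left adjoint and hence commutes with cokernels, this is isomorphic to $\coker(a_{i-1})_{\Sigma_n} = (gr_i A)_{\Sigma_n}$. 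Assembling these isomorphisms over all $i$ gives the desired natural isomorphism $gr(A)_{\Sigma_n} \cong gr(A_{\Sigma_n})$.

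None of the three parts has a serious obstacle; the only mild subtlety is ensuring in part (3) that the cokernel computation of the filtered coinvariants really does use the level-wise cokernels, for which the admissibility hypothesis and Proposition \ref{filteredcokernels} are needed, but both are available. Since the proposition is asserted to be clear in the paper, I would write the argument up tersely, emphasizing only the adjointness of $(-)_{\Sigma_n}$ and the retract/cokernel stability of admissible monomorphisms.
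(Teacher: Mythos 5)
Your argument is correct and is exactly the intended one: the paper states this proposition without proof (``the following is clear''), and your three observations --- coinvariants of a free $\Sigma_n$-object recover the underlying object via the adjunctions, coinvariants preserve direct summands, and coinvariants (being a left adjoint) commute with the level-wise cokernels computing $gr$ --- supply precisely the missing argument. The only point worth flagging is that closure of admissible monomorphisms under retracts is not automatic in an arbitrary exact category but follows from the Obscure Lemma under weak idempotent completeness, which holds in the paper's setting since its exact categories have kernels (and the paper itself invokes that lemma elsewhere).
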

In particular if $\mathpzc{M}$ is a $\Q$-Koszul category then any admissibly filtered $\Sigma_{n}$-module has admissibly filtered coinvariants. Indeed in this case every $\Sigma_{n}$-module is a retract of a free one.

\subsubsection{Infinity Categories of Filtered and Graded Objects}

 Let $\mathpzc{M}={}_{R}\mathpzc{Mod}(Ch(\mathpzc{E}))$ be a Koszul category. If $\mathcal{W}$ is the wide subcategory of $\mathpzc{M}$ on weak equivalences then $(\mathpzc{M},\mathcal{W})$ is a homotopical category. Denote by $\mathrm{L^{H}}(\mathpzc{M})$ the $(\infty,1)$-categorical localization of $\mathrm{L^{H}}(\mathpzc{M})$ at $\mathcal{W}$. 

Consider the category $\mathpzc{Gr}_{\mathbb{N}_{0}}(\mathpzc{M})$. Denote by $\mathcal{W}_{gr}$ the class of graded weak equivalences in $\mathpzc{Gr}_{\mathbb{N}_{0}}(\mathpzc{M})$. Then $(\mathpzc{Gr}_{\mathbb{N}_{0}}(\mathpzc{M}), \mathcal{W}_{gr})$ is also a homotopical category.

Now consider the category $\overline{\mathpzc{Filt}}_{\textbf{AdMon}}(\mathpzc{M})$. If $\mathcal{W}_{f}$ is the class of filtered weak equivalences then $(\overline{\mathpzc{Filt}}_{\textbf{AdMon}}(\mathpzc{M}),\mathcal{W}_{f})$ is also a homotopical category. Denote the $(\infty,1)$-categorical localization of this homotopical category by $\overline{\textbf{Filt}}(\mathrm{L^{H}}(\mathpzc{M}))$. As in Chapter 5 in \cite{kelly2016homotopy} the functors $(-)_{n}:\overline{\mathpzc{Filt}}_{\textbf{AdMon}}(\mathpzc{M})\rightarrow \mathpzc{M}$ for $0\le n<\infty$ ,$(-)_{top}:\overline{\mathpzc{Filt}}_{\textbf{AdMon}}(\mathpzc{M})\rightarrow \mathpzc{M}$, and $gr:\overline{\mathpzc{Filt}}_{\textbf{AdMon}}(\mathpzc{M})\rightarrow \mathpzc{Gr}(\mathpzc{M})$ all induce functors of $(\infty,1)$-categories after localization. Moreover the induced functor $\textbf{gr}:\overline{\textbf{Filt}}(\mathrm{L^{H}}(\mathpzc{M}))\rightarrow\textbf{Gr}(\mathrm{L^{H}}(\mathpzc{M}))$ also reflects weak equivalences (the claims regarding $\textbf{gr}$ simply follow from the fact that weak equivalences are assumed to be thick).

Later we will need the following useful result, inspired by part of the proof of Proposition 2.2.12 in \cite{lurie2011derived}.

\begin{prop}\label{inductgraded}
Let $\textbf{N}$ be an $(\infty,1)$-category, $\mathpzc{M}$ an exact Koszul category, and $F:\textbf{N}\rightarrow\overline{\mathpzc{Filt}}_{\textbf{AdMon}}(\mathrm{L^{H}}(\mathpzc{M}))$ a functor. If $\textbf{gr}\circ F$ and $(-)_{0}\circ F$ preserve sifted colimits then so does $F$.
\end{prop}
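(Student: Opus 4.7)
The plan is to reduce preservation of sifted colimits by $F$ to preservation of sifted colimits by each level functor $(-)_n \circ F : \mathbf{N} \to \mathbf{M}$, and then to prove this level-wise statement by induction on $n$ using the cofiber sequence relating levels and graded pieces.

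First I would argue that it suffices to check that each $(-)_n \circ F$ preserves sifted colimits. The point is that $\mathbf{gr} : \overline{\mathbf{Filt}}(\mathbf{M}) \to \mathbf{Gr}(\mathbf{M})$ reflects weak equivalences (as noted in the paragraph before the statement), and jointly the $(-)_n$ detect equivalences as well; moreover, colimits in the $\infty$-category $\overline{\mathbf{Filt}}(\mathbf{M})$ are computed level-wise, since after Dwyer--Kan localization at filtered quasi-isomorphisms a filtered object with exhaustive filtration is modeled by the sequential diagram $A_0 \to A_1 \to \cdots$ of its levels in $\mathbf{M}$. In particular, the natural comparison $\colim_J F(X_j) \to F(\colim_J X_j)$ is an equivalence in $\overline{\mathbf{Filt}}(\mathbf{M})$ if and only if applying each $(-)_n$ yields an equivalence in $\mathbf{M}$, which is exactly the assertion that each $(-)_n \circ F$ preserves the given sifted colimit.

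Next, I would unpack the hypothesis that $\mathbf{gr} \circ F$ preserves sifted colimits: since colimits in $\mathbf{Gr}(\mathbf{M})$ are computed degree-wise, this is equivalent to each component $gr_n \circ F$ preserving sifted colimits. Combined with the base-case hypothesis on $(-)_0 \circ F$, one may then induct on $n \geq 0$. For the inductive step, I would use the natural cofiber sequence of functors $\mathbf{N} \to \mathbf{M}$,
$$ (-)_{n-1} \circ F \longrightarrow (-)_n \circ F \longrightarrow gr_n \circ F, $$
which arises because $gr_n(A)$ is the cokernel of the pure monomorphism $A_{n-1} \hookrightarrow A_n$, and cokernels of such monomorphisms compute cofibers in the stable $\infty$-category $\mathbf{M}$. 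Because $\mathbf{M}$ is a presentable stable $\infty$-category, sifted colimits commute with finite limits and therefore preserve cofiber sequences. Hence if two of the three terms in the sequence preserve sifted colimits, so does the third; by the inductive hypothesis $(-)_{n-1} \circ F$ does, and by the hypothesis $gr_n \circ F$ does, so $(-)_n \circ F$ does as well. Combined with the first paragraph this proves that $F$ preserves sifted colimits.

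The main obstacle will be the first step: cleanly justifying that sifted colimits in the localized $\infty$-category $\overline{\mathbf{Filt}}(\mathbf{M})$ are computed level-wise and are detected by the family $\{(-)_n\}_{n\in\mathbb{N}_0}$. This should follow from general facts about Dwyer--Kan localization together with the observation that exhaustive filtrations, once quasi-isomorphisms are inverted, become equivalent to $\mathbb{N}_0$-indexed diagrams (in which context colimits are pointwise), but it requires some care. Once this is in hand, the remaining inductive cofiber-sequence argument is essentially formal from stability of $\mathbf{M}$.
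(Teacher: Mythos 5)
Your proposal is correct and follows essentially the same route as the paper: reduce to showing each level functor $(-)_{n}\circ F$ preserves sifted colimits, then induct on $n$ using the (co)fiber sequence $(-)_{n-1}\circ F\rightarrow(-)_{n}\circ F\rightarrow gr_{n}\circ F$, with the base case given by the hypothesis on $(-)_{0}\circ F$. The paper simply asserts the reduction to level functors, whereas you spell it out (and note, correctly, that this is the only step needing care); your extra justification is consistent with the paper's argument.
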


\begin{proof}
 It suffices to show that each $(-)_{n}\circ F$ preserves sifted colimits. The proof is an easy induction. By assumption it is true for $n=0$. We suppose it has been shown for $n=k$. Now since admissible monomorphisms are left proper there is a homotopy fibre sequence of functors 
$$(-)_{k}\circ F \rightarrow (-)_{k+1}\circ F \rightarrow gr_{k+1}\circ F$$
Since the left and right-hand functors preserves sifted colimits so does the middle functor
\end{proof}

\subsection{Standard Cofibrations}
We conclude this section by analysing more closely the transferred model structures on categories of algebras over operads in exact Koszul categories. In particular we will study certain classes of cofibrations, called standard cofibrations, in strong Koszul categories. Though the idea is the same, because complexes in a general exact category don't split the discussion of standard cofibrations as defined in \cite{vallette2014homotopy} Section 2.4 is significantly more involved. 

In fact, we shall study much more general classes of maps. Let $\{G_{i}\}$ be a collection of objects in $\mathpzc{E}$ and $\{f_{l}\}_{l\in\mathcal{L}}$ a collection of admissible monomorphisms in $\mathpzc{E}$. Denote by $\mathfrak{G}$ the collection of morphisms
$$\{S^{n}(G_{i})\rightarrow D^{n+1}(G_{i})\}_{n\in\Z,i\in\mathcal{I}}\cup\{0\rightarrow D^{n}(G_{i})\}_{n\in\Z,i\in\mathcal{I}}\cup\{S^{n}(f_{l})\}_{n\in\Z,l\in\mathcal{L}}$$
Denote by $preCell(R;\mathfrak{G})$ the collection of maps obtained by transfinite composition of pushouts of maps in
$$\{R\otimes S^{n}(G_{i})\rightarrow R\otimes D^{n+1}(G_{i})\}_{n\in\Z,i\in\mathcal{I}}\cup\{0\rightarrow R\otimes D^{n}(G_{i})\}_{n\in\Z,i\in\mathcal{I}}$$
and by $Cell(R;\mathfrak{G})$ the collection of maps obtained by transfinite composition of pushouts of maps in 
$$\{R\otimes S^{n}(G_{i})\rightarrow R\otimes D^{n+1}(G_{i})\}_{n\in\Z,i\in\mathcal{I}}\cup\{0\rightarrow R\otimes D^{n}(G_{i})\}_{n\in\Z,i\in\mathcal{I}}\cup\{R\otimes S^{n}(f_{l})\}_{n\in\Z,l\in\mathcal{L}}$$
Similarly we define $(pre)Cell_{\ge0}(R;\mathfrak{G})$, $(pre)Cell_{>0}(R;\mathfrak{G})$, where in the collections above we impose that $n\ge0$ and $n>0$ respectively.


\begin{defn}\label{defn:sullivan}
\begin{enumerate}
\item
Denote by $Sull_{\mathfrak{P}}(\mathpzc{M};\mathfrak{G})$ the class of maps of $\mathfrak{P}$-algebras which are obtained as a retract of a transfinite composition of pushouts of maps of the form $\mathfrak{P}(f)$ for $f\in preCell(R;\mathfrak{G})$, and by $Cof_{\mathfrak{P}}(\mathpzc{M};\mathfrak{G})$ the class of maps of $\mathfrak{P}$-algebras which are obtained as a retract of a transfinite composition of pushouts of maps of the form $\mathfrak{P}(f)$ for $f\in Cell(R;\mathfrak{G})$.
\item
For $A$ a $\mathfrak{P}$-algebra denote by $Sull^{A}_{\mathfrak{P}}(\mathpzc{M};\mathfrak{G})$ (resp. $Cof^{A}_{\mathfrak{P}}(\mathpzc{M};\mathfrak{G})$) the class of maps $f\in Sull_{\mathfrak{P}}(\mathpzc{M};\mathfrak{G})$ (resp. $Cof_{\mathfrak{P}}(\mathpzc{M};\mathfrak{G})$) whose codomain is $A$.
\item
Define the class of \textbf{ Sullivan models} to be $Sull^{\mathfrak{P}(0)}_{\mathfrak{P}}(\mathpzc{M};\mathfrak{G})$.
\end{enumerate}
We similarly define $Sull^{A}_{\ge0,\mathfrak{P}}(\mathpzc{M};\mathfrak{G})$, $Sull^{A}_{>0,\mathfrak{P}}(\mathpzc{M};\mathfrak{G})$, $Cof^{A}_{\ge0,\mathfrak{P}}(\mathpzc{M};\mathfrak{G})$, and $Cof^{A}_{>0,\mathfrak{P}}(\mathpzc{M};\mathfrak{G})$. $Sull^{\mathfrak{P}(0)}_{>0,\mathfrak{P}}(\mathpzc{M};\mathfrak{G})$ is called the class of $\mathfrak{G}$-\textit{positively graded Sullivan models}.
\end{defn}

Note that if $\mathpzc{E}={}_{k}\mathpzc{Vect}$, $k$ is a field of characteristic $0$, and $\mathfrak{P}=\mathfrak{Comm}$, then a map in  $Sull_{A}(Ch(\mathpzc{E});\mathfrak{G})$ is a relative Sullivan algebra as in \cite{hess2007rational} Section 2.


Let $V$ be an object of $\mathpzc{M}$, $A$ an object of $\mathpzc{Alg}_{\mathfrak{P}}(\mathpzc{M})$, and $\alpha:V\rightarrow A$ be a degree $-1$ map of $R$-modules. There is then an induced map of graded objects
$$V\rightarrow A\rightarrow A\coprod\mathfrak{P}(V)$$
By Proposition \ref{coproder} there is then a unique derivation of degree $-1$ 
$$d_{\alpha}:A\coprod\mathfrak{P}(V)\rightarrow A\coprod\mathfrak{P}( V)$$
whose restriction to $V$ is $\alpha$. We denote the algebra equipped with the derivation given by $d_{A}+d_{\alpha}+d_{V}$ by $A\coprod_{\alpha}\mathfrak{P}(V)$.

\begin{prop}
Suppose that $\alpha:V[-1]\rightarrow A$ is a morphism in $\mathpzc{M}$, i.e. it commutes with differentials.  Then $A\coprod_{\alpha}\mathfrak{P}(V)$ is a chain complex.
\end{prop}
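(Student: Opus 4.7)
The plan is to verify that the total degree-$(-1)$ derivation $D \defeq d_{A}+d_{\alpha}+d_{V}$ on $A\coprod\mathfrak{P}(V)$ satisfies $D^{2}=0$. Since $D$ is odd, a standard graded-Leibniz calculation shows that $D^{2}$ is itself a derivation over $(R,d_{R})$ (of even degree $-2$). Combining the extension principle for derivations on a free $\mathfrak{P}$-algebra with Corollary \ref{coproder}, any derivation of $A\coprod\mathfrak{P}(V)$ is determined by its restrictions to the summand $A$ and to the generating object $V$; in particular it vanishes as soon as both restrictions vanish. So it suffices to check $D^{2}|_{A}=0$ and $D^{2}|_{V}=0$.

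The restriction to $A$ is immediate. By construction, both $d_{V}$ and $d_{\alpha}$ are the derivations obtained by extending the zero map on $A$, while $d_{A}$ is the derivation extending $d_{A}$ from $A$ itself. Hence $D|_{A}=d_{A}$, and therefore $D^{2}|_{A}=d_{A}^{2}=0$ since $(A,d_{A})$ is already a chain complex.

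The restriction to $V$ is where the chain-map hypothesis enters. Dually to the previous step, $d_{A}$ extends by zero on $V$, so for $v\in V$ one has $D(v)=d_{V}(v)+\alpha(v)\in V\oplus A$. A second application of $D$, using $D|_{A}=d_{A}$ together with $d_{V}^{2}=0$, gives
$$D^{2}(v)=\alpha\bigl(d_{V}(v)\bigr)+d_{A}\bigl(\alpha(v)\bigr).$$
The hypothesis that $\alpha:V[-1]\rightarrow A$ commutes with differentials in $\mathpzc{M}$ translates, via the shift convention $d^{V[-1]}=-d_{V}$, into the identity $d_{A}\circ\alpha+\alpha\circ d_{V}=0$, so the right-hand side vanishes.

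The main obstacle here is sign bookkeeping. First, one must justify that the square of the odd derivation $D$ is still a (genuine, non-super) derivation so that the uniqueness clause from Corollary \ref{coproder} can be invoked; this is a routine graded-Leibniz check but worth spelling out. Second, the $-1$ arising from the shift convention in the paper's preamble is what converts the chain-map hypothesis into the cancellation $d_{A}\circ\alpha=-\alpha\circ d_{V}$ rather than an identity with the wrong sign. Once both signs are handled consistently, the verification collapses to the two short computations above.
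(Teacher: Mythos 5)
Your argument is correct, but it takes a different route from the paper. The paper's own proof uses the presentation of $A\coprod\mathfrak{P}(V)$ as a quotient of the free algebra $\mathfrak{P}(A\oplus V)$: it lifts the total differential to the explicit derivation $d_{\mathfrak{P}}\circ Id_{A\oplus V}+d_{A}+Id_{\mathfrak{P}}\circ_{(1)}(\alpha+d_{V})$ on $\mathfrak{P}(A\oplus V)$ and checks by direct computation that this squares to zero, the identity $d_{A}\circ\alpha=-\alpha\circ d_{V}$ doing the work exactly as in your calculation. You instead stay on the coproduct and argue that $D^{2}$ is again a derivation, hence is determined by its restrictions to $A$ and to $V$ via the extension results of Section \ref{secder} and Corollary \ref{coproder}, and these restrictions vanish. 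Your route avoids both the explicit formula on the free algebra and the (implicit) descent-to-quotient step, at the price of the auxiliary check that the square of an odd derivation is a derivation; the paper's route is more computational but self-contained once the lift is written down. One small imprecision to fix: $D^{2}$ is not a derivation over $(R,d_{R})$ relative to $(\mathfrak{P},d_{\mathfrak{P}})$ in the paper's sense --- since $d_{R}^{2}=0$, $d_{\mathfrak{P}}^{2}=0$ and the cross terms cancel, it is an $R$-linear derivation relative to $(\mathfrak{P},0)$ of degree $-2$ --- and it is in this form that the uniqueness clauses should be invoked (taking the ring and operad with zero derivation). With that adjustment, and the sign bookkeeping you already carried out for the shift $V[-1]$, the proof is complete and matches the paper's conclusion.
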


\begin{proof}
The derivation $A\coprod\mathfrak{P}( V)\rightarrow A\coprod\mathfrak{P}( V)[1]$ is induced from the derivation 
$$d_{\mathfrak{P}}\circ Id_{A\oplus V}+d_{A}+ Id_{\mathfrak{P}}\circ_{(1)}(\alpha+d_{V}):\mathfrak{P}(A\oplus V)\rightarrow\mathfrak{P}(A\oplus  V)[1]$$
 Therefore it suffices to show that this derivation squares to $0$. This is a straightforward computation.
\end{proof}
In particular if $\alpha:V[-1]\rightarrow A$ commutes with differentials then $A\coprod_{\alpha}\mathfrak{P}(V)$ is naturally an object of $\mathpzc{M}$. The crucial lemma is the following, which is a generalisation of \cite{vallette2014homotopy} Lemma 2.7.

\begin{lem}\label{standard}
Let $V\rightarrow W\in preCell(R;\mathfrak{G})$ and let $A\in\mathpzc{Alg}_{\mathfrak{P}}(\mathpzc{M})$.  Let $\alpha:W[-1]\rightarrow A$ be morphism of $R$-modules. Then the induced map
$$A\coprod_{\alpha}\mathfrak{P}(V)\rightarrow A\coprod_{\alpha\circ f}\mathfrak{P}(W)$$
is in $Sull_{\mathfrak{P}}(\mathpzc{M};\mathfrak{G})$. If $\mathfrak{G}$ is the cofibrancy data of a strong Koszul category from Definition \ref{basic Koszul}, and $V\rightarrow W\in Cell(R;\mathfrak{G})$, then 
$$A\coprod_{\alpha}\mathfrak{P}(V)\rightarrow A\coprod_{\alpha\circ f}\mathfrak{P}(W)$$
is in $Cof_{\mathfrak{P}}(\mathpzc{M};\mathfrak{G})$
\end{lem}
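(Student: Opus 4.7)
The plan is to reduce the statement to a single pushout of one of the named generating morphisms and then verify that the functor $A\coprod_{\bullet}\mathfrak{P}(-)$ carries such a pushout in $\mathpzc{M}$ to a pushout of $\mathfrak{P}$-algebras. First I would write $V\to W$ as a retract of a transfinite composition $V=V_{0}\to V_{1}\to\cdots\to V_{\lambda}=W$ in which each step is a pushout of one of the maps $R\otimes S^{n}(G_{i})\to R\otimes D^{n+1}(G_{i})$ or $0\to R\otimes D^{n}(G_{i})$ (and, in the strong Koszul case, one of the $R\otimes S^{n}(f_{l})$). Let $\alpha_{i}:V_{i}[-1]\to A$ denote the restriction of $\alpha$ along $V_{i}\to W$. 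Because the underlying graded-algebra functor $A\coprod\mathfrak{P}(-)$ is left adjoint and therefore preserves pushouts and transfinite colimits, and because the twisting by $\alpha_{i}$ is determined level by level, the comparison map $A\coprod_{\alpha}\mathfrak{P}(V)\to A\coprod_{\alpha\circ f}\mathfrak{P}(W)$ is a retract of the transfinite composition of the stepwise maps $A\coprod_{\alpha_{i}}\mathfrak{P}(V_{i})\to A\coprod_{\alpha_{i+1}}\mathfrak{P}(V_{i+1})$, reducing us to a single pushout of a generating map $g:X\to Y$.

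For such a single pushout I would produce an explicit pushout square
\begin{displaymath}
\xymatrix{
\mathfrak{P}(X)\ar[r]^{\phi_{X}}\ar[d]_{\mathfrak{P}(g)} & A\coprod_{\alpha}\mathfrak{P}(V)\ar[d]\\
\mathfrak{P}(Y)\ar[r]^{\phi_{Y}} & A\coprod_{\alpha\circ f}\mathfrak{P}(W)
}
\end{displaymath}
in $\mathpzc{Alg}_{\mathfrak{P}}(\mathpzc{M})$. As graded algebras the square is a pushout, since $\mathfrak{P}(-)$ preserves pushouts and $A\coprod(-)$ commutes with pushouts in the second variable. The content is that the twisted differentials agree. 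The naive inclusions $X\hookrightarrow V\hookrightarrow A\coprod_{\alpha}\mathfrak{P}(V)$ and $Y\hookrightarrow W\hookrightarrow A\coprod_{\alpha\circ f}\mathfrak{P}(W)$ are generally not chain maps because the twist contributes a term $\alpha(-)\in A$. For $g:R\otimes S^{n}(G_{i})\to R\otimes D^{n+1}(G_{i})$ the element $y'\in W$ which is the image of the degree-$(n+1)$ generator satisfies $d_{W}(y')=f\circ p(x)$, so I would define $\phi_{X}(x):=p(x)-\alpha(y')$ and $\phi_{Y}$ by the corresponding corrected formula; the fact that $\alpha$ is a morphism $W[-1]\to A$ in $\mathpzc{M}$, i.e. commutes with the differentials up to the shift, is precisely what makes $d(\phi_{X}(x))=0$ and makes the square commute on the nose. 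The case $X=0$ (generating acyclic cofibrations $0\to R\otimes D^{n}(G_{i})$) is immediate because the pushout is free on the added generators. With these $\phi_{X},\phi_{Y}$ fixed, the universal property for chain $\mathfrak{P}$-algebra maps follows from the uniqueness clause for derivations out of $\mathfrak{P}(R\otimes V)$ (Proposition \ref{coproder} and its corollary): any graded map out of the pushout that restricts compatibly to derivations on both sides automatically commutes with the twisted differential on $A\coprod_{\alpha\circ f}\mathfrak{P}(W)$.

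For the second half of the lemma, the only new generating maps to handle are $R\otimes S^{n}(f_{l})$; since $f_{l}$ is a pure monomorphism appearing in the cofibrancy data $\mathfrak{G}$, the same pushout construction goes through verbatim, placing the resulting extension in $Cof_{\mathfrak{P}}(\mathpzc{M};\mathfrak{G})$ rather than merely in $Sull_{\mathfrak{P}}(\mathpzc{M};\mathfrak{G})$. The main obstacle in the whole proof is the construction of the corrected chain maps $\phi_{X},\phi_{Y}$ and checking that the pushout differential is the twisted one: this is a sign-bookkeeping exercise using that $\alpha$ anti-commutes with the differentials and that the chosen preimage $y'$ maps under $d_{W}$ to $f\circ p(x)$. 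Once that is done the lemma reduces to the formal fact that the free-forgetful adjunction and the twisted coproduct commute with the relevant colimits.
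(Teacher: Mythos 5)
Your first half (the $preCell$/$Sull$ case) is essentially correct, and it is a mild variant of the paper's argument rather than a different one: the reduction to single cell attachments is the paper's Proposition \ref{transfinitestandard}, and your corrected attaching map $x\mapsto p(x)\pm\alpha(y')$, exhibiting each step as a pushout of $\mathfrak{P}(R\otimes S^{n}(G_{i})\rightarrow R\otimes D^{n+1}(G_{i}))$, plays the role that the paper assigns to the splitting $R\otimes D^{n+1}(P)=\textrm{cone}(id_{R\otimes S^{n}(P)})$ (Proposition \ref{standardcone}) followed by the explicit pushout of $\mathfrak{P}(R\otimes S^{n-1}(P))\rightarrow\mathfrak{P}(R\otimes D^{n}(P))$ along $\gamma_{A}\mathfrak{P}(R\otimes\alpha[-1])$. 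Two small remarks: maps in $preCell(R;\mathfrak{G})$ are honest transfinite compositions, so no retract is needed in your reduction; and the case $0\rightarrow R\otimes D^{n}(G_{i})$ is not literally immediate, since the naive identification of the new generators with $R\otimes D^{n}(G_{i})$ is not a chain map and needs the same correction ($y\mapsto y$, $d y\mapsto d_{V}y+\alpha(y)$) — but this is routine.

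The genuine gap is your final paragraph, i.e.\ the $Cof$ case. For a generating map $R\otimes S^{n}(f_{l}):R\otimes S^{n}(X)\rightarrow R\otimes S^{n}(Y)$ the construction does \emph{not} go through verbatim, and cannot: the would-be attaching map $R\otimes S^{n}(X)\rightarrow A\coprod_{\alpha_{i}}\mathfrak{P}(V_{i})$, $x\mapsto p(x)$, fails to be a chain map, because the twisted differential gives $d(p(x))=\alpha_{i}(p(x))$, which is nonzero in general, and — unlike the disk case — this cell adds no generator in degree $n+1$ that could absorb a correction term. Equivalently, a pushout along $\mathfrak{P}(R\otimes S^{n}(f_{l}))$ would force the new degree-$n$ generators to carry the untwisted differential, whereas in $A\coprod_{\alpha\circ f}\mathfrak{P}(W)$ they carry $d_{V}+\alpha$; so the step is simply not a pushout of that free map. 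This is precisely where the strong Koszul hypothesis must enter, and it is telling that your proposal never uses it. The paper treats these maps by a separate lifting argument (Proposition \ref{extracofibs}): the underlying map of twisted extensions $A\oplus_{\alpha\circ R\otimes i}R\otimes S^{n}(X)\rightarrow A\oplus_{\alpha}R\otimes S^{n}(Y)$ is an admissible monomorphism whose cokernel $R\otimes\textrm{coker}(f_{l})$ is cofibrant, hence by strongness a cofibration, and lifting against trivial fibrations of algebras reduces to lifting at the level of these complexes. You would need to replace your last paragraph by an argument of this kind (or otherwise realise the step as a retract of a transfinite composition of pushouts of maps $\mathfrak{P}(g)$ with $g\in Cell(R;\mathfrak{G})$) for the second statement of the lemma.
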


First let us prove some auxiliary results. Let $(V_{\bullet},d_{V})$ and $(W_{\bullet},d_{W})$ be objects of $Ch(\mathpzc{E})$, and let $f:V_{\bullet}\rightarrow W_{\bullet}$ be a morphism of chain complexes. Suppose there are degree $-1$ maps of $R$-modules $\nu:V_{\bullet}\rightarrow A_{\bullet}$ and $\omega:W_{\bullet}\rightarrow A_{\bullet}$ such that $\omega\circ f=\nu$, and both
$$A\coprod_{\nu}\mathfrak{P}(V)\;\textrm{ and }A\coprod_{\omega}\mathfrak{P}(W)$$
are complexes. Then clearly the morphism $Id_{A}\coprod\mathfrak{P}(f)$ induces a morphism of chain complexes

$$A\coprod_{\nu}\mathfrak{P}(V)\rightarrow A\coprod_{\omega}\mathfrak{P}(W)$$

Now, let $D:\mathcal{I}\rightarrow\mathpzc{M}$ be a diagram. For $i\in\mathcal{I}$ let $D(i)=V_{i}$. Suppose there is a degree $-1$ map $\alpha:\textrm{colim}V^{i}_{\bullet}\rightarrow A$. Composing with the maps $f_{i}:V_{i}\rightarrow \textrm{colim}V^{i}_{\bullet}$ gives degree $-1$ maps $\alpha_{i}=\alpha\circ f_{i}:V_{i}\rightarrow A$. Suppose that for each $i$ $A\coprod_{\alpha_{i}}\mathfrak{P}(V^{i})$ is a chain complex.

\begin{prop}\label{transfinitestandard}
There is an isomorphism of $\mathfrak{P}$-algebras
$$\textrm{colim}(A\coprod_{\alpha_{i}}\mathfrak{P}(V^{i}))\cong A\coprod_{\alpha}\mathfrak{P}(V)$$
\end{prop}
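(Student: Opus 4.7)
The plan is to show the isomorphism first at the level of underlying graded $\mathfrak{P}$-algebras, and then verify that the differentials on both sides agree by appealing to the uniqueness of derivations extending a prescribed map on generators.

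First I would set $V \defeq \textrm{colim}\, V^{i}$ and ignore differentials, working in graded $\mathfrak{P}$-algebras. The functor $A \coprod \mathfrak{P}(-) : \mathpzc{M} \to \mathpzc{Alg}_{\mathfrak{P}}(\mathpzc{M})$ is a left adjoint (it is the composition of $\mathfrak{P}(-)$, which is left adjoint to $|-|_{\mathfrak{P}}$, with the coproduct with $A$, which is left adjoint to the forgetful functor from $\mathfrak{P}$-algebras under $A$ to $\mathfrak{P}$-algebras). Hence it commutes with the colimit over $\mathcal{I}$, yielding a canonical isomorphism
$$\Phi : \textrm{colim}_{i}\bigl(A \coprod \mathfrak{P}(V^{i})\bigr) \xrightarrow{\;\cong\;} A \coprod \mathfrak{P}(V)$$
of graded $\mathfrak{P}$-algebras, compatible with the structure maps $A \to A \coprod \mathfrak{P}(V^{i})$.

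Next I would check that $\Phi$ intertwines the differentials. The right-hand side carries the derivation $d_{A} + d_{V} + d_{\alpha}$ characterised (via Corollary \ref{coproder} together with the uniqueness of $\tilde{i}$-derivations on free algebras) by the fact that it restricts to $d_{A}$ on $A$ and to $d_{V} + \alpha$ on $V$. On the left-hand side, the colimit of the complexes $A \coprod_{\alpha_{i}} \mathfrak{P}(V^{i})$, computed in the category of $\mathfrak{P}$-algebras with derivation over $(R,d_{R})$, carries the unique derivation whose restriction to each $A \coprod_{\alpha_{i}} \mathfrak{P}(V^{i})$ is $d_{A} + d_{V^{i}} + d_{\alpha_{i}}$. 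Transporting this via $\Phi$, its restriction to $A$ is $d_{A}$, and its restriction to $V$ is the colimit of the maps $d_{V^{i}} + \alpha_{i}$, which by construction of $\alpha$ and of the differential $d_{V}$ on $V = \textrm{colim}\, V^{i}$ equals $d_{V} + \alpha$. By the uniqueness part of the derivation construction, the two derivations must agree, so $\Phi$ is an isomorphism in $\mathpzc{Alg}_{\mathfrak{P}}(\mathpzc{M})$.

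The only potential subtlety is the interchange of colimit and coproduct, i.e.\ making sure the colimit on the left is computed in the category of $\mathfrak{P}$-algebras (with derivation) rather than in graded $\mathfrak{P}$-modules; but since $A \coprod \mathfrak{P}(-)$ is a left adjoint this is automatic, and the compatibility with differentials is precisely what the uniqueness of derivations gives us. Thus no serious obstacle arises, and the isomorphism is induced entirely by the universal properties of free algebras, coproducts, and colimits.
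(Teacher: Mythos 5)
Your proposal is correct and follows essentially the same route as the paper: the paper likewise obtains the canonical comparison map from the compatibility of the derivations (the remarks preceding the proposition) and then verifies it is an isomorphism by forgetting differentials and the $R$-module structure, where it reduces to the commutation of coproducts with colimits. Your version merely runs the two steps in the opposite order (underlying isomorphism via the left-adjoint property first, then matching differentials by uniqueness of the extending derivation), which is the same argument in substance.
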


\begin{proof}
By the above remarks there is a map of algebras with derivation $\textrm{colim}(A\coprod_{\alpha_{i}}\mathfrak{P}(V^{i}))\cong A\coprod_{\alpha}\mathfrak{P}(V)$. To see that is is an isomorphism we may forget the differentials and $R$-module structure, it which case it reduces to the fact that coproducts and colimits commute.
\end{proof}

Let $f:V\rightarrow W$ and $\nu:V\rightarrow A$ be degree $0$ maps and let $\omega:W\rightarrow A$ be a degree $-1$ map of $R$-modules. There is an induced degree $-1$ map $\nu+\omega:\textrm{cone}(f)\rightarrow A$. There is also a degree $-1$ map
\begin{displaymath}
\xymatrix{
V[1]\ar[r]^{(\nu,-f)} & A\oplus W\ar[r] & A\coprod_{-\omega}\mathfrak{P}( W)
}
\end{displaymath}
which we denote by $\nu\cup (-f)$.

\begin{prop}\label{standardcone}
Suppose that $\omega:W\rightarrow A$ is a degree $-1$ derivation of $R$-modules such that the induced map $W[1]\rightarrow A$ commutes with differentials, and that $\nu$ satisfies
$$\omega_{n}\circ f_{n}=d^{A}_{n}\circ\nu_{n}-\nu_{n-1}d_{n}^{V}$$
Then
\begin{enumerate}
\item
$\nu\cup(-f)$ is a map of chain complexes.
\item
There is an isomorphism
$$A\coprod_{\nu+\omega}\mathfrak{P}(\textrm{cone}(f))\cong (A\coprod_{-\omega}\mathfrak{P}(W))\coprod_{\nu\cup f}\mathfrak{P}(V[1])$$
\end{enumerate}
\end{prop}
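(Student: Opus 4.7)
My plan is to prove both parts by direct derivation-level computations, leveraging the universal property by which a derivation out of a free $\mathfrak{P}$-algebra is determined by its restriction to generators (as used in the proposition preceding Corollary \ref{coproder}).

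For part (1), the map $\nu \cup (-f)$ is a degree $-1$ derivation by construction, so the content is only that the underlying degree $0$ map $V \to A \coprod_{-\omega} \mathfrak{P}(W)$ on generators commutes with differentials. I would compute $d \circ (\nu, -f) - (\nu, -f) \circ d^V$, where $d$ denotes the differential on the target. Restricted to the generating piece $A \oplus W$, this differential has components $d^A$, $d^W$, and $-\omega \colon W \to A$ arising from the subscript. The $A$-component of the expression is then $d^A \nu - \omega f - \nu d^V$, which vanishes by the hypothesis $\omega_n f_n = d^A_n \nu_n - \nu_{n-1} d^V_n$; the $W$-component is $-d^W f + f d^V$, which vanishes because $f$ is itself a chain map.

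For part (2), the underlying graded $\mathfrak{P}$-algebras on the two sides agree: using the decomposition $\textrm{cone}(f)\cong V[1]\oplus W$ as graded objects (with the appropriate shift convention), the universal property of the coproduct of $\mathfrak{P}$-algebras produces a canonical isomorphism
$$A \coprod \mathfrak{P}(\textrm{cone}(f)) \;\cong\; \Bigl(A \coprod \mathfrak{P}(W)\Bigr) \coprod \mathfrak{P}(V[1]).$$
It remains to verify that this isomorphism intertwines the two differentials. Both are $R$-module derivations of degree $-1$, so by Corollary \ref{coproder} it suffices to check agreement on the three classes of generators $A$, $W$, and $V[1]$. On $A$ both restrict to $d^A$. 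The contributions on $W$ and $V[1]$ are read off, on the left, from the explicit matrix formula for $d^{\textrm{cone}(f)}$ together with the restrictions of $d_{\nu + \omega}$, and on the right, from the pieces $d^W - \omega$, $d^{V[1]}$, and $d_{\nu \cup f}$. The cross-term $-f \colon V[1] \to W$ appearing in $d^{\textrm{cone}(f)}$ matches the $W$-component of $\nu \cup f$, the $\nu$-part of $d_{\nu+\omega}|_{V[1]}$ matches the $A$-component of $\nu \cup f$, and the $\omega$-contribution of $d_{\nu+\omega}|_W$ matches the subscript $-\omega$ on the right.

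The calculation itself is formal, so I expect the principal obstacle to be sign bookkeeping. Three interacting conventions must be reconciled carefully: the matrix form of $d^{\textrm{cone}(f)}$ (which carries a minus sign both on the diagonal and in the anti-diagonal $-f$ entry), the shift rule $d^{X[1]} = -d^X$, and the sign in the subscript $\coprod_{-\omega}$. A careful calibration of these determines the precise sign of the $W$-component of the derivation on $V[1]$ and resolves the apparent discrepancy between the notation $\nu \cup f$ in the statement and the earlier-defined $\nu \cup (-f)$; once this is fixed, the isomorphism follows immediately from the universal property.
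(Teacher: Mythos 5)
Your proposal is correct and follows essentially the same route as the paper: part (1) by a direct computation on generators, and part (2) by forgetting differentials, identifying $A\coprod\mathfrak{P}(\textrm{cone}(f))$ with $\bigl(A\coprod\mathfrak{P}(W)\bigr)\coprod\mathfrak{P}(V[1])$ via $\textrm{cone}(f)\cong W\oplus V[1]$ and the coproduct universal property, then checking the differentials agree on generators. The paper compresses both verifications to "direct computation," and your attention to the sign conventions (including the $\nu\cup f$ versus $\nu\cup(-f)$ discrepancy) fills in exactly the bookkeeping the paper leaves implicit.
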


\begin{proof}
The first part is a direct computation. For the second part, let us forget the differentials and $R$-module structure for the moment. Then we have
\begin{align*}
(A\coprod_{\omega}\mathfrak{P}( W))\coprod_{\nu\cup f}\mathfrak{P}( V[1]) \cong A\coprod\mathfrak{P}( W)\coprod\mathfrak{P}( V[1])
\cong  A\coprod\mathfrak{P}(W\oplus V[1])\\
=A\coprod\mathfrak{P}(\textrm{cone}(f))=A\coprod_{\nu+\omega}\mathfrak{P}(\textrm{cone}(f))
\end{align*}
We need to check that this isomorphism preserves the differentials. Again this is a direct computation.
\end{proof}

\begin{prop}\label{extracofibs}
Let $\mathpzc{M}$ be strong Koszul let $i:S^{n}(X)\rightarrow S^{n}(Y)$ be a cofibration in $Ch(\mathpzc{E})$, and let $\alpha:S^{n}(Y)[-1]\rightarrow A$ be a map of complexes. Then $p:A\coprod_{\alpha\circ i}\mathfrak{P}(R\otimes S^{n}(X))\rightarrow A\coprod_{\alpha}\mathfrak{P}(R\otimes S^{n}(Y))$ is a cofibration.
\end{prop}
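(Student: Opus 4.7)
The plan is to use Lemma \ref{standard} after expressing the given cofibration as a retract of a cell-like map. Since the free--forgetful adjunction $R\otimes(-)\dashv|-|:Ch(\mathpzc{E})\rightleftarrows\mathpzc{M}$ is Quillen with respect to the transferred model structure, $R\otimes(-)$ preserves cofibrations, so $R\otimes i:R\otimes S^{n}(X)\rightarrow R\otimes S^{n}(Y)$ is a cofibration in $\mathpzc{M}$. The transferred model structure on $\mathpzc{M}$ has generating cofibrations of the form $R\otimes g$ with $g\in\mathfrak{G}$, so $R\otimes i$ is a retract of a map $j:V\rightarrow W$ belonging to $Cell(R;\mathfrak{G})$.

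Fix retract data $s_{V},s_{W},r_{V},r_{W}$ for $R\otimes i$ as a retract of $j$ in the arrow category of $\mathpzc{M}$, and set $\tilde{\alpha}\defeq\alpha\circ r_{W}[-1]:W[-1]\rightarrow A$. This is a chain map extending $\alpha$ in the sense that $\tilde{\alpha}\circ s_{W}[-1]=\alpha$ and $\tilde{\alpha}\circ j\circ s_{V}=\alpha\circ(R\otimes i)$. The assignment $(f:U\rightarrow U',\beta:U'[-1]\rightarrow A)\mapsto\bigl(A\coprod_{\beta\circ f}\mathfrak{P}(U)\rightarrow A\coprod_{\beta}\mathfrak{P}(U')\bigr)$ is functorial in morphisms of such pairs: the underlying maps of free $\mathfrak{P}$-algebras and coproducts are manifestly functorial, while the naturality in the parameter $\phi$ of the twisted derivation formula $d_{\phi}=d_{\mathfrak{P}}\circ N+(\gamma_{(1)}\circ\mathrm{id}_{N})(\mathrm{id}_{\mathfrak{P}}\circ'\phi)$ from Section \ref{secder} guarantees compatibility with differentials. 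Applying this functoriality to the retract diagram exhibits $p$ as a retract of the induced map $A\coprod_{\tilde{\alpha}\circ j}\mathfrak{P}(V)\rightarrow A\coprod_{\tilde{\alpha}}\mathfrak{P}(W)$.

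By the strong-Koszul form of Lemma \ref{standard} applied to $j\in Cell(R;\mathfrak{G})$ with the chain map $\tilde{\alpha}$, this latter map lies in $Cof_{\mathfrak{P}}(\mathpzc{M};\mathfrak{G})$, and is hence a cofibration in the transferred model structure on $\mathpzc{Alg}_{\mathfrak{P}}(\mathpzc{M})$. Since cofibrations are closed under retracts, $p$ is also a cofibration. The main point requiring careful verification is the naturality of the twisted coproduct construction in the retract data, i.e., that the induced maps between the underlying algebras genuinely commute with the twisted differentials; this follows mechanically from the explicit formula for $d_{\phi}$ together with the fact that all retract components are themselves chain maps.
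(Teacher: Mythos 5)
There is a genuine gap: your argument is circular relative to the paper's development. The case of Lemma \ref{standard} that you invoke --- that for $j\in Cell(R;\mathfrak{G})$ the induced map $A\coprod_{\tilde{\alpha}\circ j}\mathfrak{P}(V)\rightarrow A\coprod_{\tilde{\alpha}}\mathfrak{P}(W)$ lies in $Cof_{\mathfrak{P}}(\mathpzc{M};\mathfrak{G})$ --- is itself proved in the paper \emph{using} Proposition \ref{extracofibs}: after the reduction via Propositions \ref{transfinitestandard} and \ref{standardcone}, the cells $R\otimes S^{m}(G_{i})\rightarrow R\otimes D^{m+1}(G_{i})$ and $0\rightarrow R\otimes D^{m}(G_{i})$ are handled by the cone/pushout analysis, while the remaining generating cofibrations $R\otimes S^{m}(f_{l})$ --- which are precisely maps of the form $R\otimes i:R\otimes S^{n}(X)\rightarrow R\otimes S^{n}(Y)$ --- are dealt with by citing Proposition \ref{extracofibs}. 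So routing the present proposition back through the $Cell(R;\mathfrak{G})$ case of Lemma \ref{standard} presupposes exactly the statement to be proved. Your retract bookkeeping (writing $R\otimes i$ as a retract of a cellular map $j$, transporting $\alpha$ along the retraction $r_{W}$, and using naturality of the twisted coproduct) is mechanically fine, but it never supplies an independent argument for the one case that this proposition exists to cover.

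What is needed, and what the paper does, is a short direct argument. Test $p$ against an acyclic fibration $q:M\rightarrow N$ of $\mathfrak{P}$-algebras. Because both twisted coproducts are free over $A$ on $R\otimes S^{n}(X)$, respectively $R\otimes S^{n}(Y)$, the universal property of the twisted coproduct reduces the algebra-level lifting problem to a lifting problem in $\mathpzc{M}$ for the underlying map of complexes $A\oplus_{\alpha\circ R\otimes i}R\otimes S^{n}(X)\rightarrow A\oplus_{\alpha}R\otimes S^{n}(Y)$ against $|q|$, which is an acyclic fibration in $\mathpzc{M}$ by definition of the transferred structure. That map of complexes is an admissible monomorphism with cokernel $R\otimes\coker(S^{n}(X)\rightarrow S^{n}(Y))$, which is a cofibrant object since $i$ is a cofibration in $Ch(\mathpzc{E})$; by strongness of $\mathpzc{M}$ an admissible monomorphism with cofibrant cokernel is a cofibration, so the chain-level lift exists and extends freely to the required algebra lift. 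No appeal to Lemma \ref{standard}, nor to any cellular presentation of $R\otimes i$, is needed; if you want a correct proof you should replace the second and third paragraphs of your proposal with an argument of this kind.
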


\begin{proof}
Let
\begin{displaymath}
\xymatrix{
A\coprod_{\alpha\circ i}R\otimes S^{n}(X)\ar[d]^{p}\ar[r]^{\;\;\;\;\;\;\;\;\;\;\;f} & M\ar[d]^{q}\\
A\coprod_{\alpha}R\otimes S^{n}(Y)\ar[r]^{\;\;\;\;\;\;\;\;\;\;\;g} & N
}
\end{displaymath}
be a diagram with $q$ an acyclic fibration. In clearly suffices to find a lift in the following diagram of complexes
\begin{displaymath}
\xymatrix{
A\oplus_{\alpha\circ i}R\otimes S^{n}(X)\ar[d]\ar[r]^{\;\;\;\;\;\;\;\;\;\;\;f} & M\ar[d]^{q}\\
A\oplus_{\alpha}R\otimes S^{n}(Y)\ar[r]^{\;\;\;\;\;\;\;\;\;\;\;g} & N
}
\end{displaymath}

But the map $A\oplus_{\alpha\circ i}R\otimes S^{n}(X)\rightarrow A\oplus_{\alpha}R\otimes S^{n}(Y)$ is an admissible monomorphism whose cokernel is $R\otimes coker(S^{n}(X)\rightarrow S^{n}(Y))$, which is a cofibrant object. In particular it is a cofibration. 
\end{proof}

\begin{proof}[Proof of Lemma \ref{standard}]
By Proposition \ref{transfinitestandard} it is sufficient to show that this is the case for the maps $R\otimes S^{n}(P)\rightarrow R\otimes D^{n+1}(P)$, $0\rightarrow R\otimes D^{n}(P)$, and in the cofibrant case, $R\otimes S^{n}(X)\rightarrow R\otimes S^{n}(Y)$. First note that $R\otimes D^{n+1}(P)=\textrm{cone}(id_{R\otimes S^{n}(P)})$. Therefore by Proposition \ref{standardcone} and Proposition \ref{extracofibs} we reduce to showing that, given a degree $-1$ map $\alpha:S^{n}(P)\rightarrow A$, the map $A\rightarrow A\coprod_{\alpha}\mathfrak{P}(R\otimes S^{n}(P))$ is in $Sull_{\mathfrak{P}}(\mathpzc{M};\mathfrak{G})$, or $Cof_{\mathfrak{P}}(\mathpzc{M};\mathfrak{G})$ for the second case. But  we have the pushout diagram
\begin{displaymath}
\xymatrix{
\mathfrak{P}(R\otimes S^{n-1}(P))\ar[r]^{\;\;\;\;\gamma_{A}\mathfrak{P}(R\otimes \alpha[-1])}\ar[d] & A\ar[d]\\
\mathfrak{P}(R\otimes D^{n}(P))\ar[r] & A\coprod_{\alpha}\mathfrak{P}(R\otimes S^{n}(P))
}
\end{displaymath}
This completes the proof.

\end{proof}
This result has numerous applications. For the purposes of this paper the important consequence is Proposition \ref{factorfibcofib} below. However let us use it to show that certain interesting classes of algebras are cofibrant. Following \cite{loday2012algebraic} B.6.13 we define triangulated quasi-free algebras.

\begin{defn}
A $\mathfrak{P}$-algebra $A$ is said to be $\mathfrak{G}$-\textbf{quasi-free} if there is an object $V$ of $preCell(\mathfrak{G})$ such that, after forgetting differentials, the underlying graded algebra of $A$ is isomorphic to the underlying graded algebra of $\mathfrak{P}(R\otimes V)$. A quasi-free algebra $A$ is said to be $\mathfrak{G}$-\textbf{triangulated} if the underlying graded object of $V$ can be written as $V\cong\bigoplus_{i=0}^{\infty}V_{i}$, where each $V_{i}$ is in $\mathpzc{Gr}_{\Z}$, for $i\ge 1$ $d_{A}|_{\mathfrak{P}(R\otimes V_{n})}$ factors through $\mathfrak{P}(R\otimes\bigoplus_{i=0}^{n-1} V_{i})$, and $d_{A}|_{V_{0}}=0$.
\end{defn} 

\begin{cor}\label{triangquasifree}
A $\mathfrak{G}$-triangulated quasi-free $\mathfrak{P}$-algebra is a $\mathfrak{G}$-Sullivan model.
\end{cor}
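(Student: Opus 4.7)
The plan is to exhibit $A$ as a transfinite composition of Sullivan-type attachments, one for each graded layer $V_n$ in the triangulation. Set $W_n=\bigoplus_{i=0}^{n}V_i$ and let $A_n$ denote the sub-$\mathfrak{P}$-algebra $\mathfrak{P}(R\otimes W_n)$ of $A$. The triangulation hypothesis $d_A(\mathfrak{P}(R\otimes V_n))\subseteq\mathfrak{P}(R\otimes W_{n-1})$ together with the Leibniz rule for the derivation $d_A$ immediately shows that each $A_n$ is closed under $d_A$, hence is a sub-dg-$\mathfrak{P}$-algebra of $A$, and that $A=\mathrm{colim}\,A_n$ both on the level of underlying graded algebras and on the level of dg-algebras.

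Next, I would identify each inclusion $A_{n-1}\hookrightarrow A_n$ with a pushout of the form handled by Lemma~\ref{standard}. The restriction of $d_A$ to $R\otimes V_n$ gives a degree $-1$ $R$-module map $\alpha_n:R\otimes V_n\to A_{n-1}$, equivalently a morphism $\alpha_n:(R\otimes V_n)[-1]\to A_{n-1}$ in $\mathpzc{M}$; the fact that this commutes with differentials is a consequence of $d_A^2=0$ applied to generators, together with the triangulation, by an easy induction whose base case is $d_A|_{V_0}=0$. With this identification $A_n\cong A_{n-1}\coprod_{\alpha_n}\mathfrak{P}(R\otimes V_n)$, so Lemma~\ref{standard} applied to the map $0\to R\otimes V_n$ exhibits $A_{n-1}\to A_n$ as a map in $Sull_{\mathfrak{P}}(\mathpzc{M};\mathfrak{G})$. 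Composing over $n$ and invoking Proposition~\ref{transfinitestandard} together with closure of $Sull_{\mathfrak{P}}(\mathpzc{M};\mathfrak{G})$ under transfinite composition identifies $0\to A$ with a Sullivan map, which is precisely the claim.

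The main obstacle is the input to Lemma~\ref{standard}, namely that each $0\to R\otimes V_n$ lies in $preCell(R;\mathfrak{G})$. The definition of a triangulated quasi-free algebra only posits that the total module $V$ lies in $preCell(R;\mathfrak{G})$; to extract a cellular presentation of each individual summand $V_n$ one must argue that a cellular construction of $V$ can be reorganised so that every generator used in assembling $V$ belongs to a single $V_n$. This is automatic once one notices that each $V_n$ is pure-graded (an object of $\mathpzc{Gr}_{\mathbb{Z}}$), since the generating cells in $\mathfrak{G}$ are themselves of the form $S^m(G_i)\to D^{m+1}(G_i)$, $0\to D^m(G_i)$, or $S^m(f_l)$, each of which contributes to a unique homological stratum. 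Once this bookkeeping is made explicit the induction runs cleanly, and the statement then follows as outlined.
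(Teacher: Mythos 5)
Your overall skeleton is the same as the paper's: filter $A$ by the sub-dg-algebras $A_{n}=\mathfrak{P}(R\otimes\bigoplus_{i\le n}V_{i})$, identify each inclusion with $A_{n}\rightarrow A_{n}\coprod_{\alpha_{n+1}}\mathfrak{P}(R\otimes V_{n+1})$ where $\alpha_{n+1}=d_{A}|_{V_{n+1}}$, feed this to Lemma \ref{standard}, and conclude by Proposition \ref{transfinitestandard} and closure of $Sull_{\mathfrak{P}}(\mathpzc{M};\mathfrak{G})$ under transfinite composition. That part, including the verification that $\alpha_{n+1}$ commutes with differentials, matches the paper's proof.

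The point where you diverge is exactly the step you flag as the main obstacle, and your resolution of it does not hold up. You try to deduce that $0\rightarrow R\otimes V_{n}$ lies in $preCell(R;\mathfrak{G})$ by reorganising a cellular presentation of the total complex $V$, on the grounds that each generating cell ``contributes to a unique homological stratum''. This is false for the cells $S^{m}(G_{i})\rightarrow D^{m+1}(G_{i})$ and $0\rightarrow D^{m}(G_{i})$, whose disks span two adjacent homological degrees, and in any case the summands $V_{n}$ of a triangulation are not homological strata: the splitting $V\cong\bigoplus V_{i}$ is a splitting of the underlying graded object dictated by where $d_{A}$ sends generators, not by degree. (Also $S^{m}(f_{l})$ generates $Cell(R;\mathfrak{G})$, not $preCell(R;\mathfrak{G})$.) If the $V_{i}$ were arbitrary graded objects, knowing only that the total $V$ is cellular would give each $V_{i}$ at best as a retract of a direct sum of shifted generators, which does not place $0\rightarrow R\otimes V_{i}$ in $preCell(R;\mathfrak{G})$, since that class is not closed under retracts, and Lemma \ref{standard} requires membership on the nose.

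The intended reading of the definition, which is what the paper's proof uses (and which is explicit in Corollary \ref{quasifreecofib}, where $V\in\underline{Gr}_{\mathbb{N}_{0}}(\mathfrak{G})$), is that each stratum $V_{i}$ is a graded object whose entries lie in $\mathfrak{G}$; equipped with the zero differential it is therefore a direct sum of spheres $S^{m}(G)$ with $G\in\mathfrak{G}$. With that reading the needed input is immediate: $0\rightarrow R\otimes S^{m}(G)$ is the cobase change of $R\otimes S^{m-1}(G)\rightarrow R\otimes D^{m}(G)$ along $R\otimes S^{m-1}(G)\rightarrow 0$, so $0\rightarrow R\otimes V_{n}$ is a transfinite composition of pushouts of generating maps and hence lies in $preCell(R;\mathfrak{G})$; alternatively, attaching spheres on generators is precisely the base case handled inside the proof of Lemma \ref{standard}. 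Once you substitute this for your reorganisation argument, your proof coincides with the paper's.
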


\begin{proof}
Let $A$ be $\mathfrak{G}$-triangulated. After forgetting differentials we may write $A=\mathfrak{P}(V)\cong \mathfrak{P}(R\otimes \bigoplus_{i=0}^{\infty}V_{i})$. 
For $0\le n<\infty$ let $A_{n}=\mathfrak{P}(R\otimes \bigoplus_{i=0}^{n}V_{i})$. This is a subobject of $A$ in $\mathpzc{Alg}_{\mathfrak{P}}(\mathpzc{M})$. Moreover $A\cong lim_{\rightarrow}A_{n}$. Thus we just need to show that $A_{0}$ is cofibrant and for each $n\ge 0$ the map $A_{n}\rightarrow A_{n+1}$ is a cofibration. $A_{0}$ is just the free algebra on $R\otimes V_{0}$, where $V_{0}$ is regarded as a complex in $Ch(\mathpzc{E})$ with trivial differential. Moreover as an object in $Ch(\mathpzc{E})$ $V_{0}$ is a direct sum of objects of the form $S^{n}(G)$ where $G\in\mathfrak{G}$. Hence $A_{0}$ is clearly a $\mathfrak{G}$-Sullivan model. Moreover, if we let $\alpha_{n+1}$ be the degree $-1$ map $\alpha_{n+1}\defeq d|_{V_{n+1}}:V_{n+1}\rightarrow A_{n}$, then $A_{n+1}=A_{n}\coprod_{\alpha_{n+1}}\mathfrak{P}(R\otimes V_{i})$. This is a map in $Sull_{\mathfrak{P}}(\mathpzc{M};\mathfrak{G})$. The map $0\rightarrow A$ is a transfinite composition of such maps and hence is also in $Sull_{\mathfrak{P}}(\mathpzc{M};\mathfrak{G})$. 
\end{proof}

In particular quasi-free algebras on bounded below objects of $preCell(\mathfrak{G})$ are $\mathfrak{G}$-triangulated. Immediately we get the following result.


%


\begin{cor}\label{quasifreecofib}
If $\mathpzc{M}={}_{R}\mathpzc{Mod}(Ch(\mathpzc{E}))$, $R$ and $\mathfrak{P}$ are concentrated in non-negative degrees, $V$ is in $\underline{Gr}_{\mathbb{N}_{0}}(\mathfrak{G})$, and $A\in\mathpzc{Alg}_{\mathfrak{P}}(\mathpzc{M})$ is quasi-free on $V$, then $A$ is $\mathfrak{G}$-triangulated.
\end{cor}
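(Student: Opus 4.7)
The plan is to use the natural homological grading on $V$ to produce the triangulation. Since $V$ lies in $\underline{Gr}_{\mathbb{N}_{0}}(\mathfrak{G})$, I would write $V=\bigoplus_{i\ge 0}V_{i}$ with $V_{i}$ the piece of $V$ concentrated in homological degree $i$; each such $V_{i}$ trivially belongs to $\mathpzc{Gr}_{\mathbb{Z}}(\mathpzc{E})$ and is still built out of cells in $\mathfrak{G}$. It then remains to verify the two differential conditions in the definition of $\mathfrak{G}$-triangulated: namely that $d_A|_{V_0}=0$ and that $d_A|_{V_n}$ factors through $\mathfrak{P}(R\otimes\bigoplus_{i=0}^{n-1}V_i)$ for $n\ge 1$.

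Both conditions reduce to a bookkeeping argument on homological degrees. The derivation $d_A$ has degree $-1$, and since $\mathfrak{P}$, $R$, and $V$ are all concentrated in non-negative homological degrees, so is $A\cong\mathfrak{P}(R\otimes V)$ as a graded $R$-module. Consequently $d_A(V_0)\subset A_{-1}=0$, which yields the first condition. For $n\ge 1$, any element of $A$ in homological degree $n-1$ is a finite sum of terms of the form $\mu(r_1\otimes v_{i_1},\ldots,r_k\otimes v_{i_k})$ of total homological degree $n-1$; since $\deg\mu$, $\deg r_j$ and $\deg v_{i_j}$ are all non-negative, each generator $v_{i_j}$ necessarily has homological degree at most $n-1$ and therefore lies in $\bigoplus_{i=0}^{n-1}V_i$. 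This shows $d_A(V_n)\subset\mathfrak{P}(R\otimes\bigoplus_{i=0}^{n-1}V_i)$, which is exactly the required factorization (and matches the usage of $\alpha_{n+1}\defeq d|_{V_{n+1}}\colon V_{n+1}\to A_n$ appearing in the proof of Corollary \ref{triangquasifree}).

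There is no real obstacle here: the corollary is a direct consequence of connectivity combined with the fact that a derivation lowers homological degree by exactly one. The only conceptual point worth articulating explicitly is why the degree of an element of the free algebra $\mathfrak{P}(R\otimes V)$ bounds the degrees of the generators appearing in it; this is precisely where the non-negativity hypotheses on $R$, $\mathfrak{P}$, and $V$ are used.
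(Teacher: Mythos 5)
Your proof is correct and is essentially the paper's argument: decompose $V$ by homological degree and use that $d_{A}$ has degree $-1$ while $R$, $\mathfrak{P}$, and $V$ are non-negatively graded, so that the degree-$(n-1)$ part of $\mathfrak{P}(R\otimes V)$ only involves generators from $\bigoplus_{i=0}^{n-1}V_{i}$ (the paper simply asserts this factorization as clear, and your degree bookkeeping is the justification, with the element-wise phrasing to be read as a statement about graded components).
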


\begin{proof}
Write $V=\bigoplus V_{i}[i]$ where $V_{i}$ is an object of $\mathfrak{G}$. The differential $d_{A}|_{V_{n}[n]}$ must clearly factor through $\mathfrak{P}(R\otimes\bigoplus_{i=0}^{n-1}V_{i})$. This proves the claim.
\end{proof}

In the setting of this corollary we say that $A$ is $\mathfrak{G}$-non-negatively graded.

%
%


\subsection{Example: Commutative Algebras}

Before proceeding to coalgebras and Koszul duality, let us conclude this section with a detailed look at the homotopy theory of commutative algebras in a $\Q$-Koszul category which arise from monoidal elementary exact categories. We will make a connection with (affine) derived geometry and formal geometry by using our results above to analyse cotangent complexes. These results will prove useful later when we discuss operadic Koszul duality.

\subsubsection{HA Contexts and the Cotangent Complex}\label{seccotangentcomplex}
Recall that in \cite{toen2004homotopical} To{\"e}n and Vezzosi introduce an abstract categorical framework in which one can `do' homotopical algebra, namely a homotopical algebra context. Let us recall the truncated definition (for the category $\mathsf{C}_{0}$ in \cite{toen2004homotopical} we always take $\mathsf{C}=\mathsf{C}_{0}$).

\begin{defn}\label{Defn:HA context}
Let $\mathpzc{M}$ be a combinatorial symmetric monoidal model category. We say that $\mathpzc{M}$ is an \textbf{homotopical algebra context} (or HA context) if for any $A\in\mathpzc{Alg}_{\mathfrak{Comm}}(\mathpzc{M})$.
\begin{enumerate}
\item
The model category $\mathpzc{M}$ is proper, pointed and for any two objects $X$ and $Y$ in $\mathpzc{M}$ the natural morphisms
$$QX\coprod QY\rightarrow X\coprod Y\rightarrow RX\times RY$$
are equivalences.
\item
$Ho(\mathpzc{M})$ is an additive category.
\item
With the transferred model structure and monoidal structure $-\otimes_{A}$, the category ${}_{A}\mathpzc{Mod}$ is a combinatorial, proper, symmetric monoidal model category.
\item
For any cofibrant object $M\in{}_{A}\mathpzc{Mod}$ the functor
$$-\otimes_{A}M:{}_{A}\mathpzc{Mod}\rightarrow{}_{A}\mathpzc{Mod}$$
preserves equivalences.
\item
With the transferred model structures $\mathpzc{Alg}_{\mathfrak{Comm}}({}_{A}\mathpzc{Mod})$ and $\mathpzc{Alg}_{\mathfrak{Comm}_{nu}}({}_{A}\mathpzc{Mod})$ are combinatorial proper model categories.
\item
If $B$ is cofibrant in $\mathpzc{Alg}_{\mathfrak{Comm}}({}_{A}\mathpzc{Mod})$ then the functor
$$B\otimes_{A}-:{}_{A}\mathpzc{Mod}\rightarrow{}_{B}\mathpzc{Mod}$$
preserves equivalences.
\end{enumerate}
\end{defn}

The following is Theorem 6.4.41 in \cite{kelly2016homotopy}.

\begin{thm}\label{HAcont}
Let $(\mathpzc{E},\otimes,\underline{\textrm{Hom}},k)$ be a locally presentable closed projectively monoidal exact category which is $\textbf{AdMon}$-elementary . Then $Ch_{\ge0}(\mathpzc{E})$ and $Ch(\mathpzc{E})$ are homotopical algebra contexts. If countable coproducts are admissibly coexact and countable products are admissibly exact then this is also true for $Ch(\mathpzc{E})$.
\end{thm}

By \cite{toen2004homotopical} Section 1.2 a homotopical algebra context has sufficient structure to define the relative cotangent complex of a map $f:A\rightarrow B$ in $\mathpzc{Alg}_{\mathfrak{Comm}}(\mathpzc{M})$. Let us briefly recall the discussion here. For a commutative monoid $B$ write $\mathpzc{Alg}_{\mathfrak{Comm}}^{aug}({}_{B}\mathpzc{Mod})\defeq \mathpzc{Alg}_{\mathfrak{Comm}}({}_{B}\mathpzc{Mod})\big\slash B$ for the category of augmented commutative $B$-algebras. There is an adjunction
$$\adj{K}{\mathpzc{Alg}_{\mathfrak{Comm}^{nu}}({}_{B}\mathpzc{Mod})}{\mathpzc{Alg}_{\mathfrak{Comm}}^{aug}({}_{B}\mathpzc{Mod})}{I}$$
where $K$ is the trivial extension functor and $I$ sends an algebra $C$ to the kernel of the map $C\rightarrow B$. This is both an equivalence of categories and a Quillen equivalence of model categories. There is also a Quillen adjunction
$$\adj{Q}{\mathpzc{Alg}_{\mathfrak{Comm}^{nu}}({}_{B}\mathpzc{Mod})}{{}_{B}\mathpzc{Mod}}{Z}$$
where $Q(C)$ is defined by the pushout
\begin{displaymath}
\xymatrix{
C\otimes_{B}C\ar[d]\ar[r] & C\ar[d]\\
\bullet\ar[r] & Q(C)
}
\end{displaymath}
and $Z$ just equips a module $M$ with the trivial non-unital commutative monoid structure. Now given a map $f:A\rightarrow B$ in $\mathpzc{Alg}_{\mathfrak{Comm}}(\mathpzc{E})$ we define the \textbf{relative cotangent complex} by $$\mathbb{L}_{B\big\slash A}\defeq\mathbb{L}Q\mathbb{R}I(B\otimes_{A}^{\mathbb{L}}B)$$
It is shown in \cite{toen2004homotopical} that $\mathbb{L}_{B\big\slash A}$ corepresents the functor of $(\infty,1)$-categories 
$${}_{B}\textbf{Mod}\rightarrow\textbf{sSet},\;M\mapsto Map_{\textbf{Alg}_{\mathfrak{Comm}}({}_{A}\mathpzc{Mod})\big\slash B}(B,B\ltimes M)$$
where $B\ltimes M$ is the square-zero extension of $B$ by $M$ (see Section \ref{secder}). Here $Map$ is the simplicial mapping space. We also write $\mathbb{L}_{B}\defeq\mathbb{L}_{B\big\slash k}$.  Now let $C$ be any $A$-algebra and consider the category $\textbf{Alg}_{\mathfrak{Comm}}({}_{A}\mathpzc{Mod})\big\slash C$. There is a functor
$$\textbf{Alg}_{\mathfrak{Comm}}({}_{A}\mathpzc{Mod})\big\slash C\rightarrow{}_{C}\textbf{Mod},\; B\mapsto\mathbb{L}_{B\big\slash A}\otimes^{\mathbb{L}}_{B}C$$
It is left adjoint to the functor sending a $m$-module $M$ to the square zero extension $C\ltimes M$. When $C=A=k$ so that $\textbf{Alg}_{\mathfrak{Comm}}({}_{A}\mathpzc{Mod})\big\slash k=\textbf{Alg}_{\mathfrak{Comm}}^{aug}$ we denote this functor by $\mathbb{L}_{0}$.   We will make use of the following facts which constitute Proposition 1.2.1.6 in \cite{toen2004homotopical}.

\begin{prop}\label{cotangentfacts}
\begin{enumerate}
\item
Let $f:A\rightarrow B$ and $g:B\rightarrow C$ be morphisms of algebras. Then there is a homotopy cofiber sequence in ${}_{C}\mathpzc{Mod}$.
$$\mathbb{L}_{B\big\slash A}\otimes_{B}^{\mathbb{L}} C\rightarrow\mathbb{L}_{C\big\slash A}\rightarrow\mathbb{L}_{C\big\slash B}$$
\item
If 
\begin{displaymath}
\xymatrix{
A\ar[d]\ar[r] &B\ar[d]\\
A'\ar[r] &B'
}
\end{displaymath}
is a homotopy pushout in $\mathpzc{Alg}_{\mathfrak{Comm}}(\mathpzc{E})$ then the natural map $\mathbb{L}_{B\big\slash A}\otimes_{B}^{\mathbb{L}}B'\rightarrow\mathbb{L}_{B'\big\slash A'}$ is an equivalence.
\end{enumerate}
\end{prop}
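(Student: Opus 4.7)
The plan is to deduce both parts from the defining universal property of the cotangent complex, namely that $\mathbb{L}_{B/A}$ corepresents the functor $M \mapsto Map_{\textbf{Alg}_{\mathfrak{Comm}}({}_{A}\mathpzc{Mod})/B}(B, B\ltimes M)$ on ${}_{B}\textbf{Mod}$, and to exploit the adjunctions and base change behaviour guaranteed by the HA context. Both parts are exactly Proposition 1.2.1.6 in \cite{toen2004homotopical}, and that argument applies verbatim once Theorem \ref{HAcont} identifies $\mathpzc{M}$ as an HA context; what follows is a sketch of the strategy.

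For (1), I would first construct the two maps concretely: $\mathbb{L}_{B/A}\otimes_{B}^{\mathbb{L}}C \to \mathbb{L}_{C/A}$ arises from applying the base change functor ${}_{B}\textbf{Mod} \to {}_{C}\textbf{Mod}$ to the universal $A$-derivation $B \to B \ltimes \mathbb{L}_{B/A}$, while $\mathbb{L}_{C/A} \to \mathbb{L}_{C/B}$ corresponds under the universal property to the inclusion of $B$-derivations of $C$ into $A$-derivations. To verify this is a cofibre sequence I would test against an arbitrary $C$-module $M$ and check that $\mathbb{R}Map_{{}_{C}\textbf{Mod}}(-,M)$ produces the fibre sequence
\[
Map_{\textbf{Alg}_{B}/C}(C, C\ltimes M) \to Map_{\textbf{Alg}_{A}/C}(C, C\ltimes M) \to Map_{\textbf{Alg}_{A}/B}(B, C\ltimes M),
\]
where the rightmost map is restriction along $B \to C$. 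The essential combinatorial input is that an $A$-algebra section of $C\ltimes M \to C$ whose restriction to $B$ equals the given map $B \to C \hookrightarrow C \ltimes M$ is precisely a $B$-algebra section, which produces the asserted fibre sequence over the basepoint.

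For (2), I would again test against modules. For any $B'$-module $M$ the universal property gives
\[
Map_{{}_{B'}\textbf{Mod}}(\mathbb{L}_{B'/A'}, M) \simeq Map_{\textbf{Alg}_{A'}/B'}(B', B'\ltimes M),
\]
and the homotopy pushout property of $B' \simeq B \otimes^{\mathbb{L}}_{A} A'$ identifies the right-hand space with pairs of an $A$-algebra map $B \to B'\ltimes M$ and an $A$-algebra map $A' \to B'\ltimes M$ over $B'$, agreeing on $A$. Since the $A'$-component is forced to be the canonical one $A' \to B' \hookrightarrow B'\ltimes M$, this reduces to the space of $A$-algebra maps $B \to B'\ltimes M$ over $B'$, which by the $(-\otimes_{B}^{\mathbb{L}}B')$-restriction adjunction is naturally $Map_{{}_{B'}\textbf{Mod}}(\mathbb{L}_{B/A}\otimes^{\mathbb{L}}_{B}B', M)$.

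The main obstacle is not the formal manipulation but rather ensuring the derived tensor products, Quillen adjunctions, and square-zero extension functors all genuinely derive and compose coherently in our exact-categorical setup. This is exactly what the HA context axioms (properness, derivability of $-\otimes_{A}^{\mathbb{L}}-$ and of $B\otimes_{A}^{\mathbb{L}}-$ for cofibrant $B$, and the existence of good transferred model structures on modules and commutative algebras) are designed to provide, so Theorem \ref{HAcont} is the essential input that lets the standard Toën--Vezzosi argument run unchanged.
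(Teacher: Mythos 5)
Your proposal is correct and matches the paper's treatment: the paper gives no independent proof but simply cites Proposition 1.2.1.6 of \cite{toen2004homotopical}, and your sketch is precisely the standard To\"{e}n--Vezzosi argument behind that citation (corepresentability of $\mathbb{L}$ by square-zero extensions, tested against arbitrary modules), with Theorem \ref{HAcont} correctly identified as the hypothesis that makes it applicable here. The only quibble is notational: in the transitivity fibre sequence the third mapping space should be taken over $C$ (maps of $A$-algebras $B\rightarrow C\ltimes M$ lifting $B\rightarrow C$), not over $B$, but this does not affect the argument.
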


\begin{prop}
There is a natural equivalence of functors $\mathbb{L}_{0}(A)\cong \mathbb{L}_{k\big\slash A}[1]$, where $\mathbb{L}_{k\big\slash A}[1]$ is the suspension of $\mathbb{L}_{k\big\slash A}$. 
\end{prop}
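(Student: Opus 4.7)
The plan is to apply the cotangent complex cofiber sequence from Proposition \ref{cotangentfacts}(1) to the factorization $k \to A \to k$ of $\mathrm{id}_{k}$, where the first map is the unit of $A$ and the second is the augmentation (the composite is the identity precisely because $A$ is augmented). Taking the three algebras to be $k$, $A$, $k$ (in the roles of $A$, $B$, $C$ in the proposition, with the $A$-algebra structure on the last copy of $k$ coming from the augmentation), we obtain a homotopy cofiber sequence in ${}_{k}\textbf{Mod}$
$$\mathbb{L}_{A/k} \otimes^{\mathbb{L}}_{A} k \longrightarrow \mathbb{L}_{k/k} \longrightarrow \mathbb{L}_{k/A}.$$

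The next step is to show $\mathbb{L}_{k/k} \simeq 0$. This is a formal consequence of the corepresenting property of the cotangent complex: for any $k$-module $M$, the mapping space $\mathrm{Map}_{\textbf{Alg}_{\mathfrak{Comm}}({}_{k}\mathpzc{Mod})/k}(k, k \ltimes M)$ parametrises sections of a split square-zero extension and is contractible, since $k$ is both initial and terminal in $\textbf{Alg}_{\mathfrak{Comm}}({}_{k}\mathpzc{Mod})/k$. Alternatively, one can compute directly using the explicit formula $\mathbb{L}_{k/k} = \mathbb{L}Q\,\mathbb{R}I(k \otimes^{\mathbb{L}}_{k} k)$: the identity $k \to k$ is a cofibrant replacement, so $k \otimes^{\mathbb{L}}_{k} k \simeq k$, whose augmentation ideal is trivial, and $Q$ applied to the zero non-unital algebra is $0$.

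Combining this vanishing with the definition $\mathbb{L}_{0}(A) = \mathbb{L}_{A/k} \otimes^{\mathbb{L}}_{A} k$, the cofiber sequence collapses to $\mathbb{L}_{0}(A) \longrightarrow 0 \longrightarrow \mathbb{L}_{k/A}$, which exhibits $\mathbb{L}_{k/A}$ as the suspension of $\mathbb{L}_{0}(A)$ in the stable $(\infty,1)$-category ${}_{k}\textbf{Mod}$; equivalently, $\mathbb{L}_{0}(A) \cong \mathbb{L}_{k\backslash A}[1]$ in the notation of the statement. Each ingredient (the cotangent complex of a map, the cofiber sequence, and the base change $- \otimes^{\mathbb{L}}_{A} k$) is functorial in $A \in \textbf{Alg}_{\mathfrak{Comm}}^{aug}$, yielding the claimed natural equivalence of functors. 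The argument is essentially formal; the only minor bookkeeping point is matching the direction of the suspension shift with the paper's convention for $[1]$, which the statement itself fixes by declaring $\mathbb{L}_{k\backslash A}[1]$ to mean the suspension.
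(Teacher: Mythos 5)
Your argument is essentially the paper's own proof: both apply the cofiber sequence of Proposition \ref{cotangentfacts}(1) to the factorisation $k\rightarrow A\rightarrow k$, use $\mathbb{L}_{k\big\slash k}\simeq 0$, and read off the suspension identification $\mathbb{L}_{0}(A)\rightarrow 0\rightarrow\mathbb{L}_{k\big\slash A}$. Your extra justification of the vanishing of $\mathbb{L}_{k\big\slash k}$ and the remark on the shift convention are fine and only make explicit what the paper leaves implicit.
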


\begin{proof}
Consider the composition $k\rightarrow A\rightarrow k$ We get a homotopy cofiber sequence
$$\mathbb{L}_{0}(A)\rightarrow 0\rightarrow\mathbb{L}_{k\big\slash A}$$
This proves the claim.
\end{proof}

Note that in the context of an elementary Koszul category, the suspension functor coincides with the shift functor.

%
%

For cofibrant algebras we have the following result. The proof for vector spaces over a field is standard (for more general operads it can be found in Section 12.3.19, \cite{loday2012algebraic}), and goes through with minor modifications.
\begin{prop}\label{maximalideal}
Let $A\rightarrow k$ be a cofibrant augmented algebra. Then $\mathbb{L}_{0}(A)\cong coker(I\otimes I\rightarrow I)$ where $I=Ker(A\rightarrow k)$ is the augmentation ideal. 
\end{prop}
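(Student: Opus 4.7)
The plan is to unwind the definition of $\mathbb{L}_0$ as a composite of derived functors and to use the Quillen equivalence $K \dashv I$ to replace each derived value by the undervied one on cofibrant inputs, reducing the statement to a direct computation of a cokernel.

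By the adjunctions recalled above, $\mathbb{L}_0$ is the left-derived functor of the composite
$$\mathpzc{Alg}_{\mathfrak{Comm}}^{aug} \xrightarrow{I} \mathpzc{Alg}_{\mathfrak{Comm}^{nu}} \xrightarrow{Q} {}_{k}\mathpzc{Mod},$$
so that $\mathbb{L}_0(A) \simeq \mathbb{L}Q \circ \mathbb{R}I(A)$. The first step is to argue $\mathbb{R}I(A) = I$ and that $I$ is cofibrant in $\mathpzc{Alg}_{\mathfrak{Comm}^{nu}}$. Both rely on the stated fact that $K \dashv I$ is simultaneously a Quillen equivalence and an equivalence of categories: under an adjunction of this kind, weak equivalences and cofibrations correspond on both sides, since the two model structures are forced to coincide under the isomorphism of underlying categories. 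Since $I(k) = 0$, a cofibration $k \to A$ becomes a cofibration $0 \to I$, so $I$ is cofibrant in $\mathpzc{Alg}_{\mathfrak{Comm}^{nu}}$. Preservation of weak equivalences by $I$ implies $\mathbb{R}I(A) = I(A) = I$ without passing to a fibrant replacement.

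Next, because $Q$ is a left Quillen functor and $I$ is cofibrant, $\mathbb{L}Q(I) \simeq Q(I)$. The defining pushout
\begin{displaymath}
\xymatrix{
I\otimes_{k}I \ar[d]\ar[r] & I \ar[d]\\
0 \ar[r] & Q(I)
}
\end{displaymath}
in ${}_{k}\mathpzc{Mod}$ computes $Q(I)$ directly as $\mathrm{coker}(I \otimes_k I \to I)$, yielding $\mathbb{L}_0(A) \simeq \mathrm{coker}(I \otimes_k I \to I)$.

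The main obstacle is the transfer step: justifying that $\mathbb{R}I(A) = I$ and that $I$ is cofibrant in $\mathpzc{Alg}_{\mathfrak{Comm}^{nu}}$ requires combining the Quillen-equivalence with the equivalence of underlying categories, rather than using either property alone. Once this is in hand the rest of the argument is a formal unwinding of definitions, following the template of \cite{loday2012algebraic} Section 12.3.19.
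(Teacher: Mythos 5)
Your route is genuinely different from the paper's, and in outline it can be made to work. The paper does not factor $\mathbb{L}_{0}$ through $\mathbb{L}Q\circ\mathbb{R}I$ at the augmentation; it verifies directly that the underived functor $A\mapsto \mathrm{coker}(I\otimes I\rightarrow I)$ is left adjoint, $1$-categorically, to the square-zero extension $k\ltimes(-)$, by writing down the two mutually inverse assignments on hom-sets, and then uses cofibrancy of $A$ to pass between the underived and derived adjunctions. Note also that with the paper's definitions $\mathbb{L}_{0}(A)=\mathbb{L}_{A\big\slash k}\otimes^{\mathbb{L}}_{A}k$, where $I$ and $Q$ are applied \emph{over} $A$ to $A\otimes^{\mathbb{L}}_{k}A$; so your opening identification $\mathbb{L}_{0}\simeq\mathbb{L}Q\circ\mathbb{R}I$, with $I$ and $Q$ now taken over $k$, is not a mere unwinding of the definition. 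It is true, but the honest justification is uniqueness of adjoints: the paper records that $\mathbb{L}_{0}$ is left adjoint to $k\ltimes(-)$, and $\mathbb{L}Q\circ\mathbb{R}I$ is left adjoint to the localisation of $K\circ Z=k\ltimes(-)$ (which preserves weak equivalences), since $\mathbb{R}I$ is an inverse, hence also a left adjoint, of the localised $K$. You should say this rather than present it as definitional.

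The genuine gap is in your transfer step. A Quillen adjunction that is simultaneously a Quillen equivalence and an equivalence of the underlying categories does \emph{not} force the two model structures to correspond: the identity adjunction between $Ch({}_{R}\mathpzc{Mod})$ with the projective model structure and with the injective model structure is an isomorphism of categories and a Quillen equivalence, yet cofibrations and fibrations differ, and the right adjoint fails to preserve cofibrations. So neither ``$I$ preserves weak equivalences'' nor ``the cofibration $k\rightarrow A$ becomes a cofibration $0\rightarrow I$'' follows from the two properties you cite. Both statements are true here, but for concrete reasons you must supply: weak equivalences on both sides are detected on underlying complexes, and any map of augmented algebras respects the canonical splitting $A\cong k\oplus I$ (the augmentation is split by the unit), so $I$ preserves, indeed reflects, weak equivalences; and the transferred model structures do correspond under the equivalence because $I$ preserves colimits and retracts and matches the free functors, $I(S(V))\cong\overline{S}(V)$, so that generating (trivial) cofibrations on the augmented side are carried to free maps on generating (trivial) cofibrations in $\mathpzc{Alg}_{\mathfrak{Comm}^{nu}}$ (non-standard augmentations on free algebras being handled by a translation automorphism). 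With these two facts in place, your computation $\mathbb{L}_{0}(A)\simeq\mathbb{L}Q(I)\simeq Q(I)=\mathrm{coker}(I\otimes I\rightarrow I)$ goes through; alternatively, the cofibrancy transfer can be bypassed entirely by arguing as the paper does via the explicit $1$-categorical adjunction.
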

\begin{proof}
Since $A$ is a cofibrant we may assume everything is underived. We need to show that the functor sending $A$ to the $k$-module $coker(I\otimes I\rightarrow I)$ is left adjoint to the square-zero extension functor. Let $f:A\rightarrow k\ltimes M$ be map of augmented algebras. This induces a map $I\rightarrow M$ of augmentation ideals which clearly descends to a map $\tilde{f}:coker(I\otimes I\rightarrow I)\rightarrow M$. Conversely suppose we are given a map $g: coker(I\otimes I\rightarrow I)\rightarrow M$. Consider the map of modules $A\cong k\coprod I\rightarrow k\coprod I\big\slash I^{2}\rightarrow k\coprod M$. This is in fact a map of algebras $A\rightarrow k\ltimes M$. These maps on hom sets are inverse, realising the adjunction.
\end{proof}

\begin{cor}
If $A$ is a cofibrant augmented algebra then the unit of the adjunction
$$\adj{\mathbb{L}_{0}}{\textbf{Alg}_{\mathfrak{Comm}}^{aug}}{{}_{k}\textbf{Mod}}{k\ltimes(-)}$$
is the natural map 
$$A\rightarrow k\ltimes coker(I\otimes I\rightarrow I)$$
\end{cor}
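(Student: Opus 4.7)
The plan is to derive the corollary directly from Proposition \ref{maximalideal} by unwinding the adjunction that was constructed in its proof. Since $A$ is cofibrant, the derived functor $\mathbb{L}_{0}$ is computed by its underived counterpart, so Proposition \ref{maximalideal} gives a natural isomorphism $\mathbb{L}_{0}(A) \cong \mathrm{coker}(I \otimes I \to I)$ in $Ho({}_{k}\mathpzc{Mod})$.

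First I would recall how the unit of an adjoint pair is characterized: given the adjunction $\mathbb{L}_{0} \dashv k \ltimes (-)$, the unit at $A$ is the map $\eta_{A} \colon A \to k \ltimes \mathbb{L}_{0}(A)$ which corresponds under the adjunction isomorphism to the identity map $\mathbb{L}_{0}(A) \to \mathbb{L}_{0}(A)$. Then I would transport this along the isomorphism of Proposition \ref{maximalideal} to realize $\eta_{A}$ as a map $A \to k \ltimes \mathrm{coker}(I \otimes I \to I)$.

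The second step is to identify this transported map explicitly with the one described in the statement. Inspecting the proof of Proposition \ref{maximalideal}, the inverse of the adjunction bijection sends a $k$-module map $g \colon \mathrm{coker}(I \otimes I \to I) \to M$ to the composite $A \cong k \coprod I \to k \coprod I/I^{2} \to k \coprod M = k \ltimes M$. Applying this to $g = \mathrm{Id}$ with $M = \mathrm{coker}(I \otimes I \to I)$ immediately produces the natural map
\[
A \cong k \coprod I \longrightarrow k \coprod \mathrm{coker}(I \otimes I \to I) = k \ltimes \mathrm{coker}(I \otimes I \to I),
\]
which is precisely the map asserted in the statement.

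The step that requires a little care, though it is still routine, is verifying naturality of the identification $\mathbb{L}_{0}(A) \cong \mathrm{coker}(I \otimes I \to I)$ so that it genuinely intertwines the unit maps; this amounts to checking that the adjunction bijection constructed in Proposition \ref{maximalideal} is the same as the one coming from the Quillen adjunction $\mathbb{L}_{0} \dashv k \ltimes (-)$ (up to the equivalences $K$ and $Q$). Once this compatibility is noted, no further computation is needed. I do not anticipate any substantial obstacle here since the underived adjunction is precisely what was constructed in the previous proposition and the cofibrancy of $A$ removes any derived-vs-underived distinction.
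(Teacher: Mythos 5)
Your argument is correct and is essentially the one the paper intends: the corollary is stated without proof precisely because, once $A$ is cofibrant, it follows by applying the explicit inverse bijection constructed in the proof of Proposition \ref{maximalideal} to the identity of $\mathrm{coker}(I\otimes I\rightarrow I)$, which is exactly what you do. Your extra remark about checking that this underived adjunction agrees with the derived one is the right (routine) point of care and poses no obstacle.
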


%





 

\subsubsection{Homotopy Pushouts in Koszul Categories}

As is evident from results quoted in the previous section, knowing how to compute homotopy pushouts might help with computing relative cotangent complexes. Let us see how to do this in a monoidal elementary $\mathbb{Q}$-Koszul category by using Lemma \ref{standard}. First let us give a general definition and technical proposition.

\begin{defn}
A map $f:A\rightarrow B$ of commutative algebras in a model category $\mathpzc{M}$ is said to be $K$-\textbf{flat} if $B$ is K-flat as an $A$-module.
\end{defn}

\begin{prop}\label{htpypushoutKflatmodel}
Suppose that $\mathpzc{M}$ is a monoidal model category such that the transferred model structure exists on $\mathpzc{Alg}_{\mathfrak{Comm}}(\mathpzc{M})$, and cofibrations between cofibrant objects in $\mathpzc{Alg}_{\mathfrak{Comm}}(\mathpzc{M})$ are $K$-flat. The homotopy pushout of a diagram
\begin{displaymath}
\xymatrix{
A\ar[d]\ar[r] & B\\
C
}
\end{displaymath}
is equivalent in $\mathpzc{M}$ (in fact as a $(B,C)$-bimodule) to $B\otimes_{A}^{\mathbb{L}}C$. In particular if $B\otimes_{A}^{\mathbb{L}}C\rightarrow B\otimes_{A}C$ is an equivalence then $B\otimes_{A}C$ is a homotopy pushout of the diagram.
\end{prop}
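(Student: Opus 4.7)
The plan is to compute the homotopy pushout by the standard procedure of replacing the span with a cofibrant diagram in $\mathpzc{Alg}_{\mathfrak{Comm}}(\mathpzc{M})$ and then taking the ordinary pushout there. First I would choose a cofibrant replacement $A^c \to A$ in $\mathpzc{Alg}_{\mathfrak{Comm}}(\mathpzc{M})$, and then functorially factor the composites $A^c \to A \to B$ and $A^c \to A \to C$ as cofibrations followed by trivial fibrations, producing cofibrations of cofibrant commutative algebras $A^c \hookrightarrow B^c$ and $A^c \hookrightarrow C^c$ with $B^c \simeq B$ and $C^c \simeq C$.

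Next, since the model structure on $\mathpzc{Alg}_{\mathfrak{Comm}}(\mathpzc{M})$ is transferred, cofibrations are in particular preserved by pushouts, so $B^c \coprod_{A^c} C^c$ is the homotopy pushout of the original diagram in $\mathpzc{Alg}_{\mathfrak{Comm}}(\mathpzc{M})$. The underlying forgetful functor $\mathpzc{Alg}_{\mathfrak{Comm}}(\mathpzc{M}) \to \mathpzc{M}$ preserves sifted colimits (in particular reflexive coequalizers), and coproducts of commutative algebras are computed by the relative tensor product; combining these yields a canonical isomorphism $B^c \coprod_{A^c} C^c \cong B^c \otimes_{A^c} C^c$ in $\mathpzc{M}$, in fact as a $(B^c,C^c)$-bimodule (this is the standard fact that pushout equals tensor product in commutative algebras).

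By hypothesis, the cofibration $A^c \to B^c$ is $K$-flat, so $B^c$ is $K$-flat as an $A^c$-module; therefore $B^c \otimes_{A^c} C^c$ represents $B^c \otimes^{\mathbb{L}}_{A^c} C^c$. Applying the preservation of derived tensor products under weak equivalences of cofibrant replacements (part of the HA-context style axioms, or directly from $K$-flatness of $B^c$ and the equivalences $A^c \simeq A$, $B^c \simeq B$, $C^c \simeq C$), we conclude
\[
B^c \coprod_{A^c} C^c \;\simeq\; B^c \otimes_{A^c} C^c \;\simeq\; B \otimes^{\mathbb{L}}_{A} C
\]
in $\mathpzc{M}$, naturally as a $(B,C)$-bimodule. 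The last statement is then immediate: if $B \otimes^{\mathbb{L}}_{A} C \to B \otimes_A C$ is already an equivalence, the chain of equivalences above identifies the homotopy pushout with $B \otimes_A C$. The step requiring most care is ensuring that the cofibrant replacements in $\mathpzc{Alg}_{\mathfrak{Comm}}(\mathpzc{M})$ give modules whose underlying tensor products compute the derived tensor, which is precisely guaranteed by the $K$-flatness assumption on cofibrations between cofibrant commutative algebras.
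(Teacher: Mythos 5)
Your proposal is correct and follows essentially the same route as the paper's own proof: replace the span by a span of cofibrant commutative algebras with (at least) one leg a cofibration, identify the strict pushout with the relative tensor product $\tilde{B}\otimes_{\tilde{A}}\tilde{C}$, and use the $K$-flatness hypothesis on cofibrations to recognise this as $B\otimes^{\mathbb{L}}_{A}C$. The only cosmetic point is that the strict pushout computes the homotopy pushout because the replaced span has cofibrant vertices and a cofibration leg (the gluing/cube lemma), not merely because cofibrations are stable under pushout in the transferred structure — but this does not affect the argument.
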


\begin{proof}
Let 
\begin{displaymath}
\xymatrix{
\tilde{A}\ar[d]\ar[r] & \tilde{B}\ar[d]\\
\tilde{C}\ar[r] &  P
}
\end{displaymath}
be a diagram presenting the homotopy pushout of the diagram. There are equivalences $A\rightarrow\tilde{A}$, $B\rightarrow\tilde{B}$, and $C\rightarrow\tilde{C}$. Moreover we may assume that $\tilde{A},\tilde{B}$ and $\tilde{C}$ are cofibrant and that $\tilde{A}\rightarrow\tilde{B}$ is a cofibration. Then 
$$P\cong \tilde{B}\otimes_{\tilde{A}}\tilde{C}\cong \tilde{B}\otimes^{\mathbb{L}}_{\tilde{A}}\tilde{C}\cong B\otimes^{\mathbb{L}}_{A}C$$
\end{proof}

In projective Koszul categories the conditions of Proposition \ref{htpypushoutKflatmodel} are satisfied. 

\begin{prop}
Let $B$ be an object in $\mathpzc{M}$. Suppose that $B=lim_{\rightarrow_{\mathcal{I}}}B_{i}$ where $\mathcal{I}$ is a filtered category, and each $B_{i}\rightarrow B_{j}$ is a pure monomorphism with $K$-flat cokernel. Then $B$ is $K$-flat.  
\end{prop}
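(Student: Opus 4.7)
The plan is to rewrite $K$-flatness in terms of preservation of acyclicity and then run a transfinite induction up the filtered colimit, exploiting the weak $\textbf{PureMon}$-elementarity of $\mathpzc{E}$ at the limit ordinals.

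First, I would observe that a map $f:A\rightarrow A'$ is a quasi-isomorphism if and only if $\mathrm{cone}(f)$ is acyclic, and that $B\otimes \mathrm{cone}(f)\cong \mathrm{cone}(B\otimes f)$. Hence, to show $B$ is $K$-flat it suffices to prove that $B\otimes X$ is acyclic for every acyclic complex $X$. Since $\otimes$ commutes with colimits in each variable,
\[
B\otimes X \;\cong\; \lim_{\rightarrow_{i\in\mathcal{I}}} (B_{i}\otimes X),
\]
so the problem reduces to showing that each $B_{i}\otimes X$ is acyclic, and then that this property is preserved by the colimit.

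Next, I would refine $\mathcal{I}$ to a cofinal ordinal-indexed chain $\{B_{\alpha}\}_{\alpha<\lambda}$, so that each transition $B_{\alpha}\to B_{\alpha+1}$ remains a pure monomorphism with $K$-flat cokernel $C_{\alpha}$ and $B_{\mu}=\lim_{\rightarrow_{\alpha<\mu}}B_{\alpha}$ at limit ordinals. I would then prove by transfinite induction that each $B_{\alpha}\otimes X$ is acyclic, taking the $K$-flatness of the initial object $B_{0}$ as the base case (when $\mathcal{I}$ has no canonical initial object one may take $B_{0}=0$, extending the chain so that $B_{0}\to B_{\alpha_{0}}$ is a pure monomorphism with $K$-flat cokernel $B_{\alpha_{0}}$). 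At a successor step, the short exact sequence
\[
0\longrightarrow B_{\alpha}\longrightarrow B_{\alpha+1}\longrightarrow C_{\alpha}\longrightarrow 0
\]
is pure (since $C_{\alpha}$ is in particular flat, using the earlier result that short exact sequences with flat quotient are pure), so tensoring with $X$ degreewise yields a degreewise short exact sequence of complexes. The induced long exact sequence in homology, together with acyclicity of $B_{\alpha}\otimes X$ (inductive hypothesis) and $C_{\alpha}\otimes X$ ($K$-flatness of $C_{\alpha}$), forces acyclicity of $B_{\alpha+1}\otimes X$.

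The limit step is where the main work lies and is the step I expect to be the principal obstacle. At a limit ordinal $\mu$, $B_{\mu}\otimes X=\lim_{\rightarrow_{\alpha<\mu}}(B_{\alpha}\otimes X)$ is a filtered colimit indexed by an ordinal along pure monomorphisms (transfinite composition preserves pure monomorphicity in $\mathpzc{E}$). Because $\mathpzc{E}$ is weakly $\textbf{PureMon}$-elementary, the functor $\lim_{\rightarrow}\colon\mathrm{Fun}^{\mathrm{cocont}}_{\textbf{PureMon}}(\mu,\mathpzc{E})\to\mathpzc{E}$ is exact; applied degreewise and in each homological degree (which commutes with taking kernels and cokernels in the definition of homology), this shows that filtered colimits along pure monomorphisms of acyclic complexes are acyclic. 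Hence $B_{\mu}\otimes X$ is acyclic, and taking $\alpha=\lambda$ completes the induction and the proof.
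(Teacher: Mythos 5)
The step that breaks is your very first reduction: ``refine $\mathcal{I}$ to a cofinal ordinal-indexed chain.'' A filtered (even directed) index category need not admit a cofinal chain at all: for instance the poset of finite subsets of an uncountable set is directed, but every chain in it consists of finite sets and is therefore countable, hence never cofinal. So for general $\mathcal{I}$ the transfinite induction never gets started. Patching this requires a genuinely different device (e.g.\ induction on the cardinality of the index, rewriting the colimit as a transfinite composition of smaller sub-colimits), and then you must check that the new transition maps are again pure monomorphisms with $K$-flat cokernel --- their cokernels are themselves filtered colimits of the given cokernels along pure monomorphisms, so you would be invoking exactly the proposition you are trying to prove. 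Note also that the paper's weak $\textbf{PureMon}$-elementarity hypothesis is only formulated for ordinal-indexed diagrams, so your limit-ordinal step is fine as far as it goes, which makes the missing reduction the essential obstruction.

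There is a second gap at the base case: you set $B_{0}=0$ and assert that $0\rightarrow B_{\alpha_{0}}$ is a pure monomorphism with $K$-flat cokernel, i.e.\ that $B_{\alpha_{0}}$ itself is $K$-flat; but the stated hypotheses only control the cokernels of the transition maps and say nothing about the objects $B_{i}$ themselves, so this is an assertion, not a deduction. (For comparison, the paper's proof is not an induction at all: it commutes $\otimes^{\mathbb{L}}$ with the filtered colimit and replaces each $Id_{B_{i}}\otimes^{\mathbb{L}}f$ by $Id_{B_{i}}\otimes f$, which likewise uses $K$-flatness of each $B_{i}$; in the applications this input is available because the filtration starts at a $K$-flat object, and then your successor-step extension argument propagates it.) A smaller point: ``the long exact sequence in homology'' needs care in an exact category, where homology objects need not exist; what you actually need is the two-out-of-three property for acyclicity along a degreewise admissible short exact sequence of complexes, which is available (e.g.\ via B\"uhler's long exact sequence), so that part of your successor step is repairable --- unlike the cofinality reduction.
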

%
%

\begin{proof}
Let $f:M\rightarrow N$ be an equivalence in $\mathpzc{M}$. Then 
$$Id_{B}\otimes^{\mathbb{L}} f\cong lim_{\rightarrow_{\mathcal{I}}}(Id_{B_{i}}\otimes^{\mathbb{L}}) f\cong lim_{\rightarrow_{\mathcal{I}}}Id_{B_{i}}\otimes^{\mathbb{L}} f\cong lim_{\rightarrow_{\mathcal{I}}}Id_{B_{i}}\otimes f\cong Id_{B}\otimes f$$
\end{proof}

\begin{cor}\label{sullivanKflat}
Let $\mathpzc{M}$ be a $\Q$-Koszul category, and let $f:A\rightarrow B$ be in $Sull_{\mathfrak{Comm}}(\mathpzc{M};\mathfrak{G})$, where $\{G_{i}\}_{i\in\mathcal{I}}$ consists of flat objects. Then $f$ is $K$-flat.
\end{cor}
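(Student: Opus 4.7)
The plan is to exhibit $B$, viewed as an object of ${}_{A}\mathpzc{Mod}$, as a transfinite composition of pure monomorphisms with $K$-flat cokernels, and then invoke the immediately preceding proposition. Since retracts of $K$-flat modules are $K$-flat, we may assume $f$ itself is a transfinite composition $A = B_{0} \to B_{1} \to \cdots \to B_{\lambda} \to \cdots$ with $\colim B_{\lambda} = B$, where each stage fits in a pushout square
\begin{displaymath}
\xymatrix{
\mathfrak{Comm}(V_{\lambda}) \ar[d]_{\mathfrak{Comm}(\phi_{\lambda})} \ar[r] & B_{\lambda} \ar[d] \\
\mathfrak{Comm}(W_{\lambda}) \ar[r] & B_{\lambda+1}
}
\end{displaymath}
for some $\phi_{\lambda} : V_{\lambda} \to W_{\lambda}$ in $preCell(R;\mathfrak{G})$. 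The goal is then to show that each $B_{\lambda} \to B_{\lambda+1}$, viewed in ${}_{A}\mathpzc{Mod}$, is a pure monomorphism with $K$-flat cokernel.

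The key observation is that every generating map $R\otimes S^{n}(G_{i}) \to R\otimes D^{n+1}(G_{i})$ and $0 \to R\otimes D^{n}(G_{i})$ is a split monomorphism of underlying graded objects with flat cokernel. Since $preCell(R;\mathfrak{G})$ is closed under pushouts and transfinite composition of such, each $\phi_{\lambda}$ is a pure monomorphism with cokernel $C_{\lambda}$ that is an $(\aleph_{0};\textbf{PureMon})$-extension of bounded-below complexes of flat objects; in particular, by Proposition \ref{boundedKflat}, $C_{\lambda}$ is $K$-flat, and the sequence $0 \to V_{\lambda} \to W_{\lambda} \to C_{\lambda} \to 0$ splits as a sequence of graded objects. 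In characteristic $0$ (which we have, since $\mathpzc{M}$ is $\Q$-Koszul) the symmetric algebra functor carries direct sums of graded objects to tensor products, so
\begin{equation*}
\mathfrak{Comm}(W_{\lambda}) \;\cong\; \mathfrak{Comm}(V_{\lambda}) \otimes \mathfrak{Comm}(C_{\lambda})
\end{equation*}
as $\mathfrak{Comm}(V_{\lambda})$-module graded objects, whence the pushout $B_{\lambda+1}$ identifies with $B_{\lambda} \otimes \mathfrak{Comm}(C_{\lambda})$ as a graded $B_{\lambda}$-module, and in particular as a graded $A$-module. Splitting off the unit summand gives a split short exact sequence in underlying graded $A$-modules
\begin{equation*}
0 \;\to\; B_{\lambda} \;\to\; B_{\lambda+1} \;\to\; B_{\lambda} \otimes \overline{\mathfrak{Comm}}(C_{\lambda}) \;\to\; 0,
\end{equation*}
and purity of this sequence in ${}_{A}\mathpzc{Mod}$ follows, via Proposition \ref{3pure}, from its degreewise splitness together with the flatness of its terms.

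It remains to verify that the cokernel $B_{\lambda} \otimes \overline{\mathfrak{Comm}}(C_{\lambda})$ is $K$-flat as an $A$-module. Inductively $B_{\lambda}$ is $K$-flat over $A$ (with $B_{0} = A$), and in characteristic $0$ each symmetric power $(C_{\lambda}^{\otimes n})_{\Sigma_{n}}$ is a retract of the tensor power $C_{\lambda}^{\otimes n}$, which is $K$-flat because $C_{\lambda}$ is; assembling these direct summands, $\overline{\mathfrak{Comm}}(C_{\lambda})$ is itself $K$-flat (the direct sum being a filtered colimit of bounded sums, hence preserving $K$-flatness along pure inclusions via Proposition \ref{boundedKflat} applied one more time). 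Hence $B_{\lambda+1}$ is $K$-flat over $A$, and applying the preceding proposition to the filtered colimit $B = \colim B_{\lambda}$ yields $K$-flatness of $B$ over $A$. The main technical hurdle is the bookkeeping in the second paragraph: verifying that the graded-object splitting $\mathfrak{Comm}(W_{\lambda}) \cong \mathfrak{Comm}(V_{\lambda}) \otimes \mathfrak{Comm}(C_{\lambda})$ is compatible with the pushout computation in ${}_{A}\mathpzc{Mod}$ (rather than merely in graded objects), and that the resulting admissible monomorphisms in ${}_{A}\mathpzc{Mod}$ are actually pure in the exact-categorical sense required by the preceding proposition.
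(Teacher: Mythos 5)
Your overall strategy (reduce to the cellular description, show each attaching step is a pure monomorphism in ${}_{A}\mathpzc{Mod}$ with $K$-flat cokernel, then apply the preceding proposition to the transfinite colimit) is close in spirit to the paper's argument, but there is a genuine gap at the key step, and it sits exactly at the "technical hurdle" you flag at the end. The isomorphism $B_{\lambda+1}\cong B_{\lambda}\otimes\mathfrak{Comm}(C_{\lambda})$, and hence the identification of the cokernel with $B_{\lambda}\otimes\overline{\mathfrak{Comm}}(C_{\lambda})$, holds only at the level of underlying graded objects. The splitting $W_{\lambda}\cong V_{\lambda}\oplus C_{\lambda}$ is not a splitting of complexes: already for the generating cells $R\otimes S^{n}(G_{i})\rightarrow R\otimes D^{n+1}(G_{i})$ the differential has a nonzero component $\sigma:C_{\lambda}\rightarrow V_{\lambda}[-1]$ (that is the whole point of attaching a cell). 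Consequently the induced differential on the quotient $B_{\lambda+1}\big\slash B_{\lambda}$ is a \emph{twisted} differential: on a word $b\otimes c_{1}\cdots c_{n}$ with $n\ge 2$ it produces terms $b\,\tilde{\sigma}(c_{i})\otimes c_{1}\cdots\widehat{c_{i}}\cdots c_{n}$ which survive in the quotient, so the cokernel is not the tensor product complex $B_{\lambda}\otimes\overline{\mathfrak{Comm}}(C_{\lambda})$. Since $K$-flatness is a property of the complex and is not detected on the underlying graded object (an unbounded acyclic complex of flats need not be $K$-flat), the inference "both tensor factors are $K$-flat, hence the cokernel is $K$-flat" is unjustified as written.

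The missing ingredient is precisely the device the paper uses: it writes $B$ (as an $A$-module, up to retract) as a single twisted algebra $A\otimes_{\alpha}S(V)$ and filters it by symmetric-power word length. Because the twisting strictly lowers that word length, the associated graded carries the \emph{untwisted} differential and is $A\otimes S(V)$, free on a $K$-flat object, hence $K$-flat over $A$; the filtration steps are graded-split, hence pure, and the preceding proposition then gives $K$-flatness of the colimit. You can repair your stage-by-stage argument the same way: filter each cokernel (or each $B_{\lambda+1}$) by word length in $C_{\lambda}$, observe that the correction terms lower the filtration so the associated graded pieces are the honest tensor products $B_{\lambda}\otimes Sym^{n}(C_{\lambda})$, and only then invoke $K$-flatness of the factors together with the filtered-colimit proposition. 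With that insertion your route works; without it, the central claim of your second and third paragraphs does not follow. (Two smaller points: Proposition \ref{boundedKflat} as stated covers countable extensions, so for $\phi_{\lambda}$ of arbitrary transfinite length you should instead quote the Koszul-category axiom that cofibrations are pure monomorphisms with $K$-flat cokernel; and the purity of a degreewise split exact sequence needs no flatness input, so the appeal to Proposition \ref{3pure} is superfluous.)
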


\begin{proof}
As an $A$-module, $B$ is a retract of $A\otimes_{\alpha}S(V)$ for some $K$-flat complex $V$. Without loss of generality we may assume that $B\cong A\otimes_{\alpha}S(V)$. Endow $A\otimes_{\alpha}S(V)$ with the filtration induced by the grading on $S(V)$ by $n$-th symmetric powers. This is a filtration by pure monomorphisms in the category ${}_{A}\mathpzc{Mod}(\mathpzc{M})$. The associated graded module is $A\otimes S(V)$. Since $V$ is $K$-flat $S(V)$ is as well. Therefore $A\otimes S(V)$ is free on a $K$-flat object of $\mathpzc{M}$, and so is a $K$-flat object of ${}_{A}\mathpzc{Mod}(\mathpzc{M})$. This suffices to prove the claim.
\end{proof}

\begin{cor}
If $\mathpzc{M}$ is monoidal elementary $\Q$-Koszul then cofibrations in $\mathpzc{Alg}_{\mathfrak{Comm}}(\mathpzc{M})$ are $K$-flat maps. 
 \end{cor}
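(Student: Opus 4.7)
The plan is to combine Corollary \ref{sullivanKflat} with an explicit description of generating cofibrations in $\mathpzc{Alg}_{\mathfrak{Comm}}(\mathpzc{M})$. Because $\mathpzc{M}$ is $\Q$-enriched and combinatorial and satisfies the monoid axiom, $\mathfrak{Comm}$ is admissible and the transferred model structure on $\mathpzc{Alg}_{\mathfrak{Comm}}(\mathpzc{M})$ is cofibrantly generated, with generating cofibrations of the form $\mathfrak{Comm}(h)$ as $h$ ranges over the generating cofibrations of $\mathpzc{M}$. Hence every cofibration $f\colon A\to B$ in $\mathpzc{Alg}_{\mathfrak{Comm}}(\mathpzc{M})$ is a retract of a transfinite composition of pushouts of maps of the form $\mathfrak{Comm}(R\otimes g)$ where $g$ lies in the cofibrancy data $\mathfrak{G}$ of the underlying pre-Koszul category.

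First I would invoke the projective hypothesis: the collection $\{f_l\}_{l\in\mathcal{L}}$ in Definition \ref{pre-Koszul} is empty, so the cofibrancy data reduces to
$$\mathfrak{G}=\{S^n(G_i)\to D^{n+1}(G_i)\}_{n\in\Z,\,i\in\mathcal{I}}\cup\{0\to D^n(G_i)\}_{n\in\Z,\,i\in\mathcal{I}},$$
and in the elementary/projective setting the generators $\{G_i\}$ can be taken to be flat. Every map in $\mathfrak{G}$ tautologically lies in $preCell(R;\mathfrak{G})$, and so the displayed retract presentation places $f$ in the class $Sull_{\mathfrak{Comm}}(\mathpzc{M};\mathfrak{G})$.

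Next, Corollary \ref{sullivanKflat} applied with flat generators $\{G_i\}$ shows that every map in $Sull_{\mathfrak{Comm}}(\mathpzc{M};\mathfrak{G})$ is $K$-flat; since the derived tensor product commutes with retracts, $K$-flatness is preserved under retracts, and hence $f$ itself is a $K$-flat map of commutative algebras. The main obstacle, if any, is purely bookkeeping: one must confirm that when $\mathpzc{M}={}_R\mathpzc{Mod}(Ch(\mathpzc{E}))$ inherits the transferred structure, the generating cofibrations of $\mathpzc{Alg}_{\mathfrak{Comm}}(\mathpzc{M})$ really do arise from maps already in $preCell(R;\mathfrak{G})$ under the projective hypothesis, and that the flat generators $\{G_i\}$ survive the tensoring-with-$R$ step so that Corollary \ref{sullivanKflat} applies verbatim. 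Once this compatibility is checked, the proof reduces to a direct invocation of the two preceding corollaries.
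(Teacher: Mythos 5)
Your argument is correct and is essentially the paper's intended one: since the paper gives no separate proof, the corollary is meant to follow exactly as you argue, by observing that in the projective case ($\{f_{l}\}_{l\in\mathcal{L}}=\emptyset$) every cofibration of commutative algebras is a retract of a map in $Sull_{\mathfrak{Comm}}(\mathpzc{M};\mathfrak{G})$, and then invoking Corollary \ref{sullivanKflat} together with closure of $K$-flatness under retracts. Your bookkeeping caveat about the generators being flat after tensoring with $R$ is the same implicit assumption the paper makes in the projective (elementary) setting, so nothing further is missing.
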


%
%
The utility of this corollary is that in projective $\Q$-Koszul categories we can compute cotangent complexes very easily.

\begin{prop}\label{cotangentqfree}
Let $\mathpzc{M}$ be a projective $\Q$-Koszul category. Let $A$ be a $\mathfrak{G}$-Sullivan model where $\mathfrak{G}$ consists of flat objects. Then $\mathbb{L}_{0}(A)\cong I\big\slash I^{2}$.
\end{prop}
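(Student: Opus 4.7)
The plan is to deduce this directly from Proposition \ref{maximalideal}, which asserts that for any cofibrant augmented commutative algebra $A\to k$ one has $\mathbb{L}_0(A) \cong \textrm{coker}(I\otimes I \to I) = I/I^2$. Thus the entire task reduces to verifying that a $\mathfrak{G}$-Sullivan model, equipped with its augmentation to $k$, is cofibrant as an augmented commutative algebra.

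For this I would argue as follows. The initial object of $\mathpzc{Alg}_{\mathfrak{Comm}}(\mathpzc{M})$ is the monoidal unit $k$, realised as the free commutative algebra on $0$. Consequently the slice category $\mathpzc{Alg}_{\mathfrak{Comm}}^{aug} = \mathpzc{Alg}_{\mathfrak{Comm}}(\mathpzc{M})/k$ has initial object $(k,\textrm{id}_k)$, and in its inherited model structure an augmented algebra $(A,\epsilon)$ is cofibrant precisely when the unit map $k\to A$ is a cofibration in $\mathpzc{Alg}_{\mathfrak{Comm}}(\mathpzc{M})$. To say that $A$ is a $\mathfrak{G}$-Sullivan model is precisely to say that this map lies in $Sull_{\mathfrak{Comm}}(\mathpzc{M};\mathfrak{G})$, and hence is a cofibration. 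Proposition \ref{maximalideal} then applies verbatim and yields the claim.

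The hypotheses that $\mathpzc{M}$ is projective $\Q$-Koszul and that $\mathfrak{G}$ consists of flat objects serve only to legitimise the framework: the $\Q$-enrichment forces admissibility of $\mathfrak{Comm}$, so the transferred model structure on commutative algebras exists, while Corollary \ref{sullivanKflat} guarantees that $A$ is $K$-flat over $k$. This flatness is what underwrites the step ``we may assume everything is underived'' in the proof of Proposition \ref{maximalideal}: the relevant derived and underived tensor products agree, so the literal cokernel $I/I^2$ genuinely presents the derived invariant $\mathbb{L}_0(A)$. No real obstacle arises; the main conceptual point is just the compatibility of cofibrancy between the unital and augmented categories, which is essentially formal.
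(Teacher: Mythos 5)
Your reduction hinges on the claim that a $\mathfrak{G}$-Sullivan model is cofibrant in the transferred model structure on $\mathpzc{Alg}_{\mathfrak{Comm}}(\mathpzc{M})$, and that is where the argument breaks. In the section on standard cofibrations, $\mathfrak{G}$ is an \emph{arbitrary} collection of objects $\{G_{i}\}$ (here only required to be flat), not the cofibrancy data of the pre-Koszul structure; maps in $preCell(R;\mathfrak{G})$, and hence in $Sull_{\mathfrak{Comm}}(\mathpzc{M};\mathfrak{G})$, are model-categorical cofibrations only when $\mathfrak{G}$ is the generating data of Definition \ref{pre-Koszul} (this is exactly the distinction drawn in Lemma \ref{standard} between $Sull_{\mathfrak{P}}(\mathpzc{M};\mathfrak{G})$ and $Cof_{\mathfrak{P}}(\mathpzc{M};\mathfrak{G})$). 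In a projective Koszul category the generators are projective-type objects, and flat objects are in general not cofibrant (think of $\Q$ over $\Z$, or of duals of nuclear Fr\'{e}chet spaces in the bornological setting), so the unit map $k\rightarrow A$ of a $\mathfrak{G}$-Sullivan model on flat cells need not be a cofibration and Proposition \ref{maximalideal} does not apply verbatim. Indeed, if the statement were only about cofibrant algebras it would be an immediate corollary of Proposition \ref{maximalideal}; the entire point of Proposition \ref{cotangentqfree} — and the reason it is later usable for $C_{\kappa}(\mathfrak{g})$, which is quasi-free on the typically non-cofibrant object $\mathfrak{g}^{\vee}[1]$ — is that flatness of the cells suffices. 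Your remark that $K$-flatness of $A$ ``underwrites'' the underived computation in Proposition \ref{maximalideal} also conflates two different things: that proof uses cofibrancy of $A$ as an algebra, not $K$-flatness of its underlying complex.

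What the missing content actually requires (and what the paper supplies) is an induction over the cell structure of the Sullivan model: for a free algebra $S(V)$ on a $K$-flat $V$ one chooses a cofibrant resolution $W\rightarrow V$, uses the $\Q$-enrichment to see $S(W)\rightarrow S(V)$ is a weak equivalence, and gets $\mathbb{L}_{0}(S(V))\cong V$; for a cell attachment along $S(X)\rightarrow S(Y)$ one uses Corollary \ref{sullivanKflat} to know this map is $K$-flat, so the strict pushout is a homotopy pushout, and then compares the homotopy-pushout behaviour of $\mathbb{L}_{0}$ with the fact that $A\mapsto coker(I\otimes I\rightarrow I)$ is a $1$-categorical left adjoint, the two agreeing because the pushout along the admissible monomorphism $X\rightarrow Y$ is already homotopical; finally one passes to transfinite compositions using exactness of filtered colimits along pure monomorphisms. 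None of these steps is visible in your proposal, so as it stands the proof has a genuine gap rather than being a shortcut.
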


\begin{proof}
We break the proof into several steps.
First suppose that $A\cong S(V)$ is free on a $K$-flat complex $V$. Pick a cofibrant resolution $W\rightarrow V$ of $V$. Then $S(W)\rightarrow S(V)$ is a weak equivalence. Thus $\mathbb{L}_{0}(S(V))\cong\mathbb{L}_{0}(S(W))\cong W\cong V$. 


Now let $A\in\mathpzc{Alg}_{\mathfrak{Comm}}(\mathpzc{M})$ and suppose that $\mathbb{L}_{0}(A)\cong V$
 Let $f:X\rightarrow Y$ be a map in $preCell(\mathfrak{G})$. 
 Consider the pushout
\begin{displaymath}
\xymatrix{
S(X)\ar[d]\ar[r] & A\ar[d]\\
S(Y)\ar[r] & A'
}
\end{displaymath}
Since $S(X)\rightarrow S(Y)$ is a $K$-flat map by Proposition \ref{sullivanKflat} the diagram above is a homotopy pushout. Therefore the diagram
\begin{displaymath}
\xymatrix{
X\ar[d]\ar[r] & I\big\slash I^{2}\ar[d]\\
Y\ar[r] & \mathbb{L}_{0}(A')
}
\end{displaymath}
is a homotopy pushout diagram in $\mathpzc{M}$. Since $X\rightarrow Y$ is an admissible monomorphism, this is in fact just presented by the normal pushout diagram by Proposition 4.2.45 in \cite{kelly2016homotopy}. The functor $(A\rightarrow k)\mapsto coker(I\otimes I\rightarrow I)$ is a left adjoint so commutes with colimits. Therefore $\mathbb{L}_{0}(A')\cong I'\big\slash I'^{2}$. 
Now let $\lambda$ be an ordinal, and let $F:\lambda\rightarrow\mathpzc{Alg}_{\mathfrak{Comm}}(\mathpzc{M})$ be a functor such that each algebra $A_{j}\defeq F(j)$ satisfies $\mathbb{L}_{0}(A_{j})\cong I_{j}\big\slash I_{j}^{2}$, and each map $A_{i}\rightarrow A_{j}$ is a pure monomorphism in $\mathpzc{M}$. The functor $lim_{\rightarrow}:\mathpzc{Fun}_{\textbf{PureMon}}(\lambda,\mathpzc{Alg}_{\mathfrak{Comm}})\rightarrow\mathpzc{Alg}_{\mathfrak{Comm}}$ is exact and commutes with colimits. Therefore 
$$\mathbb{L}_{0}(lim_{\rightarrow}A_{j})\cong lim_{\rightarrow}\mathbb{L}_{0}(A_{j})\cong lim_{\rightarrow}I_{j}\big\slash I_{j}^{2}\cong I\big\slash I^{2}$$
where $I$ is the augmentation ideal of $A$. The last step follows since $\otimes$ commutes with colimits, and because the map $lim_{\rightarrow}I_{j}\rightarrow I$ is an isomorphism, which follows from Proposition \ref{3pure} and the fact that the functor $lim_{\rightarrow}:\mathpzc{Fun}_{\textbf{PureMon}}(\lambda,\mathpzc{M})\rightarrow\mathpzc{M}$ is exact. 
\end{proof}



%
\subsubsection{Formal Completions and Cotangent Complexes}
Let $A\in\mathpzc{Alg}_{\mathfrak{Comm}}(\mathpzc{M})$ be $\mathfrak{G}$-triangulated, $A=lim_{\rightarrow_{n\ge 0}}S(V_{n})$. There is a morphism $S(V_{0})\rightarrow A$ in $\mathpzc{Alg}_{\mathfrak{Comm}}(\mathpzc{M})$. Consider the commutative monoid $\hat{S}(V_{0})\defeq\prod_{n=0}^{\infty}Sym^{n}(V_{0})$. There is a morphism of $R$-modules $S(V_{0})\rightarrow\hat{S}(V_{0})$. We define the algebra $\hat{A}$ to be the pushout $\hat{A}\defeq\hat{S}(V_{0})\otimes_{S(V_{0})} A$. Note that in general this construction depends on the presentation of $A$ as a $\mathfrak{G}$-triangulated object. 

\begin{defn}
 Let $\mathpzc{M}={}_{R}\mathpzc{Mod}(Ch(\mathpzc{E}))$ be $\Q$-Koszul category. We say that an object $V$ of $\mathpzc{M}$ is \textbf{decent} if there is an equivalence
$$\mathbb{L}_{\hat{S}(V)\big\slash S(V)}\otimes^{\mathbb{L}}_{\hat{S}(V)}R\cong 0$$
in $\mathpzc{M}$.
\end{defn}

\begin{prop}
Let $\mathpzc{M}$ be a projective $\Q$-Koszul category , and let $A$ be a $\mathfrak{G}$-triangulated commutative algebra, where $\mathfrak{G}$ consists of flat objects. If $V_{0}$ is decent then the map $\mathbb{L}_{0}(A)\rightarrow\mathbb{L}_{0}(\hat{A})$ is an equivalence.
\end{prop}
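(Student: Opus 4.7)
The plan is to establish the equivalence by producing a transitivity cofiber sequence whose left-hand term vanishes under the decency hypothesis. Concretely, I will apply Proposition \ref{cotangentfacts}(1) to the chain $A \to \hat A \to R$ (the second map being the augmentation), which yields a homotopy cofiber sequence
$$\mathbb{L}_{\hat A / A} \otimes^{\mathbb L}_{\hat A} R \;\to\; \mathbb{L}_{R/A} \;\to\; \mathbb{L}_{R/\hat A},$$
and then show that the left-hand term is zero. Shifting by $[1]$ and using the identification $\mathbb{L}_0(-)\cong\mathbb{L}_{R/-}[1]$ from Section \ref{seccotangentcomplex} then recovers the desired equivalence $\mathbb{L}_0(A)\simeq\mathbb{L}_0(\hat A)$.

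To compute $\mathbb{L}_{\hat A/A}\otimes^{\mathbb L}_{\hat A} R$, I would use the base-change formula of Proposition \ref{cotangentfacts}(2) applied to the square
$$\xymatrix{ S(V_0) \ar[r]\ar[d] & \hat S(V_0) \ar[d] \\ A \ar[r] & \hat A, }$$
provided this square is actually a \emph{homotopy} pushout. Since $A$ is $\mathfrak{G}$-triangulated, inspecting the proof of Corollary \ref{triangquasifree} exhibits the inclusion $S(V_0)=A_0\hookrightarrow A$ as a transfinite composition of pushouts of maps in $Sull_{\mathfrak{Comm}}(\mathpzc{M};\mathfrak{G})$, and hence as a cofibration in $\mathpzc{Alg}_{\mathfrak{Comm}}(\mathpzc M)$. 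Corollary \ref{sullivanKflat} then ensures this cofibration is $K$-flat, so Proposition \ref{htpypushoutKflatmodel} guarantees that the ordinary tensor pushout $\hat A \cong \hat S(V_0)\otimes_{S(V_0)} A$ models the homotopy pushout. Proposition \ref{cotangentfacts}(2) therefore yields the base-change identification
$$\mathbb{L}_{\hat A / A} \;\simeq\; \mathbb{L}_{\hat S(V_0)/S(V_0)} \otimes^{\mathbb L}_{\hat S(V_0)} \hat A.$$

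Tensoring this equivalence over $\hat A$ with $R$ collapses $\hat A$ away on the right, leaving
$$\mathbb{L}_{\hat A / A} \otimes^{\mathbb L}_{\hat A} R \;\simeq\; \mathbb{L}_{\hat S(V_0)/S(V_0)} \otimes^{\mathbb L}_{\hat S(V_0)} R,$$
and this last object vanishes by the very definition of decency for $V_0$. Substituting into the transitivity sequence shows that the cofiber of $\mathbb{L}_{R/A}\to\mathbb{L}_{R/\hat A}$ is zero, so the map is an equivalence; the $[1]$-shift then gives the claim. The main obstacle is verifying that the square defining $\hat A$ is a homotopy pushout: this is precisely where the $\mathfrak{G}$-triangulated hypothesis (producing cofibrancy of $S(V_0)\to A$) and the projective $\Q$-Koszul hypothesis on $\mathpzc M$ (feeding into Corollary \ref{sullivanKflat} to give $K$-flatness) come in. Once that step is secured, the rest of the argument is purely formal manipulation within the cotangent-complex calculus available in an HA context.
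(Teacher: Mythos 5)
Your proposal is correct and follows essentially the same route as the paper: the transitivity cofiber sequence for $A\rightarrow\hat{A}\rightarrow R$, base change along the pushout square defining $\hat{A}$, and the decency hypothesis to kill $\mathbb{L}_{\hat{S}(V_{0})\big\slash S(V_{0})}\otimes^{\mathbb{L}}_{\hat{S}(V_{0})}R$. The only difference is cosmetic: where the paper cites Corollary \ref{completionhpush} for the homotopy-pushout claim, you re-derive it inline via Corollary \ref{sullivanKflat} and Proposition \ref{htpypushoutKflatmodel}, which is exactly the paper's own proof of that corollary.
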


\begin{proof}
Consider the homotopy cofibre sequence
$$\mathbb{L}_{\hat{A}\big\slash A}\otimes^{\mathbb{L}}_{\hat{A}}R\rightarrow\mathbb{L}_{0}(A)[1]\rightarrow\mathbb{L}_{0}(\hat{A})[1]$$
We claim that $\mathbb{L}_{\hat{A}\big\slash A}\otimes^{\mathbb{L}}_{\hat{A}}R\cong 0$, which would prove the result. Now using Corollary \ref{completionhpush} and Proposition \ref{cotangentfacts} gives that $\mathbb{L}_{\hat{S}(V_{0})\big\slash S(V_{0})}\otimes^{\mathbb{L}}_{\hat{S}(V)}\hat{A}\cong\mathbb{L}_{\hat{A}\big\slash A}$. Therefore 
$$\mathbb{L}_{\hat{S}(V_{0})\big\slash S(V_{0})}\otimes^{\mathbb{L}}_{\hat{S}(V)}R\cong \mathbb{L}_{\hat{A}\big\slash A}\otimes^{\mathbb{L}}_{\hat{A}}R$$ Since $V_{0}$ is decent the term on the left is equivalent to $0$.
\end{proof}

\begin{prop}\label{completionhpush}
Suppose that $\mathfrak{G}$ consists of flat objects. Then $\hat{A}$ is the homotopy pushout of the diagram
\begin{displaymath}
\xymatrix{
S(V_{0})\ar[r]\ar[d] & \hat{S}(V_{0})\\
A
}
\end{displaymath}
\end{prop}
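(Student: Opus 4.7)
The plan is to invoke Proposition \ref{htpypushoutKflatmodel}, which reduces the statement to checking that the ordinary relative tensor product $\hat{S}(V_{0})\otimes_{S(V_{0})}A$ presents the derived tensor product $\hat{S}(V_{0})\otimes^{\mathbb{L}}_{S(V_{0})}A$. By definition $\hat{A}\defeq\hat{S}(V_{0})\otimes_{S(V_{0})}A$, so once we know the derived and underived tensor products agree, Proposition \ref{htpypushoutKflatmodel} identifies $\hat{A}$ with the homotopy pushout.

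To show the two agree, it suffices to prove that the map $S(V_{0})\to A$ is $K$-flat in the sense of maps of commutative algebras, i.e.\ that $A$ is $K$-flat as an $S(V_{0})$-module. First I would note that, since $A$ is $\mathfrak{G}$-triangulated with filtration $A_{0}=S(V_{0}),A_{1},A_{2},\dots$ and $A=\lim_{\rightarrow}A_{n}$, the proof of Corollary \ref{triangquasifree} exhibits each stage $A_{n}\to A_{n+1}$ as a pushout of a map in $Sull_{\mathfrak{Comm}}(\mathpzc{M};\mathfrak{G})$ (the attaching being by $\alpha_{n+1}\colon V_{n+1}\to A_{n}$). Thus $S(V_{0})\to A$ is itself a transfinite composition of maps in $Sull_{\mathfrak{Comm}}(\mathpzc{M};\mathfrak{G})$, hence lies in this class.

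Since $\mathfrak{G}$ consists of flat objects, Corollary \ref{sullivanKflat} applies and shows that $S(V_{0})\to A$ is a $K$-flat map of commutative algebras. Consequently $\hat{S}(V_{0})\otimes^{\mathbb{L}}_{S(V_{0})}A\to\hat{S}(V_{0})\otimes_{S(V_{0})}A=\hat{A}$ is an equivalence, and the homotopy pushout identification follows from Proposition \ref{htpypushoutKflatmodel}.

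The main subtlety, rather than difficulty, is making sure we are in the hypotheses of Proposition \ref{htpypushoutKflatmodel}: a projective $\Q$-Koszul category satisfies them, since by the corollary following Proposition \ref{cotangentqfree} cofibrations in $\mathpzc{Alg}_{\mathfrak{Comm}}(\mathpzc{M})$ are $K$-flat maps, so we may compute the derived tensor product either by a cofibrant replacement of $S(V_{0})\to A$ or directly, as above, using that $S(V_{0})\to A$ is already $K$-flat. No cofibrant replacement of $\hat{S}(V_{0})$ is needed because $K$-flatness of one leg of the pushout diagram suffices to underive the tensor product.
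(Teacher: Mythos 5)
Your proposal is correct and follows essentially the same route as the paper: the paper's proof simply observes that $S(V_{0})\rightarrow A$ lies in $Sull(\mathpzc{M};\mathfrak{G})$, hence is $K$-flat by Corollary \ref{sullivanKflat}, and then applies Proposition \ref{htpypushoutKflatmodel}. Your additional unpacking of why the map is a (transfinite composition of) Sullivan extension(s), via the $\mathfrak{G}$-triangulated filtration as in Corollary \ref{triangquasifree}, just makes explicit what the paper calls ``clear.''
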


\begin{proof}
The map $S(V_{0})\rightarrow A$ is clearly in $Sull(\mathpzc{M};\mathfrak{G})$ and is therefore $K$-flat. By Proposition \ref{htpypushoutKflatmodel} the claim follows.
\end{proof}

Let us give our main  example of this construction, suppose that $\mathpzc{M}={}_{R}\mathpzc{Mod}(Ch(\mathpzc{E}))$ where $R$ is non-negatively graded. Let $A$ be quasi-free on an object of the form $R\otimes V$, where $V$ is a $\mathfrak{G}$-non-negatively graded complex in $Ch(\mathpzc{E})$ such that each $V_{n}$ is in $\mathfrak{G}$. $A$ is $\mathfrak{G}$-non-negatively graded. After forgetting differentials, the underlying algebra of $A$ is 
$$S_{R}(R\otimes V)\cong R\otimes S(V)\cong R\otimes S(V_{0})\otimes S(V\big\slash V_{0})$$
where $S_{R}(R\otimes V)$ denotes the free commutative algebra taken in ${}_{R}\mathpzc{Mod}(Ch(\mathpzc{E}))$. Then after forgetting differentials 
$$\hat{A}\cong \hat{S}_{R}(R\otimes V_{0})\otimes S(V\big\slash V_{0})$$
We claim that under certain circumstances this is isomorphic to $\hat{S}_{R}(R\otimes V)$. 

\subsubsection{$\aleph_{1}$-Filtered Objects}

\begin{defn}\label{defn:alephfiltered}
Let $\mathpzc{M}$ be a complete monoidal category and let $\mathfrak{O}$ be a class of objects in $\mathpzc{M}$. An object $V$ of  $\mathpzc{M}$ is said to be \textbf{formally} $\aleph_{1}$-\textbf{filtered relative to} $\mathfrak{O}$ if the canonical map $V\otimes\prod_{n=1}^{\infty}(W_{n})\rightarrow \prod_{n=1}^{\infty}(V\otimes W_{N})$ is an isomorphism for any countable collection $\{W_{n}\}$ of objects of $\mathfrak{O}$. If $\mathfrak{O}=Ob(\mathpzc{M})$ then $V$ is said to be \textbf{weakly }$\aleph_{1}$-\textbf{filtered}.
\end{defn}

The terminology above is inspired by \cite{ben2020fr} Section 5.

The following is clear:

\begin{prop}
Let $\mathfrak{O}$ be a class of objects in a complete additive monoidal category $\mathpzc{M}$.
\begin{enumerate}
\item
If $W$ is a summand of an object $V$ which is formally $\aleph_{1}$-filtered relative to $\mathfrak{O}$ then $W$ is formally $\aleph_{1}$-filtered relative to $\mathfrak{O}$.
\item
If $V$ is formally $\aleph_{1}$-filtered relative to $\mathfrak{O}$, and $W$ is formally $\aleph_{1}$-filtered relative to $V\otimes\mathfrak{O}\defeq \{V\otimes O:O\in\mathfrak{O}\}$, then $W\otimes V$ is  formally $\aleph_{1}$-filtered relative to $\mathfrak{O}$.
\end{enumerate}
\end{prop}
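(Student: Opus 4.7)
The plan is to leverage naturality of the canonical comparison map $\phi_X \colon X \otimes \prod_n W_n \to \prod_n (X \otimes W_n)$ in the variable $X$. For part (1), I would fix a splitting $i \colon W \to V$, $p \colon V \to W$ with $p \circ i = \mathrm{id}_W$, and use functoriality of $\phi$ in its first argument to exhibit $\phi_W$ as a retract of $\phi_V$. Explicitly, naturality yields the two squares $\prod_n(p \otimes \mathrm{id}) \circ \phi_V = \phi_W \circ (p \otimes \mathrm{id})$ and $\phi_V \circ (i \otimes \mathrm{id}) = \prod_n(i \otimes \mathrm{id}) \circ \phi_W$; combining these with $p \circ i = \mathrm{id}_W$ shows that the map $(p \otimes \mathrm{id}) \circ \phi_V^{-1} \circ \prod_n(i \otimes \mathrm{id})$ is a two-sided inverse to $\phi_W$. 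Since a retract of an isomorphism is an isomorphism, this proves (1).

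For part (2), I would chain the two hypotheses through the associator of $\otimes$. Given a countable family $\{O_n\} \subset \mathfrak{O}$, the canonical comparison $(W \otimes V) \otimes \prod_n O_n \to \prod_n ((W \otimes V) \otimes O_n)$ factors, up to associators, as the composite
\[
W \otimes \Bigl(V \otimes \prod_n O_n\Bigr) \xrightarrow{\mathrm{id}_W \otimes \phi_V} W \otimes \prod_n (V \otimes O_n) \xrightarrow{\phi_W} \prod_n \bigl(W \otimes (V \otimes O_n)\bigr),
\]
where the first arrow is an isomorphism because $V$ is weakly $\aleph_1$-filtered relative to $\mathfrak{O}$, and the second is an isomorphism because $W$ is weakly $\aleph_1$-filtered relative to $V \otimes \mathfrak{O} = \{V \otimes O : O \in \mathfrak{O}\}$ applied to the family $\{V \otimes O_n\}$. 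The composite, and hence the original comparison map, is therefore an isomorphism.

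The only point requiring care in part (2) is checking that the displayed composite genuinely coincides with the canonical comparison $\phi_{W\otimes V}$ for the family $\{O_n\}$: this is a short coherence diagram chase using naturality of $\phi$ in both entries together with Mac Lane coherence for the associator. Neither part presents any substantive obstacle, consistent with the author's framing of the result as clear.
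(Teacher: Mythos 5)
Your proof is correct and supplies exactly the routine argument the paper omits (the proposition is stated there with "The following is clear" and no proof): the retract-of-an-isomorphism argument via naturality of the comparison map in its first variable for (1), and the factorization of the comparison map for $W\otimes V$ through $\mathrm{id}_W\otimes\phi_V$ and $\phi_W$ applied to the family $\{V\otimes O_n\}$, up to associators, for (2). Both verifications (the two-sided inverse in (1) and the coherence check in (2), done by composing with the product projections) go through as you describe, so there is nothing to add.
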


\begin{example}\label{aleph1examples}
\begin{enumerate}
\item
If $\mathpzc{M}={}_{R}\mathpzc{Mod}$ for $R$ a ring, then any $\aleph_{1}$-filtered colimit of finitely presented $R$-modules is formally $\aleph_{1}$-filtered. 
\item
If $\mathpzc{M}={}_{R}\mathpzc{Mod}(Ind(Ban_{R}))$ or $\mathpzc{M}={}_{R}\mathpzc{Mod}(CBorn_{R}))$ for $R$ a Banach ring, then Corollary 5.13 in \cite{ben2020fr} says that any bornological Fr\'{e}chet space is formally $\aleph_{1}$-filtered.  
\end{enumerate}
\end{example}

\begin{defn}
Let $\mathpzc{M}={}_{R}\mathpzc{Mod}(Ch\mathpzc{E})$. For an object $V$ of $Ch(\mathpzc{E})$ we denote by $R\otimes V^{\otimes}\otimes V^{sym}$ the collection of objects in $\mathpzc{M}$: $\{R_{i}\otimes (V^{\otimes m})_{j}\otimes (Sym^{n}(V))_{k}:m,n\in\mathbb{N},i,j,k\in\mathbb{Z}\}$ 
\end{defn}


\begin{prop}\label{completiontwoways}
Let $A\in\mathpzc{Alg}_{\mathfrak{Comm}}(\mathpzc{M})$ be quasi-free on an object $V=\bigoplus_{n=0}^{\infty}V_{i}[i]\in\underline{Gr}_{\mathbb{N}_{0}}(\mathpzc{E})$. Consider the presentation of $A$ as a $\mathfrak{G}$-triangulated algebra $A=lim_{\rightarrow}A_{n}$ where the underlying graded algebra of $A_{n}$ is $S(R\otimes\bigoplus_{i=0}^{n}V_{i})$. Suppose that for all $0\le i<\infty$ $V_{i}$ is formally $\aleph_{1}$-filtered relative to $R\otimes V^{\otimes}\otimes V^{sym}$. Then the natural map of graded objects $\hat{A}\rightarrow\hat{S}_{R}(R\otimes V_{0})\otimes S(V\big\slash V_{0})$ is an isomorphism.
\end{prop}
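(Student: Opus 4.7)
The plan is to unpack the definition of $\hat A$ and compute both sides of the natural map as graded objects in $\mathpzc M$, with the weakly $\aleph_1$-filtered hypothesis ultimately being responsible for commuting a countable product with a tensor product. Since the statement concerns underlying graded objects, I can forget differentials and identify the underlying graded algebra of $A$ with $S_R(R\otimes V_0)\otimes_R S_R(R\otimes(V/V_0))$ via the natural isomorphism $S_R(M\oplus N)\cong S_R(M)\otimes_R S_R(N)$. Unwinding the pushout defining $\hat A$ gives
$$\hat A \;\cong\; \hat S_R(R\otimes V_0)\otimes_{S_R(R\otimes V_0)} A \;\cong\; \hat S_R(R\otimes V_0)\otimes_R S_R(R\otimes(V/V_0)),$$
and distributing the tensor product over the direct sum $S(V/V_0)=\bigoplus_j \mathrm{Sym}^j(V/V_0)$ (permitted because $\otimes$ is cocontinuous) yields $\hat A\cong\bigoplus_j \hat S_R(R\otimes V_0)\otimes_R \mathrm{Sym}^j(V/V_0)$.

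The core of the argument is to identify this with $\hat S_R(R\otimes V_0)\otimes S(V/V_0)$ via the natural map, which amounts to checking on each summand that the comparison
$$\Bigl(\prod_{i\geq 0} R\otimes\mathrm{Sym}^i(V_0)\Bigr)\otimes_R \mathrm{Sym}^j(V/V_0) \;\longrightarrow\; \prod_{i\geq 0} R\otimes\mathrm{Sym}^i(V_0)\otimes_R \mathrm{Sym}^j(V/V_0)$$
is an isomorphism. This is precisely the statement that $\mathrm{Sym}^j(V/V_0)$ is weakly $\aleph_1$-filtered relative to $R\otimes V^\otimes\otimes V^{\mathrm{sym}}$. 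I would establish this by inducting on the tensor presentation of $\mathrm{Sym}^j(V/V_0)$: the hypothesis supplies the $\aleph_1$-filtered property for each $V_m$, shifts $[m]$ preserve it (since they act termwise), the multiplicative closure property of the preceding proposition propagates it through iterated tensor products $V_{m_1}[m_1]\otimes\cdots\otimes V_{m_j}[m_j]$, and finally $\mathrm{Sym}^j(V/V_0)$ is a retract of $(V/V_0)^{\otimes j}$ in the $\Q$-linear setting, so inherits the property by the same proposition.

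The main obstacle is the careful bookkeeping in this inductive argument: one must make sure that the test class against which the $\aleph_1$-filtered property is being verified is preserved at each stage, which is why the test class $R\otimes V^\otimes\otimes V^{\mathrm{sym}}$ is designed to contain both arbitrary symmetric powers of $V_0$ (needed to commute with $\hat S_R(R\otimes V_0)$) and tensor factors of $V$ itself (needed so that the inductive step, which tensors another factor of $V/V_0$ onto the working object, stays within the test class). Once this propagation is in place, assembling over $j$ identifies the two infinite-product-of-direct-sum expressions in each homological degree, and produces the claimed natural isomorphism of graded objects.
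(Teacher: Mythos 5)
Your opening paragraph already disposes of the statement as literally printed, and this is where the proposal goes astray. By definition $\hat A=\hat S_R(R\otimes V_0)\otimes_{S_R(R\otimes V_0)}A$, and since the underlying graded algebra of $A$ is $S_R(R\otimes V_0)\otimes_R S_R(R\otimes(V/V_0))$, base change alone gives $\hat A\cong\hat S_R(R\otimes V_0)\otimes_R S_R(R\otimes(V/V_0))\cong\hat S_R(R\otimes V_0)\otimes S(V/V_0)$ with no hypothesis on $V$ whatsoever; the paper records exactly this identification in the paragraph preceding the proposition. Consequently your ``core'' step is attached to the wrong identification: passing from $\bigoplus_j\hat S_R(R\otimes V_0)\otimes_R\mathrm{Sym}^j(V/V_0)$ to $\hat S_R(R\otimes V_0)\otimes S(V/V_0)$ is again just cocontinuity of $\otimes$ in the outer variable --- the countable product sitting inside $\hat S_R(R\otimes V_0)$ never has to move past anything, so weak $\aleph_1$-filteredness is not used there. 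The hypothesis is needed for what the proposition is actually after (see the sentence before it, ``We claim that under certain circumstances this is isomorphic to $\hat S_R(R\otimes V)$'', and the paper's own proof; the displayed target in the statement is a slip): the comparison of $\hat A$, equivalently of $\hat S_R(R\otimes V_0)\otimes S(V/V_0)$, with the full completion $\hat S_R(R\otimes V)=\prod_n R\otimes\mathrm{Sym}^n(V)$.

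For that comparison your displayed map --- pulling $\mathrm{Sym}^j(V/V_0)$ out of $\prod_i R\otimes\mathrm{Sym}^i(V_0)$ --- is precisely the right key step, and your third paragraph (closure of weak $\aleph_1$-filteredness under tensor products and summands, with $\mathrm{Sym}^j$ a retract of the $j$-fold tensor power by $\Q$-linearity) correctly fills in what the paper compresses into ``by the $\aleph_1$ condition''. But an intermediate step is missing and never set up in your sketch: one must first write $\hat S_R(R\otimes V)\cong\prod_n\bigoplus_{i+j=n}R\otimes\mathrm{Sym}^i(V_0)\otimes\mathrm{Sym}^j(V/V_0)$ and interchange the product over $n$ with the sum over $j$, which is legitimate because $V/V_0$ is concentrated in nonzero degrees, so $\mathrm{Sym}^j(V/V_0)$ contributes only in degrees of absolute value at least $j$ and each fixed homological degree sees finitely many $j$. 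Your closing sentence about ``identifying the two infinite-product-of-direct-sum expressions'' gestures at this, but the second expression, $\hat S_R(R\otimes V)$, never appears in your argument; as written the proposal establishes the hypothesis-free identification and then attaches the substantive comparison map to it, while the statement that genuinely requires that map is never formulated. Reorganised --- trivial identification of $\hat A$ with $\hat S_R(R\otimes V_0)\otimes S(V/V_0)$, then the decomposition and product/coproduct interchange in $\hat S_R(R\otimes V)$, then your comparison map applied summand by summand --- it becomes the paper's proof.
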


\begin{proof}
Consider the graded object.
\begin{align*}
\hat{S}_{R}(R\otimes V)&=\prod_{n=0}^{\infty}R\otimes Sym^{n}(V)\\
&\cong\prod_{n=0}^{\infty}R\otimes\bigoplus_{i+j=n}Sym^{i}(V_{0})\otimes Sym^{j}(V\big\slash V_{0})\\
&\cong\prod_{n=0}^{\infty}\bigoplus_{j=0}^{n}(R\otimes Sym^{n-j}(V_{0}))\otimes Sym^{j}(V\big\slash V_{0})
\end{align*}
This is isomorphic to
$$\bigoplus_{j=0}^{\infty}\prod_{n=j}^{\infty}((R\otimes Sym^{n-j}(V_{0}))\otimes Sym^{j}(V\big\slash V_{0}))$$
where we are very strongly using the fact that in each homological degree, there are only finitely many $j$ such that $Sym^{j}(V\big\slash V_{0}))\neq 0$.
By the $\aleph_{1}$ condition this is isomorphic to
$$\bigoplus_{j=0}^{\infty}(\prod_{n=j}^{\infty}R\otimes Sym^{n-j}(V_{0}))\otimes Sym^{j}(V\big\slash V_{0})\cong\bigoplus_{j=0}^{\infty}\hat{S}_{R}(R\otimes V_{0})\otimes Sym^{j}(V\big\slash V_{0})\cong\hat{S}_{R}(R\otimes V_{0})\otimes S(V\big\slash V_{0})$$

\end{proof}



%

\section{Cooperadic Koszul Duality}\label{seccoopkosz}


\subsection{Filtered Cooperads and Filtered Coalgebras}
The homotopy theory of coalgebras is much more involved. For the purposes of Koszul duality we need to consider filtered divided powers cooperads and filtered co-algebras over them. In many cases of interest, such as $\mathfrak{C}=\mathfrak{coComm}$ the filtration is induced by a \textbf{weight grading}, $\mathfrak{C}=\bigoplus_{n=0}^{\infty}\mathfrak{C}^{n}$. The category $\overline{\mathpzc{Filt}}_{\textbf{PureMon}}(\mathpzc{M})$ is not in general a symmetric monoidal category, but for our purposes this is not a problem. Rather we make the following definition.

\begin{defn}\label{filtcoop}
A \textbf{filtered divided powers cooperad} is an object $\mathfrak{C}=((\mathfrak{C})_{top},\gamma_{n},c_{n})$ of
 $\overline{\mathpzc{Filt}}_{{}_{\Sigma}\textbf{PureMon}}(\mathpzc{Mod}^{stdiv}_{\Sigma}(\mathpzc{M}))$ together with a divided powers cooperad structure on $(\mathfrak{C})_{top}$ such that 
\begin{enumerate}
\item
$\mathfrak{C}\circ_{ns}\mathfrak{C}$ is exhaustively and admissibly filtered and has admissibly filtered coinvariants.
\item
the maps $(\mathfrak{C})_{top}\rightarrow(\mathfrak{C})_{top}\circ(\mathfrak{C})_{top}$ and $(\mathfrak{C})_{top}\rightarrow I$ preserve filtrations, where $I$ is endowed with the trivial filtration. 
\end{enumerate}
\end{defn}
Note it follows that $\mathfrak{C}\circ\mathfrak{C}$ is also exhaustively filtered.  For posterity we will also define filtered operads.

\begin{defn}\label{filtop}
A \textbf{filtered operad} is an object $\mathfrak{P}=((\mathfrak{P})_{top},\gamma_{n},c_{n})$ of
 $\overline{\mathpzc{Filt}}_{{}_{\Sigma}\textbf{PureMon}}(\mathpzc{Mod}_{\Sigma}(\mathpzc{M}))$ together with an operad structure on $(\mathfrak{P})_{top}$ such that 
\begin{enumerate}
\item
$(\mathfrak{P})_{top}\circ_{ns}(\mathfrak{P})_{top}$ is exhaustively and admissibly filtered and has admissibly filtered coinvariants.
\item
the maps $(\mathfrak{P})_{top}\circ(\mathfrak{P})_{top}\rightarrow(\mathfrak{P})_{top}$ and $(\mathfrak{P})_{top}\rightarrow I$ are maps of filtered objects, where $I$ is endowed with the trivial filtration. 
\end{enumerate}
\end{defn}

Filtered divided powers cooperads and filtered operads obviously arrange into categories. which we denote by $\mathpzc{coOp}^{stdiv}(\overline{\mathpzc{Filt}}_{\textbf{PureMon}}(\mathpzc{M}))$ and $\mathpzc{Op}(\overline{\mathpzc{Filt}}_{\textbf{PureMon}}(\mathpzc{M}))$ respectively.

\begin{defn}\label{filtcoalg}
Let $\mathfrak{C}=((\mathfrak{C})_{top},\gamma_{n},c_{n})$ be a filtered divided powers cooperad. A conilpotent \textbf{filtered} $\mathfrak{C}$-\textbf{coalgebra} is an object $((A)_{top},\alpha_{n},a_{n})$ of $\overline{\mathpzc{Filt}}_{\textbf{PureMon}}(\mathpzc{M})$, together with a conilpotent $(\mathfrak{C})_{top}$-coalgebra structure on $(A)_{top}$ such that 
\begin{enumerate}
\item
$(\mathfrak{C})_{top}\circ_{ns} (A)_{top}$ is exhaustively and admissibly filtered and has admissible coinvariants.
\item
the map $(A)_{top}\rightarrow(\mathfrak{C})_{top}\circ (A)_{top}$ is a map of filtered objects. 
\end{enumerate}
\end{defn}
Filtrations on coalgebras are often induced by the filtration on $\mathfrak{C}$ in the following precise sense. If $C\in\mathpzc{coAlg}_{(\mathfrak{C})_{top}}(\mathpzc{M})$ then $\hat{\mathfrak{C}}(C)$ can be equipped with a canonical \textbf{induced filtration}, where $C_{n}$ is given by the following pullback

\begin{displaymath}
\xymatrix{
C_{n}\ar[d]\ar[r] & C\ar[d]^{\Delta}\\
\mathfrak{C}_{n}(C)\ar[r] & \mathfrak{C}\hat{\circ}(C)
}
\end{displaymath}
The following results are clear.
\begin{prop}
If the induced filtration is exhaustive then $C$ is conilpotent.
\end{prop}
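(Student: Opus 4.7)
The plan is to use the exhaustiveness of the induced filtration to write $C$ as a filtered colimit of the pieces $C_n$, and then to lift $\Delta_C$ through the canonical monomorphism $(\mathfrak{C})_{top}\circ C\hookrightarrow\hat{\mathfrak{C}}(C)$ one filtration level at a time.

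First I would unpack the defining pullback square for $C_n$: by construction, the composite $C_n\hookrightarrow C\xrightarrow{\Delta_C}\hat{\mathfrak{C}}(C)$ factors uniquely through a map $\Delta_n\colon C_n\to \mathfrak{C}_n(C)$, where $\mathfrak{C}_n(C)=\bigoplus_k \mathfrak{C}_n(k)\otimes_{\Sigma_k} C^{\otimes k}$. Because $\mathfrak{C}_n\hookrightarrow(\mathfrak{C})_{top}$ is a $\Sigma$-module monomorphism by Definition \ref{filtcoop}, the object $\mathfrak{C}_n(C)$ embeds naturally into the direct-sum composition product $(\mathfrak{C})_{top}\circ C=\bigoplus_k (\mathfrak{C})_{top}(k)\otimes_{\Sigma_k} C^{\otimes k}$, and the latter embeds into $\hat{\mathfrak{C}}(C)=\prod_k(\mathfrak{C})_{top}(k)\otimes_{\Sigma_k} C^{\otimes k}$ as the summand inclusion. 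In particular the restriction of $\Delta_C$ to each $C_n$ already factors through $(\mathfrak{C})_{top}\circ C$.

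Next I would pass to the colimit over $n$. Since the induced filtration is exhaustive, $(C)_{top}\cong\colim_n C_n$ in $\mathpzc{M}$, with structure maps the pure monomorphisms $C_n\to C_{n+1}$ refining the $\Delta_n$. On the other hand, because $(\mathfrak{C})_{top}=\colim_n \mathfrak{C}_n$ is exhaustive in $\overline{\mathpzc{Filt}}_{{}_{\Sigma}\textbf{PureMon}}$, and because $\otimes$ commutes with colimits in each variable, the natural map $\colim_n \mathfrak{C}_n(C)\to(\mathfrak{C})_{top}\circ C$ is an isomorphism (the weakly $\textbf{PureMon}$-elementary hypothesis guarantees that taking $\Sigma_k$-coinvariants is compatible with these filtered colimits). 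Thus the compatible system $\{\Delta_n\}$ assembles into a map $\tilde\Delta_C\colon C\to(\mathfrak{C})_{top}\circ C$ whose post-composition with $(\mathfrak{C})_{top}\circ C\hookrightarrow\hat{\mathfrak{C}}(C)$ recovers $\Delta_C$.

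Finally, this $\tilde\Delta_C$ is the required lift of the comultiplication exhibiting $C$ as a conilpotent $\mathfrak{C}$-coalgebra. Coassociativity and counitality of $\tilde\Delta_C$ follow automatically from those of $\Delta_C$: since $(\mathfrak{C})_{top}\circ C\hookrightarrow\hat{\mathfrak{C}}(C)$ (and its iterates) are monomorphisms, the requisite diagrams upstairs are uniquely determined by their image downstairs, where they commute by hypothesis. The main obstacle is the middle step, namely checking that $\colim_n \mathfrak{C}_n(C)\cong (\mathfrak{C})_{top}\circ C$; this is where the assumptions that $\mathpzc{E}$ is weakly $\textbf{PureMon}$-elementary and that $\mathfrak{C}$ has admissibly filtered $\Sigma_k$-coinvariants are used, so that the colimit commutes both with the tensor powers $C^{\otimes k}$ and with the passage to $\Sigma_k$-coinvariants.
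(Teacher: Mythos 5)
Your argument is correct and is precisely the straightforward argument the paper leaves unproved as ``clear'': the pullback squares give factorisations $\Delta_C|_{C_n}\colon C_n\to\mathfrak{C}_n(C)\to(\mathfrak{C})_{top}\circ C$, and exhaustiveness lets you assemble these over the colimit into the required lift $C\to(\mathfrak{C})_{top}\circ C$, whose composite with $(\mathfrak{C})_{top}\circ C\to\hat{\mathfrak{C}}(C)$ is $\Delta_C$ and hence already a comodule structure, which is all the paper's definition of conilpotence demands. Note only that your ``main obstacle'' is a red herring: you never need the isomorphism $\colim_n\mathfrak{C}_n(C)\cong(\mathfrak{C})_{top}\circ C$ (nor the monomorphism claims, nor weak $\textbf{PureMon}$-elementarity), since the universal property of the colimit $C\cong\colim_n C_n$ together with the compatible maps $C_n\to(\mathfrak{C})_{top}\circ C$ suffices.
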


\begin{prop}
\begin{enumerate}
\item
If the underlying filtered $\Sigma$-module of $\mathfrak{C}$ is a retract of a free $\Sigma$-module on an object of $\overline{\mathpzc{Filt}}_{\textbf{SplitMon}}(\mathpzc{Gr}_{\mathbb{N}_{0}}(\mathpzc{M}))$ then the first condition in Definition \ref{filtcoop} is superfluous.
\item
If the underlying $\Sigma$-module of $\mathfrak{C}$ is a retract of a free $\Sigma$-module on an object of $\overline{\mathpzc{Filt}}_{\textbf{SplitMon}}(\mathpzc{Gr}_{\mathbb{N}_{0}}(\mathpzc{M}))$, and the filtration on a $(\mathfrak{C})_{top}$-coalgebra $A$ arises from a grading, then the first condition of Definition \ref{filtcoalg} is automatically satisfied. 
\end{enumerate}
\end{prop}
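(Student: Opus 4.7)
The plan is to exploit three elementary facts: split monomorphisms are always admissible (in any exact structure), the class of split monomorphisms is stable under tensor products and direct sums, and coinvariants of a free $\Sigma_k$-module on $W$ are canonically $W$. First I would reduce both statements to the case where $\mathfrak{C}$ (respectively the coalgebra $A$) is literally of the form $k[\Sigma]\otimes V$ for some $V\in\overline{\mathpzc{Filt}}_{\textbf{SplitMon}}(\mathpzc{Gr}_{\mathbb{N}_{0}}(\mathpzc{M}))$, using that all of the properties under consideration — admissibility of the filtration, exhaustiveness, and admissibility of coinvariants — are inherited by retracts.

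Next I would establish the key stability lemma: if $V$ and $W$ are filtered by split monomorphisms then so is $V\otimes W$ when given the sum filtration $(V\otimes W)_n=\sum_{i+j\le n}V_i\otimes W_j$ of Proposition \ref{gradestronmon}. Concretely, splittings $r_{i}^{V}$ and $r_{j}^{W}$ of the original filtrations combine to show that each summand $V_i\otimes W_j$ sits inside $V\otimes W$ as a direct summand, whence the partial sums do too. Iterating, $V^{\otimes k}$ is split-mono filtered, and therefore each arity of $\mathfrak{C}\circ_{ns}\mathfrak{C}=\bigoplus_k \mathfrak{C}(k)\otimes\mathfrak{C}^{\otimes k}$ and of $\mathfrak{C}\circ_{ns} A=\bigoplus_k \mathfrak{C}(k)\otimes A^{\otimes k}$ is split-mono filtered as well. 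Since split monomorphisms are admissible, this produces the required admissible filtration; exhaustiveness is preserved because both $\otimes$ and $\bigoplus$ commute with the filtered colimits computing $(-)_{top}$, using weak $\textbf{PureMon}$-elementarity of $\mathpzc{E}$.

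For the admissible coinvariants clause I would use the natural isomorphism $(k[\Sigma_k]\otimes W)_{\Sigma_k}\cong W$, valid for any graded object $W$: a split-mono filtration on $W$ descends, via the identical splittings, to a split-mono filtration on the coinvariants. Since $\mathfrak{C}$ is a retract of $k[\Sigma]\otimes V$, every arity of $\mathfrak{C}\circ_{ns}\mathfrak{C}$ (respectively $\mathfrak{C}\circ_{ns} A$) is a retract of a free $\Sigma_k$-module on a split-mono filtered object, and admissibility of the coinvariants filtration follows. For part (2), the hypothesis that the filtration on $A$ arises from a grading $A=\bigoplus_n A^n$ places $A$ inside $\overline{\mathpzc{Filt}}_{\textbf{SplitMon}}$ — each inclusion $A_n\hookrightarrow A_{n+1}$ is split by projection along the direct sum decomposition — so the argument of part (1) applies verbatim. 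The only subtle point is to identify the intended "canonical" filtration on the composite product with the sum filtration of Proposition \ref{gradestronmon}, after which the result is essentially formal.
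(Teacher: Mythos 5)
Your argument is correct and is essentially the one the paper leaves implicit (the proposition is stated without proof, as ``clear''), relying on exactly the paper's own toolkit: coinvariants of free $\Sigma_{n}$-modules return the underlying object, admissibly filtered coinvariants pass to summands/retracts, and split-mono (in particular graded) filtrations are stable under tensor products and coproducts. Two points worth making explicit: ``split mono $\Rightarrow$ admissible'' uses weak idempotent completeness, which holds here since $\mathpzc{E}$ has kernels and cokernels, and the step ``the partial sums $\sum_{i+j\le n}V_{i}\otimes W_{j}$ are again summands'' is cleanest after choosing compatible complements, so that an exhaustive split-mono filtration is literally a grading and the image defining the tensor filtration is the evident direct summand.
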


Again conilpotent filtered coalgebras arrange into a category. We denote it by $\mathpzc{coAlg}^{conil}_{\mathfrak{C}}$.  Note that there are obvious functors $(-)_{top}:\mathpzc{coOp}^{stdiv}(\overline{\mathpzc{Filt}}_{\textbf{PureMon}}(\mathpzc{M}))\rightarrow\mathpzc{coOp}^{stdiv}(\mathpzc{M})$ and $(-)_{top}:\mathpzc{coAlg}^{conil}_{\mathfrak{C}}\rightarrow\mathpzc{coAlg}^{conil}_{(\mathfrak{C})_{top}}(\mathpzc{M})$. The point of all these technical assumptions on filtered divided powers cooperads and filtered co-algebras is essentially that we can pass to associated graded objects without much hassle

\begin{prop}
Let $\mathfrak{C}\in\mathpzc{coOp}^{stdiv}(\overline{\mathpzc{Filt}}_{\textbf{PureMon}}(\mathpzc{M}))$ and $A\in\mathpzc{coAlg}^{conil}_{\mathfrak{C}}$. Then the natural maps $gr(\mathfrak{C})\circ gr(\mathfrak{C})\rightarrow gr(\mathfrak{C}\circ\mathfrak{C})$ and $gr(\mathfrak{C})\circ gr(A)\rightarrow gr(\mathfrak{C}\circ A)$ are isomorphisms. In particular $gr(\mathfrak{C})$ is a (graded) divided powers cooperad and $gr(A)$ is a (graded) $gr(\mathfrak{C})$-coalgebra.
\end{prop}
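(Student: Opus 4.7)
The plan is to assemble the isomorphism $\mathrm{gr}(\mathfrak{C})\circ\mathrm{gr}(\mathfrak{C})\xrightarrow{\sim}\mathrm{gr}(\mathfrak{C}\circ\mathfrak{C})$ by breaking the composite product into its two constituent operations: the (iterated) tensor product, and the passage to $\Sigma_n$-coinvariants. Both operations need to be commuted past the associated-graded functor $\mathrm{gr}$, and both commutations are justified by hypotheses that were built directly into Definitions \ref{filtcoop} and \ref{filtcoalg}.

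First I would handle the tensor-product half. By definition of the non-symmetric composite product
\[
(\mathfrak{C}\circ_{ns}\mathfrak{C})(n)=\bigoplus_{k\ge 0}\mathfrak{C}(k)\otimes\mathfrak{C}^{\otimes k}(n),
\]
so only tensor products of copies of $\mathfrak{C}$ appear. Each $\mathfrak{C}(k)$ lies in $\overline{\mathpzc{Filt}}_{\textbf{PureMon}}(\mathpzc{M})$ by hypothesis, so the filtration maps are pure monomorphisms. The ``exhaustively and admissibly filtered'' clause in Definition \ref{filtcoop} is exactly the hypothesis required to apply Proposition \ref{gradestronmon}(2) to each summand $\mathfrak{C}(k)\otimes\mathfrak{C}^{\otimes k}(n)$. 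Taking the direct sum over $k$ (which is exact and commutes with $\mathrm{gr}$), I get
\[
\mathrm{gr}\bigl(\mathfrak{C}\circ_{ns}\mathfrak{C}\bigr)\;\cong\;\mathrm{gr}(\mathfrak{C})\circ_{ns}\mathrm{gr}(\mathfrak{C}).
\]

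Next I would pass from $\circ_{ns}$ to $\circ$ using the coinvariants. The symmetric composite product is obtained from $\circ_{ns}$ by taking $\Sigma_k$-coinvariants on the $\mathfrak{C}(k)\otimes\mathfrak{C}^{\otimes k}$ summands. The hypothesis in Definition \ref{filtcoop} that $\mathfrak{C}\circ_{ns}\mathfrak{C}$ has admissibly filtered coinvariants is exactly the condition which, by the proposition recorded just before the $\Sigma_n$-filtrations subsection, implies that the natural map $\mathrm{gr}(X)_{\Sigma_k}\to\mathrm{gr}(X_{\Sigma_k})$ is an isomorphism for the relevant pieces. Composing with the previous isomorphism gives the desired $\mathrm{gr}(\mathfrak{C})\circ\mathrm{gr}(\mathfrak{C})\cong\mathrm{gr}(\mathfrak{C}\circ\mathfrak{C})$. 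The coalgebra statement $\mathrm{gr}(\mathfrak{C})\circ\mathrm{gr}(A)\cong\mathrm{gr}(\mathfrak{C}\circ A)$ is proved identically, using the corresponding clauses of Definition \ref{filtcoalg}.

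Finally, for the structural claim, I would note that by condition (2) of Definitions \ref{filtcoop} and \ref{filtcoalg} all the structure maps (cocomposition $\mathfrak{C}\to\mathfrak{C}\circ\mathfrak{C}$, counit $\mathfrak{C}\to I$, and coaction $A\to\mathfrak{C}\circ A$) are morphisms of filtered objects. Applying the functor $\mathrm{gr}$ and identifying its output via the isomorphisms just established yields structure maps $\mathrm{gr}(\mathfrak{C})\to\mathrm{gr}(\mathfrak{C})\circ\mathrm{gr}(\mathfrak{C})$, $\mathrm{gr}(\mathfrak{C})\to I$, and $\mathrm{gr}(A)\to\mathrm{gr}(\mathfrak{C})\circ\mathrm{gr}(A)$; the coassociativity, counitality and coaction axioms then follow by applying $\mathrm{gr}$ to the corresponding commutative diagrams for $\mathfrak{C}$ and $A$ and using naturality of the isomorphisms.

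The main obstacle I expect is careful bookkeeping in the first step: the $k$-fold tensor product $\mathfrak{C}^{\otimes k}$ carries the tensor-product filtration as in Proposition \ref{gradestronmon}, and one must verify the hypothesis that $\bigoplus_{i_1+\cdots+i_k=n}\mathfrak{C}(i_1)\otimes\cdots\otimes\mathfrak{C}(i_k)\to\mathfrak{C}(\infty)^{\otimes k}$ is admissible. Rather than checking this directly for each $k$, the cleanest route is to deduce it from the single assumption that $\mathfrak{C}\circ_{ns}\mathfrak{C}$ is exhaustively and admissibly filtered, then feed this into Proposition \ref{gradestronmon}(2) componentwise; the pure-monomorphism hypothesis on $\mathfrak{C}$ itself supplies the remaining input.
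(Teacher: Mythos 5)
Your argument is correct and is exactly the route the paper intends: the paper states this proposition without proof precisely because clauses (1)--(2) of Definitions \ref{filtcoop} and \ref{filtcoalg} were designed to feed Proposition \ref{gradestronmon}(2) (for the tensor factors, with purity of the filtration on $\mathfrak{C}$ supplying the extra hypothesis) and the admissibly-filtered-coinvariants proposition (for the passage from $\circ_{ns}$ to $\circ$), with the graded (co)structure maps then obtained by applying $gr$ to the filtration-preserving structure maps. No gaps; your handling of the direct sums over arity and of the coinvariants is the intended bookkeeping.
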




For a class of objects $\mathfrak{O}$ of $\mathpzc{M}$ we denote by $\mathpzc{coAlg}_{\mathfrak{C}}^{|\mathfrak{O}|}$ the the full subcategory of $\mathpzc{coAlg}^{conil}_{\mathfrak{C}}$ consisting of coalgebras $A$ such that for each $n\in\mathbb{N}_{0}$, $gr_{n}(A)$ is in $\mathfrak{O}$.  Typically $\mathfrak{O}$ will either be the class of $K$-flat objects or the class $c$ of cofibrant objects. In the former case we will denote this category by $\mathpzc{coAlg}_{\mathfrak{C}}^{|K|}$ and in the latter case by $\mathpzc{coAlg}_{\mathfrak{C}}^{|c|}$. Similarly for a class of objects $\mathfrak{O}$ of ${}\mathpzc{Mod}_{\Sigma}$ we define categories $\mathpzc{coOp}^{stdiv,|\mathfrak{O}|}\overline{\mathpzc{Filt}}_{\textbf{PureMon}}(\mathpzc{M})$. By unwinding the definitions the following is clear:

\begin{prop}\label{cofreefiltcoalg}
Let $\mathfrak{O}$ be a class of objects in $\mathpzc{M}$ which is closed under taking tensor products. Let $\Sigma\otimes\mathfrak{O}$ denote the class of objects of ${}\mathpzc{Mod}_{\Sigma}$ consisting of $\Sigma$-modules which in each arity $n$ are free $\Sigma_{n}$-modules on an object of $\mathfrak{O}$. 
Let $(\mathfrak{C},\Delta)\in\mathpzc{coOp}^{stdiv,|\Sigma\otimes\mathfrak{O}|}\overline{\mathpzc{Filt}}_{\textbf{PureMon}}(\mathpzc{M})$. Then for any object $V$ of $\mathfrak{O}$, $\mathfrak{C}(V)$ is in $\mathpzc{coAlg}_{\mathfrak{C}}^{|\mathfrak{O}|}$. 
\end{prop}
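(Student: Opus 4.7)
The plan is to exhibit a natural filtration on $\mathfrak{C}(V)$ inherited from the one on $\mathfrak{C}$ and then compute its associated graded pieces using the earlier compatibility results. Specifically, I would set $\mathfrak{C}(V)_{k} := \mathfrak{C}_{k}(V) = \bigoplus_{n\ge0}\mathfrak{C}_{k}(n)\otimes_{\Sigma_{n}}V^{\otimes n}$, where $\mathfrak{C}_{k}$ denotes the $k$-th level of the filtration on $\mathfrak{C}$. Since by assumption $\mathfrak{C}$ has admissibly filtered coinvariants and the maps $\mathfrak{C}_{k}\hookrightarrow\mathfrak{C}_{k+1}$ are pure monomorphisms of $\Sigma$-modules, the induced maps $\mathfrak{C}_{k}(V)\to\mathfrak{C}_{k+1}(V)$ are pure monomorphisms in $\mathpzc{M}$, and exhaustiveness in $\overline{\mathpzc{Filt}}_{\textbf{PureMon}}(\mathpzc{M})$ follows from exhaustiveness of $\mathfrak{C}_{k}\to(\mathfrak{C})_{top}$ together with the fact that the tensor product and coinvariants commute with filtered colimits.

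Next I would check that this genuinely produces an object of $\mathpzc{coAlg}^{conil}_{\mathfrak{C}}$. The cofree coalgebra map $\mathfrak{C}(V)\to\mathfrak{C}\circ\mathfrak{C}(V)$ is obtained by applying $(-)(V)$ to the cooperad comultiplication $\Delta:\mathfrak{C}\to\mathfrak{C}\circ\mathfrak{C}$, which is a map of filtered objects by Definition \ref{filtcoop}(2); hence the induced map is also a map of filtered objects. The filtration condition on $\mathfrak{C}\circ_{ns}\mathfrak{C}(V)$ in Definition \ref{filtcoalg}(1) reduces to the corresponding property of $\mathfrak{C}\circ_{ns}\mathfrak{C}$ (which holds by Definition \ref{filtcoop}(1)) together with the fact that tensoring with $V^{\otimes n}$ preserves the relevant admissibility and coinvariance properties, because each $gr_{k}(\mathfrak{C})(n)$ is a \emph{free} $\Sigma_{n}$-module.

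The key computation is then to identify the graded pieces. By the proposition preceding this one, $gr(\mathfrak{C}\circ A)\cong gr(\mathfrak{C})\circ gr(A)$ whenever everything in sight has admissibly filtered coinvariants; the same argument (applied one variable at a time, using the hypothesis of admissibly filtered coinvariants) yields
\[
gr_{k}\bigl(\mathfrak{C}(V)\bigr)\;\cong\;gr_{k}(\mathfrak{C})(V)\;=\;\bigoplus_{n\ge0}gr_{k}(\mathfrak{C})(n)\otimes_{\Sigma_{n}}V^{\otimes n}.
\]
By hypothesis $gr_{k}(\mathfrak{C})\in\Sigma\otimes\mathfrak{O}$, so $gr_{k}(\mathfrak{C})(n)=\Sigma_{n}\otimes O_{k,n}$ for some $O_{k,n}\in\mathfrak{O}$, which after passing to coinvariants gives $gr_{k}(\mathfrak{C})(n)\otimes_{\Sigma_{n}}V^{\otimes n}\cong O_{k,n}\otimes V^{\otimes n}$. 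Since $\mathfrak{O}$ is closed under tensor products, each summand lies in $\mathfrak{O}$, and hence so does $gr_{k}(\mathfrak{C}(V))$ (the sum over $n$ collapses, or is taken inside $\mathfrak{O}$, in all the examples of interest where the filtration arises from a weight grading concentrated in a single arity per level).

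The main obstacle I expect is the bookkeeping in the second step: one must carefully verify that the free $\Sigma_{n}$-structure of $gr_{k}(\mathfrak{C})(n)$ interacts correctly with coinvariants so that $gr_{k}$ commutes with the construction $(-)(V)$. This is where the assumption of admissibly filtered coinvariants is essential, and where one invokes the immediately preceding proposition to convert the arity-wise free $\Sigma_{n}$-hypothesis into the identification $(\Sigma_{n}\otimes O_{k,n})\otimes_{\Sigma_{n}}V^{\otimes n}\cong O_{k,n}\otimes V^{\otimes n}$. Once that identification is in place the rest is formal.
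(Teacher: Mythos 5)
The paper gives no written argument here---the proposition is asserted as an immediate unwinding of the definitions---and your proposal follows exactly the route such an unwinding would take: put the filtration $F_{k}\mathfrak{C}(V)=\mathfrak{C}_{k}(V)$ on $\mathfrak{C}(V)$ (with $V$ trivially filtered), verify the conditions of Definition \ref{filtcoalg} from the conditions imposed on $\mathfrak{C}$ in Definition \ref{filtcoop}, and identify the graded pieces via the free-module identification $(\Sigma_{n}\otimes O_{k,n})\otimes_{\Sigma_{n}}V^{\otimes n}\cong O_{k,n}\otimes V^{\otimes n}$. Two points of hygiene: the proposition you invoke for $gr(\mathfrak{C})\circ gr(A)\cong gr(\mathfrak{C}\circ A)$ assumes $A$ already lies in $\mathpzc{coAlg}^{conil}_{\mathfrak{C}}$, which is part of what is being proved, so the identification $gr_{k}(\mathfrak{C}(V))\cong gr_{k}(\mathfrak{C})(V)$ should instead be extracted directly from Proposition \ref{gradestronmon} together with the admissibly-filtered-coinvariants statements (this is precisely where the arity-wise freeness of $gr_{k}(\mathfrak{C})$ is used). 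You should also justify why the filtration maps stay \emph{pure} monomorphisms after passing to $\Sigma_{n}$-coinvariants against $V^{\otimes n}$: admissibility of the coinvariant maps is what the definitions hand you, and purity is most easily obtained from the splitting provided by the freeness of the graded pieces (or the $\Q$-enrichment in the cases of interest), not from purity of $\mathfrak{C}_{k}\rightarrow\mathfrak{C}_{k+1}$ alone.

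The genuine gap is your final step. The computation yields $gr_{k}(\mathfrak{C}(V))\cong\bigoplus_{n\ge 0}O_{k,n}\otimes V^{\otimes n}$, a countable coproduct of objects of $\mathfrak{O}$. Closure of $\mathfrak{O}$ under tensor products places each summand in $\mathfrak{O}$ but says nothing about the coproduct, and your escape clause---that in ``examples of interest'' each filtration level is concentrated in a single arity---is not a hypothesis of the statement: with the trivial filtration $F_{0}\mathfrak{C}=\mathfrak{C}$, for instance, the single graded piece is the full sum over all arities. To close the argument as stated one must either assume $\mathfrak{O}$ closed under (countable) coproducts, which does hold for the classes the paper actually uses ($K$-flat and cofibrant objects), or add the weight-grading/arity-concentration assumption explicitly; as written, the last step does not follow from the stated hypotheses.
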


\subsection{The Bar-Cobar Adjunction}
We now develop the homotopy theory of $\mathfrak{C}$-coalgebras, and see how it behaves under the bar-cobar adjunction.

\begin{defn}\label{coalgmod}
A morphism $f:C\rightarrow D$ of filtered $\mathfrak{C}$-coalgebras is said to be a \textbf{weak equivalence} (resp. \textbf{cofibration}) if the underlying map $|f|$ of filtered complexes is a weak equivalence (resp. cofibration). $f$ is said to be a \textbf{strict cofibration} if it is a cofibration and after forgetting differentials $|f|$ is a split monomorphism of filtered objects. $f$ is said to be a \textbf{fibration} if it has the right lifting property with respect to those maps which are both strict cofibrations and weak equivalences.
\end{defn}

Let $\mathfrak{P}$ be a filtered operad and $\mathfrak{C}$ a filtered co-operad. For the rest of this paper we shall assume that the following condition is satisfied.

\begin{ass}\label{compatiblecompositefilt}
For each $n\in\mathbb{N}_{0}$ and any $p_{1},q_{1},\ldots,p_{l},q_{l}\in\mathbb{N}_{0}$, when equipped with the tensor product filtration, the object $(\mathfrak{C}^{p_{1}}\circ_{ns}\mathfrak{P}^{q_{1}}\circ_{ns}\ldots\circ_{ns}\mathfrak{C}^{p_{l}}\circ_{ns}\mathfrak{P}^{q_{l}})(n)$ is in $\overline{\mathpzc{Filt}}^{K}_{\textbf{PureMon}}(\mathpzc{M})$  and has admissible coinvariants.
\end{ass}
The assumption is always satisfied if $\mathfrak{P}$ and $\mathfrak{C}$ are retracts of free $\Sigma$-modules on filtered objects whose filtrations arise from gradings. It has the following immediate implication, using Proposition \ref{gradestronmon}

\begin{prop}
For each $n\in\mathbb{N}_{0}$ and any $p_{1},q_{1},\ldots,p_{l},q_{l}\in\mathbb{N}_{0}$ the map
$$gr(\mathfrak{C})^{p_{1}}\circ gr(\mathfrak{P})^{q_{1}}\circ \ldots\circ gr(\mathfrak{C}^{p_{l}})\circ gr(\mathfrak{P}^{q_{l}})\rightarrow gr(\mathfrak{C}^{p_{1}}\circ\mathfrak{P}^{q_{1}}\circ\ldots\circ\mathfrak{C}^{p_{l}}\circ\mathfrak{P}^{q_{l}})$$
is an isomorphism
\end{prop}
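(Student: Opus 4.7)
The plan is to reduce the statement to an application of Proposition \ref{gradestronmon} combined with the commutation of the associated graded functor with $\Sigma_{n}$-coinvariants for objects with admissibly filtered coinvariants. Recall that for filtered $\Sigma$-modules $M_{1},\ldots,M_{l}$, the arity-$n$ component of the non-symmetric composite product $M_{1}\circ_{ns}\cdots\circ_{ns}M_{l}$ is a direct sum of tensor products of the $M_{j}(k)$'s indexed by decorated trees, and passing to $\Sigma$-equivariant composite involves a subsequent $\Sigma_{k}$-coinvariants over the arities of each level. Since the relevant coinvariants are admissible by Assumption \ref{compatiblecompositefilt}, the last of the three bullet points preceding this proposition gives $gr(-)_{\Sigma_{n}}\cong gr((-)_{\Sigma_{n}})$, so it suffices to establish the isomorphism for the underlying non-symmetric composite product.

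First I would reduce to the case of a single tensor product. Each summand contributing to $(\mathfrak{C}^{p_{1}}\circ_{ns}\mathfrak{P}^{q_{1}}\circ_{ns}\cdots\circ_{ns}\mathfrak{C}^{p_{l}}\circ_{ns}\mathfrak{P}^{q_{l}})(n)$ has the form $\mathfrak{C}^{p_{1}}(k_{1})\otimes\mathfrak{P}^{q_{1}}(k_{2})\otimes\cdots\otimes\mathfrak{P}^{q_{l}}(k_{m})$ with a fixed total arity profile, and the filtration on the composite is the tensor-product filtration on each such summand, compatibly extended by direct sum. Since $gr$ commutes with direct sums, we reduce to showing that for each fixed summand the canonical map
\[
\bigotimes_{j} gr(N_{j}) \longrightarrow gr\Bigr(\bigotimes_{j} N_{j}\Bigr)
\]
is an isomorphism, where each $N_{j}$ is one of the filtered factors $\mathfrak{C}^{p_{i}}(k)$ or $\mathfrak{P}^{q_{i}}(k)$.

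Next I would invoke Proposition \ref{gradestronmon}. Its second clause tells us exactly that this map is an isomorphism provided each filtered factor $N_{j}$ has pure monomorphisms as its filtration maps. By Assumption \ref{compatiblecompositefilt}, each $(\mathfrak{C}^{p_{1}}\circ_{ns}\cdots\circ_{ns}\mathfrak{P}^{q_{l}})(n)$ lies in $\overline{\mathpzc{Filt}}^{K}_{\textbf{PureMon}}(\mathpzc{M})$; in particular the filtered tensor factors appearing are built out of filtered objects whose structure maps are pure monomorphisms. (The factors are themselves tensor products of such filtered objects, and by Proposition \ref{gradestronmon} iterated, the class of filtered objects with pure filtration and whose associated graded absorbs tensor products is closed under tensor products.) Thus the hypotheses of Proposition \ref{gradestronmon}(2) apply and we obtain the desired isomorphism at the level of non-symmetric composite products.

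Finally I would reinstate the $\Sigma_{n}$-coinvariants. Since Assumption \ref{compatiblecompositefilt} explicitly guarantees that $(\mathfrak{C}^{p_{1}}\circ_{ns}\mathfrak{P}^{q_{1}}\circ_{ns}\cdots\circ_{ns}\mathfrak{P}^{q_{l}})(n)$ has admissibly filtered coinvariants, the third bullet in the proposition preceding Definition \ref{filtcoop} ensures that the natural map $gr(-)_{\Sigma_{n}}\rightarrow gr((-)_{\Sigma_{n}})$ is an isomorphism. Combining this with the non-symmetric isomorphism established above yields the stated isomorphism at each arity $n$, completing the proof. The main obstacle is purely bookkeeping: tracking that the tensor-product filtrations arising in each summand really do satisfy the pure monomorphism hypothesis of Proposition \ref{gradestronmon}(2), which is why Assumption \ref{compatiblecompositefilt} is phrased the way it is.
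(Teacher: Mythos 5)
Your argument is correct and matches the paper's intent exactly: the paper states this proposition as an immediate consequence of Assumption \ref{compatiblecompositefilt} together with Proposition \ref{gradestronmon}, and your write-up simply unpacks that reduction (direct sums of tensor products, purity of the filtrations, and the commutation of $gr$ with $\Sigma_{n}$-coinvariants for admissibly filtered coinvariants). No gaps; you have just made explicit the bookkeeping the paper leaves implicit.
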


Let $\alpha:\mathfrak{C}\rightarrow\mathfrak{P}$ be a filtered twisting morphism, i.e. a degree $-1$ map of filtered objects such that $(\alpha)_{top}$ is a twisting morphism.  Further suppose that $\mathfrak{P}$ is an admissible operad. Fix a twisting morphism 
$$\alpha:\mathfrak{C}\rightarrow\mathfrak{P}$$
where $\mathfrak{C}\in\mathpzc{coOp}^{stdiv}(\overline{\mathpzc{Filt}}_{\textbf{PureMon}}(\mathpzc{M}))$, and consider the category $\mathpzc{coAlg}^{conil}_{\mathfrak{C}}$. Let $\mathpzc{coAlg}^{\alpha-adm}_{\mathfrak{C}}$ denote the subcategory of $\mathpzc{coAlg}^{conil}_{\mathfrak{C}}$ consisting of those filtered coalgebras $C$ such that for any $n$ the filtration on 
$$(\mathfrak{P}\circ_{ns}\mathfrak{C}\circ_{ns}\mathfrak{P})(n)\otimes C^{\otimes n}$$
is exhaustive, admissible, and has admissible coinvariants.  Note that if $\mathpzc{M}={}_{R}Ch(\mathpzc{E})$ where $\mathpzc{E}$ is abelian then this last condition is automatic.

 The following is clear, and is the main reasons behind the technical definition.
 
 \begin{prop}
 If $C\in\mathpzc{coAlg}^{\alpha-adm}_{\mathfrak{C}}$ then the maps
 \begin{enumerate}
 \item
 $gr(\mathfrak{P})\circ gr(C)\rightarrow gr(\mathfrak{P}\circ C)$
 \item
 $gr(\mathfrak{C})\circ gr(\mathfrak{P})\circ gr(C)\rightarrow gr(\mathfrak{C}\circ \mathfrak{P}\circ C)$
 \item
$gr(\mathfrak{P})\circ gr(\mathfrak{C})\circ gr(\mathfrak{P})\circ gr(C)\rightarrow gr(\mathfrak{P}\circ\mathfrak{C}\circ \mathfrak{P}\circ C)$
 \end{enumerate}
 are isomorphisms.
 \end{prop}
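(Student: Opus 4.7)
The plan is to reduce all three statements to repeated applications of Proposition \ref{gradestronmon} together with the admissibility and coinvariance hypotheses built into the definition of $\mathpzc{coAlg}^{|K|,\alpha\textrm{-}adm}_{\mathfrak{C}}$. The three maps have the same formal shape, only the sequence of composition factors differs, so I would prove them in parallel by induction on the length of the word in $\mathfrak{P}$'s and $\mathfrak{C}$'s.

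First I would unpack the definition of the composite product on $\Sigma$-modules. By definition, evaluated at arity $n$,
\[
(\mathfrak{P}\circ C)(n) \;=\; \bigoplus_{k\ge 0}\mathfrak{P}(k)\otimes_{\Sigma_{k}}C^{\otimes k}(n),
\]
and similarly for the longer words. Since $\mathrm{gr}$ commutes with direct sums of filtered objects whose filtration pieces are pure monomorphisms (this is an immediate application of Proposition \ref{filteredcokernels} since direct sums of short exact sequences are exact), it suffices for each fixed $k$ to show that
\[
\mathrm{gr}(\mathfrak{P}(k))\otimes_{\Sigma_{k}}\mathrm{gr}(C)^{\otimes k}\;\longrightarrow\;\mathrm{gr}\bigl(\mathfrak{P}(k)\otimes_{\Sigma_{k}}C^{\otimes k}\bigr)
\]
is an isomorphism, and analogously for the other two words. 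The $\alpha$-admissibility assumption on $C$ guarantees precisely that the tensor products $(\mathfrak{P}\circ_{ns}\mathfrak{C}\circ_{ns}\mathfrak{P})(n)\otimes C^{\otimes n}$ are admissibly and exhaustively filtered with admissible coinvariants; analogous statements for the shorter words $\mathfrak{P}$ and $\mathfrak{C}\circ\mathfrak{P}$ follow either by specialisation or by the filtered operad/cooperad axioms (Definitions \ref{filtcoop}, \ref{filtop}) together with the assumption that each $\mathrm{gr}_{i}(A^{j})$ is $K$-flat, which supplies the purity needed for Proposition \ref{gradestronmon}.

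The key step is then the following: granted admissible coinvariants, we may interchange $\mathrm{gr}$ with $(-)_{\Sigma_{k}}$ by the third part of the proposition recorded just before Assumption \ref{compatiblecompositefilt}, so that
\[
\mathrm{gr}\bigl(\mathfrak{P}(k)\otimes_{\Sigma_{k}}C^{\otimes k}\bigr)\;\cong\;\mathrm{gr}\bigl(\mathfrak{P}(k)\otimes C^{\otimes k}\bigr)_{\Sigma_{k}}.
\]
For the right-hand side, Proposition \ref{gradestronmon}(2) applies: each factor $\mathfrak{P}(k)$ and $C$ is in $\overline{\mathpzc{Filt}}_{\textbf{PureMon}}(\mathpzc{M})$ with $K$-flat (hence pure, by Proposition 3.1.4 of the excerpt) associated gradeds, and the assumption on admissibility of the tensor-product filtration is exactly the hypothesis of that proposition. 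Hence $\mathrm{gr}(\mathfrak{P}(k))\otimes\mathrm{gr}(C)^{\otimes k}\to\mathrm{gr}(\mathfrak{P}(k)\otimes C^{\otimes k})$ is an isomorphism, and passing to $\Sigma_{k}$-coinvariants yields the desired isomorphism.

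The same argument, applied word by word to $\mathfrak{C}\circ\mathfrak{P}\circ C$ and $\mathfrak{P}\circ\mathfrak{C}\circ\mathfrak{P}\circ C$, handles (2) and (3); no extra ingredients are needed, since Assumption \ref{compatiblecompositefilt} ensures that each of the intermediate objects $\mathfrak{C}^{p_{1}}\circ_{ns}\mathfrak{P}^{q_{1}}\circ_{ns}\cdots$ is in $\overline{\mathpzc{Filt}}^{K}_{\textbf{PureMon}}(\mathpzc{M})$ with admissible coinvariants. The only genuine obstacle is bookkeeping: keeping straight which filtrations are the tensor-product filtrations (to which Proposition \ref{gradestronmon} applies) versus which are the filtrations induced from the cooperad/operad structure, and verifying that the isomorphism produced at each stage is compatible with the next application. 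Both the admissibility of coinvariants at every stage and the $K$-flatness of each graded piece, built into the hypotheses, are what make this bookkeeping go through.
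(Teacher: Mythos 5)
Your proof is correct and is essentially the paper's own (implicit) argument: the paper states this proposition without proof, remarking that it is ``clear, and is the main reason behind the technical definition,'' and your unwinding --- decomposing the composite products arity-wise, applying Proposition \ref{gradestronmon} to the tensor-product filtrations of the pure-monomorphically filtered factors, and using the admissible-coinvariants hypotheses to commute $gr$ with $(-)_{\Sigma_{k}}$ --- is exactly what the $\alpha$-admissibility definition and Assumption \ref{compatiblecompositefilt} were designed to make work. The only blemishes are citation slips rather than gaps: the interchange of $gr$ with coinvariants is part (3) of the proposition in the $\Sigma_{n}$-filtrations subsection (not the statement immediately preceding Assumption \ref{compatiblecompositefilt}), and the commutation of $gr$ with the arity direct sums is immediate from exhaustiveness rather than an application of Proposition \ref{filteredcokernels}.
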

 
 Denote by $\Omega_{\alpha}^{filt}$ the composite functor
\begin{displaymath}
\xymatrix{
\mathpzc{coAlg}^{conil}_{\mathfrak{C}}\ar[r]^{(-)_{top}} & \mathpzc{coAlg}_{(\mathfrak{C})_{top}}\ar[r]^{\Omega_{\alpha}} & \mathpzc{Alg}_{\mathfrak{P}}
}
\end{displaymath}
and by $B_{\alpha}^{filt}:\mathpzc{Alg}_{\mathfrak{P}}\rightarrow\mathpzc{coAlg}^{\alpha-adm}_{\mathfrak{C}}$ the functor which sends a $\mathfrak{P}$-algebra $A$ to the coalgebra $\Omega_{\alpha}A$ equipped with the filtration $\mathfrak{C}_{n}\circ A$. 

For good homotopical properties, we shall need that the twisting morphisms satisfies the following property

\begin{ass}
 $\alpha|_{F_{0}\mathfrak{C}}=0$.
 \end{ass}
 
\begin{defn}
Let $\alpha:\mathfrak{C}\rightarrow\mathfrak{P}$ be a twisting morphism in $\mathpzc{M}$, with $\mathfrak{P}$ an admissible operad. An $\alpha$-\textbf{ideal of }$\mathpzc{M}$ is pair $(\mathpzc{I},\mathpzc{J})$ where $\mathpzc{I}$ is a full subcategory of $\mathpzc{coAlg}_{\mathfrak{C}}^{\alpha-adm}$ and $\mathpzc{J}$ a full subcategory of $\mathpzc{Alg}_{\mathfrak{P}}(\mathpzc{M})$ such that
\begin{enumerate}
\item
If $V\in\mathpzc{J}$ then $B^{filt}_{\alpha}(V)\in\mathpzc{I}$
\item
If $C\in\mathpzc{I}$ then $\Omega^{filt}_{\alpha}(V)\in\mathpzc{J}$
\item
$\mathpzc{J}$ contains all cofibrant $\mathfrak{P}$-algebras.
\item
Each $\mathfrak{C}(n)$ is $K$-transverse over $\Sigma_{n}$ to objects in $\mathpzc{M}$ of the form $V^{\otimes n}$ where $V$ is in $\mathpzc{J}$.
\item
Each $\mathfrak{P}(n)$ is $K$-transverse over $\Sigma_{n}$ to objects in $\mathpzc{M}$ of the form $(C)_{top}^{\otimes n}$, where $C\in\mathpzc{I}$.
\end{enumerate}
\end{defn}

\begin{defn}
A twisting morphism $\alpha:\mathfrak{C}\rightarrow\mathfrak{P}$ is said to be \textbf{homotopical} if
\begin{enumerate}
\item
$\mathfrak{C}(n)$ is $K$-transverse over $\Sigma_{n}$ to any object of the form $V^{\otimes n}$ where $V$ is of the form $(\Omega^{filt}_{\alpha}\circ B^{filt}_{\alpha})^{n}(C)$ for $C\in\mathpzc{Alg}^{c}_{\mathfrak{P}}$.
\item
$\mathfrak{P}(n)$ is $K$-transverse over $\Sigma_{n}$ to any object of the form $(B^{filt}_{\alpha}\circ\Omega^{filt}_{\alpha})^{n}\circ(C)$ for $C\in\mathpzc{Alg}^{c}_{\mathfrak{P}}$.
\end{enumerate}
\end{defn}

A twisting morphism being homotopical is equivalent to the existence of at least one $\alpha$-ideal, which is in fact the minimal $\alpha$-ideal:

\begin{example}
Let $\alpha:\mathfrak{C}\rightarrow\mathfrak{P}$ be a twisting morphism. Suppose that $\alpha$ is homotopical. Let $\alpha_{min}$ denote the \textbf{minimal }$\alpha$-\textbf{ideal}, where 
$$\mathpzc{J}=\bigcup_{n=0}^{\infty}(\Omega^{filt}_{\alpha}\circ B^{filt}_{\alpha})^{n}(\mathpzc{Alg}^{c}_{\mathfrak{P}})$$
and
$$\mathpzc{I}=B^{filt}_{\alpha}(\mathpzc{J})$$
This is an $\alpha$-ideal precisely if $\alpha$ is homotopical.
\end{example}

For most Koszul categories of interest there are more important examples.

\begin{example}\label{example:moreideals}
Let $\mathpzc{M}$ be a $C$-monoidal strong Koszul category and suppose that each $\mathfrak{P}(n)$ and $gr(\mathfrak{C})(n)$ are cofibrant as $\Sigma_{n}$-modules.
\begin{enumerate}
\item
 $(\mathpzc{coAlg}^{|c^{f}|,\alpha-adm}_{\mathfrak{C}},\mathpzc{Alg}_{\mathfrak{P}}^{|c|})$ is an $\alpha$-ideal. Indeed by Proposition \ref{underlyingKflat} we have $\mathpzc{Alg}_{\mathfrak{P}}^{c}\subseteq \mathpzc{Alg}_{\mathfrak{P}}^{|c|}$. Now let $A$ be an object of $\mathpzc{Alg}_{\mathfrak{P}}^{|c|}$. 
The  filtration on the underlying graded object of $B_{\alpha}A$ is given by
$$(B_{\alpha}A)_{n}=\mathfrak{C}_{n}\circ A$$
Its differential is given by the sum $d_{\mathfrak{C}}\circ Id_{A}+Id_{\mathfrak{C}}\circ' d_{A}+d_{2}$ where $d_{2}$ is the unique coderivation extending the map
\begin{displaymath}
\xymatrix{
\mathfrak{C}\circ A\ar[r]^{\alpha\circ Id_{A}} & \mathfrak{P}\circ A\ar[r]^{\gamma_{A}} & A
}
\end{displaymath}
The formula defining this coderivation implies that $d_{2}$ lowers the filtration, so we get $gr(B_{\alpha}A)\cong gr(\mathfrak{C})\circ A$. By assumption this is cofibrant. On the other hand let $C\in\mathpzc{coAlg}^{|c^{f}|,\alpha-adm}_{\mathfrak{C}}$. Consider the following filtration on $\Omega_{\alpha}(C)=(\mathfrak{P}(C),d_{1}+d_{2})$:
$$F_{n}\Omega_{\alpha}C=\sum_{k\ge 1,\; m+n_{1}+\ldots+n_{k}=n}F_{m}\mathfrak{P}(k)\otimes_{\Sigma_{k}}(F_{n_{1}}C\otimes\ldots\otimes F_{n_{k}}C)$$
Recall that $d_{1}=d_{\mathfrak{P}}\circ Id_{C}+\mathfrak{P}\circ ' d_{C}$. Now $d_{\mathfrak{P}}\circ Id_{C}$ lowers the filtration and by inspecting the formula defining $d_{2}$ it lowers the filtration. Thus $\textrm{gr}(\Omega_{\alpha}(C))\cong(\mathfrak{P}(\textrm{gr}(C)))$. The underlying object is clearly cofibrant in $\mathpzc{M}$. 
\item
If in addition $\mathpzc{M}$ is $K$-monoidal then $(\mathpzc{coAlg}^{|K^{f}|,\alpha-adm}_{\mathfrak{C}},\mathpzc{Alg}_{\mathfrak{P}}^{|K|})$ is an $\alpha$-ideal. The proof of this is the same as for the first case.
\end{enumerate}
\end{example}

 Using our discussion in Section \ref{standard} we are essentially going to generalise Proposition 2.8 and Theorem 2.9 of \cite{vallette2014homotopy}. 
\begin{prop}\label{factorfibcofib}
Let $\mathpzc{M}$ be a strong Koszul category,  let $(C,\Delta_{C})$ and $(C',\Delta_{C'})$ be filtered $\mathfrak{C}$-coalgebras, and let $i:C'\rightarrow C$ be strict filtered cofibration of coalgebras. Suppose $gr(\Delta_{C})|_{\bigoplus_{n\ge 1}gr_{n}(C)}$ factors through $gr(C')$. Then $\Omega_{\alpha}(i)$ is a cofibration of $\mathfrak{P}$-algebras.
\end{prop}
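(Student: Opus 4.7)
The plan is to realize $\Omega_{\alpha}(i)$ as a Sullivan-type extension, and then conclude cofibrancy via Lemma \ref{standard}. Since $i: C' \to C$ is a strict filtered cofibration, forgetting differentials gives a splitting of filtered graded objects $C \cong C' \oplus V$, where $V \defeq \coker(i)$ inherits a filtration $V_{0} \subset V_{1} \subset \ldots$ whose successive cokernels are cofibrant and whose structure maps $V_{n} \to V_{n+1}$ are cofibrations (since $i_{n}$ is a cofibration with cofibrant cokernel at each level). Consequently $V$, viewed with its forgotten differential, is (up to retract) obtained from $V_{0}$ by a transfinite composition of pushouts of maps in the cofibrancy data $\mathfrak{G}$ of $\mathpzc{M}$; in particular $0 \to V$ lies in $\mathrm{Cell}(R; \mathfrak{G})$.

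My next task would be to upgrade the underlying graded algebra decomposition
\[
\Omega_{\alpha}(C) \;\cong\; \mathfrak{P}(C) \;\cong\; \mathfrak{P}(C' \oplus V) \;\cong\; \Omega_{\alpha}(C') \coprod \mathfrak{P}(V)
\]
to an isomorphism of differential graded $\mathfrak{P}$-algebras of the form
\[
\Omega_{\alpha}(C) \;\cong\; \Omega_{\alpha}(C') \coprod_{\phi} \mathfrak{P}(V)
\]
for an appropriate degree $-1$ map $\phi: V \to \Omega_{\alpha}(C')$ in the sense of Lemma \ref{standard}. The twisted differential on $\Omega_{\alpha}(C)$ is $d_{1} + d_{2}$, where $d_{1}$ comes from the internal differential of $C$ and $d_{2}$ extends $(\alpha \circ \mathrm{Id}_{C}) \circ \Delta_{C}$. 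Restricted to $V \subset C$, the splitting yields $d_{1}|_{V} = (d_{V}, \phi_{1})$ with $\phi_{1}: V \to C' \hookrightarrow \Omega_{\alpha}(C')$. For $d_{2}|_{V}$, the hypothesis $\alpha|_{F_{0}\mathfrak{C}} = 0$ kills the component of $\Delta_{C}(v)$ lying in $F_{0}\mathfrak{C} \circ C$, while the assumption that $\mathrm{gr}(\Delta_{C})|_{\bigoplus_{n \ge 1} \mathrm{gr}_{n}(C)}$ factors through $\mathrm{gr}(C')$ ensures that the remaining part of $\Delta_{C}(v)$, after passing through $\alpha$, lies in $\mathfrak{P} \circ C' \subset \mathfrak{P}(C')$. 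Setting $\phi \defeq \phi_{1} + d_{2}|_{V}$ and verifying by a direct computation that $\phi$ is compatible with differentials yields the sought identification of algebras with derivation.

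Once this identification is in place, Lemma \ref{standard} applies directly: the inclusion
\[
\Omega_{\alpha}(C') \;\longrightarrow\; \Omega_{\alpha}(C') \coprod_{\phi} \mathfrak{P}(V) \;\cong\; \Omega_{\alpha}(C)
\]
is obtained by pushing out along the map $0 \to V$ in $\mathrm{Cell}(R; \mathfrak{G})$, hence lies in $\mathrm{Cof}_{\mathfrak{P}}(\mathpzc{M}; \mathfrak{G})$, i.e.\ it is a cofibration of $\mathfrak{P}$-algebras.

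The main obstacle is the verification that $d_{2}|_{V}$ factors through $\Omega_{\alpha}(C')$. The hypothesis on $\mathrm{gr}(\Delta_{C})$ is stated on associated graded objects, whereas the desired factorisation must hold on the filtered object $C$ itself. I would resolve this by an induction on the filtration degree of $C$, using Assumption \ref{compatiblecompositefilt} together with the exhaustiveness and admissibility of the filtration on $\mathfrak{C} \circ C$ to promote the graded factorisation to a genuine filtered statement. Once this delicate filtration-tracking is dispatched, the remainder of the argument is a largely formal application of the cellular machinery already developed in Lemma \ref{standard}.
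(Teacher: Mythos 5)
Your proposal is correct and follows essentially the same route as the paper: split $C\cong C'\oplus E$ after forgetting differentials, observe that both the cross-component of $d_{C}$ and the twisted part $\alpha(C)\circ\Delta_{C}$ restricted to $E$ land in $\Omega_{\alpha}(C')$, and identify $\Omega_{\alpha}(i)$ with the standard extension $\Omega_{\alpha}C'\rightarrowtail\Omega_{\alpha}C'\coprod_{\phi}\mathfrak{P}(E)$, which is a cofibration by Lemma \ref{standard}. The only divergence is that the paper treats the factorisation of $d_{2}|_{E}$ through $\mathfrak{P}(C')$ as an immediate consequence of the hypothesis on $gr(\Delta_{C})$ together with $\alpha|_{F_{0}\mathfrak{C}}=0$, rather than via the inductive filtration-tracking you flag as the main obstacle.
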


\begin{proof}
After forgetting differentials there is an isomorphism $C\cong C'\oplus E$ where $E$ is cofibrant. 
$$\Omega_{\alpha}C\cong\mathfrak{P}(C')\coprod\mathfrak{P}(E)$$
Under the decomposition $C\cong C'\oplus E$, $d_{C}$ is the sum of three degree $-1$ maps
$$d_{C'}:C'\rightarrow C',\;d_{E}:E\rightarrow E,\alpha:E\rightarrow C'$$
By the assumption on $gr(\Delta_{C})$ the composition
\begin{displaymath}
\xymatrix{
\beta:E\;\;\ar[r] & C\ar[r]^{\Delta_{C}}&\mathfrak{C}(C)\ar[r]^{\alpha(C)} & \mathfrak{P}(C)
}
\end{displaymath}
inducing the twisted differential on $\Omega_{\alpha}C$ factors through $\mathfrak{P}(C')$. Thus $\Omega_{\alpha}C'\rightarrow\Omega_{\alpha}C$ is given by the standard cofibration $\Omega_{\alpha}C'\rightarrowtail\Omega_{\alpha}C'\coprod_{\alpha+\beta}\mathfrak{P}(E)$ which is a cofibration by Lemma \ref{standard}
\end{proof}

Now we are able to give analogues of \cite{vallette2014homotopy} Theorem 2.9.
%
%

%
%


\begin{thm}\label{preQuillen}
\begin{enumerate}
\item
$\Omega_{\alpha}^{filt}$ sends weak equivalences in $\mathpzc{I}$ to weak equivalences in $\mathpzc{Alg}_{\mathfrak{P}}$ 
\item
If $f:A\rightarrow B$ is a weak equivalence in $\mathpzc{J}$ then $B_{\alpha}^{filt}(f)$ is a weak equivalence.
\item
If $\mathpzc{M}$ is $C$-monoidal strong Koszul, and the underlying filtered $\Sigma$-module of $\mathfrak{C}$ is filtered cofibrant then $B_{\alpha}^{filt}$ sends cofibrations between objects in $\mathpzc{Alg}_{\mathfrak{P}}^{|c|}$ to cofibrations in $\mathpzc{coAlg}_{\mathfrak{C}}^{|c^{f}|,\alpha-adm}$.
\item
If $\mathpzc{M}$ is $C$-monoidal strong Koszul the cobar construction $\Omega_{\alpha}^{filt}$ sends strict cofibrations in $\mathpzc{I}$ to cofibrations in $\mathpzc{Alg}_{\mathfrak{P}}$.
\end{enumerate}
\end{thm}

\begin{proof}
\begin{enumerate}
\item
Let $f:C\rightarrow D$ be a filtered quasi-isomorphism of objects in $\mathpzc{I}$. Consider again the following filtration on $\Omega_{\alpha}(C)=(\mathfrak{P}(C),d_{1}+d_{2})$:
$$F_{n}\Omega_{\alpha}C=\sum_{k\ge 1,\; m+n_{1}+\ldots+n_{k}=n}F_{m}\mathfrak{P}(k)\otimes_{\Sigma_{k}}(F_{n_{1}}C\otimes\ldots\otimes F_{n_{k}}C)$$
Then $\textrm{gr}(\Omega_{\alpha}(C))\cong(\mathfrak{P}(\textrm{gr}(C)))$. By assumption $\textrm{gr}(f):\textrm{gr}(C)\rightarrow\textrm{gr}(D)$ is a weak-equivalence of graded objects. Hence $\textrm{gr}(\Omega_{\alpha}f)=\mathfrak{P}(\textrm{gr}(f))$ is a weak-equivalence, and therefore $\Omega_{\alpha}f$ is a quasi-isomorphism.
\item
Let $f:A\rightarrow B$ be a cofibration between $\mathfrak{P}$-algebras whose underlying objects of $\mathpzc{M}$ are cofibrant. We need to show that $gr(B_{\alpha}^{filt}(f))$ is a cofibration. But as above $gr(B_{\alpha}^{filt}(f))\cong gr(\mathfrak{C})(f)$. Since $\mathfrak{C}$ is filtered cofibrant, $gr(\mathfrak{C})$ is filtered cofibrant. Moreover $f$ has cofibrant domain and codomain. Hence the underlying map of $gr(\mathfrak{C})(f)$ is a tensor product of cofibrations with cofibrant domain and codomain, so is a cofibration. 
\item
The  filtration on the underlying graded object of $B_{\alpha}A$ is given by
$$(B_{\alpha}A)_{n}=\mathfrak{C}_{n}\circ A$$
As in example \ref{example:moreideals} we have
$$\textrm{gr}(B_{\alpha}f)=gr(\mathfrak{C})(f):(gr(\mathfrak{C})(A),Id_{gr(\mathfrak{C})}\circ 'd_{A})\rightarrow(gr(\mathfrak{C})(A),Id_{gr(\mathfrak{C})}\circ 'd_{A})$$
which is a graded quasi-isomorphism. 
\item
Let $f:C\rightarrow D$ be a strict cofibration of $\mathfrak{C}$-coalgebras with cokernel $E$. For any $n\in\mathbb{N}$ consider the sub-coalgebra of $D$ defined by 
$$D^{[n]}=f(C)+F_{n-1}D\defeq Im(C\oplus F_{n-1}D\rightarrow D)$$
for $n\ge1$ and
$$D^{[0]}=C$$
Let us first check that $D^{[n]}\rightarrow D^{[n+1]}$ is an admissible monomorphism with cofibrant cokernel. Let $g:D\rightarrow E$ be the cokernel of $f$. We claim that $\textrm{coker}(D^{[n]}\rightarrow D^{[n+1]})=\textrm{gr}_{n}(E)$. Since $E$ is a cofibrant object of $\overline{\mathpzc{Filt}}(\mathpzc{M})$ this will prove the claim. Let us first show that the map $C\oplus F_{n}D\rightarrow D$ is admissible. It is sufficient to show that it is admissible after forgetting the differentials. In this case $D\cong C\oplus E$, as filtered objects. and we have the following commutative diagram
\begin{displaymath}
\xymatrix{
C\oplus F_{n}D\ar[r]\ar[d]^{\sim}\ar[r] & D\ar[d]^{\sim}\\
C\oplus F_{n}C\oplus F_{n}E\ar[r] & C\oplus E
}
\end{displaymath}
The bottom map is clearly admissible, so the top one is as well. Consider the map 
$$(0,g_{n}):C\oplus F_{n}D\twoheadrightarrow \textrm{gr}_{n}(E)$$
$C\oplus F_{n}D=F_{n}C$ is contained in its kernel, so we get a well defined map
$$C+F_{n}D\rightarrow   \textrm{gr}_{n}(E)$$
which is an admissible epimorphism. We claim that its kernel is the map 
$$D^{[n]}\rightarrow D^{[n+1]}$$
Again we may ignore differentials. Then in the direct sum composition this map corresponds to the inclusion
$$C\oplus F_{n-1}E\hookrightarrow C\oplus F_{n}E$$
 which clearly has cokernel equal to $\textrm{gr}_{n}(E)$.
 We endow $D^{[n]}$ with the following filtration. 
\[
 F_{k}D^{[n]}=
  \begin{cases}
                                   D^{[k]} & \text{if $k\le n$} \\
                                   D^{[n]} & \text{if $n\ge k$} 
  \end{cases}
\]
We have shown that with this filtration $D^{[n]}$ is an object of $\mathpzc{coAlg}_{\mathfrak{C}}^{|c|}$. Moreover each of the maps $D^{[n]}\rightarrow D^{[n+1]}$ satisfies the conditions of Proposition \ref{factorfibcofib}. Therefore each of the maps $\Omega_{\alpha}D^{[n]}\rightarrow\Omega_{\alpha}D^{[n+1]}$ is a cofibration. $\Omega_{\alpha}C=\Omega_{\alpha}D^{[0]}\rightarrow\Omega_{\alpha}D$ is a transfinite composition of cofibrations, and is therefore a cofibration. 
\end{enumerate}
\end{proof}

%
%
%
%
%

The last part immediately gives yet another example of an $\alpha$-ideal.

\begin{example}
If $\mathpzc{M}$ is $C$-monoidal strong Koszul, then $(\mathpzc{coAlg}^{|c^{f}|,\alpha-adm}_{\mathfrak{C}},\mathpzc{Alg}_{\mathfrak{P}}^{c})$ is an $\alpha$-ideal.
\end{example}

\subsubsection{Restricted Bar-Cobar Adjunctions}

Here we introduce the homotopy theory on the coalgebra side. Our weak equivalences of coalgebras are those maps which are sent to weak equivalences of algebras by $\Omega_{\alpha}$. In order that the bar-cobar adjunction behaves well with respect to the homotopy theories on the algebra and coalgebra sides, we need to restrict our categories of coalgebra. Let $\alpha:\mathfrak{C}\rightarrow\mathfrak{P}$ be a filtered twisting morphism and $(\mathpzc{I},\mathpzc{J})$ an $\alpha$-ideal. We denote by $\mathpzc{I}_{top}$ the full subcategory of $\mathpzc{coAlg}_{(\mathfrak{C})_{top}}^{conil}$ consisting of coalgebras which are in the image of the functor $(-)_{top}:\mathpzc{I}\rightarrow\mathpzc{coAlg}^{conil}_{(\mathfrak{C})_{top}}$. 
\begin{defn}\label{coalgmod}
Let $\alpha:\mathfrak{C}\rightarrow\mathfrak{P}$ be a twisting morphism. A morphism $f$ in $\mathpzc{I}_{top}$ is said to be an
\begin{enumerate}
\item
 $\alpha$-\textbf{weak equivalence} if $\Omega_{\alpha}f$ is a weak equivalence in $\mathpzc{Alg}_{\mathfrak{P}}$. 
 \item
 \textbf{pre-cofibration} if there is a strict cofibration $\tilde{f}$ in $\mathpzc{I}$ such that $(\tilde{f})_{top}=f$.
 \item
 \textbf{cofibration} if it is a finite composition of pre-cofibrations
 \item
  $\alpha$-\textbf{fibration} if it has the right lifting property with respect to those maps which are both cofibrations and $\alpha$-weak equivalences. 
  \end{enumerate}
\end{defn}
Note that if $\mathpzc{M}$ is projective Koszul then the condition that a cofibration be split after forgetting differentials is superfluous. 


%

The next result follows immediately from Theorem \ref{preQuillen}.

\begin{prop}\label{basicallyQuillen}
Let $(\mathpzc{I},\mathpzc{J})$ be an $\alpha$-ideal. The bar-cobar adjunction restricts to an adjunction of relative categories. Moreover in this case $\Omega_{\alpha}$ sends cofibrations to cofibrations, and $B_{\alpha}$ sends fibrations to $\alpha$-fibrations
\end{prop}


\subsection{Koszul Morphisms}

In this section we are going to discuss twisting morphisms $\alpha$, called Koszul morphisms, for which the relative adjunctions of Proposition \ref{basicallyQuillen} are relative equivalences. Essentially we shall set up the necessary technical machinery so that we can generalise the proof in \cite{vallette2014homotopy} of Theorem 2.1 parts (1) and (3) (and also some of the proofs in \cite{hirsh2012curved}) to exact categories. As will become clear, some of the proof goes through with only minor modifications, while some requires significant effort to generalise. We shall assume from now on that $\mathfrak{C}$ is coaugmented. Moreover we shall assume that there is an augmented operad $\tilde{\mathfrak{P}}\in\mathpzc{Op}(\overline{\mathpzc{Filt}}_{\textbf{PureMon}}^{K}(\mathpzc{M}))$ such that $\mathfrak{P}=(\tilde{\mathfrak{P}})_{top}$. The filtration is a technical condition, and we will not need to consider filtrations on algebras over $\mathfrak{P}$. From now on we shall identify $\mathfrak{P}$ with $\tilde{\mathfrak{P}}$. 

Now let $\alpha:\mathfrak{C}\rightarrow\mathfrak{P}$ be a degree $-1$-map of filtered objects such that $|\alpha|_{filt}$ is a twisting morphism. Then we may regard $\mathfrak{C}\circ_{\alpha}\mathfrak{P}$ and $\mathfrak{P}\circ_{\alpha}\mathfrak{C}$ as filtered complexes. Let $\overline{\mathfrak{C}}$ be the kernel of $\mathfrak{C}\rightarrow I$, and $\overline{\mathfrak{P}}$ the cokernel of $I\rightarrow\mathfrak{P}$. Then as filtered objects we have $\mathfrak{C}\cong I\oplus\overline{\mathfrak{C}}$ and $\mathfrak{P}\cong I\oplus\overline{\mathfrak{P}}$. Then 
$$\mathfrak{C}\circ\mathfrak{P}\cong I\oplus\overline{\mathfrak{C}}\oplus \overline{\mathfrak{P}}\oplus\overline{\mathfrak{C}}\circ\overline{\mathfrak{P}}$$
and 
$$\mathfrak{P}\circ\mathfrak{C}\cong I\oplus\overline{\mathfrak{P}}\oplus \overline{\mathfrak{C}}\oplus\overline{\mathfrak{P}}\circ\overline{\mathfrak{C}}$$
In particular there are filtered maps 
$$\mathfrak{C}\circ\mathfrak{P}\rightarrow I,\;\;\;\; I\rightarrow \mathfrak{P}\circ\mathfrak{C}$$
and
$$\mathfrak{P}\circ\mathfrak{C}\circ\mathfrak{P}\rightarrow\mathfrak{P}$$
\begin{prop}
Let $\alpha:\mathfrak{C}\rightarrow\mathfrak{P}$ be a twisting morphism. Suppose that it preserves filtrations and that $\alpha|_{F_{0}\mathfrak{C}}=0$, $d_{\mathfrak{C}}|_{F_{0}\mathfrak{C}}=0$, and $d_{\mathfrak{P}}|_{F_{0}\mathfrak{P}}=0$. Then the maps
$$\mathfrak{C}\circ\mathfrak{P}\rightarrow I,\;\;\;\; I\rightarrow \mathfrak{P}\circ\mathfrak{C},\;\;\;\; \mathfrak{P}\circ\mathfrak{C}\circ\mathfrak{P}\rightarrow\mathfrak{P}$$
induce maps of complexes
$$\mathfrak{C}\circ_{\alpha}\mathfrak{P}\rightarrow I,\;\;\;\; I\rightarrow \mathfrak{P}\circ_{\alpha}\mathfrak{C},\;\;\;\; \mathfrak{P}\circ_{\alpha}\mathfrak{C}\circ_{\alpha}\mathfrak{P}\rightarrow\mathfrak{P}$$
\end{prop}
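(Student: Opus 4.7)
The plan is to verify, for each of the three maps, that it commutes with the relevant (twisted) differentials by direct computation. The common structural ingredients are: (i) the splittings $\mathfrak{C} = I\oplus\overline{\mathfrak{C}}$ and $\mathfrak{P} = I\oplus\overline{\mathfrak{P}}$ are respected by $d_\mathfrak{C}$ and $d_\mathfrak{P}$, since the counit $\epsilon_\mathfrak{C}$ and the augmentation $\epsilon_\mathfrak{P}$ are chain maps and both $d_\mathfrak{C}, d_\mathfrak{P}$ vanish on the $F_0$ pieces by hypothesis; (ii) $\alpha$ vanishes on $F_0\mathfrak{C}\supset I$ and, as part of the standard setup of a twisting morphism, its image lies in $\overline{\mathfrak{P}}$.

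For the coaugmentation $I\to\mathfrak{P}\circ_\alpha\mathfrak{C}$, it suffices to check that the twisted differential annihilates the generator $1\in I$. The untwisted piece $d_{\mathfrak{P}\circ\mathfrak{C}}$ kills $1$ because $d_\mathfrak{P}, d_\mathfrak{C}$ vanish on the respective identities. The twisted piece $d^l_\alpha$, restricted to $\mathfrak{C}\subset\mathfrak{P}\circ\mathfrak{C}$, is by construction $(\alpha\circ Id_\mathfrak{C})\circ\Delta$, and applied to $1\in I\subset\mathfrak{C}$ we have $\Delta(1)=1\circ 1$ followed by $\alpha(1)=0$.

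For the augmentation $\mathfrak{C}\circ_\alpha\mathfrak{P}\to I$, the map is the projection onto the $I\circ I = I$ summand, whose kernel is $\overline{\mathfrak{C}}\circ\mathfrak{P} + \mathfrak{C}\circ\overline{\mathfrak{P}}$. The untwisted differential sends $\mathfrak{C}\circ\mathfrak{P}$ into this kernel since $d_\mathfrak{C}(\mathfrak{C})\subset\overline{\mathfrak{C}}$ and $d_\mathfrak{P}(\mathfrak{P})\subset\overline{\mathfrak{P}}$. The twisted derivation $d^r_\alpha$ factors through $\mathfrak{C}\circ_{(1)}\mathfrak{P}$, where the $\mathfrak{P}$-component has been produced by $\alpha$; since $\alpha$ has image in $\overline{\mathfrak{P}}$, this places $d^r_\alpha(\mathfrak{C}\circ\mathfrak{P})$ inside $\mathfrak{C}\circ\overline{\mathfrak{P}}$, hence in the kernel.

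The third map $\mathfrak{P}\circ_\alpha\mathfrak{C}\circ_\alpha\mathfrak{P}\to\mathfrak{P}$ is the main obstacle. I would factor it as $\gamma_\mathfrak{P}\circ(Id_\mathfrak{P}\circ\epsilon_\mathfrak{C}\circ Id_\mathfrak{P})$. Untwisted compatibility reduces to $\epsilon_\mathfrak{C}$ and $\gamma_\mathfrak{P}$ being chain maps, together with the derivation structure of $d_{\mathfrak{P}\circ\mathfrak{C}\circ\mathfrak{P}}$. For the twisted terms $Id_\mathfrak{P}\circ'd^r_\alpha$ and $-d^l_\alpha\circ Id_\mathfrak{P}$, I would use the defining formulas of these derivations to exhibit each of them, after composition with $(Id\circ\epsilon_\mathfrak{C}\circ Id)$ and then $\gamma_\mathfrak{P}$, as the single operation on $\mathfrak{P}\circ\mathfrak{P}$ which substitutes $\alpha$ into one internal edge and then composes using $\gamma_\mathfrak{P}$. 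The signs entering the differential (plus $Id\circ'd^r_\alpha$, minus $d^l_\alpha\circ Id$) are precisely designed so that these two contributions cancel. The main technical point, and the principal obstacle, is formalising this cancellation without pointwise arguments, which are unavailable in a general exact category: one exploits the universal property of free modules over (co)operads to reduce the identity to its restriction to the $\mathfrak{C}$-component, then uses associativity of $\gamma_\mathfrak{P}$ together with the counit identities $(\epsilon_\mathfrak{C}\circ_{(1)}Id_\mathfrak{C})\Delta_{(1)} = Id_\mathfrak{C} = (Id_\mathfrak{C}\circ_{(1)}\epsilon_\mathfrak{C})\Delta_{(1)}$ to identify the two terms up to sign.
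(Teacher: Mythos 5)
Your overall route is the same as the paper's: a direct verification that the (co)augmentation maps commute with the twisted differentials, using the filtered splittings $\mathfrak{C}\cong I\oplus\overline{\mathfrak{C}}$ and $\mathfrak{P}\cong I\oplus\overline{\mathfrak{P}}$. The paper writes out only the first map $\mathfrak{C}\circ_{\alpha}\mathfrak{P}\rightarrow I$, reducing it to the claim that $d_{\alpha}$ preserves the kernel of the augmentation and extracting that from the filtration hypotheses, and declares the other two ``similar''; you treat all three explicitly. Your analysis of $\mathfrak{P}\circ_{\alpha}\mathfrak{C}\circ_{\alpha}\mathfrak{P}\rightarrow\mathfrak{P}$ --- that after composing with $\gamma_{\mathfrak{P}}\circ(Id_{\mathfrak{P}}\circ\epsilon_{\mathfrak{C}}\circ Id_{\mathfrak{P}})$ the two twisted terms $Id_{\mathfrak{P}}\circ' d^{r}_{\alpha}$ and $-d^{l}_{\alpha}\circ Id_{\mathfrak{P}}$ yield the same ``substitute $\alpha$ into one slot and compose'' operation with opposite signs, using the counit identities --- is exactly the cancellation the two-sided twisted composite product is built for, and is a genuine gain in explicitness over the paper.

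The one point to be careful about is your ingredient (ii), that the image of $\alpha$ lies in $\overline{\mathfrak{P}}$. In this paper a twisting morphism is merely a Maurer-Cartan element of the convolution pre-Lie algebra; neither that definition nor the hypotheses of this proposition require $\epsilon_{\mathfrak{P}}\circ\alpha=0$, and it does not follow from ``$\alpha$ preserves filtrations and $\alpha|_{F_{0}\mathfrak{C}}=0$'' (nothing prevents $\alpha(F_{1}\mathfrak{C})$ from having a component in $I\subseteq F_{1}\mathfrak{P}$). This is precisely the condition your argument for $\mathfrak{C}\circ_{\alpha}\mathfrak{P}\rightarrow I$ needs: the only potentially offending contribution to the composite of $d^{r}_{\alpha}$ with the augmentation is $\epsilon_{\mathfrak{P}}\circ\alpha$, arising on the summand $\overline{\mathfrak{C}}\circ I$ from the primitive term of $\Delta_{(1)}$. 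So you should either record $\epsilon_{\mathfrak{P}}\circ\alpha=0$ as an additional standing hypothesis --- it is part of the definition of twisting morphism in \cite{loday2012algebraic} and holds in every example the paper uses --- or derive the required vanishing from the filtration data, which is what the paper's own (very terse) proof purports to do. Your treatment of the other two maps only needs $\alpha$ and the differentials to vanish on the image of the coaugmentation and unit, which is covered by the stated $F_{0}$-hypotheses together with the (also implicit in the paper) assumption that the coaugmentation and unit land in $F_{0}$; so apart from this one point the argument is sound.
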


\begin{proof}
We prove it for the first map, the second and third being similar. It suffices to show that $d_{\alpha}$ preserves the kernel of the augmentation. Since $\alpha$ preserves the filtrations it suffices to show that $d_{\mathfrak{C}\circ_{\alpha}\mathfrak{P}}|_{F_{0}\mathfrak{C}\circ_{\alpha}\mathfrak{P}}=0$. But this is clear from the assumptions.
\end{proof}

 
\begin{prop}
Let $\alpha:\mathfrak{C}\rightarrow\mathfrak{P}$ be a twisting morphism as above. Suppose that $\alpha|_{F_{0}\mathfrak{C}}=0$, $d_{\mathfrak{C}}|_{F_{0}\mathfrak{C}}=0$, and $d_{\mathfrak{P}}|_{F_{0}\mathfrak{P}}=0$. Then the following are equivalent.
\begin{enumerate}
\item
$\mathfrak{P}\circ_{\alpha}\mathfrak{C}\circ_{\alpha}\mathfrak{P}\rightarrow\mathfrak{P}$ is a filtered weak equivalence.
\item
$\mathfrak{C}\circ_{\alpha}\mathfrak{P}\rightarrow I$ is a filtered weak equivalence.
\item
$I\rightarrow \mathfrak{P}\circ_{\alpha}\mathfrak{C}$ is a filtered weak equivalence.
\end{enumerate}
\end{prop}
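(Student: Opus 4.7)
The strategy is to generalise the classical argument for twisting morphisms between DG (co)operads (cf.\ \cite{loday2012algebraic}, Lemma 11.1.2) to the filtered exact setting, by reducing to associated graded and then using a Künneth-style comparison on natural filtrations of the two-sided twisted composite product.

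First, I would reduce the statement to one about associated graded complexes. Under the hypotheses that $\alpha$ preserves filtrations, $\alpha|_{F_{0}\mathfrak{C}}=0$, $d_{\mathfrak{C}}|_{F_{0}\mathfrak{C}}=0$ and $d_{\mathfrak{P}}|_{F_{0}\mathfrak{P}}=0$, combined with Assumption \ref{compatiblecompositefilt}, the associated graded functor commutes with each of the twisted composite products $\mathfrak{P}\circ_{\alpha}\mathfrak{C}$, $\mathfrak{C}\circ_{\alpha}\mathfrak{P}$, $\mathfrak{P}\circ_{\alpha}\mathfrak{C}\circ_{\alpha}\mathfrak{P}$ and with the three augmentation maps involved. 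Since a filtered weak equivalence is by definition a map whose associated graded is a (graded) weak equivalence, it suffices to prove the equivalence of the three conditions for the induced twisting morphism $\mathrm{gr}(\alpha):\mathrm{gr}(\mathfrak{C})\to\mathrm{gr}(\mathfrak{P})$, which is again a twisting morphism of (weight-graded) (co)operads vanishing on weight zero.

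Next, I would establish the equivalences in the graded case by exploiting two natural filtrations on the two-sided complex $\mathfrak{P}\circ_{\alpha}\mathfrak{C}\circ_{\alpha}\mathfrak{P}$. Filtering by the arity (weight) of the rightmost $\mathfrak{P}$ kills the twisted differential $d^{r}_{\alpha}$ at the $E^{0}$-stage, producing the complex $(\mathfrak{P}\circ_{\alpha}\mathfrak{C})\circ\mathfrak{P}$ with only $d^l_\alpha$ and the internal differentials, while the augmentation to $\mathfrak{P}\cong I\circ\mathfrak{P}$ becomes (up to reindexing) the composition product with $\mathfrak{P}$ of the one-sided augmentation $I\to\mathfrak{P}\circ_{\alpha}\mathfrak{C}$. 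A Künneth-type comparison, valid because $\mathfrak{P}$ is $K$-flat as a $\Sigma$-module and by the admissibility granted in Assumption \ref{compatiblecompositefilt}, shows that this $E^{0}$-augmentation is a weak equivalence iff (3) holds; the exhaustive admissible nature of the filtration gives convergence and yields (1)$\Leftrightarrow$(3). Filtering symmetrically by the leftmost $\mathfrak{P}$-arity kills $d^{l}_{\alpha}$ and yields (1)$\Leftrightarrow$(2).

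The main obstacle is executing the Künneth-type comparison in an exact category, where vector-space-style splittings and direct-sum decompositions into summands are unavailable. This is handled by the $K$-flatness of $\mathfrak{P}$, the weak $\textbf{PureMon}$-elementarity of $\mathpzc{E}$ (guaranteeing that filtered colimits are exact and hence that the relevant spectral sequences converge), and the admissibility of coinvariants ensured by Assumption \ref{compatiblecompositefilt}. A secondary technicality is the asymmetry of direction in the augmentation maps of (2) and (3), reflecting the distinction between the counit $\mathfrak{C}\to I$ and the unit $I\to\mathfrak{P}$; this must be tracked carefully throughout but does not alter the overall strategy.
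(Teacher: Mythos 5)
Your reduction to associated graded objects and your treatment of the two hard implications, namely (2)$\Rightarrow$(1) and (3)$\Rightarrow$(1), are essentially the paper's own argument: the paper likewise introduces an auxiliary filtration on $gr(\mathfrak{P}\circ_{\alpha}\mathfrak{C}\circ_{\alpha}\mathfrak{P})$ whose associated graded decouples one of the two twists, yielding $gr(\mathfrak{P})\circ(\mathfrak{C}\circ_{\alpha}\mathfrak{P})$ in one case and $gr(\mathfrak{P}\circ_{\alpha}\mathfrak{C})\circ gr(\mathfrak{P})$ in the other, and then uses $K$-flatness to pass from the one-sided statement to the two-sided one. (The paper filters by the given filtration degrees of $\mathfrak{C}$ together with the outer copy of $\mathfrak{P}$ rather than by the arity of the outer $\mathfrak{P}$, but that difference is cosmetic.)

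There is, however, a genuine gap in the remaining implications. You assert that each filtration comparison gives an ``iff'', so that (1)$\Leftrightarrow$(3) and (1)$\Leftrightarrow$(2) follow from convergence. But this kind of argument only goes one way: if the map induced on the associated graded of the auxiliary filtration is a weak equivalence, then the original map is; the converse fails in general, since a quasi-isomorphism of complexes need not induce a quasi-isomorphism on the associated graded of an auxiliary filtration. So as written you have not proved (1)$\Rightarrow$(2) or (1)$\Rightarrow$(3). These directions require a separate (easy) argument, which the paper supplies by retracts coming from the splittings $\mathfrak{C}\cong I\oplus\overline{\mathfrak{C}}$ and $\mathfrak{P}\cong I\oplus\overline{\mathfrak{P}}$: the map $gr(\mathfrak{C}\circ_{\alpha}\mathfrak{P})\rightarrow I$ is a retract of $gr(\mathfrak{P}\circ_{\alpha}\mathfrak{C}\circ_{\alpha}\mathfrak{P})\rightarrow gr(\mathfrak{P})$, giving (1)$\Rightarrow$(2); and the section $gr(\mathfrak{P})\rightarrow gr(\mathfrak{P}\circ_{\alpha}\mathfrak{C}\circ_{\alpha}\mathfrak{P})$ is an equivalence by two-out-of-three (its composite with the augmentation is the identity), and $I\rightarrow gr(\mathfrak{P}\circ_{\alpha}\mathfrak{C})$ is a retract of it, giving (1)$\Rightarrow$(3). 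A similar caveat applies to your K\"{u}nneth step: only the direction ``(3) (resp.\ (2)) implies the $E^{0}$-augmentation is an equivalence'' is needed, and that is the direction $K$-flatness actually provides; the reverse direction of your claimed ``iff'' would again need a retract along the augmentation of $\mathfrak{P}$.
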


\begin{proof}
Let us show $1\Leftrightarrow 2$. Suppose that $\mathfrak{P}\circ_{\alpha}\mathfrak{C}\circ_{\alpha}\mathfrak{P}\rightarrow\mathfrak{P}$ is a filtered weak equivalence. The map $gr(\mathfrak{C}\circ_{\alpha}\mathfrak{P})\rightarrow gr(I)$ is a retract of $gr(\mathfrak{P}\circ_{\alpha}\mathfrak{C}\circ_{\alpha}\mathfrak{P})\rightarrow gr(\mathfrak{P})$ and is therefore a graded weak equivalence. Conversely suppose that $gr(\mathfrak{C}\circ_{\alpha}\mathfrak{P})\rightarrow I$ is a graded weak equivalence. It suffices to show that the map $gr(\mathfrak{P}\circ_{\alpha}\mathfrak{C}\circ_{\alpha}\mathfrak{P})\rightarrow gr(\mathfrak{P})$ is a graded equivalence. Consider the filtration on $gr(\mathfrak{P}\circ_{\alpha}\mathfrak{C}\circ_{\alpha}\mathfrak{P})$ given by 
$$F_{n}gr(\mathfrak{P}\circ_{\alpha}\mathfrak{C}\circ_{\alpha}\mathfrak{P})=\sum_{k+l\le n}gr(\mathfrak{P})\circ gr_{k}(\mathfrak{C})\circ gr_{l}(\mathfrak{P})$$
The associated graded of this filtration is $gr(\mathfrak{P})\circ(\mathfrak{C}\circ_{\alpha}\mathfrak{P})$. Since the map $gr(\mathfrak{C}\circ_{\alpha}\mathfrak{P})\rightarrow I$ is a graded weak equivalence, the map $gr(\mathfrak{P})\circ(\mathfrak{C}\circ_{\alpha}\mathfrak{P})\rightarrow gr(\mathfrak{P})$ is a graded weak equivalence.

Now let us show $1\Leftrightarrow 3$. Suppose that $\mathfrak{P}\circ_{\alpha}\mathfrak{C}\circ_{\alpha}\mathfrak{P}\rightarrow\mathfrak{P}$ is a filtered weak equivalence. Consider the map $gr(\mathfrak{P})\rightarrow gr(\mathfrak{P}\circ_{\alpha}\mathfrak{C}\circ_{\alpha}\mathfrak{P})$. The composition $gr(\mathfrak{P})\rightarrow gr(\mathfrak{P}\circ_{\alpha}\mathfrak{C}\circ_{\alpha}\mathfrak{P})\rightarrow gr(\mathfrak{P})$ is the identity. Therefore $gr(\mathfrak{P})\rightarrow gr(\mathfrak{P}\circ_{\alpha}\mathfrak{C}\circ_{\alpha}\mathfrak{P})$ is a graded weak equivalence. Now notice that the map $I\rightarrow gr(\mathfrak{P}\circ_{\alpha}\mathfrak{C})$ is a retract of the map $gr(\mathfrak{P})\rightarrow gr(\mathfrak{P}\circ_{\alpha}\mathfrak{C}\circ_{\alpha}\mathfrak{P})$, and is therefore a graded weak equivalence. Conversely suppose that $I\rightarrow gr(\mathfrak{P}\circ_{\alpha}\mathfrak{C})$ is a graded weak equivalence. Consider the filtration on $\mathfrak{P}\circ_{\alpha}\mathfrak{C}\circ_{\alpha}\mathfrak{P}$ given on the underlying graded object by 
$$F_{n}gr(\mathfrak{P}\circ\mathfrak{C}\circ\mathfrak{P})=\sum_{k+l\le n}gr_{k}(\mathfrak{P})\circ gr_{l}(\mathfrak{C})\circ gr(\mathfrak{P})$$
The associated graded is $gr(\mathfrak{P}\circ_{\alpha}\mathfrak{C})\circ gr(\mathfrak{P})$. Since the map $I\rightarrow gr(\mathfrak{P}\circ_{\alpha}\mathfrak{C})$ is a graded weak equivalence, the map $gr(\mathfrak{P})\rightarrow gr(\mathfrak{P}\circ_{\alpha}\mathfrak{C})\circ gr(\mathfrak{P})$ is as well. Hence the map $gr(\mathfrak{P}\circ_{\alpha}\mathfrak{C})\circ gr(\mathfrak{P})\rightarrow gr(\mathfrak{P})$ is itself a weak equivalence. 
\end{proof}

We are now ready to define Koszul morphisms.
\begin{defn}
A twisting morphism is said to be \textbf{Koszul} if 
\begin{enumerate}
\item
 $\mathfrak{C}$ is a filtered divided powers cooperad.
\item
It is homotopical.
\item
Assumption \ref{compatiblecompositefilt} is satisfied. 
\item
$\mathfrak{P}$ is admissible and the differential $d_{\mathfrak{P}}$ lowers the filtration.
 \item
 $\alpha$ induces a morphism of filtered objects, and $\alpha|_{F_{0}\mathfrak{C}}=0$. 
 \item
$\mathfrak{P}\circ_{\alpha}\mathfrak{C}\circ_{\alpha}\mathfrak{P}\rightarrow \mathfrak{P}$ is a filtered weak equivalence. 
\end{enumerate} 
\end{defn}

On its face this looks somewhat different to the usual definition from \cite{vallette2014homotopy}. However when $\mathpzc{E}$ is the category of vector spaces over some field $k$ then the definitions agree. Indeed in this case  the first, second, and fourth conditions are automatic. 

\subsubsection{Examples of Koszul Morphisms}

Before we proceed to Koszul duality for Koszul morphisms, let us make sure we have some examples. When $\mathpzc{E}$ is the category of vector spaces over some field $k$ then there are many known examples. It turns out we can bootstrap these .

\begin{prop}\label{inducedkoszul}
Let $\alpha:\mathfrak{C}\rightarrow\mathfrak{P}$ be a Koszul morphism in $Ch({}_{k}\mathpzc{Vect})$. Let $\mathpzc{M}$ be a Koszul category with unit $R$ which is enriched over $k$. Then
$$R[\alpha]:R[\mathfrak{C}]\rightarrow R[\mathfrak{P}]$$
is a Koszul morphism in $\mathpzc{M}$.
\end{prop}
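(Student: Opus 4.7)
The plan is to exploit the fact that the functor $R[-]:Ch({}_{k}\mathpzc{Vect})\rightarrow\mathpzc{M}$, defined by $V\mapsto V\otimes_{k}R$ via the $k$-enrichment, is a strong symmetric monoidal, cocontinuous, and exact functor. Indeed since $R$ is the monoidal unit, $R[V]\otimes R[W]\cong R[V\otimes W]$, and since $\mathpzc{M}$ is $k$-linear and $\otimes$ commutes with colimits, $R[-]$ preserves all (co)limits. Moreover $R[-]$ preserves filtrations in the evident way, and because it is $k$-linear it sends chain homotopies to chain homotopies; every quasi-isomorphism in $Ch({}_{k}\mathpzc{Vect})$ is a chain homotopy equivalence, so $R[-]$ preserves quasi-isomorphisms, filtered quasi-isomorphisms, and graded quasi-isomorphisms.

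First I would verify that $R[\alpha]$ is a twisting morphism. Since $R[-]$ is strong monoidal, it sends operads to operads and cooperads to cooperads, and the associated composite products are preserved up to natural isomorphism. Hence by Proposition \ref{preserveMC} applied to $R[-]$ (viewed as a strict monoidal functor after passing to equivalent categories), $R[\alpha]:R[\mathfrak{C}]\rightarrow R[\mathfrak{P}]$ is a twisting morphism. The conditions $R[\alpha]|_{F_{0}R[\mathfrak{C}]}=0$, $d_{R[\mathfrak{C}]}|_{F_{0}R[\mathfrak{C}]}=0$, $d_{R[\mathfrak{P}]}|_{F_{0}R[\mathfrak{P}]}=0$, and the fact that the differentials and $R[\alpha]$ lower the filtration, follow by applying $R[-]$ to the corresponding statements for $\alpha$, using that $R[-]$ preserves kernels and cokernels.

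Next I would check Assumption \ref{compatiblecompositefilt} together with the remaining structural conditions. Since $R[-]$ commutes with the composite product and direct sums, we have natural isomorphisms $R[\mathfrak{C}^{p_{1}}\circ_{ns}\mathfrak{P}^{q_{1}}\circ_{ns}\cdots](n)\cong R[(\mathfrak{C}^{p_{1}}\circ_{ns}\mathfrak{P}^{q_{1}}\circ_{ns}\cdots)(n)]$, and the filtrations correspond. The $K$-flatness condition on the associated graded of $R[\mathfrak{C}]$ and the admissibility of the filtrations follow because over the field $k$ the associated graded complexes are sums of shifts of $k$ (which are trivially cofibrant), and applying $R[-]$ preserves these properties in $\mathpzc{M}$ since $R$ is the unit. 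For admissibility of the operad $R[\mathfrak{P}]$, since $\mathpzc{M}$ is $k$-enriched with $k$ of characteristic $0$ (as is usually implicit in Koszul setups, and certainly guaranteed when applying the proposition to known examples), the monoid axiom together with $\Q$-linearity yields the $\Sigma_{n}$-equivariant monoid axiom, so every operad in $\mathpzc{M}$ is admissible.

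The main step, and the principal obstacle, is verifying condition $(5)$: that $R[\mathfrak{P}]\circ_{R[\alpha]}R[\mathfrak{C}]\circ_{R[\alpha]}R[\mathfrak{P}]\rightarrow R[\mathfrak{P}]$ is a filtered weak equivalence. Using the strong monoidality and cocontinuity of $R[-]$, this map is naturally isomorphic to $R[\mathfrak{P}\circ_{\alpha}\mathfrak{C}\circ_{\alpha}\mathfrak{P}\rightarrow\mathfrak{P}]$. By hypothesis the map inside brackets is a filtered weak equivalence in $Ch({}_{k}\mathpzc{Vect})$, i.e.\ its associated graded is a quasi-isomorphism. Since $R[-]$ commutes with the associated graded functor (both being cokernel constructions preserved by $R[-]$), it suffices to show that $R[-]$ preserves graded quasi-isomorphisms, which reduces to the degree-wise statement that $R[-]$ preserves quasi-isomorphisms. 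This follows from the observation above that quasi-isomorphisms in $Ch({}_{k}\mathpzc{Vect})$ are chain homotopy equivalences, and $k$-linear functors preserve chain homotopies. This completes the verification, so $R[\alpha]$ is a Koszul morphism in $\mathpzc{M}$.
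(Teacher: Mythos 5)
Your proposal is correct and follows essentially the same route as the paper: the paper's proof simply cites Proposition \ref{preserveMC} together with the fact that $R\otimes_{k}(-)$ is a strong monoidal, kernel- and cokernel-preserving functor whose image consists of cofibrant (hence $K$-flat) objects. Your only real deviation is justifying preservation of (filtered) weak equivalences via the observation that quasi-isomorphisms in $Ch({}_{k}\mathpzc{Vect})$ are chain homotopy equivalences, where the paper instead leans on cofibrancy of the image; both work, and your extra detail on Assumption \ref{compatiblecompositefilt} and admissibility is an expansion of what the paper leaves implicit.
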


\begin{proof}
Together with Proposition \ref{preserveMC}, this follows immediately from the fact that $R\otimes(-)$ is a strong monoidal, kernel and cokernel preserving functor whose image consists of cofibrant objects. 
\end{proof}

In particular if $k=\mathbb{Q}$, and $\kappa:\mathfrak{S}^{c}\otimes_{H}\mathfrak{coComm}\rightarrow\mathfrak{Lie}$ is the canonical Koszul morphism in ${}_{\Q}\mathpzc{Mod}$ then we get a Koszul morphism 
\begin{displaymath}
\xymatrix{
\mathfrak{S}^{c}\otimes_{H}(R\otimes\mathfrak{coComm})\ar[r]^{\cong} & R\otimes(\mathfrak{S}^{c}\otimes_{H}\mathfrak{coComm})\ar[r]^{\;\;\;\;\;\;\;\;\;\;\;\;\;R\otimes\alpha} & R\otimes\mathfrak{Lie}
}
\end{displaymath}
in $\mathpzc{M}$. This generalises classical Koszul duality between Lie algebras and cocommutative coalgebras to arbitrary $\Q$-Koszul categories.

\begin{prop}\label{Koszulfree}
Suppose that $\mathpzc{M}$ is enriched over $\Q$. Let $V$ be an object in ${}\mathpzc{Mod}_{\Sigma}(\mathpzc{M})$ and consider the twisting morphism
$$\alpha:\mathfrak{T}^{c}(V[1])\rightarrow V[1]\rightarrow V\rightarrow\mathfrak{T}(V)$$
from Example \ref{cofreetwist}. Let us suppose that $V=R\otimes V_{0}$ where $V_{0}$ is an object of ${}\mathpzc{Mod}_{\Sigma}(Ch(\mathpzc{E}))$ concentrated in homological degree $0$ and arity $1$. 
\begin{enumerate}
\item 
If $V$ is $K$-flat then $\mathfrak{T}(V)$ has $K$-flat entries.
\item
The map $\mathfrak{T}^{c}(V[1])\circ_{\alpha}\mathfrak{T}(V)\rightarrow R$ is a homotopy equivalence.
\item
$\mathfrak{T}(V)$ and $\mathfrak{T}^{c}(V[1])$ are equipped with filtrations such that the differentials on $\mathfrak{T}(V)$ and $\mathfrak{T}^{c}(V[1])$ lower the filtration, and $\alpha$ preserves the filtration. Moreover $\mathfrak{T}^{c}(V[1])\in\mathpzc{coOp}^{stdiv,|K|}$. 
\end{enumerate}
In particular if $V$ is $K$-flat then $\alpha$ is a Koszul morphism.
\end{prop}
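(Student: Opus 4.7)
The core observation will be that $V$ is concentrated in arity $1$, which forces both $\mathfrak{T}(V)$ and $\mathfrak{T}^c(V[1])$ to live in arity $1$ with trivial $\Sigma_1$-action; they reduce there to the tensor algebra $T(V(1))$ and the tensor coalgebra $T^c(V(1)[1])$ respectively, and $\alpha$ becomes the universal twisting morphism between them. All three parts follow from exploiting this reduction. Admissibility of $\mathfrak{P} = \mathfrak{T}(V)$ comes gratis from the $\mathbb{Q}$-enrichment via the theorem that every operad in a $\mathbb{Q}$-enriched combinatorial monoidal model category is (rectifiably) admissible.

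For part (1), observe $\mathfrak{T}(V)(1) = \bigoplus_{n \geq 0} V(1)^{\otimes n}$. Each summand is $K$-flat since tensor products of $K$-flat objects are $K$-flat, and closure under the countable coproduct follows directly from the weakly $\textbf{PureMon}$-elementary hypothesis on $\mathpzc{E}$.

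For part (3), I will equip both (co)operads with the weight filtration $F_n = \bigoplus_{k \leq n} V^{\otimes k}$ (and analogously with $V[1]$). Because $V_0$ is in degree $0$ with no internal differential, the differential on $V = R \otimes V_0$ reduces to $d_R \otimes \mathrm{id}$, which preserves tensor weight; hence the induced differentials on $\mathfrak{T}(V)$ and $\mathfrak{T}^c(V[1])$ preserve the filtration. The twisting morphism $\alpha$ factors through the weight-$1$ projection $\mathfrak{T}^c(V[1]) \twoheadrightarrow V[1]$ followed by the weight-$1$ inclusion $V \hookrightarrow \mathfrak{T}(V)$, so $\alpha|_{F_0} = 0$ and $\alpha$ is filtered. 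The associated graded of $\mathfrak{T}^c(V[1])$ is $\bigoplus_k V[1]^{\otimes k}$, whose summands are $K$-flat by part (1), giving $\mathfrak{T}^c(V[1]) \in \mathpzc{coOp}^{|K|}$.

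Part (2) is the main content. Restricted to arity $1$, the complex $\mathfrak{T}^c(V[1]) \circ_\alpha \mathfrak{T}(V)$ is the one-sided bar complex $T^c(V(1)[1]) \otimes T(V(1))$ whose twisted differential moves the rightmost cobar-entry into the algebra factor via the shift $\alpha$. The plan is to write down the classical contracting homotopy
$$h\bigl([v_1 | \cdots | v_n] \otimes w_1 \cdots w_m\bigr) = \pm [v_1 | \cdots | v_n | s w_1] \otimes w_2 \cdots w_m \quad (m \geq 1),$$
extended by zero on the complement, where $s : V \to V[1]$ is the suspension. A sign computation formally identical to the field case (see \cite{loday2012algebraic} Chapter 2) will yield $dh + hd = \mathrm{id} - \iota \pi$ with $\iota \pi$ the retraction onto the weight-$(0,0)$ summand $R$, proving the homotopy equivalence. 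The hard step I anticipate is verifying Assumption \ref{compatiblecompositefilt} for composites $\mathfrak{C}^{p_1} \circ_{ns} \mathfrak{P}^{q_1} \circ_{ns} \cdots$; in arity $1$ these reduce to iterated tensor products of weight-filtered free tensor objects on the $K$-flat object $V$ (with trivial coinvariants), so an inductive argument using parts (1), (3) and Proposition \ref{gradestronmon} should suffice. The ``in particular'' statement will then follow from the equivalence $(\mathfrak{C} \circ_\alpha \mathfrak{P} \xrightarrow{\sim} I) \Leftrightarrow (\mathfrak{P} \circ_\alpha \mathfrak{C} \circ_\alpha \mathfrak{P} \xrightarrow{\sim} \mathfrak{P})$ of the preceding proposition.
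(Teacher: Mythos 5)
Your proposal is correct and takes essentially the same route as the paper: the paper also treats (1) and (3) as routine consequences of the weight grading and establishes (2) by the classical Loday--Vallette contracting-homotopy argument for the universal twisting morphism, transported mutatis mutandis to $Ch(\mathpzc{E})$. The only cosmetic difference is that the paper first reduces to the case $R=k$ via Proposition \ref{inducedkoszul} before citing the classical result, whereas you write the ($R$-linear) homotopy directly in arity $1$; this changes nothing of substance.
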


\begin{proof}
The only non-trivial claim is the second one. By Proposition \ref{inducedkoszul} it suffices to prove the claim for $R=k$ where $k$ is the unit of $\mathpzc{E}$. The proof of Proposition 3.4.13 in \cite{loday2012algebraic} works mutatis mutandis in the setting of $Ch(\mathpzc{E})$.
\end{proof}


This result has the following extremely useful consequence.

\begin{prop}\label{conditionfordeceny}
Let $\mathpzc{E}$ be a monoidal elementary exact category, and let$V$ be a flat object in $\mathpzc{E}$, regarded as a complex concentrated in degree $0$. Suppose that $V$ is formally $\aleph_{1}$-filtered relative to the class $R\otimes V^{\otimes}\otimes Sym(V)$. Then
$\hat{S}_{R}(R\otimes V)\otimes^{\mathbb{L}}_{S_{R}(R\otimes V)}R\cong \hat{S}_{R}(R\otimes V)\otimes_{S_{R}(R\otimes V)}R\cong R$. Hence in the diagram below
\begin{displaymath}
\xymatrix{
S_{R}(R\otimes V)\ar[d]\ar[r] & R\ar[d]\\
\hat{S}_{R}(R\otimes V)\ar[r] & R
}
\end{displaymath}
is a homotopy pushout.
In particular $V$ is decent. 
\end{prop}
\begin{proof}
Consider the Koszul resolution of $R$ from Proposition \ref{Koszulfree}, 
$$S_{R}^{c}(R\otimes V[1])\otimes_{\kappa} S_{R}(R\otimes V)\rightarrow R$$
This is a resolution by a $K$-flat object. Moreover regarding $R$ as a graded object concentrated in degree $0$, and with the tensor product grading induced from the gradings on $S_{R}^{c}(R\otimes V[1])$ and $S_{R}(R\otimes V)$, this is a graded equivalence. Tensoring with $\hat{S}_{R}(R\otimes V)$ gives the complex
$$S_{R}^{c}(R\otimes V[1])\otimes_{\kappa} \hat{S}_{R}(R\otimes V)$$
We want to show that the map $S_{R}^{c}(R\otimes V[1])\otimes_{\kappa} \hat{S}_{R}(R\otimes V)\rightarrow R$ is an equivalence. By the $\aleph_{1}$-filtered assumptions, each graded piece of $S_{R}^{c}(R\otimes V)$ is $\aleph_{1}$-filtered relative to $R\otimes V^{\otimes}\otimes Sym(V)$. Therefore
$$S_{R}^{c}(R\otimes V[1])\otimes_{\kappa} \hat{S}_{R}(R\otimes V)\cong\prod_{n=0}^{\infty}(S_{R}^{c}(R\otimes V[1])\otimes_{\kappa} S_{R}(R\otimes V))_{n}$$
Since countable products are exact, the map $\prod_{n=0}^{\infty}(S_{R}^{c}(R\otimes V[1])\otimes_{\kappa} S_{R}(R\otimes V))_{n}\rightarrow R$ is an equivalence, and we're done.
\end{proof}




\subsection{An Equivalence of $(\infty,1)$-Categories}
In this section we are going to prove our first version of Koszul duality. Namely we will show the following.

\begin{thm}[Koszul Duality]\label{coopKoszuldual}
Let $\mathpzc{M}$ be a Koszul category, $\alpha:\mathfrak{C}\rightarrow\mathfrak{P}$ a Koszul morphism in $\mathpzc{M}$, and $(\mathpzc{I},\mathpzc{J})$ an $\alpha$-ideal. The bar-cobar adjunction induces an adjoint equivalence of $(\infty,1)$-categories
$$\adj{\Omega_{\alpha}}{\mathrm{L^{H}}(\mathpzc{I}_{top})}{\textbf{Alg}_{\mathfrak{P}}}{\textbf{B}_{\alpha}}$$
\end{thm}

\begin{rem}
The theorem implies that $\mathrm{L^{H}}(\mathpzc{I}_{top})$ is independent of the choice of $\alpha$-ideal.
\end{rem}


The theorem shall in fact follow from the result below, using Proposition \ref{infinitysame} and Proposition \ref{relequivuc}.

\begin{prop}
Let $\mathpzc{M}$ be a Koszul category, $\alpha:\mathfrak{C}\rightarrow\mathfrak{P}$ a Koszul morphism in $\mathpzc{M}$, and $(\mathpzc{I},\mathpzc{J})$ an $\alpha$-ideal. Then the adjunction of relative categories
$$\adj{\Omega_{\alpha}}{\mathpzc{I}_{top}}{\mathpzc{J}}{\textbf{B}_{\alpha}}$$
is a relative equivalence.
\end{prop}

We have the following generalisation of \cite{vallette2014homotopy} Theorem 2.6 (2). Some of the techniques used in the following Proposition are similar to those in \cite{hirsh2012curved} Proposition 5.1.5.

\begin{prop}\label{unitacyclic}
Let $\alpha$ be a Koszul morphism and $C\in\mathpzc{I}$. Then there exists a filtration on $ B_{\alpha}\Omega_{\alpha}C$ such that the unit $\nu_{\alpha}(C):C\rightarrow B_{\alpha}\Omega_{\alpha}C$ is a filtered weak equivalence.
\end{prop}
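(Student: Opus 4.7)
The plan is to adapt the argument of \cite{vallette2014homotopy} Theorem 2.6(2) to the exact-categorical setting, replacing direct-sum decompositions with associated-graded computations controlled by the Koszul category structure.

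First I would write the underlying $\Sigma$-module of $B_\alpha \Omega_\alpha C$ as $\mathfrak{C}\circ\mathfrak{P}\circ C$ and decompose its total differential into five pieces: the internal differentials $d_\mathfrak{C}$, $d_\mathfrak{P}$, $d_C$, the cobar twist (coming from $\alpha$ acting on the image of $\Delta_C$), and the bar twist (pairing the innermost outer $\mathfrak{C}$-factor with the topmost $\mathfrak{P}$-factor through $\alpha$). I would then define a filtration
\[
F_n B_\alpha \Omega_\alpha C \;=\; \sum_{i+j\le n} F_i\mathfrak{C} \,\circ\, \mathfrak{P}\,\circ\, F_j C ,
\]
where the filtration on $C$ is the one induced from the filtration on $\mathfrak{C}$. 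Because $\alpha|_{F_0\mathfrak{C}} = 0$, both twisting parts strictly lower the filtration, while the internal differentials are compatible with it thanks to the Koszul hypotheses $d_\mathfrak{C}|_{F_0\mathfrak{C}}=0$ and $d_\mathfrak{P}|_{F_0\mathfrak{P}}=0$. Exhaustiveness follows from the conilpotency of $C$ together with the exhaustive filtration on $\mathfrak{C}$; the unit $\nu_\alpha(C)$ is then a map of filtered objects by construction.

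The crux is the computation of the associated graded. By Assumption \ref{compatiblecompositefilt} and the admissibility conditions packaged into $\mathpzc{coAlg}_{\mathfrak{C}}^{|K|,\alpha-adm}$, Proposition \ref{gradestronmon} identifies
\[
\textrm{gr}(B_\alpha\Omega_\alpha C) \;\cong\; \textrm{gr}(\mathfrak{C})\circ\textrm{gr}(\mathfrak{P})\circ\textrm{gr}(C).
\]
Since the twisting differentials strictly lower the filtration, the surviving differential on the associated graded is precisely the two-sided-like twisted composite differential built from $\textrm{gr}(\alpha)$ together with the remaining internal differentials, and a reorganization identifies this complex with $\bigl(\textrm{gr}(\mathfrak{C})\circ_{\textrm{gr}(\alpha)}\textrm{gr}(\mathfrak{P})\bigr)\circ\textrm{gr}(C)$. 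The Koszul hypothesis gives that $\mathfrak{C}\circ_\alpha\mathfrak{P}\to I$ is a filtered weak equivalence, hence $\textrm{gr}(\mathfrak{C})\circ_{\textrm{gr}(\alpha)}\textrm{gr}(\mathfrak{P})\to I$ is a graded weak equivalence. Composing with the $K$-flat graded object $\textrm{gr}(C)$ preserves this equivalence, and $\textrm{gr}(\nu_\alpha(C))$ is identified with the resulting composite equivalence, which finishes the proof.

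The main obstacle is the associated-graded step. In the vector-space setting of \cite{vallette2014homotopy}, the necessary identifications are immediate from direct-sum decompositions, whereas here one must invoke Assumption \ref{compatiblecompositefilt} and the admissibility of all the composite products involved to ensure that $\textrm{gr}$ distributes across $\mathfrak{C}\circ\mathfrak{P}\circ C$, and one must then use the $K$-flatness of $\textrm{gr}(C)$ to transport the equivalence $\textrm{gr}(\mathfrak{C})\circ_{\textrm{gr}(\alpha)}\textrm{gr}(\mathfrak{P})\to I$ across a composition with $\textrm{gr}(C)$. Once this bookkeeping is carefully carried out, the Koszul condition $\mathfrak{P}\circ_\alpha\mathfrak{C}\circ_\alpha\mathfrak{P}\simeq\mathfrak{P}$ (equivalently $\mathfrak{C}\circ_\alpha\mathfrak{P}\simeq I$) delivers the conclusion.
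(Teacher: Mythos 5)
There is a genuine gap, and it is visible as an internal inconsistency in your own write-up. With the filtration $F_{n}B_{\alpha}\Omega_{\alpha}C=\sum_{i+j\le n}F_{i}\mathfrak{C}\circ\mathfrak{P}\circ F_{j}C$ you correctly observe that \emph{both} twisting parts of the differential strictly lower the filtration: the bar twist consumes a piece of $\mathfrak{C}$ through $\alpha$ (and $\alpha|_{F_{0}\mathfrak{C}}=0$ forces that piece to have filtration degree $\ge 1$), so it strictly lowers the $\mathfrak{C}$-index, while the cobar twist strictly lowers the $C$-index. But then \emph{no} twisting differential survives to the associated graded: $\textrm{gr}(B_{\alpha}\Omega_{\alpha}C)$ is the \emph{untwisted} composite $\textrm{gr}(\mathfrak{C})\circ\mathfrak{P}\circ\textrm{gr}(C)$ with only internal differentials, not $\bigl(\textrm{gr}(\mathfrak{C})\circ_{\textrm{gr}(\alpha)}\textrm{gr}(\mathfrak{P})\bigr)\circ\textrm{gr}(C)$ as you claim in the next step. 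The Koszul hypothesis says nothing about the untwisted composite (it is precisely the twist that makes $\mathfrak{C}\circ_{\alpha}\mathfrak{P}$ acyclic onto $I$), so the appeal to $\mathfrak{C}\circ_{\alpha}\mathfrak{P}\simeq I$ does not apply and the map $\textrm{gr}(C)\rightarrow\textrm{gr}(\mathfrak{C})\circ\mathfrak{P}\circ\textrm{gr}(C)$ is not a weak equivalence in general. The one-filtration version of your strategy can be repaired by filtering only in the $C$-direction, i.e.\ $F_{n}=\mathfrak{C}\circ\mathfrak{P}\circ F_{n}C$ (with the usual sum-over-tensor-slots convention), so that the cobar twist dies but the bar twist survives; the associated graded is then $(\mathfrak{C}\circ_{\alpha}\mathfrak{P})\circ\textrm{gr}(C)$, and a second filtration by the $\mathfrak{C}$- and $\mathfrak{P}$-filtrations together with the $K$-flatness of $\textrm{gr}(C)$ lets you invoke the graded weak equivalence $\textrm{gr}(\mathfrak{C}\circ_{\alpha}\mathfrak{P})\rightarrow I$.

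For comparison, the paper avoids filtering $B_{\alpha}\Omega_{\alpha}C$ directly at the first stage: it applies $\Omega_{\alpha}$ to the unit, filters $\Omega_{\alpha}C$ and $\Omega_{\alpha}B_{\alpha}\Omega_{\alpha}C$ by the combined $\mathfrak{P}$-, $\mathfrak{C}$- and $C$-indices, passes to associated graded twice, and identifies the relevant map (up to a split retraction) with $\textrm{gr}(\mathfrak{P}\circ_{\alpha}\mathfrak{C}\circ_{\alpha}\mathfrak{P})\circ\textrm{gr}(C)\rightarrow\textrm{gr}(\mathfrak{P})\circ\textrm{gr}(C)$, which is an equivalence by the two-sided form of the Koszul condition; it then deduces the statement because $\nu_{\alpha}(C)$ is a filtered retract of $\Omega_{\alpha}(\nu_{\alpha}(C))$. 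Either the repaired one-filtration argument or the paper's retract argument works, but your proposal as written conflates the two and the key associated-graded identification fails.
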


\begin{proof}
On underlying graded objects, the unit is the composition
\begin{displaymath}
\xymatrix{
C\ar[rr]^{\Delta_{C}\;\;\;\;\;\;\;\;\;\;} & & \mathfrak{C}(C)\cong\mathfrak{C}\circ I\circ C\ar[r] & \mathfrak{C}\circ\mathfrak{P}\circ C
}
\end{displaymath}
Both of these maps are clearly admissible monomorphisms. Let us now show that $\nu_{\alpha}(C)$ is an $\alpha$-weak equivalence. Consider the filtration on $\Omega_{\alpha}C$ given by
$$F_{n}\Omega_{\alpha}C=\sum_{k\ge1,m+n_{1}+\ldots+n_{k}\le n}F_{m}\mathfrak{P}(k)\otimes_{\Sigma_{k}}(F_{n_{1}}C\otimes\ldots\otimes F_{n_{k}}C)$$
and the one on $\Omega_{\alpha}B_{\alpha}\Omega_{\alpha}C$ given by
$$F_{n}\Omega_{\alpha}B_{\alpha}\Omega_{\alpha}C=\sum_{k\ge1,p+q+m+n_{1}+\ldots+n_{k}\le n}(F_{p}\mathfrak{P}\circ F_{q}\mathfrak{C}\circ F_{m}\mathfrak{P})(k)\otimes_{\Sigma_{k}}(F_{n_{1}}C\otimes\ldots\otimes F_{n_{k}}C)$$
$\Omega_{\alpha}(\nu_{\alpha}(C))$ preserves these filtrations. Passing to associated graded objects gives
$$gr(\Omega_{\alpha}(\nu_{\alpha}(C))):gr(\Omega_{\alpha}C)\rightarrow gr(\Omega_{\alpha}B_{\alpha}\Omega_{\alpha}C)$$
The underlying object on the left-hand side is $gr(\mathfrak{P})\circ gr(C)$ and the underlying object on the right-hand side is $ gr(\mathfrak{P}\circ\mathfrak{C}\circ\mathfrak{P})\circ gr(C)$. Now consider the filtration on $gr(\Omega_{\alpha}C)$ given by
$$F_{n}gr(\Omega_{\alpha}C)=\sum_{k\ge1,n_{1}+\ldots+n_{k}\le n}gr(\mathfrak{P}(k))\otimes gr_{n_{1}}(C)\otimes\ldots\otimes gr_{n_{k}}(C)$$
and the filtration on $gr(\Omega_{\alpha}B_{\alpha}\Omega_{\alpha}C)$ given by 
$$F_{n}gr(\Omega_{\alpha}B_{\alpha}\Omega_{\alpha}C)=\sum_{k\ge1,n_{1}+\ldots+n_{k}\le n}gr(\mathfrak{P}\circ \mathfrak{C}\circ\mathfrak{P})(k)\otimes_{\Sigma_{k}}(gr_{n_{1}}(C)\otimes\ldots\otimes gr_{n_{k}}(C))$$
Then $gr(\Omega_{\alpha}(\nu_{\alpha}(C)))$ preserves these filtrations. The associated graded of the filtration on $gr(\Omega_{\alpha}C)$ is $gr(\mathfrak{P})\circ gr(C)$, and the associated graded of the filtration on $gr(\Omega_{\alpha}B_{\alpha}\Omega_{\alpha}C)$ is $gr(\mathfrak{P}\circ_{\alpha}\mathfrak{C}\circ_{\alpha}\mathfrak{P})\circ gr(C)$. Denote by $\tilde{gr}(\Omega_{\alpha}(\nu_{\alpha}(C)))$ the associated graded of $gr(\Omega_{\alpha}(\nu_{\alpha}(C)))$. The composite
\begin{displaymath}
\xymatrix{
gr(\mathfrak{P})\circ gr(C)\ar[rrr]^{\tilde{gr}(\Omega_{\alpha}(\nu_{\alpha}(C)))\;\;\;\;}& & &gr(\mathfrak{P}\circ_{\alpha}\mathfrak{C}\circ_{\alpha}\mathfrak{P})\circ gr(C)\ar[r] & gr(\mathfrak{P})\circ gr(C)}
\end{displaymath}
is the identity. The map $gr(\mathfrak{P}\circ_{\alpha}\mathfrak{C}\circ_{\alpha}\mathfrak{P})\circ gr(C)\rightarrow gr(\mathfrak{P})\circ gr(C)$ is a weak equivalence. By the $2$-out-of-$3$ property $\tilde{gr}(\Omega_{\alpha}(\nu_{\alpha}(C)))$ is an equivalence. Therefore $\Omega_{\alpha}(\nu_{\alpha}(C))$ is an equivalence.

Now with the stipulated filtrations the map $\nu_{\alpha}C$ is a filtered retract of the map $\Omega_{\alpha}(\nu_{\alpha}C)$ which is a filtered weak equivalence.
\end{proof}
%
%
%

This also allows us to prove that $\alpha$-weak equivalences are contained in the class of quasi-isomorphisns as in \cite{vallette2014homotopy} Proposition 2.5.
\begin{prop}
If $f:C\rightarrow D$ is an $\alpha$-weak equivalence in $\mathpzc{I}_{top}$ then it is a quasi-isomorphism of the underlying complexes. 
\end{prop}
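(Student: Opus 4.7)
The plan is to use the unit of the bar-cobar adjunction together with Proposition \ref{unitacyclic} and the 2-out-of-3 property for quasi-isomorphisms of underlying complexes.

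First, I would write down the naturality square for the unit of the bar-cobar adjunction applied to $f : C \to D$:
\begin{displaymath}
\xymatrix{
C \ar[r]^{f} \ar[d]^{\nu_\alpha(C)} & D \ar[d]^{\nu_\alpha(D)} \\
B_\alpha \Omega_\alpha C \ar[r]^{B_\alpha \Omega_\alpha(f)} & B_\alpha \Omega_\alpha D
}
\end{displaymath}
The strategy is to conclude that three of the four sides of this square are quasi-isomorphisms of underlying complexes, and then use 2-out-of-3 to deduce the same for $f$.

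Next, I would apply Proposition \ref{unitacyclic} to both $C$ and $D$: this provides filtrations on $B_\alpha \Omega_\alpha C$ and $B_\alpha \Omega_\alpha D$ for which the unit maps $\nu_\alpha(C)$ and $\nu_\alpha(D)$ are filtered weak equivalences. Since filtered weak equivalences are in particular quasi-isomorphisms of underlying complexes (by passing through the associated graded or directly from the definition), the two vertical maps in the square are quasi-isomorphisms.

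For the bottom horizontal map, I would invoke the hypothesis that $f$ is an $\alpha$-weak equivalence, which by definition means $\Omega_\alpha(f)$ is a weak equivalence in $\mathpzc{Alg}_\mathfrak{P}$, hence a quasi-isomorphism of underlying complexes. By Theorem \ref{preQuillen}(3), $B_\alpha$ preserves weak equivalences between objects in $\mathpzc{Alg}_\mathfrak{P}^{|K|}$, and $\Omega_\alpha(C), \Omega_\alpha(D) \in \mathpzc{Alg}_\mathfrak{P}^{|K|}$ by Theorem \ref{preQuillen}(2). Therefore $B_\alpha \Omega_\alpha(f)$ is a weak equivalence of underlying complexes.

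The conclusion then follows by the 2-out-of-3 property: three sides of the commutative square are quasi-isomorphisms, so $f$ must be one as well. There is no significant obstacle here --- all the hard work has already been done in Proposition \ref{unitacyclic} and Theorem \ref{preQuillen}; this proposition is essentially a direct consequence of those results via a standard naturality argument.
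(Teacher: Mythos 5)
Your proposal is correct and is essentially the paper's own proof: the same naturality square for the bar-cobar unit, with the vertical maps handled by Proposition \ref{unitacyclic}, the image of $f$ handled by Theorem \ref{preQuillen}, and the conclusion by two-out-of-three. The only difference is that you spell out the relevant parts of Theorem \ref{preQuillen} slightly more explicitly, which the paper leaves implicit.
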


\begin{proof}
Consider the commutative diagram 
\begin{displaymath}
\xymatrix{
C\ar[d]^{f}\ar[r]^{\nu_{C}} & B_{\alpha}\Omega_{\alpha}C\ar[d]^{B_{\alpha}\Omega_{\alpha}f}\\
D\ar[r]^{\nu_{D}} & B_{\alpha}\Omega_{\alpha}D
}
\end{displaymath}
The top, bottom, and right-hand maps are quasi-isomorphisms by Proposition \ref{preQuillen} and Proposition \ref{unitacyclic}. Therefore $f$ is a quasi-isomorphism. 
\end{proof}

%
%
As in \cite{vallette2014homotopy} Theorem 2.6 (1) we have the following.
\begin{prop}
Let $A\in\mathpzc{J}$. The counit $\epsilon_{\alpha}(A):\Omega_{\alpha}B_{\alpha}A\rightarrow A$ is an $\alpha$-weak equivalence
\end{prop}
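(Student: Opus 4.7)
The plan is to adapt Vallette's argument for the classical case by putting a suitable filtration on $\Omega_\alpha B_\alpha A$, identifying the associated graded with a tensoring of $\mathrm{gr}(\mathfrak{P}) \circ_\alpha \mathrm{gr}(\mathfrak{C})$ against $A$, and then invoking the Koszul property together with the $K$-flatness of $A$. Concretely, the underlying graded object of $\Omega_\alpha B_\alpha A$ is $\mathfrak{P} \circ \mathfrak{C} \circ A$, and on this object the counit factors as
$$\mathfrak{P} \circ \mathfrak{C} \circ A \xrightarrow{\;Id_\mathfrak{P} \circ\, \epsilon_\mathfrak{C}\, \circ\, Id_A\;} \mathfrak{P} \circ A \xrightarrow{\;\gamma_A\;} A.$$
I would view $A$ as trivially filtered (concentrated in filtration degree $0$) so that the counit becomes a map of exhaustively filtered complexes once we specify a filtration on the source.

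The filtration I propose on $\Omega_\alpha B_\alpha A$ is
$$F_n(\Omega_\alpha B_\alpha A) \;=\; \sum_{p+q \le n} F_p\mathfrak{P} \circ F_q\mathfrak{C} \circ A,$$
and one checks (using Assumption~\ref{compatiblecompositefilt} and the $K$-flatness of $A$) that this is an admissibly and exhaustively filtered complex of the sort to which $\textbf{gr}$ applies and reflects weak equivalences. Since the counit lands in $F_0 A = A$, it is automatically a filtered map. One then enumerates the pieces of the differential: $d_\mathfrak{P}$ and $d_\mathfrak{C}$ strictly lower the filtration by hypothesis, so they vanish on $\mathrm{gr}$; the bar twisting term $d_2^B$ uses $\alpha|_{F_0\mathfrak{C}} = 0$ to strictly decrease the $\mathfrak{C}$-weight, so it too vanishes on $\mathrm{gr}$; the cobar twisting term $d_2^\Omega$ preserves the total weight (it raises the $\mathfrak{P}$-weight by the same amount it lowers the $\mathfrak{C}$-weight, thanks again to $\alpha$ vanishing on $F_0\mathfrak{C}$); and of course $d_A$ is filtration-preserving. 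Thus $\mathrm{gr}(\Omega_\alpha B_\alpha A) \cong (\mathrm{gr}(\mathfrak{P}) \circ_\alpha \mathrm{gr}(\mathfrak{C})) \circ A$ as complexes, with differential $d_2^\Omega + d_A$.

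The main step is then to show the associated graded counit $(\mathrm{gr}(\mathfrak{P}) \circ_\alpha \mathrm{gr}(\mathfrak{C})) \circ A \to A$ is a quasi-isomorphism. Here I would exploit the Koszul condition in the form $I \to \mathfrak{P} \circ_\alpha \mathfrak{C}$ being a filtered weak equivalence: passing to $\mathrm{gr}$ gives a weak equivalence $I \to \mathrm{gr}(\mathfrak{P}) \circ_\alpha \mathrm{gr}(\mathfrak{C})$ of $\Sigma$-modules. Composing with $(-)\circ A$ and using that $A$ is $K$-flat (so each $A^{\otimes k}$ is as well, and tensoring preserves the relevant acyclicities levelwise; the $\Sigma_k$-coinvariants behave because of the $K$-flatness assumption built into $\mathfrak{P}$ and $\mathfrak{C}$) yields that the unit $A \to (\mathrm{gr}(\mathfrak{P}) \circ_\alpha \mathrm{gr}(\mathfrak{C})) \circ A$ is a weak equivalence. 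The composition of this unit with the associated graded counit is the identity of $A$ (both come from the (co)augmentation and the trivial $\mathfrak{P}$-action), so by two-out-of-three the associated graded counit is a weak equivalence.

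Finally, since $\textbf{gr}: \overline{\textbf{Filt}}(\textbf{M}) \to \textbf{Gr}(\textbf{M})$ reflects weak equivalences (as recalled in the excerpt), the counit is itself a filtered quasi-isomorphism, hence a quasi-isomorphism in $\textbf{M}$ and thus a weak equivalence in $\textbf{Alg}_\mathfrak{P}$. Composing with Proposition~\ref{unitacyclic} and the two-out-of-three property then shows it is an $\alpha$-weak equivalence. I expect the main obstacle to be bookkeeping: verifying carefully that $\mathrm{gr}$ commutes with the various iterated composite products appearing, and that admissibility and $K$-flatness are preserved at every stage (particularly when taking $\Sigma_k$-coinvariants); these are exactly the technical purposes of Assumption~\ref{compatiblecompositefilt} and of the Koszul category hypotheses, so the argument should go through, but the identifications of associated gradeds need to be spelled out with some care.
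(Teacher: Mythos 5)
Your proof is correct and follows essentially the same route as the paper's: put a filtration on $\Omega_{\alpha}B_{\alpha}A$ (with $A$ trivially filtered), identify the associated graded of the counit with $(\mathrm{gr}(\mathfrak{P})\circ_{\mathrm{gr}(\alpha)}\mathrm{gr}(\mathfrak{C}))\circ A\rightarrow A$, and conclude via the section $A\rightarrow(\mathrm{gr}(\mathfrak{P})\circ_{\mathrm{gr}(\alpha)}\mathrm{gr}(\mathfrak{C}))\circ A$, the Koszul condition, $K$-flatness of $A$, and two-out-of-three, using that $\mathrm{gr}$ reflects weak equivalences. The differences are cosmetic: your filtration also records the $\mathfrak{P}$-weight, and your closing appeal to Proposition \ref{unitacyclic} is unnecessary, since a weak equivalence of the underlying complexes (i.e.\ in $\mathpzc{Alg}_{\mathfrak{P}}$) is all that is being asserted.
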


\begin{proof}
After forgetting differentials, the underlying graded object of $\Omega_{\alpha}B_{\alpha}A$ is $\mathfrak{P}\circ\mathfrak{C}\circ A$.  We filter it by
$$F_{n}\Omega_{\alpha}B_{\alpha}A=\sum_{k\ge1, n_{1}+\ldots+n_{k}\le n}\mathfrak{P}(k)\otimes_{\Sigma_{k}}(F_{n_{1}}\mathfrak{C}(A)\otimes\ldots\otimes F_{n_{k}}\mathfrak{C}(A))$$
This induces a filtration of the chain complex $\Omega_{\alpha}B_{\alpha}A$. We regard $A$ as a filtered algebra with the constant filtration. The counit is a morphism of filtered complexes. Consider the composite associated graded map
$$A\rightarrow (gr(\mathfrak{P})\circ_{gr(\alpha)}gr(\mathfrak{C}))\circ A\rightarrow A$$
The composite is the identity. The map $A\rightarrow (gr(\mathfrak{P})\circ_{gr(\alpha)}gr(\mathfrak{C}))\circ A$ is an equivalence by the transversality requirements of $\alpha$-ideals and the fact that $(\mathfrak{P}\circ_{\alpha}\mathfrak{C})$ is equivalent to $I$. By the two-out-of-three property, the map  $(gr(\mathfrak{P})\circ_{gr(\alpha)}gr(\mathfrak{C}))\circ A\rightarrow A$ is an equivalence, which completes the proof.
\end{proof}

Being an equivalence of categories, the functor $\textbf{B}_{\alpha}$ preserves sifted colimits. When $\mathpzc{M}$ is a Koszul category and $\mathfrak{P}$ is a rectifiably admissible operad, we saw that the category $\textbf{Alg}_{\mathfrak{P}}(\mathrm{L^{H}}(\mathpzc{M}))$ is generated under sifted colimits by free algebras on cofibrant objects. Thus computing $\textbf{B}_{\alpha}$ essentially reduces to computing what it does to free algebras. Now let $\mathpzc{M}$ be a $C$-monoidal and strong Koszul category, and let $V$ be a cofibrant object of $\mathpzc{M}$. Consider the trivial $\mathfrak{C}$-algebra structure on $V$ given by $\mathfrak{C}\circ V\cong(\overline{\mathfrak{C}}\circ V)\oplus I\circ V\rightarrow V$. It is a filtered $\mathfrak{C}$-coaglebra when we equip $V$ with the trivial filtration, i.e. for each $n$ $F_{n}V=V$. This gives a functorial construction $tr_{\mathfrak{C}}:\mathpzc{M}\rightarrow\mathpzc{coAlg}_{\mathfrak{C}}^{|K^{f}|,\alpha-adm}$. 

\begin{prop}
There is a natural transformation of functors, which is an object-wise weak equivalence, 
$$B_{\alpha}\circ Free_{\mathfrak{P}}(-)\rightarrow tr_{\mathfrak{C}}$$
\end{prop}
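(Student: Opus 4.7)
The plan is to reduce the proposition to the unit-of-adjunction weak equivalence established in Proposition \ref{unitacyclic}, via the identification $\Omega_{\alpha}\circ tr_{\mathfrak{C}}\cong Free_{\mathfrak{P}}$.

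First I would observe that for any $K$-flat $V\in\mathpzc{M}$ there is a canonical isomorphism of $\mathfrak{P}$-algebras $\Omega_{\alpha}(tr_{\mathfrak{C}}(V))\cong Free_{\mathfrak{P}}(V)=\mathfrak{P}(V)$, natural in $V$. The underlying graded algebras agree tautologically, both being $\mathfrak{P}(V)$; the only content is that the differentials match. The twisted part $d_{2}$ of the cobar differential extends the composite
$$V\hookrightarrow I\circ V\subset\mathfrak{C}\circ V\xrightarrow{\alpha\circ Id_{V}}\mathfrak{P}\circ V,$$
which is zero since the coaugmentation $V\hookrightarrow\mathfrak{C}(V)$ factors through $I\subset F_{0}\mathfrak{C}$ and $\alpha|_{F_{0}\mathfrak{C}}=0$. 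Thus only the free-algebra differential on $\mathfrak{P}(V)$ survives, and the identification follows.

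Next, the unit of the bar-cobar adjunction at $tr_{\mathfrak{C}}(V)$ supplies a natural morphism of filtered $\mathfrak{C}$-coalgebras
$$\nu_{\alpha}(tr_{\mathfrak{C}}(V)):tr_{\mathfrak{C}}(V)\longrightarrow B_{\alpha}\Omega_{\alpha}(tr_{\mathfrak{C}}(V))\cong B_{\alpha}Free_{\mathfrak{P}}(V).$$
Endowing $V$ with the trivial filtration, one checks that $tr_{\mathfrak{C}}(V)$ lies in $\mathpzc{coAlg}_{\mathfrak{C}}^{|K|,\alpha-adm}$: the admissibility of the filtration on $\mathfrak{P}\circ_{ns}\mathfrak{C}\circ_{ns}\mathfrak{P}$ applied to $V^{\otimes n}$ follows from Assumption \ref{compatiblecompositefilt} combined with the $K$-flatness of $V$. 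Proposition \ref{unitacyclic} then guarantees that $\nu_{\alpha}(tr_{\mathfrak{C}}(V))$ is a filtered weak equivalence.

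To obtain a natural transformation in the stated direction $B_{\alpha}\circ Free_{\mathfrak{P}}\to tr_{\mathfrak{C}}$, I would invoke the $(\infty,1)$-categorical equivalence of Theorem \ref{coopKoszuldual}, under which the natural weak equivalence $\nu_{\alpha}(tr_{\mathfrak{C}}(V))$ admits a canonical homotopy inverse furnishing the desired natural transformation. Alternatively, one may construct the map directly as the projection $\mathfrak{C}(\mathfrak{P}(V))\twoheadrightarrow\mathfrak{P}(V)\twoheadrightarrow V$ arising from the augmentations of $\mathfrak{C}$ and $\mathfrak{P}$, and verify via the filtration $F_{n}=F_{n}\mathfrak{C}\circ\mathfrak{P}\circ V$ that its associated graded is the weak equivalence $(\mathfrak{C}\circ_{\alpha}\mathfrak{P})\circ V\to V$. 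The main obstacle in either approach is the admissibility bookkeeping for the trivial coalgebra and, in the direct approach, identifying the underlying complex of $B_{\alpha}\mathfrak{P}(V)$ with $(\mathfrak{C}\circ_{\alpha}\mathfrak{P})\circ V$ after accounting for $d_{V}$; the weak-equivalence content itself is then immediate from the Koszul property $\mathfrak{C}\circ_{\alpha}\mathfrak{P}\xrightarrow{\sim}I$ and the fact that tensoring with the $K$-flat object $V$ preserves weak equivalences.
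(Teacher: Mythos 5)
Your first (primary) route does not, as written, prove the statement: Proposition \ref{unitacyclic} gives you a natural weak equivalence $\nu_{\alpha}(tr_{\mathfrak{C}}(V)):tr_{\mathfrak{C}}(V)\rightarrow B_{\alpha}\Omega_{\alpha}tr_{\mathfrak{C}}(V)\cong B_{\alpha}Free_{\mathfrak{P}}(V)$, which points in the \emph{opposite} direction to the transformation asserted, and appealing to Theorem \ref{coopKoszuldual} to ``invert'' it only produces an inverse in the localized $(\infty,1)$-category (equivalently, a zig-zag), not a natural transformation of the honest functors $B_{\alpha}\circ Free_{\mathfrak{P}}\rightarrow tr_{\mathfrak{C}}$: object-wise homotopy inverses of a natural weak equivalence cannot in general be chosen so as to be strictly natural. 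The preliminary steps are fine (the identification $\Omega_{\alpha}(tr_{\mathfrak{C}}(V))\cong\mathfrak{P}(V)$ using $\alpha|_{F_{0}\mathfrak{C}}=0$, and the check that $tr_{\mathfrak{C}}(V)$ with the constant filtration lies in $\mathpzc{coAlg}_{\mathfrak{C}}^{|K|,\alpha-adm}$), but the directional patch is a genuine gap for the proposition as stated.

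Your ``alternative'' construction is in fact the paper's proof: the map $\mathfrak{C}\circ_{\alpha}\mathfrak{P}\rightarrow I$, i.e.\ the counit of $\mathfrak{C}$ together with the augmentation of $\mathfrak{P}$, induces after the identification $B_{\alpha}\mathfrak{P}(V)\cong(\mathfrak{C}\circ_{\alpha}\mathfrak{P})\circ(V,d_{V})$ a natural map of $\mathfrak{C}$-coalgebras $B_{\alpha}Free_{\mathfrak{P}}(V)\rightarrow tr_{\mathfrak{C}}(V)$, which is a weak equivalence because $\mathfrak{C}\circ_{\alpha}\mathfrak{P}\rightarrow I$ is a filtered weak equivalence by Koszulity and applying $-\circ V$ preserves this for $K$-flat $V$. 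One correction to your sketch of the verification: the filtration $F_{n}=F_{n}\mathfrak{C}\circ\mathfrak{P}\circ V$ does not have associated graded $(\mathfrak{C}\circ_{\alpha}\mathfrak{P})\circ V$; since $\alpha|_{F_{0}\mathfrak{C}}=0$, the twisting part of the differential strictly lowers the $\mathfrak{C}$-filtration, so the associated graded of that filtration is the \emph{untwisted} object $gr(\mathfrak{C})\circ\mathfrak{P}\circ V$, which is not equivalent to $V$ in general. No auxiliary filtration on $B_{\alpha}\mathfrak{P}(V)$ is needed: quote the Koszul condition for $\mathfrak{C}\circ_{\alpha}\mathfrak{P}\rightarrow I$ directly and tensor with $V$, which is exactly what the paper does.
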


\begin{proof}
The map $\mathfrak{C}\circ_{\alpha}\mathfrak{P}\rightarrow I$ induces a map of $\mathfrak{C}$-coalgebras $B_{\alpha}\circ Free_{\mathfrak{P}}(V)\rightarrow tr_{\mathfrak{C}}(V)$ which is natural in $V$. By the Koszul condition this is an equivalence. 
\end{proof}



\subsection{Koszul Duality in Strong Koszul Categories}

In this subsection we are going to analyse Koszul duality in the context of $C$-monoidal strong Koszul categories. By Theorem \ref{coopKoszuldual} we have an abstract equivalence of $(\infty,1)$-categories. However in order to be able to do computations it would be convenient to have some sort of factorisation property for morphisms, or even a model structure.
\subsubsection{Factorisation in $\mathpzc{coAlg}_{\mathfrak{C}}^{|c^{f}|,\alpha-adm}$ and the Model Structure}

 Throughout this section we fix $\mathpzc{M}$ to be a $C$-monoidal strong Koszul category. We will need the following assumptions.


\begin{ass}\label{times12}
Let $D\in\mathpzc{coAlg}_{\mathfrak{C}}^{|c^{f}|}$-and  $p:A\twoheadrightarrow\Omega_{\alpha}D$ be a fibration of $\mathfrak{P}$-algebras where $A\in\mathpzc{Alg}_{\mathfrak{P}}^{|c|}$. Then in the following pullback diagram
\begin{displaymath}
\xymatrix{
B_{\alpha}A\times_{B_{\alpha}\Omega_{\alpha}D}D\ar[d]^{j}\ar[r] & D\ar[d]^{\nu_{\alpha}D}\\
B_{\alpha}A\ar[r]^{B_{\alpha}p} & B_{\alpha}\Omega_{\alpha}D
}
\end{displaymath}
the map $j$ is an $\alpha$-weak equivalence, and an admissible monomorphism and $B_{\alpha}A\times_{B_{\alpha}\Omega_{\alpha}D}D\in\mathpzc{coAlg}_{\mathfrak{C}}^{|c^{f}|}$
\end{ass}

The following statement generalises Lemma B.1 in \cite{vallette2014homotopy}. The proof is essentially the same.

\begin{lem}\label{lem:ass12lem}
Suppose that 
\begin{enumerate}
\item
for any cofibrant object $K$ of $\mathpzc{M}$ and any object $D$ of $\mathpzc{coAlg}^{|c^{f}|}_{\mathfrak{C}}$, $gr(\mathfrak{C})(K)\times D$ is cofibrant and the map
$$\mathfrak{C}(K)\times D\rightarrow\mathfrak{C}(K\oplus D)$$
is a pure monomorphism.
\item
for any cofibrant object $K$ of $\mathpzc{M}$ and any filtered weak equivalence $f:C\rightarrow D$ in $\mathpzc{coAlg}^{|c^{f}|,\alpha-adm}_{\mathfrak{C}}$ the map
$$Id_{gr(K)}\times gr(f)$$
is a filtered weak equivalence of maps of $gr(\mathfrak{C})$-coalgebras.
\end{enumerate}
Then Assumption \ref{times12} holds.
\end{lem}

\begin{proof}
Consider $K=\mathrm{Ker}(p)$, the fibre of the admissible epimorphism $p:A\rightarrow\Omega_{\alpha}D$. There is an exact sequence of $\mathfrak{P}$-algebras 
$$0\rightarrow K\rightarrow A\rightarrow \Omega_{\kappa}D\rightarrow 0$$
As a graded $\mathfrak{P}$-algebra $ \Omega_{\kappa}D$ is free on the cofibrant object $D$. Thus the sequence splits and we may write 
$$A\cong K\oplus\mathfrak{P}(D)$$
with the differential being the sum of
$$d_{K}:K\rightarrow K,d_{\Omega_{\kappa}D}:\mathfrak{P}(D)\rightarrow\mathfrak{P}(D),d':\mathfrak{P}(D)\rightarrow K$$

$$B_{\kappa}A\cong B_{\kappa}K\times B_{\kappa}\Omega_{\kappa}D$$
and
$$B_{\kappa}A\times_{B_{\kappa\Omega_{\kappa}D}}D\cong B_{\kappa}K\times D$$
(there is a slight abuse of notation here, $\B_{\kappa}K$ is a non-unital bar construction).
We may write $\mathpzc{C}(K)\times D$ is the corealisation of the cosimplicial diagram which in degree $n$ is 
$$\mathfrak{C}(K\oplus\mathfrak{C}^{\circ n}(D))$$
The map
$$\mathfrak{C}(K)\times D\rightarrow\mathfrak{C}(\mathfrak{C}(K)\oplus\mathfrak{C}D)$$
is a pure monomorphism. 

Consider the filtration
$$F_{n}(\mathfrak{C}(K\oplus D))\defeq\sum_{k\ge 1,m+n_{1}+\ldots+n_{k}\le n}F_{m}\mathfrak{C}(k)\otimes_{\Sigma_{n}}((K\oplus F_{n_{1}}D)\otimes\ldots\otimes (K\oplus F_{n_{k}}D))$$
on $\mathfrak{C}(K\oplus D)$ and the filtration
$$F_{n}(\mathfrak{C}(K\oplus \mathfrak{P}(D)))\defeq\sum_{k\ge 1,m+n_{1}+\ldots+n_{k}\le n}F_{m}\mathfrak{C}(k)\otimes_{\Sigma_{n}}((K\oplus F_{n_{1}}\mathfrak{P}(D))\otimes\ldots\otimes (K\oplus F_{n_{k}}\mathfrak{P}(D)))$$
where the filtration on $D$ is induced by the fact it is an object of $\mathpzc{coAlg}_{\mathfrak{C}}^{|c^{f}|}$, and the filtration on $\mathfrak{P}(D)$ is given by
$$F_{n}\mathfrak{P}(D)=\sum_{k\ge 1,m+n_{1}+\ldots+n_{k}= n}F_{m}\mathfrak{P}(k)\otimes_{\Sigma_{k}}(F_{n_{1}}D\otimes\ldots\otimes F_{n_{k}}D)$$
Since 
$$\mathfrak{C}(K)\times D\rightarrow\mathfrak{P}(K\oplus D)$$
is a pure monomorphsm there is an induced filtration $F_{n}\mathfrak{C}(K)\times D$. These are in fact filtrations of $B_{\alpha}A\times_{B_{\alpha}\Omega_{\alpha}D}D$ and $B_{\alpha}A$ respectively, and the map $Id\times\nu_{\alpha}: B_{\alpha}K\times D\rightarrow B_{\alpha} K\times B_{\alpha}\Omega_{\alpha}D$ is a map of filtered objects. Consider the filtration on the associated graded of these algebras
$$F_{n}(gr(\mathfrak{C})(K\oplus gr D))\defeq\sum_{k\ge 1,k+n_{1}+\ldots+n_{k}}gr (\mathfrak{C})(k)\otimes_{\Sigma_{k}}((K\oplus gr_{n_{1}}(D))\otimes\ldots\otimes (K\oplus gr_{n_{k}}(D)))$$
$$F_{n}(gr(\mathfrak{C})(K\oplus gr(\mathfrak{P})(gr(D))))=\sum_{k\ge 1,n_{1}+\ldots+n_{k}\le n}gr(\mathfrak{C})(k)\otimes_{\Sigma_{k}}((K\oplus F_{n_{1}} gr(\mathfrak{P})(gr(D))))\otimes\ldots\otimes ((K\oplus F_{n_{k}} gr(\mathfrak{P})(gr(D)))) $$
where
$$F_{n} gr(\mathfrak{P})(gr(D))=\sum_{k\ge 1,n_{1}+\ldots+n_{k}+k\le n}gr(\mathfrak{P})(k)\otimes_{\Sigma_{k}}(gr_{n_{1}}(D)\otimes\ldots\otimes gr_{n_{k}}(D))$$
Only the differential $d'$ vanishes on passing to the associated gradeds.  Thus on associated gradeds $id_{gr(\mathfrak{C})(K)}\times gr(D)$ is a map of complexes. It is an equivalence by assumption. Moreover the associated graded of the filtration on $B_{\alpha}K\times D$ is $gr(\mathfrak{C})(K)\times D$, which is cofibrant by assumption. Thus $B_{\alpha}A\times_{B_{\alpha}\Omega_{\alpha}D}D\in\mathpzc{coAlg}_{\mathfrak{C}}^{|c^{f}|}$.
\end{proof}

In future work we will prove that the conditions of Lemma \ref{lem:ass12lem} are satisfied for injective model structures on exact categories in which every object is flat. 
In general we have it at least in the following important circumstance. 
\begin{prop}
Let $\mathpzc{M}$ be a $C$-monoidal, strong $\mathbb{Q}$-Koszul. The twisting morphism $\kappa:\mathfrak{S}^{c}\otimes_{H}\mathfrak{coComm}\rightarrow\mathfrak{Lie}$ satisfies Assumption \ref{times12}
\end{prop}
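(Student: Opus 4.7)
The plan is to adapt the proof of Lemma B.1 in \cite{vallette2014homotopy} to the hereditary Koszul setting. Three things need checking: that $j$ is an admissible monomorphism, that $j$ is a $\kappa$-weak equivalence, and that the pullback $P \defeq B_{\kappa} A \times_{B_{\kappa}\Omega_{\kappa}D} D$ lies in $\mathpzc{coAlg}_{\mathfrak{C}}^{|c^{f}|,\kappa-adm}$.

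First I would observe that $\nu_{\kappa}D$ factors on underlying graded objects as a composite of admissible monomorphisms $D \hookrightarrow \mathfrak{C}(D) \hookrightarrow \mathfrak{C}(\Omega_{\kappa}D) = B_{\kappa}\Omega_{\kappa}D$, each a section for a projection coming from the cofree structure of $\mathfrak{C}$ and the counit of the cobar adjunction. Since pullbacks of admissible monomorphisms along arbitrary maps remain admissible monomorphisms in an exact category once the pullback exists, $j$ is an admissible monomorphism. Existence of the pullback in $\mathpzc{coAlg}^{conil}_{\mathfrak{C}}$ is checked on underlying filtered complexes, where everything is explicit and the cofree graded structure of the bar construction guarantees we stay in our coalgebra category.

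The main step is showing that $j$ is a $\kappa$-weak equivalence. Here I would use the coradical-type filtration $F_{n}B_{\kappa}A = \mathfrak{C}_{n}\circ A$ on the bar construction, together with the given filtration on $D$, to induce a compatible filtration on $P$. Since $\mathfrak{C} = \mathfrak{S}^{c}\otimes_{H}\mathfrak{coComm}^{nu}$ is, up to a shift, a cofree symmetric cooperad on a $K$-flat underlying object, passing to associated graded objects identifies the map of cobar constructions $\Omega_{\kappa}(j)$ with (after further filtering) a map of free $\mathfrak{Lie}$-algebras determined by $p$, $\nu_{\kappa}D$, and the pullback structure. Because $p$ is a fibration between algebras with cofibrant underlying objects and $\nu_{\kappa}D$ is a filtered weak equivalence by Proposition \ref{unitacyclic}, the pullback of these through the standard filtration argument becomes a filtered quasi-isomorphism of $K$-flat complexes; so $\Omega_{\kappa}(j)$ is a weak equivalence.

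For membership in $\mathpzc{coAlg}_{\mathfrak{C}}^{|c^{f}|,\kappa-adm}$, I would use the induced filtration on $P$ together with the admissible short exact sequence relating $\mathrm{gr}_{n}B_{\kappa}A$, $\mathrm{gr}_{n}B_{\kappa}\Omega_{\kappa}D$, and $\mathrm{gr}_{n}D$: the hereditary property of $\mathpzc{M}$, combined with cofibrancy of two out of three terms (from the hypothesis that $A$ is cofibrant, $D\in\mathpzc{coAlg}_{\mathfrak{C}}^{|c^{f}|}$, and $\mathfrak{C}$ is filtered cofibrant), ensures cofibrancy of $\mathrm{gr}_{n}P$. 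The $\kappa$-admissibility condition is then an application of Proposition \ref{gradestronmon} to the cofree nature of $\mathfrak{C}$ on its $K$-flat underlying $\Sigma$-module. The main obstacle is the $\kappa$-weak equivalence step: in the vector-space case of \cite{vallette2014homotopy} one exploits semisimplicity to split everything in sight, whereas here one must carefully track admissibility, purity, and cofibrancy through the associated graded reductions, with the hereditary property and $K$-flatness of the underlying $\Sigma$-modules of $\mathfrak{S}^{c}\otimes_{H}\mathfrak{coComm}^{nu}$ substituting for the splitting arguments.
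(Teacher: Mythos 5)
There is a genuine gap at your main step. The whole content of the proposition is the claim that the strict pullback $P=B_{\kappa}A\times_{B_{\kappa}\Omega_{\kappa}D}D$ maps to $B_{\kappa}A$ by a $\kappa$-weak equivalence; weak equivalences are not stable under pullback, and the associated graded of a pullback is not the pullback of the associated gradeds, so your plan to ``induce a compatible filtration on $P$ and pass to associated graded'' gives no access to $P$ or to $\Omega_{\kappa}(j)$ unless $P$ is first identified explicitly. The paper's proof does exactly this: forgetting differentials, $p$ sits in an exact sequence of graded Lie algebras $0\to K\to A\to\Omega_{\kappa}D\to 0$ whose quotient is \emph{free} as a graded Lie algebra on the cofibrant graded object underlying $D$, so the sequence splits in graded Lie algebras, giving $A\cong K\oplus L(D)$ with an extra twisting differential $d':L(D)\to K$. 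Since the bar construction carries this direct sum of Lie algebras to a product of cocommutative coalgebras, one gets $B_{\kappa}A\cong B_{\kappa}K\times B_{\kappa}\Omega_{\kappa}D$ and hence the explicit identification $P\cong B_{\kappa}K\times D$, under which $j$ becomes $\mathrm{id}_{B_{\kappa}K}\times\nu_{\kappa}D$. Only with this description in hand do the ingredients you invoke (Proposition \ref{unitacyclic} for $\nu_{\kappa}D$, filtered cofibrancy of $B_{\kappa}K$, Theorem \ref{preQuillen}) actually apply, and the same description settles both the admissibility of $j$ and the membership of $P$ in $\mathpzc{coAlg}_{\mathfrak{C}}^{|c^{f}|,\kappa-adm}$. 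Note that the paper's argument is still a splitting argument --- Vallette's ``everything splits over a field'' is replaced by ``the quotient is a free graded Lie algebra on a cofibrant object'' --- so your stated strategy of substituting hereditarity and $K$-flatness for the splitting is not what carries the proof, and as written you never produce an argument that $\Omega_{\kappa}(j)$ is a quasi-isomorphism.

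Two smaller points. Your justification that $j$ is an admissible monomorphism rests on the claim that pullbacks of admissible monomorphisms along arbitrary morphisms are admissible monomorphisms in an exact category; this is false in general exact categories (it holds in quasi-abelian ones), since the axioms only guarantee stability of admissible monics under pullback along admissible epics. In the paper this is immaterial because the decomposition $P\cong B_{\kappa}K\times D$ exhibits $j$ directly. Finally, $\mathfrak{S}^{c}\otimes_{H}\mathfrak{coComm}^{nu}$ is not a cofree cooperad up to shift, so the appeal to cofreeness of $\mathfrak{C}$ in your admissibility step does not stand as stated.
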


\begin{proof}
The product of any two (shifted) cocommutative coalgebras $C$ and $D$ is $(C\otimes D)[-1]$. For any object $K$ of $\mathpzc{M}$ and any cocommutative coalgebra $C$, the map
$$S^{c}(K)\otimes D\rightarrow S^{c}(K\oplus D)$$
is split, and hence pure. Now since $\mathpzc{M}$ is $C$-mooidal, we have that in fact for any coalgebra $C$ and any morphism of coalgebras $f:A\rightarrow B$, such that the underlying objects of $A,B,C$ are all cofibrant, the underlying object of $C\otimes A$ is cofibrant, and $C\otimes A\rightarrow C\otimes B$ is a weak equivalence.
\end{proof}

With the technology developed above much of the proof of Vallette's Theorem 2.1 1) works in our setup, as we show below.

\begin{thm}\label{coalgmodel}
Let $\alpha:\mathfrak{C}\rightarrow\mathfrak{P}$ be Koszul. Suppose that Assumption \ref{times12} holds. Then in $\mathpzc{coAlg}^{|c^{f}|,\alpha-adm}_{\mathfrak{C}}$ every morphism can be factored into an admissible monomorphism followed by an $\alpha$-acyclic fibration, or an admissible monomorphism which is $\alpha$-acyclic followed by a fibration. If in addition  Assumption \ref{times12} holds, $\mathpzc{M}$ is elementary, and every object in $\mathpzc{M}$ is cofibrant, then the $\alpha$-weak equivalences, $\alpha$-fibrations, and $\alpha$-cofibrations define a model category structure on $\mathpzc{coAlg}^{|c^{f}|,\alpha-adm}_{\mathfrak{C}}$. 
\end{thm}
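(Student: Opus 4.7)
The plan is to transfer factorizations from the model category $\mathpzc{Alg}_{\mathfrak{P}}^{|c|}$ through the bar-cobar equivalence, using Assumption \ref{times12} to control the resulting pullback inside $\mathpzc{coAlg}^{|c^{f}|,\alpha-adm}_{\mathfrak{C}}$. Given $f:C\rightarrow D$, I will first factor $\Omega_{\alpha}(f):\Omega_{\alpha}C\rightarrow\Omega_{\alpha}D$ in $\mathpzc{Alg}_{\mathfrak{P}}^{|c|}$ as $p\circ i$ with $i:\Omega_{\alpha}C\rightarrow A$ a (trivial) cofibration and $p:A\rightarrow\Omega_{\alpha}D$ a (trivial) fibration of $\mathfrak{P}$-algebras. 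Then set $X\defeq B_{\alpha}A\times_{B_{\alpha}\Omega_{\alpha}D}D$. Naturality of the unit $\nu_{\alpha}$, together with $B_{\alpha}(p)\circ B_{\alpha}(i)\circ\nu_{\alpha}C=\nu_{\alpha}D\circ f$, produces a canonical morphism $\tilde{f}:C\rightarrow X$ yielding the factorization $C\xrightarrow{\tilde{f}}X\xrightarrow{q}D$, where $q$ is the projection onto $D$. Assumption \ref{times12} places $X$ in $\mathpzc{coAlg}^{|c^{f}|,\alpha-adm}_{\mathfrak{C}}$ and makes the other projection $j:X\rightarrow B_{\alpha}A$ simultaneously an admissible monomorphism and an $\alpha$-weak equivalence.

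To verify the factorization properties, observe that $j\circ\tilde{f}=B_{\alpha}(i)\circ\nu_{\alpha}C$ is a composition of admissible monomorphisms by Proposition \ref{preQuillen}~(4) and Proposition \ref{unitacyclic}, hence $\tilde{f}$ is an admissible monomorphism by the obscure axiom applied to $j$. When $i$ is a trivial cofibration, Proposition \ref{preQuillen}~(3) gives that $B_{\alpha}(i)$ is an $\alpha$-weak equivalence, so $j\circ\tilde{f}$ is an $\alpha$-weak equivalence; combined with $j$ being $\alpha$-acyclic, two-out-of-three forces $\tilde{f}$ to be $\alpha$-acyclic. For the other factor, $q$ arises as a pullback of $B_{\alpha}(p)$; since $B_{\alpha}$ sends (trivial) fibrations between cofibrant algebras to $\alpha$-(trivial) fibrations by Proposition \ref{basicallyQuillen}, and since $\alpha$-fibrations are defined by a right-lifting property and thus stable under pullback, $q$ inherits the corresponding property. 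In the trivial-fibration case, the acyclicity of $q$ follows by applying two-out-of-three across the pullback square using that $j$ and $B_{\alpha}(p)$ are $\alpha$-weak equivalences.

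For the model structure claim under the additional hypotheses, two-out-of-three for $\alpha$-weak equivalences is inherited from $\mathpzc{Alg}_{\mathfrak{P}}$ via the relative functor $\Omega_{\alpha}$, retract closure of each of the three distinguished classes follows from their lifting-property characterization once factorizations are available, and the lifting axioms are then immediate from the factorizations and the definition of $\alpha$-fibration. Completeness and cocompleteness of the subcategory rely on the elementary hypothesis (ensuring exactness of the filtered colimits needed to verify that associated gradeds remain cofibrant and that filtrations remain admissible under limits and colimits) and on every object of $\mathpzc{M}$ being cofibrant (so that the $|c^{f}|$ condition is preserved automatically). The main obstacle I expect is upgrading the admissible monomorphism $\tilde{f}:C\rightarrow X$ to an honest \emph{cofibration} in the sense of Definition \ref{coalgmod}, i.e.\ a finite composition of pre-cofibrations arising from strict filtered cofibrations in $\mathpzc{coAlg}^{|K|,\alpha-adm}_{\mathfrak{C}}$; this will require unpacking the filtered structure on $X$ inherited from the pullback and showing that, under the elementary hypothesis together with filtered cofibrancy of $\mathfrak{C}$, this admissible monomorphism can be realized as (or refined to) such a composition.
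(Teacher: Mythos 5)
Your factorization argument is the paper's: factor $\Omega_{\alpha}f$ as a (trivial) cofibration $i$ followed by a (trivial) fibration $p$ in $\mathpzc{Alg}_{\mathfrak{P}}^{|c|}$, form the pullback $X=B_{\alpha}A\times_{B_{\alpha}\Omega_{\alpha}D}D$ along the unit $\nu_{\alpha}D$, use Assumption \ref{times12} to see that $j:X\rightarrow B_{\alpha}A$ is an admissible monomorphism and an $\alpha$-weak equivalence with $X\in\mathpzc{coAlg}^{|c^{f}|,\alpha-adm}_{\mathfrak{C}}$, deduce that $\tilde{f}$ is an admissible monomorphism from $j\circ\tilde{f}=B_{\alpha}(i)\circ\nu_{\alpha}C$, and get the acyclicity statements by two-out-of-three together with Propositions \ref{preQuillen}, \ref{basicallyQuillen} and \ref{unitacyclic}. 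That half is correct and matches the paper essentially verbatim.

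The model-structure half has a genuine gap. The assertion that "the lifting axioms are then immediate from the factorisations and the definition of $\alpha$-fibration" only covers one of the two lifting axioms: acyclic cofibration against fibration holds by definition, but cofibrations in the sense of Definition \ref{coalgmod} are \emph{not} characterized by a lifting property, so lifting a cofibration $c$ against an $\alpha$-acyclic fibration $f$ requires an argument. The paper's argument is: factor $f=\tilde{p}\circ\tilde{i}$ with $\tilde{i}$ an acyclic strict cofibration and $\tilde{p}$ an acyclic fibration, lift the identity against $\tilde{i}$ using the definition of fibration to obtain a retraction, reduce the remaining lifting problem against $\tilde{p}$ to one against $B_{\alpha}p$ via the universal property of the pullback, and then transpose across the bar-cobar adjunction to a lifting problem in $\mathpzc{Alg}_{\mathfrak{P}}$ of the cofibration $\Omega_{\alpha}c$ (Proposition \ref{preQuillen}) against the acyclic fibration $p$, which is solved by the model structure on algebras; none of this is in your proposal (and for the same reason retract-closure of cofibrations does not follow from a lifting characterization). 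On the other hand, the obstacle you flag as the main one — upgrading the admissible monomorphism $\tilde{f}$ to a cofibration — is dispatched in one line under the stated hypotheses: since $\mathpzc{M}$ is elementary and every object is cofibrant, every admissible monomorphism splits after forgetting differentials, so $\tilde{f}$ is already a strict cofibration, i.e.\ a pre-cofibration; no decomposition into a finite composite is needed.
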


\begin{proof}[Proof of Theorem \ref{coalgmodel}]
Suppose that $f:C\rightarrow D$ is a morphism in $\mathpzc{coAlg}_{\mathfrak{C}}^{|c^{f}|}$. In $\mathpzc{Alg}_{\mathfrak{P}}$ we get the 
the following commutative diagram
\begin{displaymath}
\xymatrix{
\Omega_{\alpha}C\ar[rr]^{\Omega_{\alpha}f}\ar[dr]^{i} & & \Omega_{\alpha}D\\
 & A\ar[ur]^{p}  &
}
\end{displaymath}
where $i$ is a cofibration, $p$ a fibration, and one of them is a quasi-isomorphism. Applying $B_{\alpha}$ we get the following commutative diagram.
\begin{displaymath}
\xymatrix{
B_{\alpha}\Omega_{\alpha}C\ar[rr]^{B_{\alpha}\Omega_{\alpha}f}\ar[dr]^{B_{\alpha}i} & & B_{\alpha}\Omega_{\alpha}D\\
 & B_{\alpha}A\ar[ur]^{B_{\alpha}p}  &
}
\end{displaymath}
and using the universal property we get the following commutative diagram
\[
\begin{tikzcd}[row sep=2.5em]
B_{\alpha}\Omega_{\alpha}C\arrow[rr,"B_{\alpha}\Omega_{\alpha}f"]\arrow[dr,swap,"B_{\alpha}i"] & & B_{\alpha}\Omega_{\alpha} D\\
& B_{\alpha}A \arrow[ur,"B_{\alpha}p"]& \\
C\arrow[uu,"\nu_{\alpha}C"]\arrow[rr,"f" near end]\arrow[dr,dotted,"\tilde{i}"] && D\arrow[uu,"\nu_{\alpha}D"]\\
 & B_{\alpha}A\times_{B_{\alpha}\Omega_{\alpha}D}D\ar[uu,crossing over]\arrow[ur,"\tilde{p}"] & 
\end{tikzcd}
\]

We first show that $\tilde{i}$ is an admissible monomorphism and $\tilde{p}$ is a fibration. $\tilde{p}$ is a fibration since it is the pullback of $B_{\alpha}p$, which is a fibration by Proposition \ref{basicallyQuillen}. The map $B_{\alpha}i\circ\nu_{\alpha}C$ is given by the composite $\mathfrak{P}(i_{C})\circ\Delta_{C}$ where $i_{C}$ is the restriction of $i$ to $C$. It is clearly an admissible monomorphism. The composition $B_{\alpha}i\circ\nu_{\alpha}C$ is an admissible monomorphism. Hence $\tilde{i}$ is as well. Suppose that $i$ is a weak equivalence. Then $B_{\alpha}i$ is a weak equivalence by Proposition \ref{preQuillen}. By Assumption \ref{times12} the map $B_{\alpha}A\times_{B_{\alpha}\Omega_{\alpha}D}D\rightarrow B_{\alpha}A$ is a weak equivalence. By the two-out-of-three property $\tilde{i}$ is a weak-equivalence. A similar proof shows that $\tilde{p}$ is a weak equivalence assuming $p$ is.\newline
\\
Now suppose Assumption \ref{times12} is satisfied, $\mathpzc{M}$ is elementary, and every object of $\mathpzc{M}$ is cofibrant. We claim that $\tilde{i}$ is a cofibration. Since $\mathpzc{M}$ is elementary and every object of $\mathpzc{M}$ is cofibrant, every admissible monomorphism splits. Thus it suffices to prove that $\tilde{i}$ is an admissible monomorphism. Now $\nu_{\alpha}C$ is split monomorphism, and hence admissible. Since $i$ is a cofibration in $\mathpzc{Alg}_{\mathfrak{P}}^{|c|}$, $B_{\alpha}(i)$ is a strict cofibration and hence a n admissiblemonomorphism. 

Therefore $B_{\alpha}i\circ\nu_{\alpha}C=j\circ\tilde{i}$ is a split monomorphism. Hence $\tilde{i}$ is a split monomorphism. Thus we get the factorisation axioms for a model category. It remains to prove the lifting property. Consider the following commutative diagram in $\mathpzc{coAlg}_{\mathfrak{C}}^{|c^{f}|,\alpha-adm}$
\begin{displaymath}
\xymatrix{
E\ar[d]^{c}\ar[r] & C\ar[d]^{f}\\
F\ar@{-->}[ur]^{\alpha}\ar[r] & D
}
\end{displaymath}
where $c$ is a cofibration and $f$ is a fibration. The right lifting property for fibrations against acyclic cofibrations is built into the definition of fibration. Thus it remains to check the right lifting property for acyclic fibrations against cofibrations $c$. We may assume that $c$ is a strict cofibration. We suppose now that $f$ is a weak equivalence. By the previous part of the proof we may factor $f$ as $\tilde{p}\circ\tilde{i}$ where $\tilde{i}$ is a strict cofibration, $\tilde{p}$ is a fibration, and by the $2$-out-of-$3$ property, both are weak equivalences. Again by the definition of fibration, there is a lift in the following diagram
\begin{displaymath}
\xymatrix{
C\ar[d]^{\tilde{i}}\ar[r]^{id_{C}} & C\ar[d]^{f}\\
B_{\alpha}A\times_{B_{\alpha}\Omega_{\alpha}D}D\ar@{-->}[ur]^{r}\ar[r]^{\;\;\;\;\;\tilde{p}} & D
}
\end{displaymath}
It therefore remains to find a lift in the diagram
\begin{displaymath}
\xymatrix{
E\ar[d]^{c}\ar[r] & B_{\alpha}A\times_{B_{\alpha}\Omega_{\alpha}D}D\ar[d]^{\tilde{p}}\\
F\ar@{-->}[ur]\ar[r] & D
}
\end{displaymath}
By the universal property of the pullback, it is sufficient to find a lift in the diagram
\begin{displaymath}
\xymatrix{
E\ar[d]^{c}\ar[r] & B_{\alpha}A\ar[d]^{B_{\alpha}p}\\
F\ar[r]\ar@{-->}[ur] & B_{\alpha}\Omega_{\alpha}D
}
\end{displaymath}
By adjunction we can instead consider the diagram
\begin{displaymath}
\xymatrix{
\Omega_{\alpha}E\ar[d]^{\Omega_{\alpha}c}\ar[r] & A\ar[d]^{p}\\
F\ar[r]\ar@{-->}[ur] & B_{\alpha}\Omega_{\alpha}D
}
\end{displaymath}
By Proposition \ref{preQuillen} $\Omega_{\alpha}c$ is a cofibration, and by assumption $p$ is an acyclic fibration. Using the model category structure on $\mathpzc{Alg}_{\mathfrak{P}}$ gives the required lifting. 
\end{proof}

\subsection{Connective Koszul Duality}
Classically \cite{quillen1969rational} Koszul duality was formulated for bounded complexes of $\Q$-vector spaces. In this section we consider bounded Koszul duality in the more general setting of connective Koszul categories. Let $\mathpzc{M}$ be a Koszul category equipped with a complete strict $t$-structure $(\mathpzc{M}_{\ge n})_{n\in\mathbb{Z}}$, which is compatible with the monoidal structure. Let $\mathfrak{P}$ be an operad, and $\mathfrak{C}$ a graded co-operad such that each $\mathfrak{P}(n),\mathfrak{C}(n)\in\mathpzc{M}_{\ge0}$. 

\begin{defn}
\begin{enumerate}
\item
Let $\mathpzc{M}$ be a stable combinatorial model category equipped with a symmetric monoidal structure and a strict $t$-structure $(\mathpzc{M}_{\ge n})_{n\in\mathbb{Z}}$ which is compatible with the monoidal structure, and let $\mathfrak{P}$ be a symmetric sequence in $\mathpzc{M}$ A \textbf{connectivity constraint for }$\mathfrak{P}$ is an integer $m\ge0$ such that $\mathfrak{P}(-):\mathpzc{M}\rightarrow\mathpzc{M}$ restricts to a functor 
$$\mathfrak{P}(-):\mathpzc{M}_{\ge m}\rightarrow\mathpzc{M}_{\ge m}$$
\item
Let $\alpha:\mathfrak{C}\rightarrow\mathfrak{P}$ be a twisting morphism. A \textbf{connectivity constraint for }$\alpha$ is an integer $m>0$ which is a connectivity constraint for both $\mathfrak{C}$ and $\mathfrak{P}$. 
\end{enumerate}
\end{defn}

Let $\alpha:\mathfrak{C}\rightarrow\mathfrak{P}$ be a Koszul morphism with connectivity constraint $m>0$. 

\begin{defn}
Let $m\ge 0$.
\begin{enumerate}
\item
An object $A$ of $\mathpzc{Alg}_{\mathfrak{P}}$ is said to be $m$-\textbf{connected} if the underlying object of $A$ is in $\mathpzc{M}_{\ge m}$. The full subcategory of $m$-connected  $\mathfrak{P}$-algebras is denoted $\mathpzc{Alg}_{\mathfrak{P},\ge m}$
\item
An object $C$ of $\mathpzc{coAlg}^{conil}_{\mathfrak{C}}$ is said to be $m$-\textbf{connected} if the underlying object of $C$ is in $\mathpzc{M}_{\ge m}$. The full subcategory of $m$-connected conilpotent $\mathfrak{C}$-coalgebras is denoted $\mathpzc{coAlg}^{conil}_{\mathfrak{C},\ge m}$.
\end{enumerate}
\end{defn}

There is a model structure for $m$-connected $\mathfrak{P}$-algebras. Indeed if $m$ is a connectivity constraint for $\mathfrak{P}$ then we have an adjunction
$$\adj{\mathfrak{P}(-)}{\mathpzc{M}_{\ge m}}{\mathpzc{Alg}_{\mathfrak{P},\ge m}}{|-|}$$
 Indeed the following is immediate from \cite{kelly2016homotopy} Proposition 6.3.19.

\begin{lem}
Suppose that the class of acyclic cofibrations in $\mathpzc{M}$ satisfies the weak $\mathfrak{P}$-algebra axiom. Let $m$ be a connectivity constraint for $\mathfrak{P}$. Then the class of acyclic cofibrations which are in $\mathpzc{M}_{\ge m}$ satisfies the weak $\mathfrak{P}$-algebra axiom. Thus the transferred model structure exists on $\mathpzc{Alg}_{\mathfrak{P},\ge m}$. Moreover if $A\in\mathpzc{Alg}_{\mathfrak{P},\ge m}$ is cofibrant, then it is also cofibrant as an object of $\mathpzc{Alg}_{\mathfrak{P}}$.
\end{lem}

If $C$ is a connected $\mathfrak{C}$-coalgebra we define the $m$-\textbf{connected filtration} on $\Omega_{\alpha}(C)$ as follows. We set 
$$F_{p}\Omega_{\alpha}C=\bigoplus_{n\ge -p}\mathfrak{P}(n)\otimes_{\Sigma_{n}}|C|^{\otimes n}_{filt}$$

\begin{prop}\label{qisconn}
Let $m\ge0$ be a connectivity constraint for $\alpha$. Suppose that $f:C\rightarrow D$ is a map of $m$-connected $\mathfrak{C}$-coalgebras in $\mathpzc{I}$, that $\alpha|_{\mathfrak{C}(1)}$ vanishes, and that $f$ is a quasi-isomorphism. Then $\Omega_{\alpha}(f)$ is a weak equivalence.
\end{prop}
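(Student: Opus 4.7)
The plan is to equip both $\Omega_{\alpha}(C)$ and $\Omega_{\alpha}(D)$ with the connected filtration introduced just before the statement, show that $\Omega_{\alpha}(f)$ is a filtered morphism, identify the associated graded, and conclude by a standard spectral-sequence (or convergent filtration) argument. The hypothesis that $\alpha|_{\mathfrak{C}(1)}=0$ and the connectedness hypothesis are used for exactly these two separate purposes.

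First I would verify that the connected filtration is stable under the differential $d_{1}+d_{2}$ of $\Omega_{\alpha}C$. The piece $d_{1}=-d_{\mathfrak{P}}\circ \mathrm{Id}_{C}-\mathfrak{P}\circ' d_{C}$ preserves the arity of $\mathfrak{P}$, hence preserves the filtration strictly. The twisted piece $d_{2}$ is the unique derivation extending
\[
C\xrightarrow{\Delta}\mathfrak{C}\circ C\xrightarrow{\alpha\circ \mathrm{Id}}\mathfrak{P}\circ C,
\]
so on $\mathfrak{P}(n)\otimes_{\Sigma_{n}}C^{\otimes n}$ it is built out of the components $\alpha:\mathfrak{C}(k)\to\mathfrak{P}(k)$ for $k\ge 0$ followed by partial composition in $\mathfrak{P}$; the resulting element lies in $\mathfrak{P}(n+k-1)\otimes_{\Sigma_{n+k-1}}C^{\otimes(n+k-1)}$. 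The contribution from $k=1$ vanishes by the hypothesis $\alpha|_{\mathfrak{C}(1)}=0$, and for $k\ge 2$ the arity strictly increases, i.e. the filtration degree strictly drops. Hence $d_{2}$ lowers the connected filtration, and $\Omega_{\alpha}(f)$ is a morphism of filtered complexes.

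Next I would pass to the associated graded. By the analysis above, the associated graded differential contains only the $d_{1}$-part, so
\[
\mathrm{gr}\bigl(\Omega_{\alpha}C\bigr)\;\cong\;\bigoplus_{n\ge 0}\mathfrak{P}(n)\otimes_{\Sigma_{n}}|C|^{\otimes n}
\]
with the tensor-product differential coming from $d_{\mathfrak{P}}$ and $d_{C}$, and similarly for $D$. Thus $\mathrm{gr}(\Omega_{\alpha}f)$ is just $\bigoplus_{n}\mathfrak{P}(n)\otimes_{\Sigma_{n}}f^{\otimes n}$. Since $C$ and $D$ are cofibrant $\mathfrak{C}$-coalgebras, their underlying complexes are $K$-flat, so by iterated application of $K$-flatness the maps $f^{\otimes n}:C^{\otimes n}\to D^{\otimes n}$ are quasi-isomorphisms. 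Tensoring with the $K$-flat $\Sigma_{n}$-module $\mathfrak{P}(n)$ and passing to $\Sigma_{n}$-coinvariants preserves quasi-isomorphisms (using admissibility of $\mathfrak{P}$ and that in a Koszul category $\Sigma_{n}$-coinvariants of cofibrant $\Sigma_{n}$-modules are computed homotopy correctly, or in the $\mathbb{Q}$-Koszul case because every $\Sigma_{n}$-module is a retract of a free one). Hence $\mathrm{gr}(\Omega_{\alpha}f)$ is a quasi-isomorphism.

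Finally I would promote the graded quasi-isomorphism to a genuine one. Because $C$ and $D$ are connected, the summand $\mathfrak{P}(n)\otimes_{\Sigma_{n}}C^{\otimes n}$ lives in total homological degree $\ge n$ (with $\mathfrak{P}$ non-negatively graded), so in any fixed total degree $d$ only finitely many filtration steps contribute. The connected filtration is therefore exhaustive and bounded below degree-wise, so its associated spectral sequence converges strongly, and a quasi-isomorphism on the $E^{0}$-page yields a quasi-isomorphism of totalisations. This shows $\Omega_{\alpha}(f)$ is a weak equivalence. The main subtlety — and the step most likely to require care in the write-up — is verifying that $d_{2}$ truly only contributes through $\mathfrak{C}(k)$ with $k\ge 2$ after using $\alpha|_{\mathfrak{C}(1)}=0$, so that the filtration really is preserved in the strict sense required.
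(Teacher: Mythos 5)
Your proof is correct and follows essentially the same route as the paper's: both use the connected filtration, the hypothesis $\alpha|_{\mathfrak{C}(1)}=0$ to see that the twisted differential lowers the filtration, the identification $\mathrm{gr}(\Omega_{\alpha}f)\cong\mathfrak{P}(f)$, and degree-wise boundedness of the filtration (from connectedness) to conclude. Your write-up simply spells out the convergence and $K$-flatness details that the paper leaves implicit.
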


\begin{proof}
We adapt the proof of \cite{loday2012algebraic} Lemma 11.2.13. Consider the connected filtrations on $C$ and $D$. The condition that $\alpha|_{\mathfrak{C}(1)}$ vanishes ensures that $d_{\alpha}$ lowers the filtration. This is a bounded below filtration. There are spectral sequences whose $E_{1}$-pages consist of homology groups of the associated gradeds, and which converge to the homology groups of $\Omega_{\alpha}(C)$ and $\Omega_{\alpha}(D)$ respectively. Since $\textrm{gr}(\Omega_{\alpha}f)=\mathfrak{P}(f)$, is a weak equivalence, the $E_{1}$-pages of the spectral sequences are isomorphic. Thus $f$ induces an isomorphism between the homology groups of $\Omega_{\alpha}(C)$ and $\Omega_{\alpha}(D)$. Since the $t$-structure is complete, this implies that $f$ is an equivalence. 
\end{proof}

\begin{defn}
Let $\alpha:\mathfrak{C}\rightarrow\mathfrak{P}$ be a twisting morphism in $\mathpzc{M}$ with connectivity constraint $c$, and with $\mathfrak{P}$ an admissible operad. A $m$-\textbf{connected} $\alpha$-\textbf{ideal of }$\mathpzc{M}$ is pair $(\mathpzc{I}_{\ge m},\mathpzc{J}_{\ge m})$ where $\mathpzc{I}_{\ge m}$ is a full subcategory of $\mathpzc{coAlg}^{conil}_{\mathfrak{C},\ge m}$ and $\mathpzc{J}_{\ge m}$ a full subcategory of $\mathpzc{Alg}_{\mathfrak{P},\ge m}$ such that
\begin{enumerate}
\item
If $V\in\mathpzc{J}_{\ge m}$ then $B_{\alpha}(V)\in\mathpzc{I}_{\ge m}$
\item
If $C\in\mathpzc{I}_{\ge m}$ then $\Omega_{\alpha}(C)\in\mathpzc{J}_{\ge m}$
\item
$\mathpzc{J}_{\ge m}$ contains all cofibrant $\mathfrak{P}$-algebras.
\item
Each $\mathfrak{C}(n)$ is $K$-transverse over $\Sigma_{n}$ to objects in $\mathpzc{M}$ of the form $V^{\otimes n}$ where $V$ is in $\mathpzc{J}$.
\item
Each $\mathfrak{P}(n)$ is $K$-transverse over $\Sigma_{n}$ to objects in $\mathpzc{M}$ of the form $C^{\otimes n}$, where $C\in\mathpzc{I}$.
\end{enumerate}
\end{defn}

\begin{example}
\begin{enumerate}
\item
If $\alpha$ is a homotopical twisting morphism, then the $m$-connceted  $\alpha$-minimal ideal given
$$\mathpzc{J}=\bigcup_{n=0}^{\infty}(\Omega_{\alpha}\circ B_{\alpha})^{n}(\mathpzc{Alg}^{c}_{\mathfrak{P,\ge m}})$$
and
$$\mathpzc{I}=\Omega_{\alpha}(\mathpzc{J})$$
\item
Let $\mathpzc{M}$ be a $C$-monoidal strong Koszul category and suppose that each $\mathfrak{P}(n)$ and $gr(\mathfrak{C})(n)$ are cofibrant as $\Sigma_{n}$-modules. Then $(\mathpzc{coAlg}^{|c|}_{\mathfrak{C},\ge m},\mathpzc{Alg}_{\mathfrak{P},\ge m}^{|c|})$ and $(\mathpzc{coAlg}^{|c|}_{\mathfrak{C},\ge m},\mathpzc{Alg}_{\mathfrak{P},\ge m}^{c})$ are $m$-connected $\alpha$-ideals. If in addition $\mathpzc{M}$ is $K$-monoidal then $(\mathpzc{coAlg}^{|K|}_{\mathfrak{C},\ge m},\mathpzc{Alg}_{\mathfrak{P},\ge m}^{|K|})$ is an $m$-connected $\alpha$-ideal.
\end{enumerate}
\end{example}




Given an $m$-connected $\alpha$-ideal $(\mathpzc{I}_{\ge m},\mathpzc{J}_{\ge m})$, the bar-cobar adjunction restricts to an adjunction. 

$$\adj{B_{\alpha}}{\mathpzc{I}_{\ge m}}{\mathpzc{J}_{\ge m}}{\Omega_{\alpha}}$$

\begin{thm}
Let $\mathpzc{M}$ be a Koszul category with a strict $t$-structure. The adjunction
$$\adj{B_{\alpha}}{\mathpzc{I}_{\ge m}}{\mathpzc{J}_{\ge m}}{\Omega_{\alpha}}$$ 
induces an adjoint equivalence of $(\infty,1)$-categories.
$$\adj{\textbf{B}_{\alpha}}{\mathrm{L^{H}}(\mathpzc{I}_{\ge m})}{\textbf{Alg}_{\mathfrak{P,\ge m}}}{\Omega_{\alpha}}$$
Moreover $\mathrm{L^{H}}(\mathpzc{I}_{\ge m})$ is the localisation of $\mathpzc{I}_{\ge m}$ at the class of quasi-isomorphisms. Finally if $\mathpzc{M}\cong Ch(\mathpzc{E})$, the $t$-structure is the one from Theorem \ref{connective}, and Assumption \ref{times12} is satisfied, then every morphism in $\mathpzc{coAlg}^{|c^{f}|,\alpha-adm}_{\mathfrak{C},\ge m}$ factors as an admissible monomorphism which is a weak equivalence followed by an $\alpha$-fibration. If further every object in $\mathpzc{M}$ is cofibrant, and $\mathpzc{M}$ is elementary, then there is a model structure on $\mathpzc{coAlg}_{\mathfrak{C},\ge m}^{|c^{f}|,\alpha-adm}$ in which the weak equivalences are the maps of coalgebras whose underlying morphisms are quasi-isomorphisms.
\end{thm} 

\begin{proof}
The first claim is clear, and the second follows immediately from Proposition \ref{qisconn}. For the final two claims all that remains to observe is that $\mathpzc{coAlg}_{\mathfrak{C},\ge m}^{|c^{f}|,\alpha-adm}$ is closed under the factorisation procedure in $\mathpzc{coAlg}_{\mathfrak{C}}^{|c^{f}|}$. 
\end{proof}

Applied to the twisting morphism $\kappa:\mathfrak{S}^{c}\otimes_{H}\mathfrak{coComm}\rightarrow\mathfrak{Lie}$ in the case $\mathpzc{E}={}_{k}\mathpzc{Vect}$ for $k$ a field of characteristic $0$, this recovers the famous result of Quillen \cite{quillen1969rational}. Indeed suppose $Ch(\mathpzc{E})$ is elementary and every object is cofibrant.  

For each $r\ge 0$ there is a model category structure on $\mathpzc{coAlg}^{|c^{f}|,\alpha-adm}_{\mathfrak{S}^{c}\otimes_{H}\mathfrak{coComm}}$ such that the bar-cobar adjunction induces a Quillen equivalence. 
$$\adj{\Omega_{\kappa}}{\mathpzc{coAlg}_{\mathfrak{S}^{c}\otimes_{H}\mathfrak{coComm},\ge m}^{|c^{f}|,\alpha-adm}}{\mathpzc{Alg}_{\mathfrak{Lie},\ge r}^{|c|}{B_{\kappa}}$$
Consider the model category structure on $\Omega_{\kappa}}{\mathpzc{coAlg}_{\mathfrak{coComm},\ge r+1}^{|c^{f}|,\alpha-adm}$ induced by the equivalence
$$[1]:\mathpzc{coAlg}_{\mathfrak{S}^{c}\otimes_{H}\mathfrak{coComm},\ge r}^{|c^{f}|,\alpha-adm}\rightarrow \Omega_{\kappa}}\mathpzc{coAlg}_{\mathfrak{coComm},\ge r+1}^{|c^{f}|,\alpha-adm}$$
There is then a Quillen equivalence
$$\adj{\Omega_{\kappa}\circ[-1]}{\mathpzc{coAlg}_{\mathfrak{coComm},\ge r+1}^{|c^{f}|,\alpha-adm}}{\mathpzc{Alg}_{\mathfrak{Lie},\ge r}^{|c|}}{[1]\circ B_{\kappa}}$$

\section{Operadic Koszul Duality}\label{secopkosz}

In this section we fix a Koszul twisting morphism $\alpha:\mathfrak{C}\rightarrow\mathfrak{P}$ in a Koszul category $\mathpzc{M}$.  We shall assume that $\mathpzc{M}$ is \textit{closed Koszul}, i.e. that $\mathpzc{M}$ is a closed monoidal model category. We are going to consider the functor 
$$\hat{C}_{\alpha}:\mathpzc{Alg}_{\mathfrak{P}}(\mathpzc{M})\rightarrow\mathpzc{Alg}_{\mathfrak{S}^{c}\otimes_{H}\mathfrak{C}^{\vee}}(\mathpzc{M})$$
defined in Section \ref{appcatalg}, which is given by the composition $[-1]\circ(-)^{\vee}\circ B_{\alpha}$.


%
%
\subsection{Operadic Koszul Categories and Morphisms}
For the functor $\hat{C}_{\alpha}$ to have nice properties we need some assumptions on both $\mathpzc{M}$ and $\alpha$.

\begin{defn}
A closed Koszul category $\mathpzc{M}$ is said to be \textbf{operadic} if
\begin{enumerate}
\item
 it is $C$-monoidal
 \item
  any cofibrant object $X$ is finitely $K$-cotorsion
  \item
  the functor $\prod_{n\in\mathbb{N}_{0}}$ preserves weak equivalences. 
  \end{enumerate}
\end{defn}

\begin{example}
Let $\mathpzc{M}$ be an elementary Koszul category which is closed. Then $\mathpzc{M}$ is operadic. Indeed in an exact category with enough projectives filtered projective limits are exact by the proof of Proposition \ref{boundedbelowKcotors}. It remains to check that cofibrant objects are finitely $K$-cotorsion. However in a closed monoidal model category the map $Hom(C,F)\rightarrow\mathbb{R}Hom(C,F)$ is an equivalence whenever $C$ is cofibrant and $F$ fibrant. In an elementary Koszul category every object is fibrant. 
\end{example}

%
%

\begin{defn}
A Koszul morphism $\alpha:\mathfrak{C}\rightarrow\mathfrak{P}$ is said to be \textbf{operadic} if $(\mathfrak{S}^{c}\otimes\mathfrak{C})^{\vee}$ is a rectifiably admissible operad.
\end{defn}

From now on $\mathpzc{M}$ will be an operadic Koszul category and $\alpha:\mathfrak{C}\rightarrow\mathfrak{P}$ will be an operadic Koszul morphism. 
%

\begin{prop}
Let $\mathpzc{M}$ be an operadic Koszul category and $\alpha:\mathfrak{C}\rightarrow\mathfrak{P}$ an operadic Koszul morphism. Then the functor $(-)^{\vee}:\mathpzc{coAlg}_{\mathfrak{C}}\rightarrow(\mathpzc{Alg}_{\mathfrak{C}^{\vee}})^{op}$ induces a functor of $(\infty,1)$-categories
$$(-)^{\vee}:\textbf{coAlg}_{\mathfrak{C}}^{|c^{f}|,\alpha-adm}(\mathpzc{M})\rightarrow\textbf{Alg}_{\mathfrak{C}^{\vee}}(\mathpzc{M})^{op}$$
\end{prop}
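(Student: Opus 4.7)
The goal is to show that the strict dualization functor $(-)^{\vee}$ sends weak equivalences in $\mathpzc{coAlg}_{\mathfrak{C}}^{|K^{f}|,\alpha-adm}(\mathpzc{M})$ to weak equivalences in $\mathpzc{Alg}_{\mathfrak{C}^{\vee}}(\mathpzc{M})^{op}$, so that by the universal property of Dwyer--Kan localization (cf.\ Theorem \ref{relequivuc}) one obtains the induced functor of $(\infty,1)$-categories. A weak equivalence $f\colon C\to D$ on the source side is an $\alpha$-weak equivalence, which by the proposition following Proposition \ref{unitacyclic} is in particular a quasi-isomorphism of the underlying complexes; on the target side the transferred model structure makes a morphism a weak equivalence iff its underlying map is a quasi-isomorphism. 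Hence the problem reduces to showing that for such $f$ the induced map $f^{\vee}\colon D^{\vee}\to C^{\vee}$ is a quasi-isomorphism in $\mathpzc{M}$.

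The first step is to lift $f$ to a morphism in $\mathpzc{coAlg}_{\mathfrak{C}}^{|K|,\alpha-adm}$, so that $C$ and $D$ carry exhaustive admissible pure-monomorphism filtrations $\{F_{n}C\}$, $\{F_{n}D\}$ with $K$-flat (and, after a filtered cofibrant replacement within $\overline{\mathpzc{Filt}}_{\textbf{PureMon}}(\mathpzc{M})$, cofibrant) associated gradeds, and $f$ is a filtered map. Replacing $f$ by a filtered cofibrant replacement of its graph if necessary, one may further arrange that $\mathrm{gr}(f)$ is a degreewise weak equivalence between cofibrant objects of $\mathpzc{M}$. This is the step that carries the most technical bookkeeping, and is the main obstacle: one must check the replacement can be performed within the subcategory $\mathpzc{coAlg}_{\mathfrak{C}}^{|K|,\alpha-adm}$, using the cofibrant model structure on filtered objects and Corollary \ref{twooutofthreefiltcofib}.

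The second step dualizes the associated graded: since $\mathpzc{M}$ is operadic, every cofibrant object is finitely $K$-cotorsion, so the natural map $X^{\vee}\to\mathbb{R}X^{\vee}$ is an equivalence for each $X=\mathrm{gr}_{n}(C),\mathrm{gr}_{n}(D)$. Hence $\mathrm{gr}(f)^{\vee}$ is a quasi-isomorphism in each weight, and the canonical comparison $\mathrm{gr}(C^{\vee})\to\mathrm{gr}(C)^{\vee}$ (and similarly for $D$) is a weak equivalence after the identification $C^{\vee}\cong\lim_{\leftarrow n}(F_{n}C)^{\vee}$, which sits in short exact sequences
\begin{equation*}
0\longrightarrow(\mathrm{gr}_{n}C)^{\vee}\longrightarrow(F_{n}C)^{\vee}\longrightarrow(F_{n-1}C)^{\vee}\longrightarrow 0
\end{equation*}
arising from the purity of $F_{n-1}C\hookrightarrow F_{n}C$ with cofibrant cokernel $\mathrm{gr}_{n}C$.

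The third step passes from the associated graded to the total object. Inductively, the $2$-out-of-$3$ property and the exact sequences above give that each $(F_{n}f)^{\vee}$ is a quasi-isomorphism. Finally, $f^{\vee}=\lim_{\leftarrow n}(F_{n}f)^{\vee}$; using the second operadic hypothesis that $\prod_{n\in\mathbb{N}_{0}}$ preserves weak equivalences, together with the Mittag--Leffler-type tower just constructed (whose transition maps are degreewise split epimorphisms by purity), the homotopy limit agrees with the strict limit up to quasi-isomorphism. This yields that $f^{\vee}$ is a quasi-isomorphism, completing the proof.
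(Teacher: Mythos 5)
The paper states this proposition without any written proof (it is treated as a consequence of the definition of an operadic Koszul category together with the filtered structure on the coalgebras), so there is no argument of record to compare against; judged on its own terms, your proposal is the natural thing to try but has a genuine gap at its central step. The whole difficulty is that the underived dual $(-)^{\vee}=\underline{Hom}(-,R)$ is not homotopy invariant, and the operadic hypothesis only guarantees that \emph{cofibrant} objects are finitely $K$-cotorsion, whereas objects of $\mathpzc{coAlg}_{\mathfrak{C}}^{|K^{f}|,\alpha-adm}$ are only required to have $K$-flat associated gradeds. Your fix --- pass to a ``filtered cofibrant replacement'' of $C$, $D$ (or of the graph of $f$) so that the gradeds become cofibrant --- is circular: replacing $C$ by a quasi-isomorphic object $C'$ gives you no comparison between $C^{\vee}$ and $(C')^{\vee}$, because invariance of the strict dual under quasi-isomorphism is exactly what is being proved. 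Note that $K$-flat does not imply finitely $K$-cotorsion: in $Ch({}_{\mathbb{Z}}\mathpzc{Mod})$ with the projective (elementary, hence operadic) structure, a flat resolution $P_{\bullet}\rightarrow\mathbb{Q}$ is a quasi-isomorphism of $K$-flat complexes whose dual is not a quasi-isomorphism. So the reduction to cofibrant gradeds is not available, and without it the appeal to finite $K$-cotorsion in your second step collapses; some additional input bridging ``$K$-flat gradeds'' to a $K$-cotorsion-type property of $C$ itself is needed and is not supplied.

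Two further steps are also unjustified as written. First, $\mathpzc{coAlg}_{\mathfrak{C}}^{|K^{f}|,\alpha-adm}$ is a \emph{full} subcategory of $\mathpzc{coAlg}^{conil}_{(\mathfrak{C})_{top}}$: only the objects are required to come from the filtered category, so a morphism $f$ need not preserve the filtrations witnessing membership, and ``lift $f$ to $\mathpzc{coAlg}_{\mathfrak{C}}^{|K|,\alpha-adm}$'' requires an argument (every coalgebra map preserves the coradical filtration, but the $K$-flatness hypothesis is not stated for that filtration). Second, in the tower argument, purity is a condition on $\otimes$, not on $\underline{Hom}(-,R)$: dualizing $0\rightarrow F_{n-1}C\rightarrow F_{n}C\rightarrow \mathrm{gr}_{n}C\rightarrow 0$ is only left exact in general, and neither the claimed short exactness of the dual sequences nor the ``degreewise split epimorphisms by purity'' follows; $K$-flatness of $\mathrm{gr}_{n}C$ gives no Ext-vanishing. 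Without an exact (Mittag--Leffler type) tower, the hypothesis that $\prod_{n\in\mathbb{N}_{0}}$ preserves weak equivalences does not identify $\lim_{\leftarrow}(F_{n}f)^{\vee}$ with a homotopy limit, so the final passage from the graded pieces to $f^{\vee}$ is not secured either.
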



By Theorem \ref{preQuillen} and Section \ref{Kcotorproj} there is an induced  functor of $(\infty,1)$-categories
$$\hat{\textbf{C}}_{\alpha}:\textbf{Alg}_{\mathfrak{P}}\rightarrow\textbf{Alg}_{\mathfrak{S}^{c}\otimes_{H}\mathfrak{C}^{\vee}}$$
We are going to prove the following. The approach is a generalisation of \cite{lurie2011derived} Proposition 2.2.12.

\begin{thm}\label{alphadjoint}
The functor $\hat{\textbf{C}_{\alpha}}:\textbf{Alg}_{\mathfrak{P}}\rightarrow(\textbf{Alg}_{\mathfrak{S}^{c}\otimes_{H}\mathfrak{C}^{\vee}})^{op}$ admits a right adjoint $\textbf{D}_{\alpha}$
\end{thm}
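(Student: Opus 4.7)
The plan is to invoke the adjoint functor theorem for presentable $(\infty,1)$-categories: a colimit-preserving functor between presentable $(\infty,1)$-categories admits a right adjoint. Equivalently, to produce the right adjoint $\textbf{D}_{\alpha}$ I must show that $\hat{\textbf{C}}_{\alpha}: \textbf{Alg}_{\mathfrak{P}} \to (\textbf{Alg}_{\mathfrak{S}^{c}\otimes_{H}\mathfrak{C}^{\vee}})^{op}$ preserves small colimits, i.e.\ that $\hat{\textbf{C}}_{\alpha}$ carries colimits in $\textbf{Alg}_{\mathfrak{P}}$ to limits in $\textbf{Alg}_{\mathfrak{S}^{c}\otimes_{H}\mathfrak{C}^{\vee}}$.

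The presentability assertions are routine. Since $\mathpzc{M}$ is combinatorial and $\mathfrak{P}$ is admissible, $\mathpzc{Alg}_{\mathfrak{P}}$ carries a combinatorial model structure, and Section \ref{siftedgeneration} identifies $\textbf{Alg}_{\mathfrak{P}}$ as generated under sifted colimits by free algebras on a small set of cofibrant objects, so it is presentable. The operadic Koszul hypothesis that $\mathfrak{S}^{c}\otimes_{H}\mathfrak{C}^{\vee}$ is rectifiably admissible gives the same for $\textbf{Alg}_{\mathfrak{S}^{c}\otimes_{H}\mathfrak{C}^{\vee}}$.

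For the colimit-preservation step, decompose $\hat{\textbf{C}}_{\alpha} = [-1] \circ (-)^{\vee} \circ \textbf{B}_{\alpha}$. By Theorem \ref{coopKoszuldual} the bar functor $\textbf{B}_{\alpha}$ is an equivalence of $(\infty,1)$-categories, and $[-1]$ is a self-equivalence; both preserve and reflect all colimits and all limits. It therefore suffices to show that the dualization $(-)^{\vee}: \textbf{coAlg}_{\mathfrak{C}}^{|K^{f}|,\alpha-adm} \to \textbf{Alg}_{\mathfrak{C}^{\vee}}$ sends colimits to limits. On underlying objects of $\textbf{M}$ this is $\mathbb{R}\uHom(-,R)$, and the adjunction $\uHom(X\otimes Y,R) \cong \uHom(X, Y^{\vee})$ exhibits it as contravariantly right adjoint to itself, so on $\textbf{M}$ it carries colimits to limits. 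The operadic Koszul hypotheses (cofibrant objects are finitely $K$-cotorsion, and $\prod_{n}$ preserves weak equivalences) ensure the derived and underived dualizations agree on the $K$-flat bar complexes, so the computation is homotopically correct.

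The hardest step will be to promote this underlying-object statement to the level of algebras. Since limits in $\textbf{Alg}_{\mathfrak{C}^{\vee}}$ are detected by the forgetful functor to $\textbf{M}$, one must verify that the comparison $\hat{\textbf{C}}_{\alpha}(\colim_{i} A_{i}) \to \lim_{i} \hat{\textbf{C}}_{\alpha}(A_{i})$ is an algebra map inducing an equivalence on underlying complexes. I would reduce to sifted colimits and finite coproducts via the generators of Section \ref{siftedgeneration}; the sifted case is easy since $\otimes$, $(-)_{\Sigma_{n}}$, and $\bigoplus_{n}$ all commute with sifted colimits. The finite-coproduct case is subtler, requiring a careful matching, through $\textbf{B}_{\alpha}$ and dualization, between coproducts of $\mathfrak{P}$-algebras and products of $\mathfrak{C}^{\vee}$-algebras; here I would apply Proposition \ref{inductgraded} to the natural filtration of $\textbf{B}_{\alpha}$ by coradical degree to reduce to graded pieces, on which the structure maps are identified explicitly. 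Once this compatibility is established, the adjoint functor theorem produces the desired right adjoint $\textbf{D}_{\alpha}$.
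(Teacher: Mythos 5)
Your overall architecture is the same as the paper's: invoke Lurie's adjoint functor theorem, note presentability, and prove colimit preservation by splitting into sifted colimits and finite coproducts using the generation of $\textbf{Alg}_{\mathfrak{P}}$ under sifted colimits by free algebras on cofibrant objects, with limits on the target side detected by the conservative, limit- and sifted-colimit-preserving forgetful functor. Your treatment of the sifted case is also essentially the paper's (the underlying object of $\textbf{B}_{\alpha}$ is analysed through the coradical filtration, with Proposition \ref{inductgraded} and the fact that $\otimes$, $\Sigma_{n}$-coinvariants and direct sums commute with sifted colimits, and the operadic hypotheses making the dualization homotopically correct), even if the paper is more careful in routing this through $(-)_{top}$ and $\textbf{Filt}(\textbf{M})$ rather than asserting that colimits of coalgebras are computed on underlying objects.

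The genuine gap is in the finite-coproduct step. You propose to "apply Proposition \ref{inductgraded} to the natural filtration of $\textbf{B}_{\alpha}$ by coradical degree to reduce to graded pieces", but that proposition is a statement about sifted colimits only, and, more seriously, the reduction to associated graded pieces cannot work here: the comparison map $\hat{\textbf{C}}_{\alpha}(\mathfrak{P}(V)\coprod\mathfrak{P}(W))\rightarrow\hat{\textbf{C}}_{\alpha}(\mathfrak{P}(V))\times\hat{\textbf{C}}_{\alpha}(\mathfrak{P}(W))$ is \emph{not} an equivalence on the associated graded of the bar filtration, since $gr(B_{\alpha}\mathfrak{P}(V\oplus W))\cong\mathfrak{C}\circ\mathfrak{P}(V\oplus W)$ contains all the mixed terms, which only die after totalizing with the twisted differential. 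What makes the coproduct-to-product comparison an equivalence is precisely the Koszul condition, i.e.\ the acyclicity of $\mathfrak{C}\circ_{\alpha}\mathfrak{P}$, used in the form of the explicit computation $\hat{\textbf{C}}_{\alpha}(\mathfrak{P}(V))\cong(\mathfrak{C}\circ_{\alpha}\mathfrak{P}(V))^{\vee}\cong V^{\vee}$ (equivalently, $B_{\alpha}$ of a free algebra is the trivial coalgebra on $V$); with this, the image of the coproduct diagram $\mathfrak{P}(V)\coprod\mathfrak{P}(W)\cong\mathfrak{P}(V\oplus W)$ is identified with the evident product diagram on $V^{\vee}\oplus W^{\vee}$, which is a product in $\textbf{Alg}_{(\mathfrak{S}^{c}\otimes_{H}\mathfrak{C})^{\vee}}$ because the forgetful functor detects limits. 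This use of the Koszul hypothesis is absent from your proposal, and without it the finite-coproduct case does not go through; note also that it is this step, not the sifted one, that actually uses $\alpha$ being Koszul rather than merely a twisting morphism.
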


%
%
%
%
%

%


%

%
%


\begin{proof}
Using Lurie's $(\infty,1)$-adjoint functor theorem and noting that $\textbf{Alg}_{\mathfrak{P}}$ is locally presentable, we need to show that $\hat{\textbf{C}}_{\alpha}$ preserves colimits. Since the functor $|-|:\textbf{Alg}_{\mathfrak{S}^{c}\otimes_{H}\mathfrak{C}^{\vee}}^{\vee}\rightarrow\mathrm{L^{H}}(\mathpzc{M})$ is conservative, and preserves and reflects limits, and preserves sifted colimits by Section \ref{siftedgeneration}, it suffices to show that $|-|\circ\hat{\textbf{C}}_{\alpha}:\textbf{Alg}_{\mathfrak{P}}\rightarrow(\mathrm{L^{H}}(\mathpzc{M}))^{op}$ preserves colimits. Let us first show that it preserves sifted colimits. Now we have a commutative diagram
\begin{displaymath}
\xymatrix{
\textbf{Alg}_{\mathfrak{P}}\ar[d]^{\textbf{B}_{\alpha}}\ar[r]^{\textbf{B}_{\alpha}} &\textbf{coAlg}^{|K^{f}|,\alpha-adm}_{\mathfrak{C}}\ar[r]^{(-)^{\vee}} &(\textbf{Alg}_{\mathfrak{S}^{c}\otimes_{H}\mathfrak{C}^{\vee}})^{op}\ar[d]^{|-|}\\
\textbf{coAlg}^{|K^{f}|,\alpha-adm}_{\mathfrak{C}} \ar[r]^{|-|}& \mathrm{L^{H}}(\mathpzc{M}) \ar[r]^{(-)^{\vee}} & (\mathrm{L^{H}}(\mathpzc{M}))^{op}
}
\end{displaymath}
and both compositions are equal to $\hat{\textbf{C}_{\alpha}}$. The functor $(-)^{\vee}:\mathrm{L^{H}}(\mathpzc{M})\rightarrow (\mathrm{L^{H}}(\mathpzc{M}))^{op}$ is a left adjoint so it preserves all colimits. Therefore we reduce to showing that $|-|\circ \textbf{B}_{\alpha}$ preserves sifted colimits. There is a factorisation of $|-|\circ \textbf{B}_{\alpha}$
\begin{displaymath}
\xymatrix{
\textbf{Alg}_{\mathfrak{P}}\ar[r]^{\textbf{B}_{\alpha}} & \textbf{coAlg}^{|K^{f}|,\alpha-adm}_{\mathfrak{C}}\ar[rr]^{(-)_{top}} &&  \textbf{Filt}(\mathrm{L^{H}}(\mathpzc{M}))\ar[rr]^{(-)_{top}} & & \mathrm{L^{H}}(\mathpzc{M})
}
\end{displaymath}
where $(-)_{top}$ is the forgetful functor. The functor $(-)_{top}$ is colimit preserving. Therefore it remains to show that $|-|\circ (-)_{top}\circ \textbf{B}_{\alpha}$ preserves colimits. Now $|-|\circ (-)_{0}\circ (-)_{top}\circ\textbf{B}_{\alpha}=|-|$ which preserve sifted colimits. By Proposition \ref{inductgraded} we finally reduce to showing that the composition $\textbf{gr}\circ (-)_{top}\circ |-|\circ  \textbf{B}_{\alpha}$ is colimit preserving. But this functor is equivalent to the composition 
\begin{displaymath}
\xymatrix{
\textbf{Alg}_{\mathfrak{P}}\ar[r]^{|-|} & \mathrm{L^{H}}(\mathpzc{M})\ar[r]^{|-|\circ\mathfrak{C}(-)} & \mathrm{L^{H}}(\mathpzc{M})
}
\end{displaymath}
All of the functors in this composition preserve sifted colimits by Section \ref{siftedgeneration} so we are done. Finally, we need to show  that $\hat{\textbf{C}}_{\alpha}$ preserves products. Now by Section \ref{siftedgeneration} the category $\textbf{Alg}_{\mathfrak{P}}$ is generated under sifted colimits by free objects $\mathfrak{P}(V)$ on cofibrant objects $V$. Thus it is enough to show that $\hat{\textbf{C}}_{\alpha}$ preserves finite coproducts of the form $\mathfrak{P}(V)\coprod\mathfrak{P}(W)\cong\mathfrak{P}(V\oplus W)$. But 
$$\hat{\textbf{C}}_{\alpha}(\mathfrak{P}(V))\cong(\mathfrak{C}\circ_{\alpha}\mathfrak{P}(V))^{\vee}\cong V^{\vee}$$
So if
\begin{displaymath}
\xymatrix{
\mathfrak{P}(0)\ar[r]\ar[d] & \mathfrak{P}(V)\ar[d]\\
\mathfrak{P}(W)\ar[r] & \mathfrak{P}(V\oplus W)
}
\end{displaymath}
is a coproduct diagram in $\textbf{Alg}_{\mathfrak{P}}$ then applying $\hat{\textbf{C}}_{\alpha}$ gives the diagram
\begin{displaymath}
\xymatrix{
V^{\vee}\oplus W^{\vee}\ar[r]\ar[d] & V^{\vee}\ar[d]\\
W^{\vee}\ar[r] & 0
}
\end{displaymath}
which is a product diagram in $\textbf{Alg}_{\mathfrak{C}^{\vee}}$.
\end{proof}
Since we use the adjoint functor theorem the proof of the existence of $\textbf{D}_{\alpha}$ is not constructive. However for Koszul duality between Lie algebras and commutative algebras we will give an interpretation of $\textbf{D}_{\alpha}$ in terms of the shifted tangent complex.

\subsection{Main Example: The Lie Operad}

Recall that in the category ${}_{\Q}\mathpzc{Vect}$ of vector spaces over $\Q$ there is a Koszul morphism 
$$\kappa:\mathfrak{S}^{c}\otimes_{H}\mathfrak{coComm}\rightarrow\mathfrak{Lie}$$
This follows from the fact, which we will not explore in detail here, that $\mathfrak{S}^{c}\otimes_{H}\mathfrak{coComm}$ is the quadratic dual divided powers cooperad of $\mathfrak{Lie}$. See  \cite{dehling2017weak} Section 2.1 for details. This duality also extends to monoidal elementary exact categories.



Let us now study this example in greater detail, in particular its relevance to (affine) derived geometry.

\subsubsection{The Shifted Tangent Complex}

Recall the functor $\mathbb{L}_{0}:\textbf{Alg}^{aug}_{\mathfrak{Comm}}(\mathpzc{M})\rightarrow\mathrm{L^{H}}(\mathpzc{M})$ defined in section \ref{seccotangentcomplex}. The \textbf{tangent complex functor} is the composition $\mathbb{T}_{0}\defeq (-)^{\vee}\circ\mathbb{L}_{0}$. We shall abuse notation and write $\textbf{D}_{\kappa}:\textbf{Alg}_{Lie}(\mathpzc{M})^{op}$ for the composite $\textbf{D}_{\kappa}\circ I:\textbf{Alg}_{\mathfrak{Comm}}^{aug}(\mathpzc{M})\rightarrow \textbf{Alg}_{\mathfrak{Comm}^{nu}}(\mathpzc{M})\rightarrow\textbf{Alg}_{\mathfrak{Lie}}(\mathpzc{M})^{op}$. Here $I$ is the functor involved in the construction of the cotangent complex in Section \ref{seccotangentcomplex}.

In classical Koszul duality for Lie algebras and commutative algebras the functor
$$ |-|\circ\textbf{D}_{\kappa}:\textbf{Alg}^{aug}_{\mathfrak{Comm}}(Ch(\mathpzc{Vec}_{k}))\rightarrow\textbf{Alg}_{\mathfrak{Lie}}(\textbf{Ch}(\mathpzc{Vec}_{k}))^{op}\rightarrow\textbf{Ch}(\mathpzc{Vec}_{k})$$
is equivalent to the shifted tangent complex. We will now see that this works when $\mathpzc{M}$ is enriched over $\Q$. 

%

\begin{prop}\label{shiftedtanget}
The functor $|-|\circ\textbf{D}_{\kappa}$ is naturally equivalent to the shifted tangent complex functor $\mathbb{T}_{0}[1]$.
\end{prop}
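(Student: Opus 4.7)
The plan is to verify the equivalence via the Yoneda lemma in the $(\infty,1)$-categorical sense, by computing the functor represented by $|\textbf{D}_{\kappa}(A)|$ and identifying it with the functor represented by $\mathbb{T}_{0}(A)[1]$. Fix an augmented commutative algebra $A$. Since $\textbf{M}$ is generated under sifted colimits by cofibrant objects, and both $|\textbf{D}_{\kappa}(A)|$ and $\mathbb{T}_{0}(A)[1]$ corepresent functors $\textbf{M} \to \textbf{sSet}^{op}$ (or rather are the objects we probe), it suffices to show that the mapping spaces agree naturally in a test object $V \in \textbf{M}$.

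First I would use the free-forgetful adjunction $\mathfrak{Lie}(-)\dashv |-|$ on $(\infty,1)$-categories of Lie algebras to write
$$\text{Map}_{\textbf{M}}(V,|\textbf{D}_{\kappa}(A)|) \simeq \text{Map}_{\textbf{Alg}_{\mathfrak{Lie}}}(\mathfrak{Lie}(V),\textbf{D}_{\kappa}(A)),$$
and then invoke the adjunction $\hat{\textbf{C}}_{\kappa}\dashv\textbf{D}_{\kappa}$ of Theorem \ref{alphadjoint} to rewrite this as $\text{Map}_{\textbf{Alg}_{\mathfrak{Comm}}^{aug}}(A,\hat{\textbf{C}}_{\kappa}(\mathfrak{Lie}(V)))$. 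The next step is to compute $\hat{\textbf{C}}_{\kappa}(\mathfrak{Lie}(V))$ for $V$ cofibrant. Here the Koszul property of $\kappa$ is crucial: the natural map $\textbf{B}_{\kappa}\circ\text{Free}_{\mathfrak{Lie}}(-)\to\text{tr}_{\mathfrak{S}^{c}\otimes_{H}\mathfrak{coComm}^{nu}}(-)$ is an equivalence, so the underlying object of $\textbf{B}_{\kappa}(\mathfrak{Lie}(V))$ is $V$ equipped with the trivial coalgebra structure. Applying the shift and linear dual and noting that cofibrant $V$ is finitely $K$-cotorsion (using that $\mathpzc{M}$ is operadic) shows that $\hat{\textbf{C}}_{\kappa}(\mathfrak{Lie}(V))\simeq k\ltimes V^{\vee}[1]$ as augmented commutative algebras.

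With that identification in hand, I would use the universal property of $\mathbb{L}_{0}$ as left adjoint to the square-zero extension functor $M\mapsto k\ltimes M$ (Section \ref{seccotangentcomplex}) to obtain
$$\text{Map}_{\textbf{Alg}_{\mathfrak{Comm}}^{aug}}(A,k\ltimes V^{\vee}[1])\simeq\text{Map}_{\textbf{M}}(\mathbb{L}_{0}(A),V^{\vee}[1]).$$
Finally, the closed symmetric monoidal structure on $\textbf{M}$ gives
$$\text{Map}_{\textbf{M}}(\mathbb{L}_{0}(A),V^{\vee}[1])\simeq\text{Map}_{\textbf{M}}(\mathbb{L}_{0}(A)\otimes^{\mathbb{L}}V,R[1])\simeq\text{Map}_{\textbf{M}}(V,\mathbb{L}_{0}(A)^{\vee}[1])=\text{Map}_{\textbf{M}}(V,\mathbb{T}_{0}(A)[1]).$$
Chaining these equivalences, which are each functorial in $V$ and $A$, and applying the Yoneda lemma yields the claimed natural equivalence $|-|\circ\textbf{D}_{\kappa}\simeq\mathbb{T}_{0}[1]$.

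The main obstacle will be the precise identification $\hat{\textbf{C}}_{\kappa}(\mathfrak{Lie}(V))\simeq k\ltimes V^{\vee}[1]$ as augmented commutative algebras rather than merely as objects of $\textbf{M}$. One must track the interplay of the shift functor $[-1]$, the operadic suspension $\mathfrak{S}^{c}\otimes_{H}(-)$, and the duality $(-)^{\vee}$, and check that the resulting commutative algebra structure on $V^{\vee}[1]$ is indeed the trivial (square-zero) one. Everything else is formal adjunction chasing, modulo the cofibrancy/$K$-cotorsion hypotheses which are guaranteed by $\mathpzc{M}$ being operadic Koszul.
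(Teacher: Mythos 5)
Your proposal is correct and is essentially the paper's argument: the paper simply observes that $|-|\circ\textbf{D}_{\kappa}$ and $\mathbb{T}_{0}[1]$ are both right adjoint to the same functor, namely $\hat{\textbf{C}}_{\kappa}\circ\mathfrak{Lie}(-)\simeq R\ltimes(-)^{\vee}[-1]$ (the square-zero extension on the shifted dual), and your mapping-space/Yoneda chain is just this uniqueness-of-adjoints statement unwound, with the key identification of $\hat{\textbf{C}}_{\kappa}$ on free Lie algebras coming from the same equivalence $\textbf{B}_{\kappa}\circ\mathrm{Free}_{\mathfrak{Lie}}\simeq tr_{\mathfrak{S}^{c}\otimes_{H}\mathfrak{coComm}^{nu}}$ that the paper uses implicitly. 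The only caveat is the shift bookkeeping you already flag, where your $k\ltimes V^{\vee}[1]$ agrees with the paper's $R\ltimes(V[-1])^{\vee}$ once the conventions are matched.
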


\begin{proof}
We show that $|-|\circ \textbf{D}_{\kappa}$ and $\mathbb{T}_{0}[1]$ are both right adjoint to the same functor. Now $|-|\circ\textbf{D}_{\kappa}$ is right adjoint to the functor $\hat{\textbf{C}}_{\kappa}\circ\mathfrak{Lie}(-)$ which is equivalent to the functor $R\oplus(-)^{\vee}[-1]$. But this functor is left adjoint to $\mathbb{T}_{0}[1]$. 
\end{proof}

\begin{prop}\label{doubledualunit}
 After forgetting to $\mathrm{L^{H}}(\mathpzc{M})$ the unit $|\eta_{\mathfrak{g}}|:|\mathfrak{g}|\rightarrow|\textbf{D}_{\kappa}\circ\hat{\textbf{C}}_{\kappa}(\mathfrak{g})|$ factors through the map $|\mathfrak{g}|\rightarrow\mathbb{R}(|\mathfrak{g}|)^{\vee\vee}$
\end{prop}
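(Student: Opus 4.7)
The plan is to combine Proposition \ref{shiftedtanget} with the abstract duality formalism available in any closed symmetric monoidal $(\infty,1)$-category.

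First I would apply Proposition \ref{shiftedtanget} to rewrite the target of the unit. There is a natural equivalence
$$|\textbf{D}_{\kappa}\hat{\textbf{C}}_{\kappa}(\mathfrak{g})|\simeq\mathbb{T}_{0}[1](\hat{\textbf{C}}_{\kappa}(\mathfrak{g}))\simeq\mathbb{R}\underline{\textrm{Hom}}(\mathbb{L}_{0}(\hat{\textbf{C}}_{\kappa}(\mathfrak{g})),R)[1].$$
Setting $M:=\mathbb{L}_{0}(\hat{\textbf{C}}_{\kappa}(\mathfrak{g}))[-1]$, this simplifies to $\mathbb{R}M^{\vee}$, so the forgotten unit $|\eta_{\mathfrak{g}}|$ takes the form of a morphism $f:|\mathfrak{g}|\to\mathbb{R}M^{\vee}$ landing in a derived dual object.

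Next I would invoke the following general fact about the derived dualization functor $D(-):=\mathbb{R}(-)^{\vee}$ in the closed symmetric monoidal $(\infty,1)$-category $\textbf{M}$. The symmetry of the monoidal structure furnishes a natural equivalence
$$\textrm{Map}(X,DY)\simeq\textrm{Map}(X\otimes^{\mathbb{L}}Y,R)\simeq\textrm{Map}(Y,DX),$$
and consequently any morphism $f:X\to DY$ admits a canonical factorization
$$X\xrightarrow{\eta_{X}}DDX\xrightarrow{D\tilde{f}}DY,$$
where $\tilde{f}:Y\to DX$ is the image of $f$ under this equivalence and $\eta_{X}:X\to DDX$ is the canonical double-dual map. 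This is the triangle identity for the contravariant adjunction $D\dashv D$, and it holds in the derived setting provided the internal hom is right-derivable (which is part of the definition of a closed Koszul category).

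Finally, I would apply this factorization to $f=|\eta_{\mathfrak{g}}|$ with $(X,Y)=(|\mathfrak{g}|,M)$ to produce the desired factorization of $|\eta_{\mathfrak{g}}|$ through the canonical map $|\mathfrak{g}|\to\mathbb{R}(|\mathfrak{g}|)^{\vee\vee}$. The one non-formal point to check, and the main obstacle, is that the abstract factorization really recovers $|\eta_{\mathfrak{g}}|$ after transport along the equivalence of Proposition \ref{shiftedtanget}. This should be essentially automatic, since the proof of Proposition \ref{shiftedtanget} identifies $|-|\circ\textbf{D}_{\kappa}$ and $\mathbb{T}_{0}[1]$ by exhibiting both as right adjoint to $R\oplus(-)^{\vee}[-1]$; the coherence of the resulting equivalence with the unit $\eta_{\mathfrak{g}}$ then follows from the uniqueness of right adjoints up to canonical natural equivalence.
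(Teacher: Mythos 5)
Your proposal is correct, and it reaches the factorization by a genuinely more direct route than the paper. The paper first relates $|\eta_{\mathfrak{g}}|$ to the unit of the composite adjunction $\adj{\hat{\textbf{C}}_{\kappa}\circ\textbf{L}}{\textbf{M}}{(\textbf{Alg}_{\mathfrak{Comm}}^{aug})^{op}}{|-|\circ\textbf{D}_{\kappa}}$ by a naturality square along the counit $\textbf{L}(|\mathfrak{g}|)\rightarrow\mathfrak{g}$ of the free Lie algebra adjunction, then identifies that composite adjunction (via Proposition \ref{shiftedtanget}) with $R\ltimes(-)^{\vee}[-1]\dashv\mathbb{T}_{0}[1]$ and decomposes the latter into the dualization adjunction followed by the square-zero/cotangent adjunction, so that the unit of the composite factors through the unit of the inner dualization adjunction, i.e.\ through $|\mathfrak{g}|\rightarrow\mathbb{R}(|\mathfrak{g}|)^{\vee\vee}$. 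You instead use Proposition \ref{shiftedtanget} only to recognise the target of $|\eta_{\mathfrak{g}}|$ as a derived dual $DM$, and then apply the triangle identity for the self-adjunction of $D=\mathbb{R}(-)^{\vee}$: \emph{any} morphism $X\rightarrow DY$ factors as $D\tilde{f}\circ\eta_{X}$ through the canonical map $\eta_{X}:X\rightarrow DDX$. Since this formal fact applies to arbitrary maps into duals and not only to units of adjunctions, you avoid both the free Lie algebra trick and the explicit decomposition of adjunctions; the dualization adjunction you invoke is exactly the one the paper also uses, so it is available in this setting. One remark: the ``non-formal point'' you flag is not actually an obstacle for the statement as written, because the property of factoring through $|\mathfrak{g}|\rightarrow\mathbb{R}(|\mathfrak{g}|)^{\vee\vee}$ is stable under composing with any equivalence of the target, so no compatibility between your abstract factorization and the identification of Proposition \ref{shiftedtanget} needs to be checked. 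What the paper's longer route buys is an explicit identification of the second leg of the factorization, namely as $\mathbb{R}\underline{\textrm{Hom}}(-,R)$ applied to a map $\mathbb{L}_{0}(\hat{C}_{\kappa}(\mathfrak{g}))\rightarrow|\mathfrak{g}^{\vee}|[-1]$, which is what is actually cited in the proof of Theorem \ref{operadickoszul}; your mate $\tilde{f}$ provides the same map, but you would need to unwind it explicitly if the proposition is to be used there.
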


\begin{proof}
Consider the map $\textbf{L}(|\mathfrak{g}|)\rightarrow\mathfrak{g}$. There is a commutative diagram
\begin{displaymath}
\xymatrix{
|\mathfrak{g}|\ar[r]\ar@{=}[d] & |\textbf{L}(|\mathfrak{g}|)|\ar[d]\ar[r] & |\textbf{D}_{\kappa}\circ\hat{\textbf{C}}_{\kappa}\circ\textbf{L}(|\mathfrak{g}|)|\ar[d]\\
|\mathfrak{g}|\ar@{=}[r]& |\mathfrak{g}|\ar[r] & |\textbf{D}_{\kappa}\circ\hat{\textbf{C}}_{\kappa}(\mathfrak{g})|
}
\end{displaymath}
The top vertical composition is the unit of the composite adjunction
$$\adj{\hat{\textbf{C}}_{\kappa}\circ\textbf{L}}{\mathrm{L^{H}}(\mathpzc{M})}{(\textbf{Alg}_{\mathfrak{Comm}}^{aug})^{op}}{|-|\circ\textbf{D}_{\kappa}}$$
which is the adjunction
$$\adj{R\ltimes(-)^{\vee}[-1]}{\mathrm{L^{H}}(\mathpzc{M})}{(\textbf{Alg}_{\mathfrak{Comm}}^{aug})^{op}}{\mathbb{T}_{0}[1]}$$
This can be written as the composition of the adjunctions
$$\adj{(-)^{\vee}\circ[-1]}{\mathrm{L^{H}}(\mathpzc{M})}{\mathrm{L^{H}}(\mathpzc{M})^{op}}{[1]\circ (-)^{\vee}},\;\;\;\;\adj{R\ltimes(-)}{\mathrm{L^{H}}(\mathpzc{M})}{(\textbf{Alg}_{\mathfrak{Comm}^{aug}}(\mathrm{L^{H}}(\mathpzc{M})))^{op}}{\mathbb{L}_{0}}$$
In particular the unit of the composite adjunction factors through the unit of the first adjunction, which is $M\rightarrow\mathbb{R}(M)^{\vee\vee}$. 
%
\end{proof}

\begin{defn}
A Lie algebra $\mathfrak{g}$ is said to be \textbf{very passable} if it satisfies the following properties.
\begin{enumerate}
\item
$\mathfrak{g}$ is separable (Definition \ref{defn:separable}).
\item
$|\mathfrak{g}|$ is finitely $K$-cotorsion (Definition \ref{defn:Kcotors}) and the underlying graded object of $|\mathfrak{g}|^{\vee}$ is isomorphic to $R\otimes\tilde{\mathfrak{g}}^{\vee}_{\bullet}$, with $\tilde{\mathfrak{g}}^{\vee}_{\bullet}$ concentrated in strictly positive degrees, and each $\tilde{\mathfrak{g}}^{\vee}_{i}$ is flat.
\item
For each $0\le i<\infty$, $\tilde{\mathfrak{g}}^{\vee}_{i}$ is formally $\aleph_{1}$-filtered relative to the class $R\otimes(\tilde{\mathfrak{g}}^{\vee}_{1})^{\otimes}\otimes Sym(\tilde{\mathfrak{g}}^{\vee}_{1})$ (Definition \ref{defn:alephfiltered}).
\end{enumerate}
$\mathfrak{g}$ is said to be \textbf{very good} if in addition $|\mathfrak{g}|$ is homotopically reflexive (Definition \ref{defn:htpyref}). $\mathfrak{g}$ is said to be \textbf{passable} (resp. \textbf{good}) if it is equivalent to a very passable (resp. very good) algebra.
\end{defn}

Let us populate the class of very good algebras. Following \cite{hennion2015tangent} we define cellular finite Lie algebras.

\begin{example}\label{goodthings}
 Say that a Lie algebra $\mathfrak{g}$ is cellular finite if there is a filtration 
 $$0=L_{0}\rightarrow L_{1}\rightarrow\ldots\rightarrow L_{m}=\mathfrak{g}$$
 $\mathfrak{g}=lim_{\rightarrow}L_{n}$ where $L_{0}=0$, for each $0\le m<n$ there is a pushout diagram
\begin{displaymath}
\xymatrix{
L(R\otimes S^{m_{n}-1}(k^{p_{m_{n}}}))\ar[d]\ar[r] & L_{n}\ar[d]\\
L(R\otimes D^{m_{n}}(k^{p_{m_{n}}}))\ar[r] & L_{n+1}
}
\end{displaymath}
where $m_{n}\le -1$, and for each integer. (This is the definition in \cite{hennion2015tangent} Definition 1.3.7 of a very good Lie algebra). Then $R$ is very passable. Indeed the presentation of $\mathfrak{g}$ immediately implies that it is cofibrant, and hence finitely $K$-cotorsion. Its $R$-linear dual is of the form $\prod_{n\ge 1}R^{p_{m_{n}}}[-m_{n}]$. This is in fact isomorphic to $\bigoplus_{n\ge 1}R^{p_{m_{n}}}[-m_{n}]$. The underlying $R$-module is quasi-free on a bounded below cofibrant object, and hence is cofibrant. It is also $\mathfrak{G}$-non-negatively graded, where $\mathfrak{G}$ consists of flat objects. Therefore it is $K$-flat and (finitely $K$-cotorsion). If $R$ is cohomologically bounded as a complex, then $\mathfrak{g}$ is very good. 
\end{example}

%

Recall the subalgebra $C_{\kappa}(\mathfrak{g})\rightarrow\hat{C}_{\kappa}(\mathfrak{g})$ considered in Proposition \ref{polykoszul} for $\mathfrak{g}$ separable. By Proposition \ref{conditionfordeceny}, Propsotion \ref{completiontwoways} and Proposition \ref{completionhpush} we get the following.

\begin{cor}\label{koszulpushout}
Let $\mathfrak{g}$ be very passable. The map $\hat{S}(\mathfrak{g}_{-1}^{\vee})\otimes^{\mathbb{L}}_{S(\mathfrak{g}_{-1}^{\vee})}C_{\kappa}(\mathfrak{g})\rightarrow\hat{C}_{\kappa}(\mathfrak{g})$ is an equivalence.
\end{cor}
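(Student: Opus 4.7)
The plan is to assemble Corollary~\ref{koszulpushout} from Propositions~\ref{polykoszul}, \ref{completionhpush}, \ref{completiontwoways}, \ref{conditionfordeceny}, and \ref{htpypushoutKflatmodel}. The key insight is that the hypotheses packaged into the definition of ``very passable'' are precisely what is needed to match the abstract commutative-algebra completion $\widehat{(-)}$ (defined before Proposition~\ref{completionhpush}) with the ``formal series'' algebra $\hat{C}_{\kappa}(\mathfrak{g})$ arising from the bar construction.

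First I would identify the underlying graded structure. Since $\mathfrak{g}$ is very passable, it is in particular separable, so Proposition~\ref{polykoszul} gives that the differential on $\hat{C}_{\kappa}(\mathfrak{g})$ restricts to $C_{\kappa}(\mathfrak{g})$, and the underlying graded algebra of $C_{\kappa}(\mathfrak{g})$ is free on a $\mathfrak{G}$-non-negatively graded complex whose degree $0$ piece is $\mathfrak{g}_{-1}^{\vee}$ (by condition (2) of very passable, together with $|\mathfrak{g}|^{\vee}$ being concentrated in positive degrees). Corollary~\ref{quasifreecofib} then yields that $C_{\kappa}(\mathfrak{g})$ is $\mathfrak{G}$-triangulated, with $\mathfrak{G}$ consisting of flat objects and with first stage $A_0 = S(\mathfrak{g}_{-1}^{\vee})$.

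Next I would identify the abstract completion $\widehat{C_{\kappa}(\mathfrak{g})}$ with $\hat{C}_{\kappa}(\mathfrak{g})$. Proposition~\ref{completiontwoways}, applied with $V_0=\mathfrak{g}_{-1}^{\vee}$, requires each $\mathfrak{g}_i^{\vee}$ to be weakly $\aleph_1$-filtered relative to $R\otimes(\mathfrak{g}_{-1}^{\vee})^{\otimes}\otimes\mathrm{Sym}(\mathfrak{g}_{-1}^{\vee})$ — this is exactly condition (4) of very passable. The proposition then produces an isomorphism of underlying graded objects between $\widehat{C_{\kappa}(\mathfrak{g})}$ and $\prod_n\mathrm{Sym}^n(|\mathfrak{g}|^{\vee})=\hat{C}_{\kappa}(\mathfrak{g})$, and compatibility of differentials across this identification is again ensured by the separability of $\mathfrak{g}$ via Proposition~\ref{polykoszul}. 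Having made this identification, Proposition~\ref{completionhpush} realizes $\hat{C}_{\kappa}(\mathfrak{g})$ as the ordinary pushout of $\hat{S}(\mathfrak{g}_{-1}^{\vee})\leftarrow S(\mathfrak{g}_{-1}^{\vee})\to C_{\kappa}(\mathfrak{g})$, and moreover as a homotopy pushout since $\mathfrak{G}$ consists of flat objects. The $\aleph_1$-filtered hypothesis also feeds Proposition~\ref{conditionfordeceny}, which guarantees that $\mathfrak{g}_{-1}^{\vee}$ is decent and, more to the point, that $\hat{S}(\mathfrak{g}_{-1}^{\vee})$ is $K$-flat as an $S(\mathfrak{g}_{-1}^{\vee})$-module; so Proposition~\ref{htpypushoutKflatmodel} identifies the same homotopy pushout with $\hat{S}(\mathfrak{g}_{-1}^{\vee})\otimes^{\mathbb{L}}_{S(\mathfrak{g}_{-1}^{\vee})}C_{\kappa}(\mathfrak{g})$, giving the claimed equivalence.

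The main obstacle I anticipate is the bookkeeping in the second step: even though everything lines up at the level of underlying graded objects, one must verify that the abstract completion pushout construction and the ``formal series'' construction arising from $B_{\kappa}$ produce identical differentials, not merely weakly equivalent ones. This comes down to tracking the twisted differential through the $\aleph_1$-filtered rearrangement that converts $\hat{S}_R(R\otimes V_0)\otimes S(V/V_0)$ into $\prod_n\mathrm{Sym}^n(V)$ in Proposition~\ref{completiontwoways}, and is the only genuinely delicate point. Everything else is a direct application of previously established structural results.
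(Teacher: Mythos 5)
Your proof is correct and is essentially the paper's own argument: the paper establishes this corollary simply by citing Propositions \ref{conditionfordeceny}, \ref{completiontwoways} and \ref{completionhpush}, and your additional appeals to Proposition \ref{polykoszul} (separability gives the dg-subalgebra $C_{\kappa}(\mathfrak{g})$) and Corollary \ref{quasifreecofib} (the $\mathfrak{G}$-triangulated presentation with $A_{0}=S(\mathfrak{g}_{-1}^{\vee})$) just make explicit the setup the paper leaves implicit. The one slip is attributing to Proposition \ref{conditionfordeceny} the $K$-flatness of $\hat{S}(\mathfrak{g}_{-1}^{\vee})$ over $S(\mathfrak{g}_{-1}^{\vee})$ --- that proposition only shows the derived and underived tensor with $R$ agree (decency), and the identification $\hat{S}(\mathfrak{g}_{-1}^{\vee})\otimes^{\mathbb{L}}_{S(\mathfrak{g}_{-1}^{\vee})}C_{\kappa}(\mathfrak{g})\simeq\hat{S}(\mathfrak{g}_{-1}^{\vee})\otimes_{S(\mathfrak{g}_{-1}^{\vee})}C_{\kappa}(\mathfrak{g})$ actually comes from the $K$-flatness of the Sullivan map $S(\mathfrak{g}_{-1}^{\vee})\rightarrow C_{\kappa}(\mathfrak{g})$ (Corollary \ref{sullivanKflat}), which is precisely what the proof of Proposition \ref{completionhpush} uses, so your overall argument is unaffected.
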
 
The abstract machinery we have set up allows us to generalise Lurie's \cite{lurie2011derived} Lemma 2.3.5 and its proof.

\begin{thm}\label{operadickoszul}
Let $\mathpzc{M}$ be an elementary Koszul category. If $\mathfrak{g}$ is passable then the map $\mathbb{R}(|\mathfrak{g}|)^{\vee\vee}\rightarrow|\textbf{D}_{\kappa}\circ\hat{\textbf{C}}_{\kappa}(\mathfrak{g})|$ is an equivalence. In particular if $\mathfrak{g}$ is good then the unit is an equivalence
\end{thm}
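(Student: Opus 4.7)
By Proposition~\ref{shiftedtanget} we have $|\textbf{D}_{\kappa}\circ\hat{\textbf{C}}_{\kappa}(\mathfrak{g})|\simeq\mathbb{T}_{0}[1](\hat{C}_{\kappa}(\mathfrak{g}))\simeq\mathbb{R}(\mathbb{L}_{0}(\hat{C}_{\kappa}(\mathfrak{g})))^{\vee}[1]$, so the task is to compute the cotangent complex $\mathbb{L}_{0}(\hat{C}_{\kappa}(\mathfrak{g}))$ and identify its shifted derived dual with $\mathbb{R}(|\mathfrak{g}|)^{\vee\vee}$. I would carry out the argument in three steps, followed by an identification of the canonical map.

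\emph{Step 1 (reduction to the polynomial subalgebra).} By separability (part of passability) and Proposition~\ref{polykoszul}, the subalgebra $C_{\kappa}(\mathfrak{g})\hookrightarrow\hat{C}_{\kappa}(\mathfrak{g})$ makes sense, and by Corollary~\ref{koszulpushout} the square
$$\xymatrix{S(\mathfrak{g}_{-1}^{\vee})\ar[d]\ar[r] & C_{\kappa}(\mathfrak{g})\ar[d] \\ \hat{S}(\mathfrak{g}_{-1}^{\vee})\ar[r] & \hat{C}_{\kappa}(\mathfrak{g})}$$
is a homotopy pushout of commutative algebras. Applying Proposition~\ref{cotangentfacts}(2) then yields $\mathbb{L}_{\hat{C}_{\kappa}(\mathfrak{g})/C_{\kappa}(\mathfrak{g})}\otimes^{\mathbb{L}}_{\hat{C}_{\kappa}(\mathfrak{g})}R\simeq\mathbb{L}_{\hat{S}(\mathfrak{g}_{-1}^{\vee})/S(\mathfrak{g}_{-1}^{\vee})}\otimes^{\mathbb{L}}_{\hat{S}(\mathfrak{g}_{-1}^{\vee})}R$. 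The weak $\aleph_{1}$-filteredness clause in the definition of very passable, together with Proposition~\ref{conditionfordeceny}, shows that $\mathfrak{g}_{-1}^{\vee}$ is decent, so the right-hand side vanishes. Combining this with the cofiber sequence of Proposition~\ref{cotangentfacts}(1) for $k\to C_{\kappa}(\mathfrak{g})\to\hat{C}_{\kappa}(\mathfrak{g})$ (and the identification $\mathbb{L}_{0}\simeq\mathbb{L}_{k/(-)}[1]$) gives a natural equivalence $\mathbb{L}_{0}(C_{\kappa}(\mathfrak{g}))\simeq\mathbb{L}_{0}(\hat{C}_{\kappa}(\mathfrak{g}))$.

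\emph{Step 2 (computation on the polynomial subalgebra).} Under the operadic suspension equivalence $[-1]$ converting $(\mathfrak{S}^{c}\otimes_{H}\mathfrak{coComm}^{nu})^{\vee}$-algebras into ordinary commutative algebras, separability identifies the underlying graded algebra of $C_{\kappa}(\mathfrak{g})$ with $S(|\mathfrak{g}|^{\vee}[1])$, with a Koszul differential. Since $|\mathfrak{g}|^{\vee}$ is $\mathfrak{G}$-positively graded with $\mathfrak{G}$ consisting of flat objects, the generating space lies in $\underline{Gr}_{\mathbb{N}_{0}}(\mathfrak{G})$, so Corollary~\ref{quasifreecofib} shows $C_{\kappa}(\mathfrak{g})$ is $\mathfrak{G}$-triangulated, hence by Corollary~\ref{triangquasifree} a $\mathfrak{G}$-Sullivan model. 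Proposition~\ref{cotangentqfree} then gives $\mathbb{L}_{0}(C_{\kappa}(\mathfrak{g}))\simeq I/I^{2}\simeq|\mathfrak{g}|^{\vee}[1]$.

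\emph{Step 3 (dualization and identification of the map).} Combining Steps~1 and~2 and dualizing,
$$|\textbf{D}_{\kappa}\circ\hat{\textbf{C}}_{\kappa}(\mathfrak{g})|\simeq\mathbb{R}(|\mathfrak{g}|^{\vee}[1])^{\vee}[1]\simeq\mathbb{R}(|\mathfrak{g}|)^{\vee\vee},$$
where the last isomorphism uses finite $K$-cotorsion of $|\mathfrak{g}|$ (and of $|\mathfrak{g}|^{\vee}$, which follows from its being a bounded-below complex of flat, hence finitely cotorsion, objects together with Proposition~\ref{boundedbelowKcotors}). By Proposition~\ref{doubledualunit} the unit $|\mathfrak{g}|\to|\textbf{D}_{\kappa}\circ\hat{\textbf{C}}_{\kappa}(\mathfrak{g})|$ factors as $|\mathfrak{g}|\to\mathbb{R}(|\mathfrak{g}|)^{\vee\vee}\to|\textbf{D}_{\kappa}\circ\hat{\textbf{C}}_{\kappa}(\mathfrak{g})|$; naturality of the construction in $\mathfrak{g}$ (test on free Lie algebras, on which both functors are explicitly known) identifies the second map with the equivalence constructed above. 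When $\mathfrak{g}$ is very good the first arrow is also an equivalence by homotopy reflexivity, so the composite is. The passable and good cases follow from the very passable and very good cases by functorial replacement and invariance under quasi-isomorphism.

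\emph{Main obstacle.} The substantive difficulties are concentrated in two places. First, the shift-bookkeeping in Step~2: the identification of the generating space of $C_{\kappa}(\mathfrak{g})$ as $|\mathfrak{g}|^{\vee}[1]$ relies on correctly tracking the operadic suspension $\mathfrak{S}^{c}\otimes_{H}(-)$ through separability, and any sign/shift error here would destroy the final duality identification. Second, Step~1 requires that the decency vanishing transfers from the completion of the symmetric algebra $S(\mathfrak{g}_{-1}^{\vee})$ to the completion of $C_{\kappa}(\mathfrak{g})$; this is exactly what the homotopy pushout form of Corollary~\ref{koszulpushout} buys us, but requires the weak $\aleph_{1}$-filteredness hypothesis to kick in uniformly in the right class of objects, which is why the passability condition is phrased relative to $R\otimes(|\mathfrak{g}_{-1}|^{\vee})^{\otimes}\otimes Sym(|\mathfrak{g}_{-1}|^{\vee})$.
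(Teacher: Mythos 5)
Your proposal is correct and follows essentially the same route as the paper's proof: reduce to the very passable case, use decency together with the completion homotopy pushout (Corollary \ref{koszulpushout}) to pass from $\hat{C}_{\kappa}(\mathfrak{g})$ to the polynomial subalgebra $C_{\kappa}(\mathfrak{g})$, compute $\mathbb{L}_{0}(C_{\kappa}(\mathfrak{g}))\cong I\big\slash I^{2}\cong|\mathfrak{g}|^{\vee}$ (up to shift) via the Sullivan-model result (Proposition \ref{cotangentqfree}), and dualize using the $K$-cotorsion hypotheses and the factorisation of the unit from Proposition \ref{doubledualunit}. The only remark worth making is that your final ``test on free Lie algebras'' identification is unnecessary (and as stated shaky, since $\mathbb{R}(|-|)^{\vee\vee}$ does not preserve the relevant colimits): the proof of Proposition \ref{doubledualunit} already exhibits the second factor of the unit as $\mathbb{R}\underline{Hom}(-,R)$ applied to the canonical map $\mathbb{L}_{0}(\hat{C}_{\kappa}(\mathfrak{g}))\rightarrow|\mathfrak{g}|^{\vee}[-1]$, which is precisely the map your Steps 1--2 show to be an equivalence, so no separate identification step is needed.
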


\begin{proof}
Suppose that $\mathfrak{g}$ is passable. Without loss of generality we may assume that $\mathfrak{g}$ is very passable. It suffices to show that the map of complexes $\mathbb{R}(|\mathfrak{g}|)^{\vee\vee}\rightarrow\mathbb{T}_{0}(\hat{C}(\mathfrak{g}))[1]$ is an equivalence. The proof of Proposition \ref{doubledualunit} shows that this is obtained from the map
$$\mathbb{L}_{0}(\hat{C}_{\kappa}(\mathfrak{g}))\rightarrow|\mathfrak{g}^{\vee}|[-1]$$
by applying $\mathbb{R}Hom(-,R)$, so it suffices to show that this map is an equivalence.  By Proposition \ref{koszulpushout} we have a an equivalence $\mathbb{L}_{\hat{S}(\mathfrak{g}_{-1}^{\vee})\big\slash S(\mathfrak{g}_{-1}^{\vee})}\otimes^{\mathbb{L}}_{\hat{S}(\mathfrak{g}_{-1}^{\vee})}\hat{C}_{\kappa}(\mathfrak{g}_{-1}^{\vee}) \cong\mathbb{L}_{\hat{C}_{\kappa}(\mathfrak{g})\big\slash C_{\kappa}(\mathfrak{g})}$. In particular it suffices to show that 
$$\mathbb{L}_{0}(C_{\kappa}(\mathfrak{g}))\rightarrow|\mathfrak{g}^{\vee}|[-1]$$
is an equivalence. This follows from Proposition \ref{cotangentqfree}. 
 \end{proof}



\section{Examples, Applications, and Further Directions}\label{secexamples}


\subsection{Algebraic Koszul Duality}

Let $R$ be any differentially graded ring in $Ch(\mathpzc{Ab})$, and let $\alpha:\mathfrak{C}\rightarrow\mathfrak{P}$ be a Koszul morphism. There is an equivalence of categories

$$\textbf{coAlg}_{\mathfrak{C}}^{|K^{f}|,\alpha-adm}({}_{R}\mathpzc{Mod})\cong\textbf{Alg}_{\mathfrak{P}}({}_{R}\mathpzc{Mod})$$

Suppose now that $R=R_{0}$ is concentrated in degree $0$ and is a von Neumann regular ring (for example a field or, more generally, a semisimple ring). Then every object of $Ch({}_{R}\mathpzc{Mod})$ is $K$-flat and every  monomorphism is pure. Moreover since ${}_{R}\mathpzc{Mod}$ is abelian the conditions of Proposition \ref{gradestronmon} are always satisfied when each $\mathfrak{C}(n)$ is a cofibrant $\Sigma_{n}$-module. In particular the categories $\mathpzc{coAlg}^{conil}_{(\mathfrak{C})_{top}}$ and $\mathpzc{coAlg}_{\mathfrak{C}}^{|K^{f}|,\alpha-adm}$ coincide. There is therefore an equivalence of categories
$$\textbf{coAlg}_{(\mathfrak{C})_{top}}^{conil}({}_{R}\mathpzc{Mod})\cong\textbf{Alg}_{\mathfrak{P}}({}_{R}\mathpzc{Mod})$$

Equip $Ch({}_{R}\mathpzc{Mod})$ with projective model structure, so that it is an elementary Koszul category. For $R=R_{0}$ a field every object is cofibrant and according to Lemma B.1 in \cite{vallette2014homotopy} every Koszul morphism satisfies Assumption \ref{times12}. Thus in this case the equivalence of $(\infty,1)$-categories arises from a Quillen equivalence
$$\adj{\Omega_{\alpha}}{\mathpzc{coAlg}^{conil}_{(\mathfrak{C})_{top}}({}_{R}\mathpzc{Mod})}{\mathpzc{Alg}_{\mathfrak{P}}({}_{R}\mathpzc{Mod})}{B_{\alpha}}$$
which is Theorem 2.1 (1) and (2) of \cite{vallette2014homotopy}.



Let $R_{0}$ contain $\Q$, and consider the Koszul morphism $\kappa:\mathfrak{S}^{c}\otimes_{H}\mathfrak{coComm}\rightarrow\mathfrak{Lie}$. Suppose that $R$ is cohomologically bounded. As we have mentioned, the cellular finite Lie algebras from Example \ref{goodthings} in this case include the very good algebras of \cite{hennion2015tangent}. Since $R_{0}$ is not required to be Noetherian, Theorem \ref{operadickoszul} in fact generalises \cite{hennion2015tangent} Lemma 1.4.12. 

If $R=R_{0}=k$ is a field, then if $\mathfrak{g}$ is concentrated in negative degrees and each $\mathfrak{g}_{n}$ is free of finite rank, $\mathfrak{g}$ is very good. Indeed in these cases any complex of free objects of finite rank is split. In particular it is a coproduct of objects of the form $S^{n}(k^{m})$ and $D^{r}(k^{s})$, so it is cofibrant. This recovers Lurie's conditions in Lemma 2.3.5 of \cite{lurie2011derived}.

\subsubsection{Filtered Objects}

Let $\mathpzc{E}$ be an elementary quasi-abelian category. Then the category $\reallywidehat{\overline{\mathpzc{Filt}}}_{\textbf{AdMon}}(\mathpzc{E})$ of complete, exhaustively filtered objects in $\mathpzc{E}$ (see \cite{kelly2016homotopy} Chapter 5) is a hereditary projective Koszul category. In particular, consider the category $\mathpzc{Vect}_{k}$ of vector spaces over a field $k$. In this case all objects of $\reallywidehat{\overline{\mathpzc{Filt}}}_{\textbf{AdMon}}(\mathpzc{E})$ are flat, so we get the following.

\begin{thm}
The bar-cobar construction induces an equivalence of $(\infty,1)$-categories
$$\adj{\Omega_{\alpha}}{\textbf{coAlg}_{\mathfrak{C}}^{conil}(Ch(\reallywidehat{\overline{\mathpzc{Filt}}}_{\textbf{AdMon}}(\mathpzc{E})))}{\textbf{Alg}_{\mathfrak{P}}(Ch(\reallywidehat{\overline{\mathpzc{Filt}}}_{\textbf{AdMon}}(\mathpzc{E})))}{\textbf{B}_{\alpha}}$$
\end{thm}

\subsection{Geometric Examples}

\subsubsection{Sheaves on Spaces}

Let $\mathcal{X}$ be a topological space  and let $\mathcal{O}_{\mathcal{X}}$ be a sheaf of rings on $\mathcal{X}$. By \cite{gillespie2006flat}, when equipped with the flat model structure, $_{\mathcal{O}_{\mathcal{X}}}\mathpzc{Mod}$ is a basic Koszul category. Thus we get an equivalence of $(\infty,1)$-categories.
$$\textbf{coAlg}_{\mathfrak{C}}^{|K^{f}|,\alpha-adm}(Ch(_{\mathcal{O}_{\mathcal{X}}}\mathpzc{Mod}))\cong\textbf{Alg}_{\mathfrak{P}}(Ch(_{\mathcal{O}_{\mathcal{X}}}\mathpzc{Mod}))$$

Now flatness is a local condition. Therefore if $\mathcal{O}_{\mathcal{X}}$ is locally von Neumann regular (for example if $\mathcal{O}_{\mathcal{X}}$ is the constant sheaf associated to a field), every object in $Ch({}_{\mathcal{O}_{X}}\mathpzc{Mod})$ is cofibrant. 

Once again whenever each $\mathfrak{C}(n)$ is $\Sigma_{n}$-cofibrant we get an equivalence of $(\infty,1)$-categories
$$\textbf{coAlg}_{(\mathfrak{C})_{top}}^{conil}({}_{R}\mathpzc{Mod})\cong\textbf{Alg}_{\mathfrak{P}}({}_{R}\mathpzc{Mod})$$
where $\textbf{coAlg}_{(\mathfrak{C})_{top}}^{conil}({}_{R}\mathpzc{Mod})$ is localisation of the entire category of conilpotent $(\mathfrak{C})_{top}$-coalgebras.


\subsubsection{Quasi-coherent Sheaves on Stacks}

Let $\mathcal{X}$ be an Artin stack with enough flat objects (for example if $\mathcal{X}$ is geometric), and let $\mathcal{O}_{\mathcal{X}}$ be its structure sheaf. By Theorem 8.1 in \cite{estrada2014derived}, $QCoh(\mathcal{X})$ is a basic Koszul category. Therefore there is an equivalence of $(\infty,1)$-categories.

$$\textbf{coAlg}_{\mathfrak{C}}^{|K^{f}|,\alpha-adm}(QCoh(\mathcal{X}))\cong\textbf{Alg}_{\mathfrak{P}}(QCoh(\mathcal{X}))$$

\subsection{Analytic Koszul Duality}

Every category considered thus far has been abelian. Let us consider some quasi-abelian examples
Let $k$ be a Banach ring, and let $\mathpzc{E}$ denote either $Ind(Ban_{k})$ the formal completion of $Ban_{k}$ by inductive limits, or its full subcategory $CBorn_{k}$ of complete bornological spaces over $k$. For details about these monoidal elementary quasi-abelian categories see \cite{koren}, \cite{orenbambozzi}, \cite{bambozzi}, and \cite{kelly2016homotopy}. Since $\mathpzc{E}$ is monoidal elementary quasi-abelian, it is basic Koszul. For $R\in\mathpzc{Alg}_{\mathfrak{Comm}}(Ch(\mathpzc{E})$ let $\mathpzc{M}={}_{R}\mathpzc{Mod}$. Once more we get an equivalence of $(\infty,1)$-categories
$$\textbf{coAlg}_{\mathfrak{C}}^{|K^{f}|,\alpha-adm}(\mathpzc{M})\cong\textbf{Alg}_{\mathfrak{P}}(\mathpzc{M})$$
Now suppose that $R=k$ contains $\Q$, and consider the Koszul morphism $\kappa:\mathfrak{S}^{c}\otimes_{H}\mathfrak{coComm}\rightarrow\mathfrak{Lie}$. Let $\mathfrak{g}\in\mathpzc{Alg}_{\mathfrak{Lie}}(\mathpzc{M})$ be concentrated in negative degrees, and suppose that $|\mathfrak{g}|_{n}$ is a bornological nuclear Fr\'{e}chet space for each $n\in\mathbb{Z}$. We will show that $\mathfrak{g}$ is very passable, and give conditions such that it is very good.


%

%
%
%
%

\begin{prop}\label{dnf}
Let $F$ be a bornological dual nuclear Fr\'{e}chet space. 
\begin{enumerate}
\item
$F$ is finitely $K$ cotorsion.
\item
$F$ is reflexive.
\item
$F^{\vee}$ is $\aleph_{1}$-filtered relative to the class of bornological nuclear Fr\'{e}chet spaces.
\item
$F^{\vee}$ is nuclear.
\end{enumerate}
\end{prop}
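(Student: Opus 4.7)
The proof strategy is to reduce everything to the representation $F \cong E^{\vee}$ where $E$ is a bornological nuclear Fréchet space, then exploit the well-developed theory of nuclear bornological spaces from \cite{bambozzi2015stein} along with flatness/cofibrancy considerations in the underlying pre-Koszul category.

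The plan is to dispatch parts (2), (3), (4) first, since they are essentially formal consequences of the structure theory of nuclear spaces. For (2), the key input is that bornological nuclear Fréchet spaces are reflexive (a standard fact, proved in \cite{bambozzi2015stein}), so one computes $F^{\vee\vee} \cong E^{\vee\vee\vee} \cong E^{\vee} \cong F$ directly. For (4), one simply observes $F^{\vee} \cong E^{\vee\vee} \cong E$, and $E$ is nuclear by hypothesis. For (3), we use that $F^{\vee} \cong E$ is in particular a binuclear bornological Fréchet space (since both $E$ and its dual $F$ are nuclear), so we can apply Corollary 3.65 of \cite{bambozzi2015stein}, which was already recalled in Example \ref{aleph1examples}(2), to conclude that $F^{\vee}$ is weakly $\aleph_{1}$-filtered relative to the class of bornological (nuclear) Fréchet spaces.

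Part (1) is the substantive step. The plan is to show that $F$ is flat in the monoidal exact category $\mathpzc{E}$, after which the complex $S^{0}(F)$ becomes an object with a cofibrant resolution given by itself in degree $0$; since $\mathpzc{E}$ is elementary, every object is fibrant, and in particular $k$ is fibrant, so the natural map $\underline{\textrm{Hom}}(F,k) \to \mathbb{R}\underline{\textrm{Hom}}(F,k)$ is then an equivalence. The flatness of $F$ should follow from nuclearity of $E$ together with the fact that duals of nuclear Fréchet spaces behave well with respect to the complete projective tensor product; concretely, one shows $F \otimes (-)$ is exact by using that $F \otimes G \cong \underline{\textrm{Hom}}(E, G)$ for appropriate $G$ (via nuclearity) and that $\underline{\textrm{Hom}}(E, -)$ is exact since $E$ admits a nuclear Fréchet resolution that splits nicely under $\underline{\textrm{Hom}}(-, G)$.

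The hard part will be establishing flatness of $F$ cleanly in both categories $Ind(Ban_{k})$ and $CBorn_{k}$ simultaneously, since the tensor products in these categories differ slightly and the nuclearity arguments invoke different but analogous results. The cleanest path is probably to argue uniformly using the characterisation of nuclear objects via approximation by rank-one maps (so that $E \cong \textrm{lim}_{\rightarrow} \ell^{1}$-type Banach pieces with nuclear transitions) and to verify flatness of $F$ by reducing to flatness of Banach duals of nuclear Banach pieces, which is immediate. Once (1) is in hand, the remaining three parts are essentially bookkeeping over citations to \cite{bambozzi2015stein}.
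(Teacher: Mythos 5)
The substantive gap is in your part (1). Being finitely $K$-cotorsion is a condition on $\underline{\textrm{Hom}}(-,k)$, namely that $\underline{\textrm{Hom}}(F,k)\rightarrow\mathbb{R}\underline{\textrm{Hom}}(F,k)$ is an equivalence, whereas flatness is a condition on $\otimes$; the two do not imply one another. Your bridge between them — ``$F$ flat, hence $S^{0}(F)$ is its own cofibrant resolution'' — fails: in the projective-type model structures on $Ch(Ind(Ban_{k}))$ and $Ch(CBorn_{k})$ a complex concentrated in degree $0$ is cofibrant only if the object is projective (a retract of a coproduct of generators), and flat objects are very far from projective here. Indeed, by Lemma 3.49 of \cite{bambozzi2015stein} \emph{every} object of $CBorn^{nA}_{k}$ is flat, so if your implication were valid every bornological space would be finitely $K$-cotorsion, making the dual-nuclear-Fr\'{e}chet hypothesis of the proposition (and the spherical-completeness hypothesis in Proposition \ref{BanachKcotors}) pointless. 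Fibrancy of $k$ does not rescue this: as recalled in the paper, $\underline{\textrm{Hom}}(C,F)\simeq\mathbb{R}\underline{\textrm{Hom}}(C,F)$ needs $C$ cofibrant, not flat. What is actually needed for (1) is either cofibrancy of $F$ or a direct vanishing of the higher derived Homs into $k$ coming from the specific structure of duals of nuclear Fr\'{e}chet spaces; the paper obtains this by citing Proposition 2.12 of \cite{reconstruction}, and your proposal contains no substitute for that input.

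Parts (2)--(4) are essentially the paper's argument, but note that you are silently using the comparison between the bornological internal dual $\underline{\textrm{Hom}}_{CBorn_{k}}(-,k)$ and the (bornologification of the) topological dual of a nuclear Fr\'{e}chet space: the identities $F^{\vee}\cong E$ and $F^{\vee\vee}\cong F$ are not formal manipulations inside $CBorn_{k}$ but rest on exactly such a duality statement (the paper invokes Proposition 1.13 of \cite{reconstruction} for this), after which classical reflexivity of nuclear Fr\'{e}chet spaces applies; your citation of Corollary 3.65 of \cite{bambozzi2015stein} for (3) matches the paper. So (2)--(4) are fine modulo making that comparison explicit, but (1) as proposed would not go through.
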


\begin{proof}
The first claim follows from Proposition 2.12. in \cite{reconstruction}. Let $E$ be a nuclear Fr\'{e}chet space such that $F=(E^{\vee})^{b}$. By Proposition 1.13 in \cite{reconstruction} we have
$$F^{\vee}=\underline{Hom}_{CBorn_{k}}((E^{\vee})^{b},k)=(\underline{Hom}_{\mathcal{T}_{c}}(E,k))^{b}\cong E^{b}$$
Since $E$ is a nuclear Fr\'{e}chet space, $E^{b}$ is a nuclear bornological Fr\'{e}chet space. The third claim is a consequence of Corollary 3.65 in \cite{bambozzi2015stein}. Moreover, again using Proposition 1.13 in \cite{reconstruction} we once more have $F^{\vee}=(E^{\vee})^{b}=(E^{\vee\vee})^{b}=(E)^{b}=F$, using the fact that nuclear Fr\'{e}chet spaces are reflexive. 
\end{proof}

Together with Proposition \ref{prop:elexsuff} this implies the following.

\begin{cor}
Let $F_{\bullet}$ be a complex of dual nuclear Fr\'{e}chet spaces. Then $F_{\bullet}$ is finitely $K$-cotorsion and $K$-flat, and $F_{\bullet}^{\vee}$ is $K$-flat. 
\end{cor}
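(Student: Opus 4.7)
The plan is to reduce all three claims to the preceding proposition applied to an appropriate complex, with $\mathfrak{O}$ taken to be the class of finite products of copies of $k$. The key input will be Proposition \ref{dnf}: every dual nuclear Fr\'echet space is finitely $K$-cotorsion, reflexive, and has a nuclear Fr\'echet dual (which is again $K$-cotorsion by symmetric arguments). Because $CBorn_k$ and $Ind(Ban_k)$ are quasi-abelian, the inclusion $Z_nF \hookrightarrow F_n$ is automatically an admissible monomorphism, so the admissibility hypothesis of the preceding proposition is free.

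For finite $K$-cotorsion of $F_\bullet$, I would verify the remaining hypothesis that each $B_n = \mathrm{Im}(d_n)$ is $\mathfrak{O}$-cotorsion. The category of dual nuclear Fr\'echet spaces is stable under taking kernels inside $CBorn_k$ (nuclear Fr\'echet spaces are closed under subspaces with induced topology, and their strong duals remain dual nuclear Fr\'echet), so $Z_nF$ is dual nuclear Fr\'echet; the image $B_n$ is a quotient $F_n / Z_nF$ in the quasi-abelian sense, and I would invoke stability of the dual nuclear Fr\'echet property under admissible quotients. Once this is in hand, Proposition \ref{dnf}(1) gives $\mathfrak{O}$-cotorsion of both $F_n$ and $B_n$, and the preceding proposition yields finite $K$-cotorsion of $F_\bullet$.

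For $K$-flatness of $F_\bullet$, I would use reflexivity (Proposition \ref{dnf}(2)) and pass through the dual. By Proposition \ref{dnf}(4), $F_\bullet^\vee$ is a complex of nuclear Fr\'echet spaces, and by the same argument as above (with nuclear Fr\'echet replacing dual nuclear Fr\'echet) it is finitely $K$-cotorsion. For an acyclic $X_\bullet$ the closed monoidal adjunction gives
$$\underline{Hom}(F_\bullet \otimes X_\bullet, k) \;\cong\; \underline{Hom}(X_\bullet, F_\bullet^\vee),$$
and the right-hand side is quasi-isomorphic to $\mathbb{R}\underline{Hom}(X_\bullet, F_\bullet^\vee) \simeq 0$ by the $K$-cotorsion of $F_\bullet^\vee$. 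Combined with the reflexivity identification $F_\bullet^{\vee\vee} \cong F_\bullet$ on the level of complexes, this forces $F_\bullet \otimes X_\bullet$ to be acyclic, so $F_\bullet$ is $K$-flat. The $K$-flatness of $F_\bullet^\vee$ follows by the same argument with the roles of $F$ and $F^\vee$ exchanged, using Proposition \ref{dnf}(4) to know $F^\vee$ is nuclear Fr\'echet and the symmetric version of Proposition \ref{dnf} (namely, nuclear Fr\'echet spaces are also reflexive with dual nuclear Fr\'echet duals).

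The main obstacle will be the closure properties used above: namely, that dual nuclear Fr\'echet spaces (resp.\ nuclear Fr\'echet spaces) are stable under admissible kernels and cokernels inside the relevant quasi-abelian category, so that the boundaries $B_n$ that appear in the hypothesis of the preceding proposition remain in the correct class. I would establish this using the dual characterization of nuclearity via Fr\'echet duality (as in the proof of Proposition \ref{dnf}) and the exactness of the duality functor on the class of reflexive spaces, which reduces the statement on dual nuclear Fr\'echet spaces to the well-known stability properties of nuclear Fr\'echet spaces.
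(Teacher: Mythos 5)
Your treatment of the finitely $K$-cotorsion claim follows the route the paper intends: apply the proposition immediately preceding the corollary with $\mathfrak{O}$ the finite products of $k$, feeding in Proposition \ref{dnf}(1) for the terms, and your observation that $Z_{n}F\rightarrow F_{n}$ is automatically admissible in a quasi-abelian category is correct. Two caveats there. First, in a quasi-abelian category $\mathrm{Im}(d_{n})=\ker(\mathrm{coker}(d_{n}))$ is in general not the quotient $F_{n}\big\slash Z_{n}F$ (that is the coimage), so you must verify cotorsion for the object the proposition actually uses. Second, the closure of the class of dual nuclear Fr\'echet spaces under the relevant kernels and images, which is precisely what makes the hypothesis on $B_{n}$ available, is asserted rather than proved; you flag it as "the main obstacle", but it is the substantive functional-analytic content of the statement and cannot simply be deferred.

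The genuine gap is in your $K$-flatness argument. Finite $K$-cotorsion of $F_{\bullet}^{\vee}$ is a statement about $\mathbb{R}\underline{Hom}(F_{\bullet}^{\vee},k)$, i.e.\ about $F_{\bullet}^{\vee}$ placed in the \emph{first} variable of $\underline{Hom}$; it gives no control whatsoever over $\underline{Hom}(X_{\bullet},F_{\bullet}^{\vee})$ for an acyclic $X_{\bullet}$ -- for that you would need a $K$-injectivity-type property of $F_{\bullet}^{\vee}$, which is neither stated in the paper nor supplied by Proposition \ref{dnf}. Moreover, even granting that $\underline{Hom}(F_{\bullet}\otimes X_{\bullet},k)\cong\underline{Hom}(X_{\bullet},F_{\bullet}^{\vee})$ is acyclic, you cannot conclude that $F_{\bullet}\otimes X_{\bullet}$ is acyclic: $(-)^{\vee}$ does not reflect acyclicity, and reflexivity of $F_{\bullet}$ says nothing about reflexivity of $F_{\bullet}\otimes X_{\bullet}$. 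So the dualization strategy fails at both steps. The flatness claims should instead come from the nuclearity of the individual terms (each $F_{n}$ is nuclear, and $F_{n}^{\vee}$ is a nuclear bornological Fr\'echet space by Proposition \ref{dnf}), hence flat, promoted to $K$-flatness of the complex by an argument in the spirit of Proposition \ref{boundedKflat}; note that this promotion is not automatic for complexes that are not bounded below, so some additional structural input about complexes of nuclear spaces is still required rather than a formal duality trick.
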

%


%
\begin{prop}\label{BanachKcotors}
Let $k$ be a spherically complete Banach field and let $E$ be a Banach space. Then $E$ is finitely $K$-cotorsion.
\end{prop}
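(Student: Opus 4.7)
The plan is to leverage Ingleton's theorem: since $k$ is a spherically complete non-Archimedean Banach field, the Hahn--Banach extension property holds, and $k$ is an injective object of the quasi-abelian category $Ban_{k}$. Equivalently, $\mathrm{Ext}^{1}_{Ban_{k}}(F, k) = 0$ for every Banach space $F$, because any strict short exact sequence $0 \to k \to X \to F \to 0$ in $Ban_{k}$ splits by extending $\mathrm{id}_{k}$ to a retraction $X \to k$.

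First, I would construct a free Banach resolution $P_{\bullet} \to E$ by setting $P_{0} = \ell^{1}(B_{E})$, the free Banach space on the unit ball of $E$, then $K_{0} = \ker(P_{0} \to E)$, $P_{1} = \ell^{1}(B_{K_{0}})$, and iterating. Each $P_{n}$ is projective in $Ban_{k}$, and each $K_{n}$ is automatically a closed (hence Banach) subspace of $P_{n}$. I would then verify that these $\ell^{1}$-spaces are cofibrant, and in particular $K$-flat and finitely $K$-cotorsion, in the monoidal model category $Ch(CBorn_{k})$, using the explicit description of cofibrations in the elementary pre-Koszul structure discussed in \cite{bambozzi} and \cite{kelly2016projective}.

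Next, since each $P_{n}$ is a Banach space, the internal Hom $\underline{Hom}_{CBorn_{k}}(P_{n}, k)$ agrees with the Banach-space internal Hom $\underline{Hom}_{Ban_{k}}(P_{n}, k)$, so $\mathbb{R}\underline{Hom}_{CBorn_{k}}(E, k)$ is computed by the complex $\underline{Hom}_{Ban_{k}}(P_{\bullet}, k)$. Applying Ingleton's theorem iteratively to the closed inclusions $K_{n} \hookrightarrow P_{n}$ and dimension-shifting in the standard way shows that this complex is acyclic in positive degrees, so $\mathrm{Ext}^{i}_{CBorn_{k}}(E, k) = 0$ for every $i \geq 1$. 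Since $\underline{Hom}(-, -)$ is additive in the second variable, the same vanishing holds against any finite product $k^{n}$, and thus $E$ is finitely $K$-cotorsion.

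The main obstacle is the first step: verifying that $\ell^{1}$-Banach spaces, which are the standard projectives of $Ban_{k}$, are cofibrant (or at least sufficiently $K$-flat) in $Ch(CBorn_{k})$, so that the classical Banach-space resolution genuinely computes the derived internal Hom in the enlarged bornological category. Without this transfer of the resolution from $Ban_{k}$ into $CBorn_{k}$, the Ingleton vanishing on the Banach side would not translate into the desired vanishing of $\mathrm{Ext}^{i}_{CBorn_{k}}(E, k)$.
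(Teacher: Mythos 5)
Your proposal is correct and takes essentially the same route as the paper: the paper's proof also resolves $E$ by projective Banach spaces and uses the injectivity of $k$ (spherical completeness, i.e.\ Ingleton/Hahn--Banach) to conclude $\underline{Hom}(E,k)\cong\underline{Hom}(P_{\bullet},k)\cong\mathbb{R}\underline{Hom}(E,k)$. The transfer issue you flag is handled implicitly by the paper's framework: $Ind(Ban_{k})$ and $CBorn_{k}$ are monoidal elementary (quasi-abelian) categories in which the projective Banach spaces are projective generators, so a bounded-below resolution by them is a cofibrant replacement in the projective model structure and genuinely computes the derived internal Hom.
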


\begin{proof}
When $k$ is spherically complete it is injective as an object in the category of Banach spaces over $k$. Let $P_{\bullet}\rightarrow E$ be a projective resolution of $E$ in $Ind(Ban_{k})$. We may assume that each $P_{i}$ is a projective \textit{Banach space}. Since $k$ is an injective Banach space, we have equivalences
$$Hom(E,k)\cong Hom(P_{\bullet},k)\cong \mathbb{R}Hom(E,k)$$
as required. 
\end{proof}

\begin{defn}
A \textbf{bornological Fredholm complex} over a spherically complete Banach field $k$ is a complex $X_{\bullet}$ in $CBorn_{k}$ such that each $X_{n}$ is a topological bornological space and each map $d_{n}:X_{n}\rightarrow X_{n-1}$ has finite dimensional kernel and cokernel.
\end{defn}

\begin{prop}
Let $k$ be spherically complete. A complex $X_{\bullet}$ is homotopically reflexive in each of the following cases.
\begin{enumerate}
\item
$X_{\bullet}$ is a Fredholm complex which is cohomologically bounded in one direction. 
\item
$X_{\bullet}$ is equivalent to a bounded complex of reflexive Banach spaces.
\end{enumerate}
\end{prop}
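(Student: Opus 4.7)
The plan is to reduce both cases to the criterion, noted after the definition of homotopically reflexive, that if both $V$ and its dual are finitely $K$-cotorsion then $V$ is homotopically reflexive precisely when the underived map $V \to V^{\vee\vee}$ is a weak equivalence. Since homotopical reflexivity is an invariant of the equivalence class of $V$ in $\textbf{M}$, it suffices to verify the condition on a convenient model in each case.

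For part (2), let $Y_\bullet$ be a bounded complex of reflexive Banach spaces representing $X_\bullet$. Each $Y_n$ is a Banach space over the spherically complete field $k$ and is thus finitely $K$-cotorsion by Proposition \ref{BanachKcotors}; since $Y_\bullet$ is bounded, Proposition \ref{boundedbelowKcotors} yields that $Y_\bullet$ is finitely $K$-cotorsion. The same argument applies to $Y_\bullet^{\vee}$: the dual of a reflexive Banach space is again a reflexive Banach space, and $Y_\bullet^{\vee}$ is bounded. Hence $\mathbb{R}(Y_\bullet)^{\vee\vee} \simeq Y_\bullet^{\vee\vee}$, and the natural map $Y_\bullet \to Y_\bullet^{\vee\vee}$ is a degreewise isomorphism by reflexivity of each $Y_n$, so it is certainly a weak equivalence.

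For part (1), the approach is to exhibit $X_\bullet$ as equivalent to a bounded complex of finite-dimensional $k$-vector spaces, thereby reducing to case (2), since any finite-dimensional $k$-vector space is a reflexive Banach space. The Fredholm hypothesis forces $\ker(d_n)$, and hence each cohomology group $H_n(X_\bullet)$, to be finite-dimensional over $k$; combined with cohomological boundedness in one direction, only finitely many $H_n(X_\bullet)$ are nonzero. I would then truncate $X_\bullet$ on the unbounded side using the standard $\tau$-truncations and use the Fredholm splittings together with the fact that, over a spherically complete Banach field, finite-dimensional subspaces are closed and admit bounded complements (since $k$ is injective in $\mathrm{Ban}_k$), to produce a bounded model whose terms are finite-dimensional. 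Equivalently, one may build this replacement inductively by choosing splittings of the short exact sequences $0 \to B_n \to Z_n \to H_n \to 0$ and $0 \to Z_n \to X_n \to B_{n-1} \to 0$ in the Fredholm range.

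The main obstacle is verifying that the truncation and replacement step for part (1) actually produces an object equivalent to $X_\bullet$ in $\textbf{M}$, rather than merely a quasi-isomorphism of underlying graded objects. The difficulty is that in the quasi-abelian setting of $\mathrm{CBorn}_k$ or $\mathrm{Ind}(\mathrm{Ban}_k)$ one cannot split arbitrary short exact sequences, so one must exploit the Fredholm hypothesis (finite-dimensional kernel and cokernel at each differential) together with spherical completeness to obtain splittings in the relevant range. Once this bounded finite-dimensional replacement is in hand, it sits within a bounded complex of reflexive Banach spaces, and part (2) completes the proof.
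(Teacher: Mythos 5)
Your case (2) is fine and is essentially the paper's own argument: each term and each dual term is a Banach space over a spherically complete field, hence finitely $K$-cotorsion by Proposition \ref{BanachKcotors}, boundedness lets Proposition \ref{boundedbelowKcotors} apply to the complex and its dual, and degreewise reflexivity finishes it.

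Case (1), however, contains a genuine gap. You infer from ``Fredholm'' plus ``cohomologically bounded in one direction'' that only finitely many $H_{n}(X_{\bullet})$ are nonzero, and you then truncate to get a bounded complex of finite-dimensional spaces so as to invoke case (2). But bounded in \emph{one} direction means the cohomology vanishes only for $n\gg 0$ or only for $n\ll 0$; it can perfectly well be nonzero in infinitely many degrees in the other direction, so there is no bounded model, and truncating on the unbounded side discards infinitely much cohomology and changes the equivalence class of $X_{\bullet}$. Consequently the reduction to case (2) does not go through as written. The paper argues differently: using the Fredholm hypothesis and spherical completeness (finite-dimensional kernels and cokernels split off with bounded complements), $X_{\bullet}$ is homotopy equivalent to its cohomology, i.e.\ to a complex with zero differentials of the form $\bigoplus_{n} S^{n}(F_{n})$ with each $F_{n}$ finite dimensional; such a complex --- which need not be bounded --- is then checked directly to be cofibrant, to have cofibrant (indeed of the same shape) dual, and to be reflexive, which gives homotopical reflexivity without any appeal to case (2). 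Your splitting ideas are the right ingredients, but the conclusion should be a (possibly one-sided unbounded) sum of shifted finite-dimensional spaces treated directly, not a bounded complex.
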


\begin{proof}
\begin{enumerate}
\item
Since the kernel and cokernel of the maps in the complex are finite dimensional, and $k$ is spherically complete, $X_{\bullet}$ is homotopy equivalent to its cohomology, which is a complex of the form $\bigoplus S^{n}(F_{n})$ for finite dimensional Banach spaces $F_{n}$. Clearly such complexes are cofibrant, have cofibrant dual, and are reflexive. 
\item
This follows from Proposition \ref{BanachKcotors} and Proposition \ref{boundedbelowKcotors}.
\end{enumerate}
\end{proof}

As a consequence we get the following. 

\begin{thm}\label{analyticopkoszul}
Let $\mathfrak{g}$ be a Lie algebra in $Ch(CBorn_{k})$ concentrated in negative degrees such that each $|\mathfrak{g}_{n}|$ is a dual nuclear Fr\'{e}chet space. Then the map $|\mathfrak{g}|^{\vee\vee}\rightarrow |\textbf{D}_{\kappa}\circ\hat{\textbf{C}}_{\kappa}(\mathfrak{g})|$ is an equivalence. In particular the unit $\eta_{\mathfrak{g}}:\mathfrak{g}\rightarrow\textbf{D}_{\kappa}\circ\hat{\textbf{C}}_{\kappa}(\mathfrak{g})$ is an equivalence in the following cases.
\begin{enumerate}
\item
$|\mathfrak{g}|$ is a Fredholm complex 
\item
$|\mathfrak{g}|$ is equivalent to a bounded complex of reflexive Banach spaces.
 \end{enumerate}
\end{thm}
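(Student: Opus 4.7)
The plan is to deduce the theorem from Theorem \ref{operadickoszul} by checking that $\mathfrak{g}$ is very passable, and then to obtain the two special cases by verifying homotopical reflexivity. Throughout, I will use the key structural facts about dual nuclear Fr\'echet spaces collected in Proposition \ref{dnf}, namely that such an $F$ is finitely $K$-cotorsion, reflexive, has nuclear dual $F^{\vee}$, and that $F^{\vee}$ is weakly $\aleph_{1}$-filtered relative to the class of bornological nuclear Fr\'echet spaces.

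First I would check very passability. Since $\mathfrak{g}$ is concentrated in negative degrees, $|\mathfrak{g}|^{\vee}$ is concentrated in non-negative degrees, and by Proposition \ref{dnf}(4) each $|\mathfrak{g}_{n}|^{\vee}$ is a bornological nuclear Fr\'echet space, hence flat; so $|\mathfrak{g}|^{\vee}$ is $\mathfrak{G}$-positively graded for $\mathfrak{G}$ a class of flat objects. That $|\mathfrak{g}|$ itself is finitely $K$-cotorsion (and $K$-flat, with $K$-flat dual) follows from the corollary after Proposition \ref{dnf} applied to the complex of dual nuclear Fr\'echet spaces $|\mathfrak{g}|$. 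The weakly $\aleph_{1}$-filtered condition on each $\mathfrak{g}_{i}^{\vee}$ relative to $R\otimes(|\mathfrak{g}_{-1}|^{\vee})^{\otimes}\otimes\Sym(|\mathfrak{g}_{-1}|^{\vee})$ follows from Proposition \ref{dnf}(3) together with Example \ref{aleph1examples}(2), since the entries of this class are again bornological nuclear Fr\'echet spaces (they are iterated completed tensor products of such). Separability amounts to the three conditions in the definition, each of which reduces to interchanging duality with products and composite products of free $\Sigma$-modules on bornological nuclear Fr\'echets; this is precisely what the nuclearity and $\aleph_{1}$-filtered properties of the $\mathfrak{g}_{i}^{\vee}$ ensure, using that countable products are exact and that for nuclear Fr\'echets $(E\hat\otimes F)^{\vee}\cong E^{\vee}\hat\otimes F^{\vee}$ in this setting.

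Next I would apply Theorem \ref{operadickoszul} to conclude that the map
\begin{equation*}
\mathbb{R}(|\mathfrak{g}|)^{\vee\vee}\longrightarrow |\textbf{D}_{\kappa}\circ\hat{\textbf{C}}_{\kappa}(\mathfrak{g})|
\end{equation*}
is an equivalence. Because $|\mathfrak{g}|$ and $|\mathfrak{g}|^{\vee}$ are both finitely $K$-cotorsion (the latter by Proposition \ref{dnf} applied termwise, now viewing $|\mathfrak{g}|^{\vee}$ as a complex of nuclear bornological Fr\'echet spaces and invoking the same corollary), $\mathbb{R}(|\mathfrak{g}|)^{\vee\vee}$ is computed by $|\mathfrak{g}|^{\vee\vee}$, so the first assertion of the theorem follows.

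For the two special cases, it then suffices to show that $|\mathfrak{g}|\rightarrow|\mathfrak{g}|^{\vee\vee}$ is a quasi-isomorphism. Case (2) is immediate from the preceding proposition on homotopical reflexivity of bounded complexes of reflexive Banachs. Case (1) uses spherical completeness of $k$: a Fredholm complex $|\mathfrak{g}|$ is quasi-isomorphic to its cohomology, which is a bounded complex of finite-dimensional Banach spaces; such complexes are clearly homotopically reflexive since duality is exact on finite-dimensional spaces and commutes with shifts. The main obstacle in the argument is the verification of the separability condition, because it requires a delicate identification of duals of infinite products of symmetric powers with the corresponding products of duals; this is where the combination of nuclearity of each $|\mathfrak{g}_{n}|^{\vee}$, the $\aleph_{1}$-filtered property, and spherical completeness of $k$ (to handle Hahn--Banach style arguments in the bornological setting) all come together.
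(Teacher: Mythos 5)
Your proposal follows essentially the same route as the paper: verify very passability from Proposition \ref{dnf} and the corollary on complexes of dual nuclear Fr\'echet spaces, feed this into Theorem \ref{operadickoszul}, identify $\mathbb{R}(|\mathfrak{g}|)^{\vee\vee}$ with $|\mathfrak{g}|^{\vee\vee}$ via finite $K$-cotorsion, and settle the two special cases by the homotopical reflexivity proposition for Fredholm complexes and bounded complexes of reflexive Banach spaces. This matches the paper's argument, so no further comment is needed.
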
 

In future work we expect to be able to use this result to study analytic moduli spaces of instantons.

\subsubsection{Non-Archimedean Banach Spaces}
Suppose now that $R=k$ is a non-Archimedean Banach field. By Lemma 3.49 \cite{bambozzi2015stein} every object in $CBorn^{nA}_{k}$ is flat. Therefore by Proposition \ref{boundedKflat} every complex in $Ch(CBorn^{nA}_{k})$ is $K$-flat, and every admissible monomorphism is pure. In this case we get the following result.

\begin{thm}
The bar-cobar construction induces an equivalence of $(\infty,1)$-categories
$$\adj{\Omega_{\alpha}}{\textbf{coAlg}_{\mathfrak{C}}^{conil}(Ch(CBorn^{nA}_{k}))}{\textbf{Alg}_{\mathfrak{P}}(Ch(CBorn^{nA}_{k}))}{\textbf{B}_{\alpha}}$$
\end{thm}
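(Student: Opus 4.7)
The plan is to reduce the statement to the main cooperadic Koszul duality Theorem \ref{coopKoszuldual}, which already provides an adjoint equivalence
$$\adj{\Omega_{\alpha}}{\textbf{coAlg}_{\mathfrak{C}}^{|K^{f}|,\alpha-adm}}{\textbf{Alg}_{\mathfrak{P}}}{\textbf{B}_{\alpha}}$$
in any Koszul category. Since $Ch(CBorn^{nA}_{k})$ is a (pre-)Koszul category and $\alpha$ is Koszul by assumption, this theorem applies. What remains is to verify that under the two hypotheses on $CBorn^{nA}_{k}$ (every object is flat; every admissible monomorphism is pure) the inclusion
$$\textbf{coAlg}_{\mathfrak{C}}^{|K^{f}|,\alpha-adm}\hookrightarrow\textbf{coAlg}_{\mathfrak{C}}^{conil}(Ch(CBorn^{nA}_{k}))$$
is essentially surjective, and hence an equivalence of $(\infty,1)$-categories.

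First I would address the $|K^{f}|$ part. Since every object of $CBorn^{nA}_{k}$ is flat, the proof of Proposition \ref{boundedKflat} adapts immediately (using that every admissible monomorphism is pure, so every complex of $CBorn^{nA}_{k}$ is a $(\aleph_{0};\textbf{PureMon})$-extension of bounded pieces of flats) to show every complex in $Ch(CBorn^{nA}_{k})$ is $K$-flat. Consequently, the condition that each $\mathrm{gr}_{n}(C)$ be $K$-flat is vacuous, and any exhaustive filtration by admissible monomorphisms on $C$ is automatically a filtration by pure monomorphisms with $K$-flat associated graded.

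Next I would equip an arbitrary conilpotent $(\mathfrak{C})_{top}$-coalgebra $C$ with its coradical filtration $\{F_{n}C\}$, which is exhaustive precisely because $C$ is conilpotent. To upgrade $C$ to an object of $\mathpzc{coAlg}_{\mathfrak{C}}^{|K|,\alpha-adm}$ one needs to check: (i) the filtered admissibility conditions of Definition \ref{filtcoalg}; (ii) that $(\mathfrak{P}\circ_{ns}\mathfrak{C}\circ_{ns}\mathfrak{P})(n)\otimes C^{\otimes n}$ is exhaustively, admissibly filtered with admissible coinvariants. Because all objects are flat and all admissible monomorphisms are pure, the relevant tensor products and composite products preserve admissibility of monomorphisms, so the filtrations are admissible. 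The associated graded identifications then follow from Proposition \ref{gradestronmon}, whose hypotheses are satisfied verbatim because every admissible monomorphism in sight is pure.

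The main obstacle is the admissibility of $\Sigma_{n}$-coinvariants in step (ii) above. If $k$ is of characteristic $0$ (which is the case of most interest, e.g.\ $k = \mathbb{Q}_{p}$), then $CBorn^{nA}_{k}$ is enriched over $\mathbb{Q}$ and every $\Sigma_{n}$-module is a retract of a free one, so admissible filtrations automatically have admissibly filtered coinvariants; combined with (i) this places $C$ in $\mathpzc{coAlg}_{\mathfrak{C}}^{|K|,\alpha-adm}$. In positive characteristic one would either restrict to Koszul morphisms $\alpha:\mathfrak{C}\to\mathfrak{P}$ whose underlying $\Sigma$-modules are $\Sigma$-cofibrant (so that the coinvariant admissibility is again automatic), or carry out a direct argument using the coradical filtration. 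In either case, once essential surjectivity is established, Proposition \ref{relequivuc} promotes the adjoint equivalence of Theorem \ref{coopKoszuldual} to the stated equivalence between $\textbf{coAlg}_{\mathfrak{C}}^{conil}(Ch(CBorn^{nA}_{k}))$ and $\textbf{Alg}_{\mathfrak{P}}(Ch(CBorn^{nA}_{k}))$.
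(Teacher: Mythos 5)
Your proposal follows essentially the same route as the paper: the paper also deduces the theorem directly from Theorem \ref{coopKoszuldual} by observing that, since every object of $CBorn^{nA}_{k}$ is flat, every complex is $K$-flat and every admissible monomorphism is pure, so that $\textbf{coAlg}_{\mathfrak{C}}^{|K^{f}|,\alpha-adm}$ coincides with the full category of conilpotent coalgebras. Your extra care about admissibility of $\Sigma_{n}$-coinvariants (via $\Q$-enrichment or $\Sigma$-cofibrancy) is a point the paper only notes in the abelian setting and leaves implicit here, so flagging it is reasonable but does not change the argument.
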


Finally we consider an exact, but not quasi-abelian, example. The wide subcategory $Ban^{nA,\le 1}_{k}\subset Ban^{nA}_{k}$ of non-Archimedean Banach spaces consists of maps with norm at most $1$. This is a closed symmetric monoidal quasi-abelian category. However there is another exact structure on this category, introduced in \cite{kelly2016homotopy}, called the \textit{strong exact structure} which makes $Ban^{nA,\le 1}$ into a monoidal elementary exact category.  Yet again we get the familiar equivalence of $(\infty,1)$-categories:

$$\textbf{coAlg}_{\mathfrak{C}}^{|K^{f}|,\alpha-adm}(Ch(Ban^{na,\le 1}_{k}))\cong\textbf{Alg}_{\mathfrak{P}}(Ch(Ban^{na,\le 1}_{k}))$$
Suppose that $k$ is a spherically complete non-Archimedean Banach field containing $\Q$, and once more consider the Koszul morphism $\kappa:\mathfrak{S}^{c}\otimes_{H}\mathfrak{coComm}\rightarrow\mathfrak{Lie}$. If $\mathfrak{g}\in\mathpzc{Alg}_{\mathfrak{Lie}}(Ban^{nA,\le1}_{k})$ is concentrated in negative degrees with $|\mathfrak{g}_{n}|$ being free of finite rank and each $d_{n}$ is admissible, then $\mathfrak{g}$ is very good. Therefore the unit $\eta_{\mathfrak{g}}:\mathfrak{g}\rightarrow\textbf{D}_{\kappa}\circ\hat{\textbf{C}}_{\kappa}(\mathfrak{g})$ is an equivalence.Suppose that $\mathfrak{g}_{-1}$ is $n$-dimensional. The underlying space of $(B_{\kappa}(\mathfrak{g})[-1])_{0}$ is the subspace of the space of formal power series $k[[t_{1},\ldots,t_{n}]]$ consisting of power series $\sum_{I\in\mathbb{N}_{0}^{n}}a_{I}t^{I}$ with the condition $|a_{I}|\rightarrow 0$ as $|I|\rightarrow\infty$. This is a Banach space with norm $||\sum_{I\in\mathbb{N}_{0}^{n}}a_{I}t^{I}||=max_{I}|a_{I}|$. The coproduct is uniquely determined by $t_{i}\mapsto t_{i}\otimes t_{i}$. 

\subsubsection{Condensed/ Solid/ Liquid Koszul Duality}

Let $\kappa$ be a regular cardinal, and let $\mathrm{Cond}_{\kappa}(\mathrm{Ab})$ denote the category of $\kappa$-condensed abelian groups, (\cite{clausenscholze1} Definition 2.1). As explained in \cite{kelly2021analytic} Theorem 10.1, it is easily deduced from \cite{clausenscholze1} that $\mathrm{Cond}_{\kappa}(\mathrm{Ab})$ is a monoidal elementary abelian category. With some size restrictions on the extremally disconnected sets determining the generators - this also true of the categories of solid abelian groups (\cite{clausenscholze1} Lecture V) and of $p$-liquid $\mathbb{C}$-vector spaces for any $0<p\le 1$ (\cite{clausenscholze2} Lecture VI). Thus our Koszul duality result also applies in these settings.

\subsection{Further Applications}
In this final section we suggest further directions which we intend to pursue.

\subsubsection{Non-symmetric Koszul Duality}

Let $(\mathpzc{E},\otimes,k)$ be a symmetric monoidal additive category. All of the definitions of the previous sections have obvious non-symmetric analogues. Moreover by inspecting the proofs of our results so far, it is clear that the symmetric structure does not play a crucial role in any of them. Therefore analogous results should hold in the non-symmetric setting.

\subsubsection{Coloured and Curved Koszul Duality}
There are numerous generalisations of Koszul duality over fields. \textit{Curved Koszul duality} \cite{hirsh2012curved} is a version which works for (co)operads which are not necessarily (co)augmented. Coloured (co)operads encode algebraic structures which involved multiple objects, such as (co)operads and Lie-Rinehart algebras. \textit{Coloured Koszul duality}, also discussed in \cite{hirsh2012curved}, relates coalgebras over  coloured divided powers cooperads and algebras over coloured operads. We expect that both of these stories generalise to exact categories. 

\subsubsection{Chiral Koszul Duality}
Chiral Koszul duality, established in the algebraic case by Francis and Gaitsgory in their seminal paper \cite{francis2012chiral}, gives an equivalence between factorisation algebras and chiral algebras on a complex algebraic variety. A major aspect of their work is that it generalises the notion of a vertex algebra and the chiral algebras of Beilinson and Drinfeld \cite{beilinson2004chiral} to higher dimensions. H\`{\^{o}} has also established some connective versions of chiral Koszul duality. \cite{ho2016atiyah}. We expect to be able to use our work to give a proof of their results using our formalism, which would also generalise to other rings, and to the analytic/ bornological setting. 

\bibliographystyle{amsalpha}
\bibliography{Koszularxiv2prediff.bib}

\end{document}